\documentclass[11pt, leqno, fleqn]{amsart}
\usepackage[normalem]{ulem}
\usepackage[titletoc]{appendix}
\usepackage{lineno}

\usepackage{graphicx, psfrag, epstopdf}
\usepackage{hyperref}
\usepackage{comment}
\usepackage{dsfont}
\usepackage{amsmath}
\usepackage{amsfonts}
\usepackage[fleqn]{mathtools}
\usepackage{amssymb}
\usepackage{graphicx}
\usepackage[all,cmtip]{xy}
\usepackage{soul}
\usepackage{color}
\usepackage{yfonts}
\usepackage{paralist}

\usepackage{comment}
\usepackage{stmaryrd}

\usepackage{amsxtra}

\usepackage{mdframed}

\def\cU{\mathfrak U}

\def\cW{\mathcal W}

\def\eps{\varepsilon}

\def \kb{ {\mathfrak{b}} }

\newcommand{\norm}[1]{\| #1\|}

\def\cB{          \mathcal B}

\def\cF{          \mathcal F}
\def\cH{          \mathcal H}
\def\cN{          \mathcal N}
\def\cE{          \mathcal E}

\def\maxder{  D_1 } 

\def\maxderII{  D_2 }

\def\SL{   {\rm SL}}

\def\SO{   {\rm SO}}
\def\SU{   {\rm SU}}

\def\clb{   \color{black}}

\newcommand{\SectionSymbol}{\textsection}

\def\bt{        \mathbf{t}}
\def\obt{       \overline{ \mathbf{t}}}

\def \R{{\mathbb R}}

\def \Z{{\mathbb Z}}
\def \N{{\mathbb N}}

\newcommand{\abse}[1]{\left\lvert#1\right\rvert} 

\newcommand{\normBig}[1]{\Bigl\lVert#1\Bigr\rVert}

\newcommand{\T}{{\mathbb T}}
\newcommand{\prf}{{\begin{proof}}}
	\newcommand{\epf}{{\end{proof}}}

\newcommand{\bd}{\mathbf{d}}

\newcommand{\dd}{\,\mathrm{d}}

\DeclareMathOperator{\Lip}{Lip}
\DeclareMathOperator{\Gr}{Gr}

\DeclareMathOperator{\leb}{Leb}
\DeclareMathOperator{\SubP}{(S^{-})}
\DeclareMathOperator{\SupP}{(S^+)}
\DeclareMathOperator{\NCSup}{(NC^+)}
\DeclareMathOperator{\NCSub}{(NC^{-})}
\DeclareMathOperator{\OO}{O}
\DeclareMathOperator{\ini}{ini}

\newcommand{\acts}{\curvearrowright}

\newcommand{\increasing}{\boxslash}         

\newcommand{\EV}{{\mathbb E}}

\newcommand{\cP}{{\mathcal P}}

\newcommand{\cK}{{\mathcal K}}

\newcommand{\C}{{\mathbb C}}

\newcommand{\ary}{\begin{eqnarray}}
	\newcommand{\eary}{\end{eqnarray}}

\newcommand{\aryst}{\begin{eqnarray*}}
	\newcommand{\earyst}{\end{eqnarray*}}

\newcommand{\enmt}{\begin{enumerate}}
	\newcommand{\eenmt}{\end{enumerate}}

\usepackage{mathptmx}
\usepackage{bm}

\newtheorem{thm}{\bf Theorem}[section]

\newtheorem{lemma}[thm]{\bf Lemma}
\newtheorem{prop}[thm]{\bf Proposition}
\newtheorem{defi}[thm]{\bf Definition}

\newtheorem{cor}[thm]{\bf Corollary}

\theoremstyle{definition}

\newtheorem{rema}[thm]{\bf Remark}
\newtheorem{setting}[thm]{\bf Setting}

\DeclareMathOperator{\diff}{Diff}

\DeclareMathOperator{\diam}{diam}

\theoremstyle{definition}

\def\bee{\begin{equation}}
	\def\eee{\end{equation}}

\newif\ifA

\def\supp{{\rm supp}}

\def\cL{\mathcal{L}}

\def\dist{ {\rm dist} }

\def\S{ {\mathbb{S}} }

\DeclareMathOperator{\Ad}{Ad}
\DeclareMathOperator{\Ker}{Ker}
\DeclareMathOperator{\GL}{GL}

\def \vol{  {\rm vol} }

\newcommand{\Diff}{{\rm Diff}}

		\newcommand{\pdvr}[2]
		{\dfrac{\partial^{#2} #1}{\partial \theta^{#2_1} \partial r^{#2_2}}}
		\newcommand{\pdvrs}[2]
		{\partial^{#2} #1 /\partial \theta^{#2_1} \partial r^{#2_2}}
		\newcommand{\ug}{{\underline g}}

		\newcommand{\diag}{\mathop{\rm diag}}

		\definecolor{blue}{rgb}{0,0,1}
		\definecolor{red}{rgb}{1,0,0}
		\definecolor{green}{rgb}{0,.7,0}

		\DeclareFontFamily{U}{solomos}{}
		\DeclareErrorFont{U}{solomos}{m}{n}{10}
		\DeclareFontShape{U}{solomos}{m}{n}{
			<-> s*[1.1]  gsolomos8r
		}{}

  \usepackage{tikz}
\newlength\squareheight
\begin{document}

			\title[Rigidity and equidistribution of random walks by diffeomorphisms]{Rigidity and equidistribution of random walks by diffeomorphisms near the conservative regime}
			

				\author{Timoth\'ee B\'enard}				
			\address{Timoth\'ee B\'enard, 
				      Institut Galil\'ee
				     Universit\'e Paris 13, CNRS UMR 7539,
				  	99 avenue Jean-Baptiste Cl\'ement
					93430 - Villetaneuse
			}
							\email{bernard@math.univ-paris13.fr} 
			
			\author{Zhiyuan Zhang}
			
			\address{Zhiyuan Zhang,
				Department of Mathematics, 
				Imperial College London, 
				London SW7 2AZ, 
				United Kingdom
			}
			\email{zhiyuan.zhang@imperial.ac.uk} 		
			\maketitle
			  
			\begin{abstract}
			We consider a random walk on a closed manifold $M$ driven by a probability measure $\mu$ on the set of $C^2$-diffeomorphisms. Provided $\mu$ has compact support, statisfies certain gap and pinching conditions, and is weak-$*$ close to volume-preserving, we prove $M$ carries exactly one atom-free stationary probability measure $\Upsilon_{\mu}$. It has full Frostman dimension and coincides with volume in the volume-preserving setting. Moreover, for every $x\in M$, the $n$-step distribution $\mu^{*n}*\delta_{x}$ converges to  $\Upsilon_{\mu}$ unless $x$ is trapped in a finite $\mu$-invariant set. Our result applies to a variety of situations: bi-expanding random walks on surfaces, non-linear perturbations of Zariski-dense random walks on the torus $\T^d$,  on cocompact lattice quotients of $\SO(2,1), \SO(3,1)$, or on the sphere $\S^d$.
			
\end{abstract}

			 
			\tableofcontents
			
			\section{Introduction} \label{sec intro}

Let $M$ be a closed smooth Riemannian manifold of dimension $d\geq2$. Let $\mu$ be a probability measure on $\Diff^2(M)$ the space of $C^2$-diffeomorphisms of $M$. Given $n\in \N$, $x \in M$, set $\mu^{*n}*\delta_{x}$ to be the law of $g_{n}\dots g_{1}x$ where the $g_{i}$ are i.i.d. variables of law $\mu$. The probability measure  $\mu^{*n}*\delta_{x}$ may be interpreted as the $n$-step distribution of a random walk on $M$ with origin $x$ and driven by random  instructions   given by  $\mu$. We refer to this process as the $\mu$-walk on $M$.

Our aim  is to study the $\mu$-walk on $M$, by classifying stationary measures and establishing equidistribution properties under  general 
 conditions on $\mu$. Recall that a probability measure $\nu$ on $M$ is $\mu$-stationary if it satisfies $\nu =\int    g_{\star}   \nu d\mu(g)$.   We equip $M$ with a volume form and write $\vol$ the associated normalized volume measure on $M$. We denote by $\Gamma_{\mu}:=\langle \supp(\mu) \rangle_{\text{grp}}$  the group generated by the support of $\mu$. Assuming $\Gamma_{\mu}$ preserves  $\vol$, 
 and $\mu$ satisfies certain expansion and pinching conditions, we  show the following:
\begin{itemize}
\item[(a)] {\bf Rigidity}: The volume measure $\vol$ is the only atom-free $\mu$-stationary probability measure on $M$. 

\bigskip
\item[(b)] {\bf Equidistribution}:  For every $x\in M$ with infinite $\Gamma_{\mu}$-orbit, we have the weak-$*$ convergence:
$$\mu^{*n}*\delta_{x}\rightarrow \vol. $$
\end{itemize}

Our result applies in a \emph{variety of situations}, including perturbations of Zariski-dense linear random walks on the torus $\mathbb{T}^d$, on cocompact lattice quotients of $\mathrm{SO}(2,1)$ and $\mathrm{SO}(3,1)$, and on the sphere $\S^d$. We also treat the case $d=2$ for general surfaces, without requiring perturbations of an underlying linear dynamics.

Our methods also apply in \emph{dissipative} settings,  when $\Gamma_\mu$ is not assumed to preserve volume. In this context, the rigidity statement must be modified as follows: there exists a unique atom-free  $\mu$-stationary probability measure $\Upsilon_\mu$ on $M$. Moreover, $ \Upsilon_{\mu}$ is full-dimensional in the sense that for every $\varepsilon>0$, there exists $C_{\varepsilon}>0$ such that for all $x\in M$ and $r>0$,
$$ \Upsilon_{\mu}(B_{r}(x))\leq C_{\varepsilon} r^{d-\eps}.$$
The corresponding equidistribution statement continues to hold: for every $x$ not contained in a finite $\mu$-invariant set, we have
$$\mu^{*n}*\delta_{x}\rightarrow \Upsilon_{\mu}.$$
In the dissipative setting, we still require $\mu$ to be weak-$*$ close to a volume-preserving probability measure (conservative case). Note this  allows $\supp(\mu)$ to contain highly  volume-disrupting elements, although with small mass. 
\bigskip

\noindent\emph{Related works}. Rigidity phenomena for stationary measures were first studied in the linear setting. In their breakthrough work \cite{BFLM}, Bourgain-Furman-Lindenstrauss-Mozes consider a random walk on the torus $\T^d$ driven by a finitely supported strongly irreducible, proximal,  probability measure on $\SL_{d}(\Z)$. By means of Fourier analysis and techniques from additive combinatorics, they are able to show rigidity (a) and equidistribution (b) as stated above. Soon after, Benoist-Quint \cite{BQ1} provide another proof of (a) for linear walks on the torus,  making use of completely different tools inspired by the proof of Ratner's rigidity theorems for unipotent flows. Benoist-Quint's method has the advantage to extend to arbitrary lattice quotients of Lie groups provided the measure driving the walk satisfies a certain semisimplicity assumption \cite{BQ1, BQ2}. In \cite{BQ3}, classification of stationary measures allow Benoist-Quint to derive a weak version of (b), in the sense that convergence only holds in Cesaro average:
$$\frac{1}{n}\sum_{k=0}^{n-1}\mu^{*k}*\delta_{x}\rightarrow \vol. $$

Regarding non-homogeneous  settings, Benoist-Quint's approach has been influential in the  groundbreaking    work of Eskin-Mirzakhani \cite{EM}, on rigidity properties of the $\SL_{2}(\R)$-action on moduli space of translation surfaces, and in the  landmark work of Brown-Rodriguez Hertz \cite{BRH} dealing with random walks by diffeomorphisms on closed surfaces. The latter shows that any ergodic stationary measure  for a random walk on a surface falls in one of  three categories: either it is uniform on a finite invariant set, or the stable direction of the walk is non-random,
 or the measure is SRB meaning it is leafwise absolutely continuous for a notion of unstable foliation.   Building on  techniques from Tsujii \cite{T2},  Brown-Lee-Obata-Ruan  \cite{BLOR1, BLOR2} are able to promote this SRB conclusion to absolute continuity with respect to volume under certain expansion conditions.  
 Note the works \cite{BRH, BLOR1, BLOR2} are constrained to dimension $2$, and rely on techniques in the spirit of Benoist-Quint/Eskin-Mirzakhani.   For that reason, they can  derive equidistribution in Cesaro average only. The question of removing the Cesaro average is explicitly asked in \cite[Question 1.2]{BLOR2}.
\emph{Our result deals with arbitrary dimension $d\geq 2$ and shows equidistribution  without extra Cesaro-averaging}.

In their recent {\it tour de force} \cite{BEFRH}, Brown-Eskin-Filip-Rodriguez Hertz establish a measure rigidity theorem in a  general framework, applicable in all dimensions. In the deterministic setting, namely for iterates of a single diffeomorphism, they classify all generalized u-Gibbs states under a Quantitative Non-Integrability condition. This condition is relatively mild; see \cite{EPZ, ES}.
In the random walk setting, they prove that every ergodic $\mu$-stationary measure is $\Gamma_{\mu}$-invariant under suitable expansion assumptions, and further show absolute continuity whenever the measure has no zero Lyapunov exponents.
One advantage of our approach in the random walk setting is that \emph{we allow for the presence of zero Lyapunov exponents}. 
While such a phenomenon cannot occur on surfaces, where it is  excluded by expansion, it arises naturally in dimensions $3$ and higher. For instance, if $\mu$ is symmetric (i.e. invariant under $g\mapsto g^{-1}$) and $M$  is odd-dimensional, then every ergodic $\mu$-stationary measure has a zero Lyapunov exponent.

The approach we follow is in the spirit  of that of \cite{BFLM} for the torus case mentioned above, and more precisely its recent extension to the setting of lattice quotients of simple Lie groups \cite{BH1, BH2}. Noting that rigidity  (a) follows from equidistribution (b), the proof is divided in three steps: (Phase I) for large enough $n$, we show the distribution $\mu^{*n}*\delta_{x}$ has positive dimension above scale $\rho_{\mu}(n)>0$ where $\lim_{n\to +\infty}\rho_{\mu}(n)=0$. (Phase II) We bootstrap positive dimension to almost full dimension using a local variant of the multislicing method from \cite{BH1, BH2}. (Phase III)  We promote high dimension to equidistribution by invoking the spectral gap of the convolution operator $f\mapsto f*\mu$ on an appropriate Sobolev space. This spectral result follows from recent work of  \cite{DD, DDZ}. 

We point out that the method of proof is quite close to provide a rate for the  convergence $\mu^{*n}*\delta_{x}\rightarrow \Upsilon_{\mu}$. Indeed, Phases II and III are quantitative, the only obstruction lies in Phase I where the function $\rho_{\mu}(n)$ is not explicit. In the homogeneous setting, making Phase I quantitative is possible but usually requires arithmetic assumptions (as in \cite{BFLM, BH1, BH2, BG, LMW,Y} etc.) which are not naturally available in the smooth setting. Obtaining positive-dimensional  estimates without arithmetic input is a well-known open question.

\bigskip
\begin{center}
*

*\,\,\,\,*
\end{center}
\bigskip

We now pass to the more precise description of our results. We start by presenting applications to  concrete dynamical contexts (\SectionSymbol  \ref{Sec-examples}), and then we will phrase the theorem underlying all these statements (\SectionSymbol \ref{Sec-Mainresults}).

\subsection{Examples  of application} \label{Sec-examples}

Let $M$ be a closed  connected smooth Riemannian manifold of dimension $d\geq2$. Equip $M$ with a volume form and write  $\vol$ the associated volume  measure on $M$,  normalized so that $\vol(M)=1$. We  say that a measure $\Upsilon$ on $M$ is full dimensional if the following holds:  $\forall \varepsilon>0, \exists C_{\eps}>0, \forall x\in M, \forall r>0$, $ \Upsilon(B_{r}(x))\leq C_{\varepsilon}r^{d-\eps}.$ Here $B_{r}(x)$ refers to the open ball of radius $r$ and center $x$ in $M$.  Note that the property of being full dimensional does not depend on the metric. 

We denote by $\Diff^k(M)$ 
 the set of $C^k$-diffeomorphisms of $M$ endowed with the $C^k$-topology, and $\Diff^k(M, \vol)\subseteq \Diff^k(M)$ the closed subset of those  preserving the volume.  Given a Borel set $A\subset \Diff^k(M)$, we write $\cP(A)$  the set of Borel probability measures on $A$, endowed with the weak-$*$ topology, and call $\cP_{c}(A)$ the subset of measures having compact support. The subsets of $\Diff^k(M)$  we will consider will always be Borel subsets, though sometimes we may omit to mention it. Given $\mu\in \cP(\Diff^k(M))$ we write $\Gamma_{\mu}$ the semigroup generated by its support.

\bigskip
	
Our main results  apply to show rigidity of stationary measures and equidistribution in various contexts, as recorded in the next theorem.

\begin{thm} \label{examplesI,II,III}
Let $\mu_{0}$ be a compactly supported probability measure on $\Diff^2(M, \vol)$.  Let  $\cK \subseteq\Diff^{2}(M)$ be a bounded set containing $\supp(\mu_0)$.

Assume $\mu_{0}$ pertains  to either the surface case (I), or the torus case (II), or the lattice quotient case (III) described below. 

	Then there is   a neighborhood $\cU$  of $\mu_0$ in $\cP(\cK)$ with respect to the weak-$\star$ topology, such that  every  $\mu \in \cU$ has the following property: 
	\begin{itemize}
	\item[(a)] There exists a unique atom-free $\mu$-stationary probability measure $ \Upsilon_{\mu}$ on $M$. Moreover  $\Upsilon_{\mu}$ is full dimensional, and absolutely continuous with density in the Sobolev space $H^s(M)$ for some $s=s(\mu_{0})>0$.
	
	\item[(b)] For every  $x \in M$, either $x$ has finite $\Gamma_{\mu}$-orbit, or $\lim_{n \to \infty} \mu^{*n} * \delta_{x} = \Upsilon_{\mu}$.   
	\end{itemize}
\end{thm}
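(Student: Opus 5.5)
The plan is to deduce Theorem \ref{examplesI,II,III} from the general main theorem phrased in \SectionSymbol\ref{Sec-Mainresults}. That theorem should assert: if $\mu_0\in\cP_c(\Diff^2(M,\vol))$ satisfies an expansion (gap) condition and a pinching condition, both open in the weak-$*$ topology on $\cP(\cK)$ for bounded $\cK\subseteq\Diff^2(M)$, then conclusions (a) and (b) hold in a weak-$*$ neighborhood of $\mu_0$. So the work splits into two parts. First, check for each of the cases (I), (II), (III) that $\mu_0$ satisfies the hypotheses. Second, check that these hypotheses are stable under weak-$*$ perturbation inside $\cP(\cK)$.

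\textbf{Stability.} Typically the expansion condition reads
$$\sup_{x\in M,\,v\in T_x^1M}\int \log\|(Dg)^{-1}_x v\|^{-1}\,d\mu^{*N}(g)<0$$
for some fixed $N$, together with a pinching bound comparing the top and bottom derivative growth. Since $\cK$ is bounded in $C^2$, the integrands are equicontinuous in $(x,v)$ and continuous in $g$ for the $C^1$ topology. Moreover $\mu\mapsto\mu^{*N}$ is weak-$*$ continuous on $\cP(\cK)$. Hence strict inequalities persist on a neighborhood $\cU$, uniformly over the compact unit tangent bundle. The same argument should give a uniform spectral gap constant, and so a uniform Sobolev exponent $s=s(\mu_0)$. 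This is the route by which the Phase III spectral gap of \cite{DD,DDZ} transfers to $\mu$.

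\textbf{Verification case by case.}
\begin{itemize}
\item[(II)] For the torus, $\mu_0$ is a Zariski-dense linear walk. Its derivative cocycle is constant, so the expansion condition reduces to a statement about the random matrix product $g_N\cdots g_1$ acting on the projective space. Uniform contraction of $\int\log\|g^{-1}v\|$ for large $N$ follows from Furstenberg's theorem together with uniform convergence in $v$, which holds by strong irreducibility. Pinching follows from simplicity of the Lyapunov spectrum (Guivarc'h--Raugi, Gol'dsheid--Margulis).
\item[(III)] For lattice quotients of $\SO(2,1)$, $\SO(3,1)$ and for the sphere $\S^d$, I would use the same argument, with the derivative replaced by the adjoint action on the Lie algebra. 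For the sphere, the relevant data are the derivative of the isometric or projective action.
\item[(I)] For surfaces, the hypothesis should be precisely uniform bi-expansion, so there is nothing to check beyond the definition.
\end{itemize}

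\textbf{Expected obstacle.} I expect the main difficulty to be in case (III), and in the pinching condition in particular. In $\SO(3,1)$ and on spheres the adjoint representation is not irreducible, and the walk may have zero exponents or neutral directions, for example compact factors or the centralizer directions. One must show that the paper's expansion and pinching conditions are formulated to tolerate these. Concretely, I would decompose the tangent space into isotypic pieces and verify the gap separately on the expanding part, using that the neutral part acts isometrically. After that, (a) and (b) are direct citations of the main theorem, with uniqueness and full dimensionality of $\Upsilon_\mu$ coming from the equidistribution statement.
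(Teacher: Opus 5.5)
Your overall shape (reduce to the general theorem, then verify its hypotheses case by case) matches the paper. However, you never pin down what those hypotheses are, and the guessed ``expansion $+$ pinching'' is not what is needed, so the case checks miss the actual work. Theorem \ref{thm mainDissipative} requires three things:
\begin{enumerate}
\item a $(\kappa,J)$-gap and $(\eta,J)$-pinching, i.e.\ separation of singular values tested uniformly over all subspaces $V\in\Gr(T_xM,d-b)$, not expansion of single vectors;
\item that $\mu_0^{-1}$ be $(-\kappa/d)$-expanding;
\item total ergodicity of $T_{\mu_0}$ on $L^2(M,\vol)$.
\end{enumerate}
The $H^s$ clause in (a) is not among its conclusions. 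It comes from Lemma \ref{Hs-stat}, which needs $\mu_0^{-1}$ to be \emph{coexpanding} and is a separate spectral argument (for the $L^2$-adjoint of $T_\mu$), combined with uniqueness of the atom-free stationary measure.

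Hence case (I) is not ``nothing to check''. Bi-expansion must be converted into a $1$-gap and into coexpansion of $\mu_0^{\pm1}$. This is Lemma \ref{lem contractinggaptocoexpanding0}, which uses $d=2$ and $\det Dg=1$ to make the gap integrand equal to $-2\log\|Dg(x,v_0)\|$, plus \cite[Prop.~3.11]{DD}. Total ergodicity then comes from the $1$-gap via \cite{DDZ} (Theorem \ref{Thm Ergodicity0}).

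In case (II), Furstenberg's theorem on projective space only controls vectors. The paper gets the $b$-gap for every $b$ from simplicity of the spectrum, the uniform law of large numbers on every $\wedge^k\R^d$, and the inequalities \eqref{eq-proj-rationorm}, \eqref{eq-inf-rationorm}. Pinching is then automatic (Lemma \ref{rem allgapstopinch}). Applying the same to the Zariski-dense $\mu_0^{-1}$ gives its expansion and coexpansion via Lemmas \ref{lem finetocoexpanding2} and \ref{lem finetocoexpanding}.

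Your plan for case (III) with $\SO(3,1)$ would fail. Since $\mathfrak{so}(3,1)\cong\mathfrak{sl}_2(\C)$ is simple as a real Lie algebra, the adjoint representation is irreducible. There is no isotypic splitting, and the zero-exponent directions are not an invariant subspace on which the group acts isometrically; they move with the random product.

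The real obstruction is that the spectrum is $(-c,-c,0,0,c,c)$, so there is no full gap. One must apply Theorem \ref{thm mainDissipative} with $J=\{0,2,4,6\}$, proving $b$-gaps for $b=2,4$ and pinching on the blocks $(0,2),(2,4),(4,6)$ (Lemma \ref{caseSO(3,1)-gapinching}). This requires $n^{-1}\int\log\|\Ad h\, e_V\|\,d\mu_0^{*n}(h)\to\sigma_{7-2l}+\dots+\sigma_6$ uniformly over $V\in\Gr(\mathfrak{sl}_2(\C),2l)$, $l=1,2$. That uniform convergence is the missing idea. The paper obtains it from He's non-concentration estimate \cite[Prop.~1.2]{H4} for the random most-contracted $2$-plane and $4$-plane, applied to $\mu_0$, its inverse and its transpose. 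Total ergodicity there comes from Moore's theorem. For $\SO(2,1)$ there is no such issue: the spectrum $\sigma_1<0<-\sigma_1$ is simple and full gap holds as in case (II).

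Finally, the sphere is not part of this theorem. Rotations have all exponents zero, so no gap holds at $\mu_0$ and weak-$*$ openness gives nothing. Theorem \ref{thm Sphere} instead uses pointwise $C^{\ell_0}$ volume-preserving perturbations and the Dolgopyat--Krikorian dichotomy.
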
 			
			
\begin{rema}
If the measure $\mu$ in Theorem \ref{examplesI,II,III}   is supported on $ \Diff^2(M, \vol)$, then  $$\Upsilon_{\mu} = \vol,$$ so we recover items (a) and (b) from the introductory paragraph. 
\end{rema}
			
			To describe the  settings mentioned in the theorem, we use the following  notion of expanding measure. 	It  already appears in the literature under various names:  \lq\lq uniformly expanding in dimension $1$\rq\rq \	in \cite{BEFRH},  \lq\lq uniformly expanding on average in the future\rq\rq \  in \cite{BLOR1},  and   \lq\lq expanding on average\rq\rq  \ in \cite{DD}	
					
	\begin{defi} \label{def expandingonaverage}
		Let $n_{0}\in \N$, $\kappa\in \R$. We say that $\mu \in \cP_c(\Diff^1(M))$ is $(n_{0}, \kappa)$-{\rm expanding} if 
		\aryst
	\inf_{x \in M, v \in T_xM \setminus \{0\}}	\int_{G} \log (\| Dg(x, v) \| / \| v \|) d\mu^{* n_0}(g) > n_{0}\kappa.
		\earyst 
	\end{defi}			
		\noindent	We also say that $\mu$ is $ \kappa$-expanding if it is $(n_{0}, \kappa)$-expanding for some $n_{0}$. Finally, we say $\mu$ is expanding if it is $0$-expanding.
			
			\bigskip
				
				We now describe the three settings addressed in Theorem \ref{examplesI,II,III}.

				\bigskip 
	 		\noindent {\bf Case I} (\emph{Perturbations of Surface maps}).
			
\noindent
\vrule width 1pt\hspace{0.4em}
\parbox{0.9\linewidth}{
Assume $M$ has dimension $d=2$, and both the measures $\mu_{0}$ and $\mu_{0}^{-1}$ are expanding. Here $\mu_{0}^{-1}$ denotes the image of $\mu_{0}$ by the inverse map $g\mapsto g^{-1}$.  
}

\begin{rema}
The property of being expanding  is well studied   in dimension $2$ volume-preserving   random walks.  
In this context, we have the following.
\begin{enumerate}
	\item The expanding property  holds for a dense set of measures $\mu$ in $\cP(\Diff^1(M, \vol))$.  This follows from \cite{P} and the proof therein,

	\item It is equivalent to the \emph{$1$-gap property}  and to the {\it coexpanding  property} (see Definitions \ref{def gap}, \ref{defi coexpanding} and Lemma \ref{lem contractinggaptocoexpanding0}). In particular,  it implies exponential mixing for the $\mu$-random walk on $M$  (see \cite[Theorem 1.1]{DD2} and \cite{DD}). 
	\end{enumerate}
\end{rema}

 In Theorem \ref{examplesI,II,III} case (I), item (a) recovers the rigidity statement  \cite[Thm B]{BLOR2} and item (b) strenghtens the equidistribution statement \cite[Thm C]{BLOR2} by removing the Cesaro average, hereby providing a positive answer to \cite[Question 1.2]{BLOR2}. We stress that \cite{BLOR2} ultimately relies on drift arguments ``\`a la Benoist-Quint/Eskin-Mirzakhani'' (via \cite{BRH})  while we employ a completely different method rooted in Bourgain's sum-product theorem.  We believe that our techniques may also lead to progress in the study of random walks on complex surfaces, investigated in \cite{CD, R}, and leave this direction for future work.
 	\bigskip

 		\noindent {\bf Case II} (\emph{Perturbations of linear random walks on $\T^d$}).
		
		\noindent
\vrule width 1pt\hspace{0.4em}
\parbox{0.9\linewidth}{
Consider $M = \T^d$, and let $\vol$ denote the  normalized  Lebesgue measure on $\T^d$.
	Let $\mu_0$ be a probability measure on $\SL(d, \Z)$ whose support  is finite and generates a Zariski-dense subgroup of $\SL(d, \Z)$. 	Through the action of $\SL_{d}(\Z)$ on $\T^d$, we may view $\mu_{0}$ as a measure on  $\Diff^{2}(\T^d,\vol)$, and consider the induced volume-preserving random walk on $\T^d$.		 
}
	\bigskip		
	
	In the linear regime   $\mu=\mu_{0}$, Theorem \ref{examplesI,II,III} case (II)  is a consequence of Bourgain-Furman-Lindenstrauss-Mozes \cite{BFLM}, see aslo \cite{BQ1}. 
	Theorem \ref{examplesI,II,III} case (II) generalizes \cite[Theorem A]{BLOR2} which deals with conclusion (a) for the torus of  dimension $2$.

		\bigskip		
 	\noindent {\bf Case III} (\emph{Perturbations of homogeneous random walks on  $H / \Lambda$}).
	
		\noindent
\vrule width 1pt\hspace{0.4em}
\parbox{0.9\linewidth}{Let $M=H/ \Lambda$ where $H\in \{ \SO(2,1), \SO(3,1)\}$ and $\Lambda$ is a cocompact lattice in $H$. Let $\vol$ the denote the Haar probability measure on $M$.   Let $\mu'_0$ be a compactly supported Borel probability measure on $H$ whose support  generates a Zariski-dense subgroup of $H$. Define $\mu_{0}$ to be the image $\mu_{0}'$  in $\Diff^{ 2}(M, \vol)$ via the map $H\rightarrow \Diff(M, \vol), h\mapsto L_{h}$ where  $L_h(x) = hx$. }
		 
\bigskip	
	In the linear regime   $\mu=\mu_{0}$, Theorem \ref{examplesI,II,III} case (III)   is a consequence of \cite{BH1}. 
	 	So far our proof does not cover more general homogeneous spaces.  We will comment on this in Remark \ref{disc-moregen-simplegroups}.
	 \bigskip

  \bigskip
We may also apply our method to study random walks on the sphere $M=\S^d$ driven by random diffeomorphisms close to   rotations. Here we rely on a rigidity theorem of \cite{DK} claiming that a perturbation of a finitely supported measure on $\SO(d+1)$ spanning a dense subgroup is either conjugated to a measure on $\SO(d+1)$ or has simple Lyapunov spectrum. 	Invoking this theorem requires to restrict weak-$*$ perturbation to pointwise perturbation and remain within the set of volume-preserving diffeomorphisms with high regularity.

		\begin{thm}[Perturbations of random rotations on $\S^d$]  \label{thm Sphere}
		Let $\ell_{0}=\ell_{0}(d)\geq 1$ be large enough depending on $d$. Let $\mu_{0}$ be a probability measure on $\SO(d+1)$ with finite support $\{R_{1}, \dots, R_{k}\}$ spanning a dense subgroup. In particular $\mu_{0}=\sum_{i=1}^k p_{i} \delta_{R_{i}}$ with  $p_{i}>0$. Then there exists $\mathcal{V}$ a neighborhood of $(R_{i})_{i=1}^k$ in $\Diff^{\ell_{0}}(M, \vol)^k$ and  $\mathcal{N}$ a neighborhood of $(p_{i})_{i=1}^k$ in $(0, 1)^k$ such that for all  $(S_{i})_{i=1}^k\in \mathcal{V}$, $(q_{i})_{i=1}^k\in \mathcal{N}$ with $\sum_{i}q_{i}=1$,  the measure $\mu:=\sum_{i}q_{i}\delta_{S_{i}}$ satisfies conclusions 
 $(a)$ and $(b)$ from Theorem \ref{examplesI,II,III} (with $\Upsilon_{\mu}=\vol$ the Haar measure).
 \end{thm}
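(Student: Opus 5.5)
The plan is to reduce Theorem \ref{thm Sphere} to the general criterion underlying Theorem \ref{examplesI,II,III}, to be stated in \SectionSymbol\ref{Sec-Mainresults}. That criterion asks for a compactly supported measure, weak-$*$ close to a volume-preserving one, that satisfies suitable gap and pinching conditions (together with expansion). The key input is the rigidity theorem of \cite{DK}. For $(S_i)\in\mathcal V$ in a sufficiently small $C^{\ell_0}$ neighborhood with $\ell_0$ large, and weights $(q_i)\in\mathcal N$, the measure $\mu=\sum q_i\delta_{S_i}$ falls into one of two cases. Either (i) $\mu$ is smoothly conjugated into $\SO(d+1)$, i.e. there is a volume-preserving diffeomorphism $h$ with $hS_ih^{-1}\in\SO(d+1)$ for all $i$; or (ii) $\mu$ has simple Lyapunov spectrum for every ergodic stationary measure, and \cite{DK} moreover gives that volume is the unique stationary measure with quantitative positive exponents.

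\textbf{Case (i).} The group generated by the $hS_ih^{-1}$ is a perturbation of a dense subgroup of $\SO(d+1)$, so after shrinking $\mathcal V$ it remains dense: density of a finitely generated subgroup of a compact simple group is open, by the Zassenhaus neighborhood argument. The random walk on $\S^d$ by a measure with dense support in $\SO(d+1)$ then equidistributes for every starting point. Indeed, $\mu'^{*n}$ converges to Haar on $\SO(d+1)$ by the Kawada–Itô theorem, since the support is not contained in a coset of a proper closed subgroup; and this too is an open condition near $\mu_0$. Pushing forward by the orbit map gives $\mu^{*n}*\delta_x\to\vol$ for every $x$. Conjugating back by $h$, which preserves $\vol$, yields (a) and (b) with $\Upsilon_\mu=\vol$, and there are no finite orbits. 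The density statement in (a) is immediate since $\Upsilon_\mu=\vol$.

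\textbf{Case (ii).} Here I would verify the hypotheses of the main theorem. Simple spectrum with strictly positive top exponent, uniform over all stationary measures and after passing to $\mu^{*n_0}$, should give the $(n_0,\kappa)$-expanding property of Definition \ref{def expandingonaverage}, via the standard semicontinuity and compactness argument over $\P(TM)$-stationary measures. The same applies to $\mu^{-1}$, which is again a perturbation of random rotations. The gap and pinching conditions should follow from simplicity of the spectrum, with gaps bounded below uniformly on the neighborhood. Weak-$*$ closeness to a volume-preserving measure is automatic since $\mu$ itself preserves $\vol$. The main theorem then yields the unique atom-free stationary measure, which must be $\vol$ by volume preservation, together with equidistribution off finite orbits.

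\textbf{Main obstacle.} The hardest step is uniformity. The main theorem requires its gap and pinching constants to hold on a fixed neighborhood, whereas \cite{DK} provides simple spectrum for each perturbation with exponents that tend to $0$ as $\mu\to\mu_0$. The resolution I would try is to apply the main theorem not near $\mu_0$ but to each individual $\mu$ in case (ii), using $\mu$ itself as the conservative reference measure. Since $\mu$ is already volume preserving, no weak-$*$ neighborhood is needed, so only a pointwise verification of expansion and gaps at $\mu$ is required, with constants allowed to depend on $\mu$.
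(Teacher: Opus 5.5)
Your skeleton matches the paper's proof: the Dolgopyat--Krikorian dichotomy, Kawada--It\^o when $\mu$ is conjugate into $\SO(d+1)$, and otherwise Theorem \ref{main thm 2Dissipative} applied with $\mu$ itself in the role of $\mu_0$. This is legitimate because $\mu$ preserves $\vol$, and it is exactly your resolution of the uniformity issue.

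The gap is in how you verify the hypotheses in case (ii). You want the $b$-gaps and expansion to follow, by semicontinuity, from simplicity of the Lyapunov spectrum (with positive top exponent) of stationary measures on $M$. That implication is false. The $b$-gap of Definition \ref{def gap} and the expanding property are uniform over all $(x,V)\in\Gr(TM,d-b)$, resp. all $(x,[v])\in\mathbb{P}(TM)$. So the compactness argument reduces them to a sign condition for every $\mu$-stationary measure $\hat m$ on the Grassmannian (resp. projective) bundle, not on $M$. Along $\hat m$, the average of the cocycle is determined by which exponents $V$ carries, i.e. by the position of $V$ relative to the random contracting filtration.

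Suppose the slow Oseledets subspace $E_b(x)$ is non-random, i.e. a measurable $\Gamma_\mu$-invariant subbundle. Then there are stationary measures carried by planes with $V\cap E_b(x)\neq\{0\}$, and along them the averaged quantity in Definition \ref{def gap} is $\geq\lambda_{b+1}-\lambda_b>0$. Likewise, if the slowest line $E_1(x)$ is non-random, the measure it carries on $\mathbb{P}(TM)$ is stationary with average $\lambda_1<0$. So a positive top exponent does not give expansion either.

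Simplicity of the spectrum, even for every ergodic stationary measure on $M$, does not exclude this. What is needed is non-concentration of the contracting filtration; compare Proposition \ref{lem AllGapsImpliesNonConcentration}. Remark \ref{disc-moregen-simplegroups} exhibits, inside the paper, a genuine Lyapunov gap $\lambda_5<\lambda_6$ coexisting with failure of the $5$-gap condition.

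The paper bypasses exponents entirely. The estimate (45) in the proof of \cite[Corollary 4]{DK}, applied with $r=b$, is uniform in the base point and in the $r$-dimensional subspace. It yields the $b$-gap directly for every $1\le b\le d-1$ whenever $\mu$ is not conjugate into $\SO(d+1)$; the case $b=1$ is \cite[Prop 2.1]{DDZ}. Everything else then comes from the paper's own lemmas:
pinching from Lemma \ref{rem allgapstopinch};
total ergodicity from Theorem \ref{Thm Ergodicity0};
expansion of $\mu^{-1}$ from Lemmas \ref{rem allgapstopinch} and \ref{lem finetocoexpanding2}, once the same dichotomy is run for $\mu^{-1}$, a perturbation of $\mu_0^{-1}$.

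A smaller slip in case (i): Kawada--It\^o requires that the support not lie in a coset of a proper closed \emph{normal} subgroup. Your version (``proper closed subgroup'') fails already for $k=2$, since $\{R_1,R_2\}\subset R_1\overline{\langle R_1^{-1}R_2\rangle}$. The correct condition holds automatically, because $\SO(d+1)$ has no nontrivial abelian quotient. The paper instead invokes \cite[Proposition 3.3]{BB} for aperiodicity.
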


In the linear regime $\mu=\mu_{0}$, Theorem \ref{thm Sphere} follows from It\^o-Kawada's theorem \cite{IK}. 
In the above perturbed context, Dolgopyat-Krikorian \cite{DK} show that for  $d\in 2\N$ even, the volume measure is the only \emph{absolutely continuous} $\mu$-stationary probability on $\S^d$. This result is extended to all $d\geq 2$ in the recent work of  DeWitt, Dolgopyat and the second author \cite{DDZ}.  Theorem \ref{thm Sphere} builds on \cite{DK, DDZ} to rule out a much bigger class of measures: ergodic stationary measures are either atomic or the volume measure. The conclusion of equidistribution  (b) is also new in the non-linear regime.

\subsection{Main results} \label{Sec-Mainresults}

All the results in Section \ref{sec intro} will follow from a more general statement, Theorem \ref{thm mainDissipative}, presented below. 
We keep the notations $(M, \vol)$ of \SectionSymbol \ref{Sec-examples}. To lighten the tracking of dependences, we assume without loss of generality\footnote{We may always reduce to this setting be suitably rescaling  the metric at every point in $M$.} 
that $\vol$ is the normalized volume measure determined by the metric on $M$. 
As a shorthand, we write $G := \Diff^1(M)$  the group of  $C^{1}$ diffeomorphisms on $M$, and $G_{\vol} := \Diff^{1}(M, \vol)$  the subgroup of those preserving the volume.

	We first need to  introduce a gap condition. Informally, a measure $\mu$ on $G$ has a $b$-gap if at every point $x$, the $b$-th and $(b+1)$-th smallest Lyapunov exponents  are separated by a constant $\kappa=\kappa(\mu)>0$.
 We formulate this condition by asking that, for the action of $g \acts M$ where $g$ is chosen by $\mu$ (or an iterate),  the strongest \lq\lq contraction\rq\rq \ along any $(d-b)$-dimensional tangent subspace dominates the strongest \lq\lq expansion\rq\rq \ along its $b$-dimensional orthogonal complement.

\begin{defi}[Gap] \label{def gap} 
	Consider integers $1 \leq b \leq d-1$, $n_0 \geq 1$, and a real $\kappa > 0$. Let $\mu$ be a compactly supported Borel probability measure on $G$. We say $\mu$  has a {\em $(n_0, \kappa, b)$-gap } if for every $x \in M$ and every $V \in \Gr(T_x M, d - b)$, we have
	\aryst
	 	\int_{G}  \Big ( \log \sup_{u \in \S(V^{\perp})} \| P_{Dg(x, V)^{\perp}}(  Dg(x, u) ) \| - \log \inf_{v \in \S(V)} \| Dg(x, v) \|  \Big )  d\mu^{* n_0}(g) < - n_0 \kappa.
	\earyst

\end{defi}
	We say that $\mu$ has a {\em $b$-gap}  if there exist an integer $n_0 \geq 1$ and   $\kappa > 0$ such that $\mu$ has a $(n_{0}, \kappa,  b)$-gap. 
If $J\subseteq \N$ is a set of integers, we also say $\mu$ has a $(n_0, \kappa, J)$-gap if it has a $(n_0, \kappa, b)$-gap for all $b\in J\cap [1, d-1]$. Finally, we say $\mu$ has a $(\kappa, J)$-gap if it has a $(n_0, \kappa, J)$-gap for some $n_{0}\in \N$.
 
 The notion of $b$-gap is related to the property of being $(n_0, \kappa_1,  \kappa_2,  b)$-uniform stated in \cite[Definition 8]{Z} (see also \cite[Definition 1.1]{DDZ}). 
 
Given any bounded set $\cK\subseteq G$, it is direct to see that  having a  $(n_0, \kappa, b)$-gap is a weak-$\star$  open condition on $\cP(\cK)$.  

\bigskip

Our first general result is the following. 
%



\begin{thm} \label{main thm 2Dissipative}
	Let $\mu_0$ be a compactly supported Borel probability measure on $\Diff^{2}(M, \vol)$.
	Assume that  $\mu_0$  has a $b$-gap  for every $1 \leq b\leq d - 1$, and $\mu_{0}^{-1}$ is expanding.
	Let  $\cK \subseteq\Diff^{2}(M)$ be a bounded set containing $\supp(\mu_0)$.
	Let $\cU$ be a neighborhood of $\mu_0$ in $\cP(\cK)$ with respect to the weak-$\star$ topology, let $\mu \in \cU$.
	
	If $\cU$ is small enough depending on $M, \mu_{0},\cK$, then the following holds:
	
	There exists a unique atom-free $\mu$-stationary probability measure $\Upsilon_{\mu}$ on $M$.   Moreover $\Upsilon_{\mu}$ is full dimensional, and  for every  $x \in M$, either $x$ has finite $\Gamma_{\mu}$-orbit, or  
	$$\lim_{n \to \infty} \mu^{*n} * \delta_{x} = \Upsilon_{\mu}.$$   
\end{thm}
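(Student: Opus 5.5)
The proof follows the three-phase strategy announced in the introduction. It reduces to the equidistribution statement, since rigidity (a) follows from (b). Indeed, if $\nu$ is an atom-free $\mu$-stationary measure, then $\nu$-almost every $x$ has infinite $\Gamma_\mu$-orbit: a point with finite orbit lies in a finite invariant set, and an atom-free measure gives such countable sets measure zero. Integrating $\mu^{*n}*\delta_x\to\Upsilon_\mu$ against $\nu$ then gives $\nu=\Upsilon_\mu$. The first task is to check that all hypotheses are weak-$\star$ open on $\cP(\cK)$, so they persist for $\mu\in\cU$: the $(n_0,\kappa,b)$-gaps for $1\le b\le d-1$, expansion of $\mu^{-1}$, and closeness to the conservative measure $\mu_0$. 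Here the boundedness of $\cK$ in $\Diff^2$ gives uniform $C^2$ control, so the integrands in Definitions \ref{def expandingonaverage} and \ref{def gap} are continuous and bounded. We also record that the $1$-gap together with volume preservation of $\mu_0$ yields expansion of $\mu_0$ and $\mu$ along every direction, up to a small loss.

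\textbf{Phase III (spectral gap).} Following \cite{DD, DDZ}, we establish that for $\mu$ near $\mu_0$ the averaging operator $P_\mu f=\int f\circ g\,d\mu(g)$, or its dual on densities, has a spectral gap on a suitable Sobolev space $H^{-s}(M)$ modulo constants. This produces the measure $\Upsilon_\mu$ as the unique fixed point of the transfer operator in $H^{-s}$, with a density in $H^s$. It also shows that any probability measure $\eta$ that is "$\varepsilon$-close to full dimension at scale $r$" satisfies $\mu^{*m}*\eta\approx\Upsilon_\mu$ for $m\approx C\log(1/r)$. The mechanism is that such an $\eta$ has controlled $H^{-s}$ norm after mollification at scale $r$, and expansion of $\mu^{-1}$ controls how quickly $\mu^{*m}$ transports high frequencies. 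Full dimensionality of $\Upsilon_\mu$ follows by applying the same estimate to the bootstrapped measures obtained in Phase II.

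\textbf{Phase I (positive dimension).} For $x$ with infinite orbit, we show $\mu^{*n}*\delta_x(B_r(y))\le C r^{\alpha}$ for all $r\ge\rho(n)$, where $\rho(n)\to0$. Expansion of $\mu$ gives a Margulis-type function $u(y)=d(y,F)^{-\delta}$ satisfying $P_\mu u\le a u+B$ with $a<1$, where $F$ ranges over the finite invariant sets. This function makes the walk escape any fixed small ball around a finite invariant set. Non-concentration at a point at small scales then follows by applying the same drift to pairs of points, $d(y,y')^{-\delta}$, using expansion on $M\times M$ away from the diagonal. Because the finite invariant sets must be handled with compactness, $\rho$ is non-explicit.

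\textbf{Phase II (bootstrap, the main obstacle).} We raise the dimension from $\alpha$ to $d-\varepsilon$ with a local multislicing argument. In charts at scale $r$, the derivative cocycle along a typical word is approximated by linear maps. The $b$-gaps for every $b$ supply, for each flag dimension, a dominated splitting under which small balls are stretched to ellipsoids with well-separated axes. This makes the local pieces look like images under a non-concentrated family of linear maps, to which the discretized sum-product and projection theorems of Bourgain, in the form of \cite{BH1, BH2}, can be applied. The delicate parts are twofold. First, the $C^2$ nonlinearity must be controlled at the scales used, which needs uniform distortion bounds from $\cK$ and forces scales $r$ so small that curvature is negligible on $r^{1-\epsilon}$-balls. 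Second, we need non-concentration of the random "slicing directions", that is, the Oseledets-like subspaces, in Grassmannians. I would try to derive this from the gap condition via a Hölder regularity estimate on the stationary measures on the Grassmann bundle, itself again a Margulis-function argument. Finally, combining the three phases gives convergence along $n=n_1+n_2+m$, with $n_1$ large for Phase I, $n_2\asymp\log(1/\rho(n_1))$ for Phase II, and $m$ for Phase III, which yields $\mu^{*n}*\delta_x\to\Upsilon_\mu$.
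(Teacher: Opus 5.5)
Your three-phase architecture matches the paper's. The paper actually derives this theorem from Theorem \ref{thm mainDissipative}: pinching comes from Lemma \ref{rem allgapstopinch}, and total ergodicity of $T_{\mu_0}$ comes from the $1$-gap via \cite{DDZ}, Theorem \ref{Thm Ergodicity0}. However, your Phase I has a genuine hole. The pair Margulis function $d(y,y')^{-\delta}$ (Lemmas \ref{lem margulisineq}, \ref{lem massdecay1}) only upgrades a starting measure $\nu=\mu^{*n_1}*\delta_x$ that \emph{already} has small mass on small balls. It cannot see atoms, since the diagonal is invariant.

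Your function $d(y,F)^{-\delta}$ around finite invariant sets does not supply this input, for two reasons. First, nothing in it prevents $\mu^{*n}*\delta_x$ from keeping an atom of mass $\geq\varepsilon$ at a point of \emph{infinite} orbit, for instance when many distinct words send $x$ to the same point. Second, a priori there is not even a single such function, since the finite orbits could be infinitely or even uncountably many.

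The missing idea is Lemma \ref{lem AmFinite}: for $m$ large, the set of $z$ for which $\mu^{*m}*\delta_z$ has an atom of mass $\geq\varepsilon$ is finite. The reason is that two nearby such points are both fixed by $g^{-1}h$ for a non-negligible $(\mu^{*m})^{\otimes 2}$-set of pairs. Then $D(g^{-1}h)$ nearly fixes the direction joining them, which contradicts Corollary \ref{lem NonConcentrationofAngles}: $g^{-1}h$ expands any fixed vector with high probability. A stationary-limit argument then gives decay of atoms (Lemma \ref{lem massdecay2}). This is exactly where the hypothesis on $\mu_0^{-1}$ enters: forward expansion by $h$ must beat backward contraction by $g^{-1}$.

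The same lemma yields Lemma \ref{cor countablefiniteorbits}. Your rigidity reduction uses it silently when it calls the set of finite-orbit points countable, and without it the reduction fails (think of a finite group action).

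Relatedly, you place the $\mu_0^{-1}$ hypothesis in Phase III, where it plays no role. There, only the trivial bound $\|T_\mu^n f\|_{\Lip}\le D^n\|f\|_{\Lip}$ is needed for the mollification error. The spectral gap on $H^s$ comes from \emph{coexpansion} of $\mu_0$, together with total ergodicity and Keller--Liverani. Coexpansion is what the $1$-gap and volume preservation give (Lemma \ref{lem finetocoexpanding}); expansion of $\mu$ comes instead from the $(d-1)$-gap. The $H^s$ density you claim for $\Upsilon_\mu$ needs coexpansion of $\mu_0^{-1}$ (Lemma \ref{Hs-stat}), and it is not required here anyway.

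Finally, Phase II needs no stationary measure on a Grassmann bundle. Proposition \ref{lem AllGapsImpliesNonConcentration} obtains non-concentration of the finite-time subspaces $W_b(x,g)$, uniformly in $x$ and down to scale $e^{-n}$, directly from the gap via the moment bound of Remark \ref{lem MomentBound}. Your Furstenberg-measure route would additionally need uniformity in the base point and a finite-time approximation.
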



	


Note that the condition in Theorem \ref{main thm 2Dissipative} allows for any values of the Lyapunov exponents, however they  need to have multiplicity one. This rules out applicability to perturbation of linear random walks on complex homogeneous spaces, such as quotients of $\SO(3,1)$ (which is isogeneous to $\SL_{2}(\C)$). 
In order to allow some multiplicity in the Lyapunov spectrum, we introduce  a notion of pinching. Informally, a measure $\mu$ on $G=\Diff^1(M)$ is $(\eta, b_{0}, b_{1})$-pinched if, at every point $x\in M$, the $i$-th smallest Lyapunov exponents, where $b_{0}+1 \leq i\leq b_{1}$, are all $\eta$-close to one another. Similarly to the notion of gap, we formulate this in terms of contraction/expansion.


\begin{defi}[Pinching] \label{def pinch}
	Consider   integers $0 \leq b_0  < b_1 \leq d$,  $n_0 \geq 1$, and a real $\eps > 0$. 
	Let $\mu$ be a compactly supported Borel probability measure on $G$. We say $\mu$  is { \em $(n_0, \eta, b_0, b_1)$-pinched} if for every $x \in M$, every $V_0 \in \Gr(T_xM, d - b_0)$, and every $V_1 \in \Gr(T_xM,  d - b_1)$ with $V_1 \subset V_0$, we have
	\aryst
		\int_{G}  \Big ( \log \sup_{u \in \S(  V_1^{\perp})}  \| P_{  Dg(x, V_{1})^{\perp}}(  Dg(x, u) ) \|  - \log \inf_{v \in \S(V_0)}  \| Dg(x, v) \|    \Big ) d\mu^{* n_0}(g)   < n_0 \eta.
	\earyst
\end{defi}
  
Given $J\subseteq \N$, we have an associated notion of $(n_0, \eta, J)$-pinching which means $(n_0, \eta, b_{0},b_{1})$-pinching for all consecutive integers $b_{0},b_{1}$ in $J\cap [0, d]$. We may also drop the parameter $n_{0}$ and leave it  implicit.
  
Given any bounded set $\cK\subseteq G$, it is direct to see that  being a  $(n_0, \eta, b_{0},b_{1})$-pinched is a weak-$\star$  open condition on $\cP(\cK)$.

\bigskip

The main result of this paper is the following.

\begin{thm} \label{thm mainDissipative}
 	Let  $\kappa, \eta > 0$,  let $J = \{  0 = d_0 < d_1 <   \cdots < d_m < d_{m+1} = d \}$ be a subset of $ \{0, \cdots, d \}$. 
	Let $\mu_0$ be a compactly supported Borel probability measure on $\Diff^{2}(M, \vol)$ such that 
	(1) $\mu_0$  has  a $(\kappa, J)$-gap and is $(\eta, J)$-pinched, 
	(2)   $\mu_{0}^{-1}$ is $(-\kappa/d)$-expanding,   
	(3) $T_{\mu_0}$ is totally ergodic in $L^2(M, \vol)$.
	Let $\cK  \subseteq \Diff^{2}(M)$ be a bounded subset containing $\supp(\mu_0)$, and 
	$\cU$ a  neighborhood  of $\mu_0$ in $\cP(\cK)$ with respect to the weak-$*$ topology.  Let $\mu\in \cU$.

	If  $\eta, \cU$ are small enough depending on $M,\mu_{0}, \cK$ then the following is true:
	
		There exists a unique atom-free $\mu$-stationary probability measure $ \Upsilon_{\mu}$ on $M$. Moreover $\Upsilon_{\mu}$ is full dimensional, and  for every  $x \in M$, either $x$ has finite $\Gamma_{\mu}$-orbit, or  
	$$\lim_{n \to \infty} \mu^{*n} * \delta_{x} = \Upsilon_{\mu}.$$   

\end{thm}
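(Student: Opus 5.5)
The plan follows the three-phase scheme announced in the introduction. Rigidity is deduced from equidistribution. Indeed, if $\nu$ is an atom-free $\mu$-stationary measure, then $\nu=\int \mu^{*n}*\delta_x\,d\nu(x)$ for every $n$. Moreover $\nu$ gives zero mass to points with finite $\Gamma_\mu$-orbit. Such points form a countable union of finite $\mu$-invariant sets, and this countability has to be checked; it should follow from the gap condition, which forces isolation of finite orbits. Hence dominated convergence together with (b) gives $\nu=\Upsilon_\mu$. Existence of $\Upsilon_\mu$ and full dimensionality will come out of the proof of (b). So the core task is: for $x$ with infinite orbit, show $\mu^{*n}*\delta_x\to\Upsilon_\mu$.

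\textbf{Phase III (spectral input).} Using hypothesis (3), the gap and pinching, and the fact that $\mu$ is weak-$*$ close to the volume-preserving $\mu_0$, I will invoke the results of \cite{DD, DDZ}. The aim is a spectral gap for the transfer operator $P_\mu f=\int f\circ g\,d\mu(g)$, or its dual, on a negative Sobolev space $H^{-s}(M)$, uniformly for $\mu\in\cU$. Its spectrum outside a disc of radius $<1$ should consist of the simple eigenvalue $1$, with eigenvector $\Upsilon_\mu\in H^{s}$. Total ergodicity rules out other unimodular eigenvalues, and this persists under perturbation by upper semicontinuity of the isolated spectrum. The consequence I want is the following. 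Suppose a probability measure $\nu$ has dimension $\ge d-\eps$ at scales down to $\delta$, i.e.\ $\nu(B_r(y))\le r^{d-\eps}$ for $r\ge\delta$. Then $\mu^{*k}*\nu$ is $\delta^{c}$-close to $\Upsilon_\mu$ against $C^1$ test functions for $k\asymp|\log\delta|$. To get this, smooth $\nu$ at scale $\delta$; the smoothed measure has $H^{-s}$ norm controlled by the high dimension. The $C^2$-bounds on $\cK$ make the smoothing error stable under $k$ steps with only $e^{O(k)}\delta$ loss.

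\textbf{Phase II (bootstrap).} Start from a measure with dimension $\alpha>0$ above scale $\rho$ and increase its dimension to $d-\eps$ above a slightly smaller scale $\rho^{C}$. Locally in exponential charts, $\mu^{*n}$ acts near-linearly at scale $\rho$ along random flags. The gap condition for $J$ gives a splitting into blocks of dimensions $d_{i+1}-d_i$, and within each block pinching makes the action nearly conformal. This is exactly the structure needed for a local multislicing/projection theorem as in \cite{BH1, BH2}. It uses the Oseledets-type directions of $\mu^{-1}$ (expanding by (2)) and non-concentration of random projections, which comes from the gap estimates of Definition \ref{def gap}. Iterating the resulting $\eps$-improvement of dimension finitely many times reaches $d-\eps$.

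\textbf{Phase I (the main obstacle).} I must show that for $x$ with infinite orbit, $\mu^{*n}*\delta_x$ has positive dimension above some scale $\rho(n)\to0$. The first step is to rule out, via a compactness/contradiction argument, that $\mu^{*n}*\delta_x$ concentrates most mass on few $\rho$-balls uniformly in $n$. A limit of such concentrations would produce a stationary measure with an atom. The atomic part of a stationary measure lives on finite invariant sets, because the maximal atom mass is attained on a finite invariant set. The delicate point is to show that the walk from $x$ escapes neighbourhoods of these finite orbits. Near a finite orbit, the $(-\kappa/d)$-expansion of $\mu^{-1}$ and the gap make the finite orbit repelling on average for the forward walk, via a Margulis-function argument $u(y)=\dist(y,F)^{-\beta}$ satisfying $P_\mu u\le a u+b$ with $a<1$. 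Next, positive mass away from atoms at scale $\rho$ is upgraded to a Frostman bound $\rho^{\alpha}$: run $\log(1/\rho)$ further steps and use expansion to separate nearby points exponentially. This is where non-quantitative $\rho(n)$ enters. Finally, combine the three phases: Phase I gives positive dimension at scale $\rho(n)$, Phase II raises it, and Phase III turns it into closeness to $\Upsilon_\mu$ up to an error tending to $0$ with $n$.
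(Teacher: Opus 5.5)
\textbf{Verdict: genuine gap in Phase~I.} Your architecture (rigidity deduced from equidistribution, then Phases I--III) is the paper's. Your dominated-convergence derivation of rigidity from (b) is fine, and slightly more direct than the paper's ergodic-decomposition argument, once finite orbits are known to be countable. The gap is the step ``a limit of such concentrations would produce a stationary measure with an atom''.

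Subsequential limits of $\mu^{*n}*\delta_x$ need not be stationary; only Ces\`aro limits are. If the heavy atom (or heavy small ball) of $\mu^{*n}*\delta_x$ sits at a point $y_n$ that moves with $n$, averaging washes it out. For example, take $\mu=\delta_g$ with $g$ a minimal translation of $\T^2$. Then every $\mu^{*n}*\delta_x$ is a Dirac mass and all orbits are infinite, yet the only stationary measure is Haar. So compactness alone cannot produce the atom, and expansion must be used exactly here. Your Margulis function $\dist(\cdot,F)^{-\beta}$ around a given finite orbit $F$ does not address this. Nothing forces the concentration to occur near a finite orbit; the function only shows that a Ces\`aro limit gives no mass to $F$.

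\textbf{The missing ingredient} is Lemma~\ref{lem AmFinite}: for $m$ large, the set $A_m$ of points $z$ such that $\mu^{*m}*\delta_z$ has an atom of mass $\ge\eps$ is finite.
\begin{itemize}
\item \emph{Why $A_m$ is finite.} Suppose two nearby points $z_1,z_2$ lay in $A_m$. Then, with probability bounded below in terms of $\eps$, independent $g,h\sim\mu^{*m}$ satisfy $g^{-1}hz_i=z_i$ for $i=1,2$. So $D(g^{-1}h)(z_1)$ nearly fixes the unit vector pointing from $z_1$ to $z_2$. This contradicts the back-and-forth expansion of Corollary~\ref{lem NonConcentrationofAngles}.
\item \emph{Where hypothesis (2) enters.} That corollary needs the backward contraction rate to be smaller than the forward expansion rate of $\mu$. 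The forward rate itself comes from gap, pinching and near volume-preservation via Lemma~\ref{lem finetocoexpanding2}. Forward repulsion from a finite orbit uses only forward expansion, not (2), so your attribution of (2) there is off.
\item \emph{How finiteness closes the argument.} A $2\eps$-atom of $\mu^{*(n+m)}*\delta_x$ forces $\mu^{*n}*\delta_x(A_m)\ge\eps$ for all $n$. Since $A_m$ is a fixed finite set, this survives Ces\`aro averaging. It yields a stationary measure with an atom at a point of $A_m\cap\Gamma_\mu x$, which therefore has finite orbit, a contradiction.
\item \emph{Consequences.} The maximal atom of $\mu^{*n}*\delta_x$ tends to $0$ (Lemma~\ref{lem massdecay2}). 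Only then does your point-separation step (the pair Margulis function of Lemmas~\ref{lem margulisineq}, \ref{lem massdecay1}) give positive dimension. The same lemma with $\eps=1/N$ also gives the countability of finite orbits (Lemma~\ref{cor countablefiniteorbits}) that your rigidity step assumes.
\end{itemize}

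\textbf{Smaller points.}
\begin{itemize}
\item To use \cite{DD} you must check that $\mu_0$ is \emph{coexpanding} (Lemma~\ref{lem finetocoexpanding}).
\item You must transfer the spectral picture to $\mu$ by Keller--Liverani, since weak-$*$ perturbations are not operator-norm perturbations.
\item Phase~II needs $\det Dg\approx 1$ (Lemma~\ref{lem detcloseto1}) to turn the multislicing bound on boxes into a gain for the balls $g^{-1}B_{\rho^{1/2}}(x)$. Hypothesis (2) plays no role there.
\item $\Upsilon_\mu$ is an eigenvector of the dual operator on $H^{-s}$. It is not known to lie in $H^{s}$ under these hypotheses; that needs $\mu_0^{-1}$ coexpanding (Lemma~\ref{Hs-stat}).
\end{itemize}
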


 In the above statement,  $T_{\mu_0}$ refers to the Markov operator associated to $\mu_{0}$, acting on $L^2(M, \vol)$ by the formula $(T_{\mu_{0}}f)(x)=\int_{G}f(gx)d\mu(g)$. The total ergodicity condition means that for all $k\geq 1$, the fixed points of $T^k_{\mu_{0}}$  on $L^2(M, \vol)$ are limited to  constant functions, which amounts to  $\vol$ being $\mu^{*k}$-ergodic.

\begin{rema}[Sobolev regularity]
If $\mu_{0}^{-1}$ is coexpanding (see Definition \ref{defi coexpanding}), then Lemma \ref{Hs-stat} implies that the measure $\Upsilon_{\mu}$ in Theorems \ref{main thm 2Dissipative}, \ref{thm mainDissipative} is  absolutely continuous, with density in the Sobolev space $H^s(M)$ for some $s=s(\mu_{0})>0$. This condition of coexpansion  holds automatically in the context of our application in Theorem \ref{examplesI,II,III}.
\end{rema}

\begin{rema}[Volume-preserving case]
If $\mu$ is supported on $\Diff^2(M, \vol)$, then clearly $$\Upsilon_{\mu}=\vol.$$ 
\end{rema}

In \cite{BEFRH}, Brown-Eskin-Filip-Rodriguez Hertz study (among other things) stationary measures for smooth volume-preserving random walks. They show that any atom-free ergodic $\mu$-stationary measure $\sigma$ is in fact $\Gamma_{\mu}$-invariant and absolutely continuous 
if the walk is expanding  on the exterior  tangent bundle $\bigwedge^*TM$ \cite[Def 1.1.3]{BEFRH} and the measure $\sigma$ does not have  zero as a Lyapunov exponent ($\sigma$ is ``hyperbolic''). 
Our conditions in Theorem  \ref{main thm 2Dissipative},  \ref{thm mainDissipative} are stronger\footnote{As can be deduced from Proposition \ref{lem AllGapsImpliesNonConcentration}, similarly to the proof of Lemma \ref{lem finetocoexpanding2}.} than the expansion condition in \cite{BEFRH}.
Nonetheless a strength of Theorems \ref{main thm 2Dissipative}, \ref{thm mainDissipative} is that they allow to classify stationary measures without restriction to hyperbolic ones.  
Note non-hyperbolic measures occur very naturally, e.g. if $\mu$ is symmetric (i.e. $\mu=\mu^{-1}$) and $M$ is odd-dimensional, then every ergodic stationary has zero in its Lyapunov spectrum.
\bigskip

Most of the paper is dedicated to the proof  of Theorem  \ref{thm mainDissipative}. We will  explain how to derive Theorems  \ref{examplesI,II,III}, \ref{thm Sphere},  \ref{main thm 2Dissipative} in later sections.


\subsection{Structure of the paper}

In Section \ref{sec Strategyoftheproof}, we  reduce Theorem \ref{thm mainDissipative} to establishing three key results  (namely Propositions \ref{prop step I},  \ref{prop step II}, \ref{prop step III}), corresponding to the phases I, II, III mentioned in the introductory paragraph. 
Section \ref{sec Basic properties} is dedicated to preliminaries, exploring  consequences of  Definitions \ref{def gap} and \ref{def pinch}.
In  Section \ref{sec proof prop step I}, we prove phase I (Proposition \ref{prop step I}). Sections \ref{sec BHMT}, \ref{sec proof prop step II} establish phase II (Proposition \ref{prop step II}).  Section  \ref{sec proof prop step III} validates phase III (Propostion \ref{prop step III}).
The proofs of Theorem \ref{main thm 2Dissipative}, as well as 
Theorems \ref{examplesI,II,III}, \ref{thm Sphere} from \SectionSymbol \ref{Sec-examples} are given in Section \ref{sec Proofs123}. The paper ends with a short appendix  \ref{app-LDP} on large deviation estimates for Markov chains, which is used on several occasions throughout the paper.

 \subsection{Notations and conventions} We discuss the notational conventions in this paper.
We let $\R_{\geq0}$, $\R_{>0}$, $\N$, $\N_{\geq1}$ denote respectively the non-negative reals, positive reals, non-negative integers, positive integers.

 \bigskip

 \noindent{$\bullet$ \bf Vector spaces.} 
 We introduce some notations related to a finite dimensional real vector space $V$ equipped with a norm induced by an inner product.

Given $r > 0$, we denote by $B^{V}_{r}$ the open ball of radius $r$ in $V$ centered at the origin.

We denote by $\S(V)$ the set of unit vectors in $V$.
Given a subspace $W \subset V$, we denote by $W^{\perp}$ the orthogonal complement of $W$ in $V$, and  denote by $P_{W}: V \to W$ the orthogonal projection to $W$.

 Let $V_0, V_1$ be two subspaces of $V$ having complementary dimensions. 
We denote by $\measuredangle(V_0, V_1)$  the angle between $V_0$ and $V_1$. More precisely, we let 
$$\measuredangle(V_0, V_1) : = \inf_{v_0 \in \S(V_0), v_1 \in \S(V_1)} \measuredangle(v_0, v_1).$$ 
This is equivalent to the angle function introduced in \cite[\SectionSymbol 1.3]{BH2}, up to a bounded multiplicative 
constant and up to a power depending on $\dim V$.

Given $k \in \{1, \cdots, \dim(V)\}$, we denote by $\Gr(V, k)$ the Grassmannian which parametrizes the set of $k$-dimensional subspaces of $V$. We write $\Gr(n, m) = \Gr(\R^n, m)$ for every integers $1 \leq m \leq n$.
 We fix a certain  Riemannian metric on $\Gr(n,m)$ defined in a canonical\footnote{We ask the metric to be canonical so that we do not need to mention how parameters depend on a choice of metric throughout the paper.} way, and write 
 ${\rm dist}$ the induced distance on $\Gr(n,m)$.  As explained in \cite{BH2}, ${\rm dist}$ is equivalent to the $\OO(V)$-invariant metric   introduced in  \cite[\SectionSymbol 1.3]{BH2}.
 \bigskip

 \noindent{$\bullet$ \bf Manifold.} 
As in \SectionSymbol \ref{Sec-Mainresults}, we will let $M$ be a smooth,  compact, connected Riemannian manifold without boundary of dimension $d$ equipped with the normalized volume measure $\vol$ induced by the metric.  We denote by $d_M$ the distance on $M$ induced by the   metric.

Given a point $x \in M$ and  $r > 0$, we denote by $B_r(x)$ the open ball with radius $r$ centered at  $x \in M$.

For every $s \in \R$,  we denote by $H^s(M)$ the $L^2$-based Sobolev space on $M$ of order $s$. Let $\Lip(M)$ be the space of Lipschitz continuous functions on $M$.

Given two smooth Riemannian manifolds $M_1$ and $M_2$, and an integer $\ell \geq 0$, we denote by $C^\ell(M_1, M_2)$ the space of $C^\ell$ mappings from $M_1$ to $M_2$, equipped with the compact-open $C^\ell$ topology  (see \cite[Chapter 2, Section 1]{H2}) which we abbreviate as the $C^\ell$ topology. The space $C^{\infty}(M_1, M_2)$ of $C^{\infty}$ mappings from $M_1$ to $M_2$ is equipped with $C^{\infty}$ topology, generated by the union of topologies induced by $C^\ell(M_1, M_2)$ for finite $\ell$. Since $M$ is compact, for any $1 \leq \ell \leq \infty$, the space $\Diff^\ell(M)$ of $C^\ell$-diffeomorphisms of $M$ forms an open subset of $C^\ell(M, M)$ with respect to the $C^\ell$-topology. 


Given integers  $1 \leq k \leq \ell$,  and $f \in  C^\ell(M_1, M_2)$, we denote by $D^k f(x) : (T_x M_1 )^{\otimes k} \to T_{f(x)} M_2$ the $k$-th derivative map, sending $v_1 \otimes \cdots \otimes v_k \in (T_x M_1)^{\otimes k}$ to $D^kf(x, v_1, \cdots, v_k) \in T_{f(x)}M_2$.
We denote
\aryst
 \| f \|_{C^\ell} :=  \sup_{x \in M_1} d_{M_{2}}(f(x), y_0) + \sum_{k = 1}^{\ell} \| D^k f \|_{L^{\infty}}  \ \mbox{ where } \   \| D^kf \|_{L^{\infty}} := \sup_{x \in M_1} \| D^k f(x)\|,  
\earyst
and $y_0$ is a fixed arbitrary reference point in $M_2$.
When $M_1$ is compact, the topological space $C^\ell(M_1, M_2)$ is a separable complete metric space, and a subset $\cK \subset C^\ell(M_1, M_2)$ is bounded if and only if 
$\sup_{f \in \cK} \| f \|_{C^\ell} < \infty$.

\bigskip
\noindent{$\bullet$ \bf Implicit constants.} 
(1) Given $A\in \R$, $B>0$, and a parameter $q$ (potentially multidimensional), we write $A = O_{q}(B)$ or $A \lesssim_{q} B$  to mean that there is a positive constant $C$ depending  on $M$ and $q$ only such that $|A| \leq C B$.
We also say that a statement involving $A,B$  is valid under the condition $A\lll_{q} B$ if it holds provided  $A\leq \eps B$ where $\eps>0$ is a small enough constant depending on $M$ and $q$ only. 
When the above $C,\eps$ only depend on $M$, we do not write any subscript. We insist that the notations $O(\cdot)$, $\lesssim$, $\lll$ \emph{allow implicit dependencies on $M$. }

\bigskip
(2) Given a parameter $q$, we write $C=C(q)$ to indicate that a certain constant $C$ depends on $q$ only. Contrary to the previous set of notations, we ask to \emph{indicate dependence on $M$ here}. 

For example, given parameters $\eta, \kappa>0$, a statement of the form
 $$\text{``\emph{If $\eta \leq c_{0}(d)\kappa$, then the property $P$ holds}''}$$
means than $P$ is valid for all values of $\eta,\kappa$ such that $\eta/\kappa$ is smaller than a certain constant $c_{0}$ depending on $d$. 
 
 It will be convenient to extend this notation to weak-$\star$ neighborhoods of the measure $\mu_{0}$. We will write
 $$\text{``\emph{If $\cU \subseteq \cU_{0}(q)$, then the property $P$ holds}''}$$
  to say that $P$ is valid provided that $\cU$ is included in a certain weak-$\star$  neighborhood of $\mu_{0}$ which only depends on the parameter $q$.

  \section{Strategy of  proof} \label{sec Strategyoftheproof}

We reduce the proof Theorem \ref{thm mainDissipative} to three steps, described as Phases I, II, and III below. We place ourselves in the setting of Theorem \ref{thm mainDissipative}, which we repeat for clarity. 

     	\begin{setting}\label{Setting-main-result}
	We consider    $\kappa, \check{\kappa}, \eta> 0$ with $\check{\kappa}\in (0, \kappa)$, and  $J = \{  0 = d_0 < d_1 <   \cdots < d_m < d_{m+1} = d \}$  a subset of $ \{0, \cdots, d \}$. 
	We let $\mu_0$ be a compactly supported Borel probability measure on $\Diff^{2}(M, \vol)$ such that: 
	
	(1) $\mu_0$  has a $(\kappa, J)$-gap and is $(\eta, J)$-pinched, 
	
	(2) $\mu_{0}^{-1}$ is $(-\check{\kappa}/d)$-expanding, 
	
	 (3) $T_{\mu_0}$ is totally ergodic in $L^2(M, \vol)$.
	 
	We let $\cK  \subseteq \Diff^{2}(M)$ be a bounded subset containing $\supp(\mu_0)$, and 
	$\cU$ a  neighborhood  of $\mu_0$ in $\cP_c(\cK)$ with respect to the weak-$\star$ topology.  Finally, we consider $\mu\in \cU$. 
	\end{setting}

\begin{rema}In Theorem \ref{thm mainDissipative}, we only assume $\mu_{0}^{-1}$ to be $(-\kappa/d)$-expanding. Noting this remains true if we perturb the value of $\kappa$, the above setting (with $\check{\kappa}$) is thus equivalent to that of Theorem \ref{thm mainDissipative}, as needed. Introducing $\check{\kappa}$ will be helpful to quantify  some of the statements below.
\end{rema}



  \bigskip
  \noindent{\bf Phase I}. We show that given any starting point $x\in M$ with infinite $\Gamma_{\mu}$-orbit, any scale $\rho_{0}>0$,  the $n$-step distribution $\mu^{*n}*\delta_{x}$ of the $\mu$-random walk has positive dimension above the scale $\rho_{0}$, provided $n$ is large enough depending on $\mu, x,\rho_{0}$.

   \begin{prop}[Positive dimension] \label{prop step I} 
  Assume $\eta\leq c_{0}(d)(\kappa-\check{\kappa})$ and $\cU\subseteq \cU_{0}(M, \mu_{0}, \cK)$. 
  There exists $\varkappa_{0}=\varkappa_{0}( M,  \mu_{0}, \cK)>0$ such that for every $x \in M$ with infinite $\Gamma_{\mu}$-orbit, every $\rho_0>0$, and $n \ggg_{\mu, x, \rho_{0}} 1$, we have
\aryst
  \sup_{y \in M} \mu^{* n} * \delta_{x}(B_{\rho}(y)) \leq 2 \rho^{\varkappa_{0}}, \quad \forall \rho \geq \rho_0.
\earyst
  \end{prop}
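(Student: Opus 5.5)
We will prove Proposition \ref{prop step I} by combining a Margulis-type function argument with a soft compactness argument that uses the infinite orbit. The mechanism is that $\mu^{-1}$ should be "expanding on average" at the scale of pairs of points. This is hypothesis (2), which survives perturbation from $\mu_0$ to $\mu$ in a weak-$*$ neighbourhood inside the bounded set $\cK$. It should yield a Lyapunov (Margulis) function controlling proximity of two independent walk trajectories.

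\emph{Step 1: energy reduction.} We first reduce the claimed bound to a bound on a Frostman-type energy. Let $\nu_n := \mu^{*n}*\delta_x$. For $\varkappa>0$ small, consider
\[
I_\varkappa(\nu_n,\rho_0):=\int\!\!\int \max\bigl(d_M(y,z),\rho_0\bigr)^{-\varkappa}\, d\nu_n(y)\,d\nu_n(z).
\]
Bounding this by a constant gives $\nu_n(B_\rho(y))\lesssim \rho^{\varkappa/2}$ for all $\rho\geq \rho_0$. Equivalently, we control $\EV\,\max(d(g_n\cdots g_1 x, g'_n\cdots g'_1x),\rho_0)^{-\varkappa}$, where the $g_i,g_i'$ are independent with law $\mu$.

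\emph{Step 2: Margulis function.} A cleaner route is to work with a single trajectory and a function $u(y)=\max(d(y,F),\rho_0)^{-\varkappa}$, but the natural object is the two-point function $u(y,z)=d(y,z)^{-\varkappa}$. We aim for a Margulis inequality
\[
\int u(gy,gz)\,d\mu^{*n_0}(g)\leq c\,u(y,z)+C,\qquad c<1,
\]
valid for $d(y,z)\leq r_0$. Two points close together are separated on average if $\mu$ expands along the direction $y-z$. The gap/pinching hypotheses give that some $b$-gap forces expansion of generic directions. I expect to use $(-\check\kappa/d)$-expansion of $\mu^{-1}$ together with the gap at $b=d-1$ and the pinching to derive that the walk is expanding on average in every direction, up to an error of order $\eta$. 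The assumption $\eta\leq c_0(d)(\kappa-\check\kappa)$ is exactly what keeps this error below the available gap. This derivation is the main technical obstacle: converting the Grassmannian gap/pinching integrals into expansion on $TM$ for all perturbed $\mu$ uniformly, with constants depending only on $(M,\mu_0,\cK)$. It fixes $\varkappa_0$.

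\emph{Step 3: finite orbits and the diagonal.} The Margulis inequality degenerates near the diagonal only at the singular set, and iterating it shows $\EV\, u(\cdot)$ becomes bounded by $C/(1-c)$ after a number of steps of order $\log(1/\rho_0)$. For a single starting point $x$, however, the two copies begin at the same point, so we must use independence. We pass instead to $\EV\,\nu_n\otimes\nu_n$, and note that mass near the diagonal at scale $\rho$ means that $\nu_m$ charges a $\rho$-ball heavily for some earlier $m$. Concretely, we argue by contradiction with compactness. If the bound failed along $n_k\to\infty$ with fixed $\rho_0$, a weak-$*$ limit of the $\nu_{n_k}$ (or of Cesàro averages) would be a $\mu$-stationary measure charging some $\rho$-ball with mass $\geq 2\rho^{\varkappa_0}$. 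The Margulis function then forces any stationary measure to have dimension $\geq\varkappa_0$ away from atoms, so the failure must come from atoms, that is, from finite $\mu$-invariant sets.

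\emph{Step 4: ruling out finite invariant sets.} A separate Margulis function $u_F(y)=d(y,F)^{-\varkappa}$ for each finite invariant $F$ (again from expansion) shows that a point with infinite orbit cannot accumulate mass near $F$ for large $n$. We take $n\ggg_{\mu,x,\rho_0}1$ to absorb the initial transient. This is where the non-explicit dependence on $(\mu,x,\rho_0)$ enters.
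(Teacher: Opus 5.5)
There is a genuine gap. Steps 3 and 4 never supply the one non-soft input the argument needs: for $x$ with infinite $\Gamma_\mu$-orbit, the largest atom of $\nu_n=\mu^{*n}*\delta_x$ tends to $0$.

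The two-point function $d(y,z)^{-s}$ is infinite on the diagonal and is contracted only by the \emph{diagonal} action. The way to use it is therefore the Cauchy--Schwarz bound $\nu_{n_1+n}(B_\rho(y))^2\le\int (g_*\nu_{n_1})^{\otimes 2}(B_\rho(y)\times B_\rho(y))\,d\mu^{*n}(g)$: the two copies are independent up to time $n_1$ and share a common $g$ afterwards. You then discard the part of $\nu_{n_1}\otimes\nu_{n_1}$ within distance $r$ of the diagonal. The discarded mass is at most $\sup_y\nu_{n_1}(B_r(y))$, and this is small for small $r$ only if every atom of $\nu_{n_1}$ is small.

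Your compactness argument does not prove atom decay:
\begin{itemize}
\item A weak-$*$ limit of $\nu_{n_k}$ along the bad times need not be stationary.
\item Ces\`aro limits are stationary, but they do not register failures at sparse times $n_k$.
\item Ces\`aro limits can be atom-free even if every $\nu_n$ has an atom of mass $\ge\eps$, because the heavy atoms may wander (as for the orbit measures $\delta_{R^n x}$ of an irrational rotation).
\end{itemize}
Step 4 does not close this either. The atoms of $\nu_n$ lie on the orbit of $x$ itself, not near finite invariant sets. Moreover, there may be infinitely many finite orbits, each with its own Margulis constant, so you get no uniformity.

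The missing idea is in Lemmas \ref{lem AmFinite} and \ref{lem massdecay2}: pin the heavy atoms down to a finite set. For $m$ large, the set $A_m$ of points $z$ such that $\mu^{*m}*\delta_z$ has an atom of mass $\ge\eps$ is finite. Otherwise, two $r$-close points $z_1,z_2\in A_m$ would share a set of words of $\mu^{*m}$-mass $\gtrsim\eps^2$ sending the corresponding atoms to $z_1$ and $z_2$. Back-and-forth compositions of two such words fix both $z_1$ and $z_2$. By Taylor expansion (with $r$ small in terms of $m$), they therefore expand the direction joining $z_1$ to $z_2$ by at most $2$, which contradicts Corollary \ref{lem NonConcentrationofAngles}. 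Once $A_m$ is finite, hence closed, persistent atoms force $\nu_n(A_m)\ge\eps$ for all $n$. A Ces\`aro limit then has an atom in $A_m$, which yields a finite orbit meeting $\Gamma_\mu x$.

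This is exactly where hypothesis (2) enters, and you have its role backwards. The $(-\check\kappa/d)$-expansion of $\mu^{-1}$ only bounds backward contraction; it cannot produce forward expansion. The forward expansion at rate $(\kappa+\check\kappa)/(2d)$, needed for Lemma \ref{lem margulisineq}, comes from Lemma \ref{lem finetocoexpanding2}. That lemma uses the gap at $b=d_m$, pinching on $(d_m,d)$, and volume preservation; it is applied to $\mu_0$ and transferred to $\mu$ because expansion is an open condition. You never invoke volume preservation, and without it gap and pinching give no expansion. The condition $\eta\le c_0(d)(\kappa-\check\kappa)$ then ensures this forward rate beats the backward contraction rate $\check\kappa/d$, which is precisely what the back-and-forth estimate requires.
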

  
    The proof of  Proposition \ref{prop step I} is given in Section \ref{sec proof prop step I} and follows the argument of  \cite[Proposition 5.1]{BH1}. 
  
  \begin{rema}
  In this step, we only require $C^1$-regularity for elements in the support of $\mu$, that is, $\supp(\mu) \subset \Diff^1(M)$ is sufficient. Moreover, we only use that the $\mu$-random walk on $M$ is expanding  forward and not too contracting backward, see Proposition \ref{prop step I gen} for a precise general statement. 
  \end{rema}

  \begin{rema}
  The proof does not allow  to track down how the implicit constant in the lower bound $n \ggg_{\mu, x, \rho_{0}} 1$ depends on $x$ or $\rho_{0}$. This is in fact the only reason why our results are not quantitative. If we could make explicit the lower bound required on $n$ depending on $x$, $\rho_{0}$, then we could obtain a rate for the convergence $\mu^{*n}*\delta_{x} \rightarrow \nu$ announced in Theorem \ref{thm mainDissipative}.
  

  \end{rema}

In the course of proving Proposition \ref{prop step I}, we also obtain the following useful lemma.
\begin{lemma}\label{cor countablefiniteorbits}
	 For every $N \in \N$, the number of  $\Gamma_{\mu}$-orbits of cardinality at most $N$ is finite.
\end{lemma}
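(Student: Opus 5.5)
The plan is to show that finite orbits of bounded size cannot accumulate. The tool is a uniform ``repulsion from small sets'' estimate, of the kind proved on the way to Proposition \ref{prop step I}.

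The forward expansion hypothesis (the $(\kappa,J)$-gap with $b=d_m$ gives expansion on average of every tangent vector, and this persists on $\cU$) lets us build a Margulis-type function
$$u_{F}(y)=\sum_{z\in F,\, z\neq y} d_M(y,z)^{-\delta}$$
attached to a finite set $F$. For small $\delta>0$ it satisfies a drift inequality
$$\int u_F(gy)\,d\mu^{*n_0}(g)\le a\,u_F(gy)+b,$$
with $a<1$ and $b$ uniform. This is the standard argument in \cite{BH1,BFLM} adapted to $C^1$ maps: near a point $z$, a map $g$ acts approximately linearly, and expansion on average of $\|Dg(x,v)\|^{-\delta}$ for small $\delta$ gives the contraction factor $a$. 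For the points of $F$ we use the fact that $gF$ is another point of $F$ when $F$ is $\Gamma_\mu$-invariant.

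\emph{Step 1.} Apply the drift to a finite orbit $F$ with $|F|\le N$. Average $u_F$ over $F$ against the uniform measure $m_F$, which is $\mu$-stationary (indeed invariant) because $F$ is a single finite orbit. Stationarity gives
$$\int u_F\,dm_F\le a\int u_F\,dm_F+b,$$
so $\int u_F\,dm_F\le b/(1-a)$. Since every term is positive, each pair of distinct points $y,z\in F$ satisfies $d_M(y,z)^{-\delta}\le N b/(1-a)$. Hence every orbit of size at most $N$ is $r_N$-separated, with $r_N>0$ independent of the orbit. The uniformity needs care: the linearization error must be controlled at scales below a uniform $r_0$ using uniform $C^1$ (in fact $C^2$) bounds on $\cK$. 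Pairs at distance $\ge r_0$ contribute a bounded amount, which is absorbed into $b$.

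\emph{Step 2.} Separation alone does not bound the number of orbits, so we also need that distinct small orbits are far apart. Let $F,F'$ be distinct orbits of size at most $N$. Apply the same argument to the union $F\cup F'$, which is still invariant with at most $2N$ points, using the uniform measure on $F\cup F'$. This shows all points of $F\cup F'$ are $r_{2N}$-separated. In particular, the chosen points of distinct orbits are $r_{2N}$-apart, so by compactness of $M$ there are at most $O(r_{2N}^{-d})$ such orbits.

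The main obstacle is establishing the drift inequality with constants uniform in $\mu\in\cU$ and valid at all small scales. The plan is to take $n_0$ from the gap/expansion hypothesis and use weak-$*$ openness on $\cP(\cK)$. We then pick $\delta$ so small that
$$\int \|Dg(x,v)\|^{-\delta}\,d\mu^{*n_0}\le 1-c\delta$$
uniformly in $x$ and $v$ (a Taylor expansion of the exponential). Finally we choose $r_0$ so that $d_M(gy,gz)\ge (1-\epsilon)\|Dg(y)\exp_y^{-1}z\|$ for all $g\in\supp\mu^{*n_0}$ whenever $d_M(y,z)<r_0$, which holds by the uniform $C^2$ bound on $\cK$.
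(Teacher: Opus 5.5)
Your argument is correct, but it takes a different route from the paper's.

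\textbf{The paper's route.} The paper deduces the statement from Lemma \ref{lem AmFinite}. A point whose orbit has at most $N$ elements lies, for every $m$, in $A_m=\{z : \max_y \mu^{*m}*\delta_z(\{y\})\ge 1/N\}$. The finiteness of $A_m$ is proved by pigeonhole: if a small ball contained more than $2N$ points of $A_m$, two of them would be reached from fixed preimages by a common set of words of mass $\gtrsim N^{-2}$. A quotient of two such words then fixes two nearby points, so it is nearly isometric along the connecting direction. This contradicts the back-and-forth expansion of Corollary \ref{lem NonConcentrationofAngles}, which uses both forward expansion of $\mu$ and the hypothesis that $\mu^{-1}$ is not too contracting.

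\textbf{Your route.} You feed the two-point Margulis inequality into finite invariant sets. That inequality is exactly Lemma \ref{lem margulisineq}: $S_\mu^{k}\Delta_\delta\le Ce^{-k\delta\kappa_1/2}\Delta_\delta+C$. Each $g\in\Gamma_\mu$ permutes a finite invariant set $F$, so $\sum_{y\neq z\in F}\Delta_\delta(y,z)$ is preserved exactly; you do not even need stationarity of $m_F$. Once $Ce^{-k\delta\kappa_1/2}\le 1/2$, this gives $\sum_{y\ne z}d(y,z)^{-\delta}\le 2C|F|^2$. Applying it to $F\cup F'$ then separates distinct small orbits uniformly.

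\textbf{Comparison.} Your argument needs only forward expansion and yields an explicit uniform separation scale for invariant sets of bounded size. The paper's route costs nothing extra, because Lemma \ref{lem AmFinite} is needed anyway for the atom-decay Lemma \ref{lem massdecay2} in Phase I.

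\textbf{Minor corrections.}
\begin{itemize}
\item The drift inequality should read $\int u_F(gy)\,d\mu^{*n_0}(g)\le a\,u_F(y)+b$, and only for $y\in F$.
\item The constant $b=(|F|-1)C$ depends on $N$, which is harmless.
\item Step 1 is subsumed by Step 2.
\item Uniformity over $\mu\in\cU$ is not needed, since the statement concerns a fixed $\mu$.
\item Forward expansion of $\mu$ does not follow from the gap at $d_m$ alone. It comes from Lemma \ref{lem finetocoexpanding2}: the gap at $d_m$, together with pinching on $[d_m,d]$ and almost volume-preservation, then openness in $\cU$.
\end{itemize}
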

    
 \bigskip
   
  \noindent{\bf Phase II}.  We bootstrap the dimensional estimate obtained in Phase I, until reaching a dimension arbitrarily close to $d=\dim M$. To do so we show that acting by $\mu$-convolution on a given measure $\nu$ on $M$ improves its dimensional properties, up to ignoring a small part of $\nu$.

  We conveniently use the next terminology, extracted from \cite[Definition 4.3]{BH1}. 
  
  \begin{defi} 
  	Let $\alpha \in [0, 1]$, $\tau \geq 0$,  $I \subset \R_{ > 0}$. Let $\nu$ be a Borel measure on $M$. 
  	We say that $\nu$ is { \em $(\alpha, \cB_{I}, \tau)$-robust} if one can decompose $\nu$ into a sum of Borel measures $\nu = \nu' + \nu''$ such that 
		\aryst
  \forall \rho\in I,	\forall x \in M,\,\, 	\,\,\nu'( B_{\rho}(x) ) \leq \rho^{d\alpha} \ \mbox{ and } \  \nu''(X) \leq \tau.
  	\earyst
In the case where $I$ is a singleton $I=\{\rho\}$, we simply write that $\nu$ is $(\alpha, \cB_{\rho}, \tau)$-robust. 
  \end{defi}

The key result underlying  Phase II is the following.  Recall the parameter $\eta$ rules the pinching condition on Lyapunov exponents associated to $\mu$. 

  	   \begin{prop}[Dimensional  increment] \label{prop step II-single} 
		 	Let $\varkappa,\rho,  \eps \in (0,1/10)$,   $\tau \geq 0$, $\alpha \in [\varkappa, 1 - \varkappa]$. Let $\nu$ be a Borel measure on $M$ of mass at most $1$ and that is $(\alpha, \, \cB_{[\rho, \rho^{1/4}]},\, \tau)$-robust.  If    $\eta, \eps, \rho\lll_{\cK,\mu_{0}, \varkappa}1$, then for some integer $n_{\rho} =n_{\rho}(M, \cK, \rho)\in [1, | \log \rho |]$, we have 
		$$\text{$\mu^{n_{\rho}} * \nu$ is $(\alpha + \eps,  \,\cB_{\rho^{1/2}},\, \tau + \rho^{\eps})$-robust.}$$
		 \end{prop}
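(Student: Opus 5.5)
We follow the multislicing strategy of \cite{BH1, BH2}, localised to small balls of $M$. Fix $\rho$ and cover $M$ by balls of radius $\rho^{1/4}$ (more precisely a slightly smaller scale $r_{0}=\rho^{1/4-\delta}$). On each ball we use normal coordinates to identify it with a ball of $\R^d$. Since $\mu$ is supported on the bounded set $\cK$ of $C^{2}$ maps, every $g$ in the support of $\mu^{n}$ with $n\le|\log\rho|$ is approximated on such a ball by its affine part, i.e.\ by $Dg(x)$. The error is quadratic in $r_{0}$ times a factor $e^{O_{\cK}(n)}$. Choosing $n_{\rho}=c|\log\rho|$ with $c=c(M,\cK)$ small, this error stays below $\rho^{1/2}$ at all relevant scales. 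The question is thereby reduced to a linear problem: for a measure $\nu_{x}$ on a ball of $\R^{d}$, we must understand the law of the random images $Dg(x)\nu_{x}$, seen between the scales $\rho$ and $\rho^{1/4}$ after zooming.

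The linear input is a local version of the "multislicing" projection theorem (the content of Section \ref{sec BHMT}). Consider a measure on $\R^d$ that is $(\alpha,\cB_{[\rho,\rho^{1/4}]},\tau)$-robust with $\alpha\in[\varkappa,1-\varkappa]$, and a random linear map $A$ whose law has a flag-like structure: the singular directions split according to the blocks $d_{i}$ of $J$. Then $A_\star\nu$ gains dimension $\eps$ at scale $\rho^{1/2}$, except on an exceptional set of mass $\rho^{\eps}$. For this we need two inputs on the cocycle $Dg(x)$ along $\mu^{*n}$.
\begin{enumerate}
\item[(i)] There are large deviation estimates, in the spirit of Appendix \ref{app-LDP}. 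The $(\kappa,J)$-gap guarantees that the singular values of $Dg(x)$ in different $J$-blocks are separated by $e^{\kappa n}$. The $(\eta,J)$-pinching guarantees that within a block they lie within $e^{\eta n}$ of each other, so that each block acts as a near-conformal map.
\item[(ii)] There is a non-concentration estimate. The random Oseledets-type subspaces $Dg(x)^{-1}$ (flags of the $J$-blocks) must not concentrate near any fixed proper subspace, uniformly in $x$ and at all scales $\ge\rho$. I would deduce this from the gap conditions of $\mu_{0}$ transported to $\mu^{-1}$, together with the weak-$*$ closeness $\cU\subseteq\cU_{0}$. This is the local analogue of Proposition \ref{lem AllGapsImpliesNonConcentration}.
\end{enumerate}
Given (i) and (ii), the multislicing theorem applies blockwise. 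The near-conformality from the small $\eta$ lets us treat each block as a similarity, up to an $O(\eta)$ loss in dimension that is absorbed by $\eps$.

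Next we globalise. Decompose $\nu=\nu'+\nu''$ according to robustness, and localise $\nu'$ by a partition of unity subordinate to the ball cover. Apply the linear statement on each ball, with affine approximation error below $\rho^{1/2}$. Summing, the exceptional parts contribute at most $\tau+\rho^{\eps}$. The good parts give $\mu^{n_\rho}*\nu'(B_{\rho^{1/2}}(y))\le\rho^{d(\alpha+\eps)/2}$. Here we use that balls of radius $\rho^{1/2}$ around $y$ meet only boundedly many pieces coming from $g^{-1}B_{\rho^{1/2}}(y)$, via a Fubini/Markov argument in $g$.

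The main obstacle is step (ii) combined with the local multislicing. The required non-concentration of the random flags must hold quantitatively, uniformly in the base point, at all scales down to $\rho$, for a nonlinear walk that is only weak-$*$ close to $\mu_0$. Moreover the multiscale structure of \cite{BH2} has to be carried out inside a single chart, where the affine approximation is only valid over the range $[\rho,\rho^{1/4}]$. I would first try to obtain non-concentration at one scale, from the gap property via a Lyapunov-function argument on the Grassmannian bundle. I would then iterate it over $O(1)$ dyadic blocks of scales, using the time $n_\rho$ split into $O_{\varkappa,\eps}(1)$ pieces.
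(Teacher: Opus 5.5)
Your overall architecture matches the paper's: localize to small charts, view $g^{-1}B_{\rho^{1/2}}(y)$ as a box carried by the contracting flag of $Dg$, feed a local multislicing theorem with gap/pinching large deviations and flag non-concentration, then sum over a cover. But the linear step as you formulate it is false, because you never use that $\mu$ is weak-$*$ close to the \emph{volume-preserving} $\mu_0$. Inputs (i)--(ii) only constrain ratios of singular values and the position of the flags. Replacing $A$ by $\rho^{1/10}A$ leaves them intact, yet $A_\star\nu(B_{\rho^{1/2}}(y))=\nu(A^{-1}B_{\rho^{1/2}}(y))$ then sees $\nu$ at scales near $\rho^{2/5}$. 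So for a $\nu$ that is exactly $d\alpha$-dimensional (e.g.\ Ahlfors regular), $A_\star\nu$ has \emph{smaller} dimension at scale $\rho^{1/2}$, not $\alpha+\eps$. Concretely, Theorem \ref{thm Multislicing0} only yields $\nu(\text{box})\le\rho^{\kappa\tau'/(200d)}\leb(\text{box})^{\alpha}$, and $\leb(B^{\cW_{Q,g}}_{\rho^{\bt_{Q,g}}})\asymp\rho^{d/2}/\det Dg(x_Q)$. To reach $\rho^{d(\alpha+\eps)/2}$ you must know that $\det Dg(x_Q)\ge\rho^{O(\eta)}$ outside a set of $g$ of $\mu^{*n}$-measure at most $\rho^{\eps'}$. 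The paper obtains this from the $(n_1,\eta)$-almost volume preservation of $\mu$, which is inherited from $\mu_0\in\cP(\Diff^2(M,\vol))$. The large deviation Lemma \ref{lem detcloseto1} then gives $|\sum_i s_i(x_Q,g)|\le 2\eta$ off an exceptional set $\cH_Q$. This $O(\eta)$ loss, like the pinching loss, must be beaten by the multislicing gain, not absorbed into $\eps$.

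The summation over charts is also not justified as written. With charts of radius $\zeta$ (the paper uses $\rho^{1/3}$, you use roughly $\rho^{1/4}$) there are about $\zeta^{-d}$ of them. Your linear statement, like \cite[Theorem 3.4]{BH2}, gives a spatial exceptional set of absolute mass $\rho^{\eps}$ per chart, and that does not sum. Zooming does not help either: rescaling $Q$ to unit size and normalizing by $\zeta^{d\alpha}$ turns an absolute loss $\rho^{\eps}$ into $\zeta^{d\alpha}\rho^{\eps}$. Summed over charts this gives $\zeta^{-d(1-\alpha)}\rho^{\eps}\gg1$, since $1-\alpha\ge\varkappa$. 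What you need is $\nu(F_{Q,g})\le\rho^{\eps'}(\nu(Q)+\zeta^{d})$, which is exactly the ``local'' content of Theorem \ref{thm Multislicing0}. The paper places many $2\zeta$-separated translates of $\nu_Q$ in $B^{\R^d}_1$, obtaining a measure of mass at least $\rho^{2\eps'}$ with the same non-concentration at all box scales (all at most $\zeta$). It then applies the global theorem and pigeonholes a copy with small \emph{relative} exceptional mass. By contrast, the step you single out as the main obstacle is already in hand. The relevant flag is the forward contracting flag $W_{d_k}(x_Q,g)$ of $Dg(x_Q)$. Its non-concentration at every scale $r\ge\rho$ is Proposition \ref{lem AllGapsImpliesNonConcentration} applied to $\mu$ itself, since the gap condition is weak-$*$ open. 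It is needed only at the chart centre: Lemma \ref{lin-chart} with $(\zeta,r)=(\rho^{1/3},\rho^{1/2})$ covers each $\varphi_Q(g^{-1}B_{\rho^{1/2}}(y))$ by $O(1)$ translates of a single box. No passage to $\mu^{-1}$, no Lyapunov function on the Grassmannian bundle, and no splitting of $n_\rho$ is required.
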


	The proof of  Proposition \ref{prop step II-single} is given in Section \ref{sec proof prop step II}. It builds on Benard-He's multislicing method \cite{BH1, BH2} which is recalled and developped further in Section \ref{sec BHMT}.
		 
	 \begin{rema}   
	 	In Phase II, we only use that the $\mu$-random walk on $M$ satisfies the gap and pinching properties required in assumption (1), and is almost volume-preserving, see  Proposition \ref{prop step II-single-gen} for a precise general statement. 
		 \end{rema}
		 
By combining Proposition \ref{prop step I} and an iterative application of Proposition \ref{prop step II-single},  we obtain that for a starting point $x\in M$ with infinite $\Gamma_{\mu}$-orbit, the distribution $\mu^{*n}*\delta_{x}$ ultimately reaches high dimension.

   \begin{prop}[Large dimension] \label{prop step II}
   Let $\varkappa, \eps, \rho\in (0,1/10)$, $n\in \N$, $x\in M$ with infinite $\Gamma_{\mu}$-orbit. If $\cU\subseteq \cU_{0}(\mu_{0}, \cK, \varkappa)$, $\eta, \eps \lll_{\cK, \mu_{0}, x, \varkappa}1$, and $n \ggg_{\mu, x, \varkappa, \rho} 1$, then 
$$ \text{$\mu^{*n}*\delta_{x}$ is $(1-\varkappa, \,\cB_{\rho}, \,\rho^\eps)$-robust.}$$
  \end{prop}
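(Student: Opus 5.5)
We combine Proposition \ref{prop step I} with finitely many iterations of Proposition \ref{prop step II-single}. The scales are arranged so that one application of Proposition \ref{prop step II-single} turns robustness on a window $[\rho_j,\rho_j^{1/4}]$ into robustness at the single scale $\rho_j^{1/2}$. We then need that single scale to sit inside the window required at the next step.

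Since $\rho_j^{1/2}$ is a single scale rather than a window, the cleanest arrangement is to run the iteration once for each target scale and read off only the final one. Fix the target $\rho$ and the number of steps $k:=\lceil (1-\varkappa)/\eps\rceil$. Define $\rho_k:=\rho$ and, going backwards, $\rho_{j-1}:=\rho_j^{2}$, so $\rho_0=\rho^{2^k}$.

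We need that at stage $j$ the measure is $(\alpha_j,\cB_{[\rho_j,\rho_j^{1/4}]},\tau_j)$-robust. One application then yields robustness at the scale $\rho_j^{1/2}=\rho_{j+1}^{1/4}$, which is only the top of the next window. So single-scale output does not suffice; we want robustness on a whole window at every stage. To obtain it, apply Proposition \ref{prop step II-single} with parameter $\rho'$ ranging over $[\rho_{j}^{2},\rho_j^{1/2}]$. For each such $\rho'$ the required input window $[\rho',\rho'^{1/4}]$ is contained in $[\rho_j^{2},\rho_j^{1/8}]$. The output holds at $\rho'^{1/2}$ for an iterate $n_{\rho'}$ that depends on $\rho'$.

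Because the iterate depends on $\rho'$, we use the flexibility of stationarity. Phase I gives robustness, with $\tau=0$, for \emph{all} $n\geq n_0$ and all scales $\geq\rho_0$. Since the input hypothesis at stage $j$ holds for every $n$ beyond a threshold, the output $\mu^{*(n+n_{\rho'})}*\delta_x$ is robust at scale $\rho'^{1/2}$ for all large $n$. Robustness at all scales in a window for a common $n$ then follows by covering the window by $O(|\log\rho|)$ dyadic scales. The sub-probabilities $\nu''$ do not simply add across scales, so we instead take a single decomposition. The decomposition in the definition is scale-dependent, and this step is the main obstacle. I would handle it by applying Proposition \ref{prop step II-single} with a slightly larger $\eps$ and absorbing the union of the bad parts, paying a factor $|\log\rho|$ in $\tau$. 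This is harmless since $|\log\rho|\rho^{\eps}\le\rho^{\eps/2}$. Alternatively, one can pigeonhole and restrict to a window where the dimension is uniform, following the analogous argument in \cite{BH1}.

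The parameters are then tracked as follows. Starting from Proposition \ref{prop step I} with $\rho_0$ replaced by a power of $\rho$, we obtain $\alpha_0=\varkappa_0/d$. We may assume $\varkappa\le \varkappa_0/d$ after shrinking, or start at $\alpha_0=\min(\varkappa_0/d,1-\varkappa)$; the factor $2$ is absorbed by lowering the exponent slightly. Each step gives $\alpha_{j+1}=\min(\alpha_j+\eps,1-\varkappa)$ and $\tau_{j+1}=\tau_j+\rho_j^{c\eps}$. After $k=O(1/\eps)$ steps we have $\alpha=1-\varkappa$, once $\alpha$ hits $1-\varkappa$ we stop, and robustness is monotone in $\alpha$. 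The total error satisfies $\tau\le k\rho^{c\eps}\le\rho^{\eps'}$ after renaming $\eps$.

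The smallness conditions on $\eta,\eps$ come from Proposition \ref{prop step II-single}, uniformly in the finitely many $\alpha\in[\varkappa,1-\varkappa]$. The condition on $\cU$ comes from Proposition \ref{prop step I}. Smallness of $\rho$ is arranged by replacing $\rho$ by a smaller scale, which is harmless since robustness at smaller thresholds implies the statement, up to adjusting constants. Finally, $n$ must exceed the Phase I threshold $n_0(\mu,x,\rho^{2^k})$ plus $\sum_j n_{\rho_j}\le k|\log\rho|$, which is where the dependence $n\ggg_{\mu,x,\varkappa,\rho}1$ enters.
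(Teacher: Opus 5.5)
Your proposal is essentially the paper's proof. Phase I gives robustness with $\tau=0$ on a wide window for all large $n$. Proposition \ref{prop step II-single} is then applied at every scale of the window, which is legitimate because $n_{\rho}\le|\log\rho|$ depends only on $\rho$ and the input holds for all large times. The single-scale outputs are recombined into window robustness, and this is iterated $O(1/\eps)$ times.

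The step you call the main obstacle is elementary: take $\nu':=\min_i (d\nu'_i/d\nu)\,\nu$ over $O(|\log\rho|)$ dyadic scales, whose complement has mass at most $\sum_i\nu''_i(M)$; this is \cite[Lemma 4.5]{BH1}, which the paper invokes. Your fixed-ratio windows $[\rho_j,\rho_j^{1/4}]$ with $\rho_{j-1}=\rho_j^2$ do not nest, because each step divides the logarithmic ratio of the window by $4$. You should instead start, as the paper does, from $[\rho^{C},\rho^{1/C}]$ with $C$ of order $2^{k}$, which Phase I provides. You also lose a factor $|\log\rho|$ in $\tau$ at each of the $k$ steps, not just once, which is harmless since $k$ is fixed.
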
 

Let us take a moment to check Proposition \ref{prop step II}, admitting the previous key steps. For this, we can mimic the proof of \cite[\SectionSymbol 4.5]{BHZ}.

\begin{proof}[Proof that Propositions \ref{prop step I}, \ref{prop step II-single} $\implies  $Proposition \ref{prop step II} ]

 We may assume that $\varkappa <\varkappa_{0}$ where $\varkappa_{0}$ is the dimensional constant in Proposition \ref{prop step I}. Then Proposition \ref{prop step I} guarantees that for any $C>1$, for every $\rho\lll_{C}1$ and $n\geq n_{\ini}=n_{\ini}(\mu, x, \rho^C)\in \N$, the measure
\[\text{{$\mu^{*n}*\delta_{x}$} is $(\varkappa, \cB_{[\rho^C, \,\rho^{1/C}]}, 0)$-robust}. \]

By Proposition \ref{prop step II-single},  there is some small constant $\eps=\eps(M, \cK, \mu_{0}, \varkappa)>0$, such that, up to imposing from the start $\eta\lll_{\cK, \mu_{0}, \varkappa}1$, we have for {all} $\rho\lll_{\cK, \mu_{0}, \varkappa, C}1$, all $n\geq C  |\log \rho| +n_{ini}$ and $r\in [\rho^{C}, \rho^{4/C}]$,
\[\text{{$\mu^{*n}*\delta_x$} is $(\varkappa+2\eps, \cB_{r^{1/2}}, r^{\eps})$-robust}. \]
These estimates for single scales can be combined using \cite[Lemma 4.5]{BH1} to get under the same conditions:
\[\text{{$\mu^{*n}*\delta_x$} is $(\varkappa+\eps, \cB_{[\rho^{C/2}, \rho^{1/(2C)}]}, O_{\eps,C}(\rho^{\eps/4C}))$-robust}. \]

The argument in the last paragraph can be applied iteratively, adding at the  $k$-th step  the value $+\eps$ to the dimension provided that the latter is not yet above $1-\varkappa$. This dimensional increment is guaranteed for the interval of scales  $[\rho^{C/2^k}, \rho^{1/(2^kC)}]$, and  a trash component of the form $O_{\eps,C,k}(\rho^{\eps/4C})$.
Noting the value of $\eps$ is independent of $k$, we reach dimension $1-\varkappa$ in at most  $\lceil \eps^{-1} \rceil$ steps. Iterating this long is allowed provided that $C$ is chosen large enough depending on $\eps$, and up to taking $C$ even larger, the parameter $\rho$ belongs to the range of scales for which we have obtained $(1 - \varkappa)$-robustness when the iteration ends.
This concludes the proof.
\end{proof}

 \clb
\bigskip

  \noindent{\bf Phase III}.  
  At the end of Phase II, we have obtained a measure $\mu^{n}*\delta_{x}$ which has dimension very close to $d=\dim M$ above a small scale. In this last phase, we apply a few more convolutions by $\mu$ to convert this property into (quantitative) equidistribution toward the only  atom-free  $\mu$-stationary measure. The argument relies on the spectral gap of the convolution operator by $\mu$ on a suitable Sobolev space.

We quantify how close  two probability measures on $M$ are by  means of the Wasserstein distance  $\cW_1$. 
Given $f :M\rightarrow \R$, we set
$$\|f\|_{\Lip}:=  \|f\|_{L^{\infty}} + \sup_{x\neq y}\frac{|f(x)-f(y)|}{d_{M}(x,y)}. $$
The Wassertein distance between two probability measures $\nu_1, \nu_2$ on $M$ is  defined by
\aryst
\cW_1(\nu_1, \nu_2) := \frac{1}{2}\sup_{f\,:\, \|f\|_{\Lip}\leq 1} |\nu_{1}(f)-\nu_{2}(f)|.
\earyst
     
      The main result in this step is the following. 
    \begin{prop} \label{prop step III}  
    Assume   $\cU\subseteq \cU_{0}(\mu_{0},\cK)$ and  $ \eta\leq c_{0}(d)\kappa$. 
       there exists a  unique $\mu$-stationary probability  measure $\Upsilon_{\mu}$ on $M$   such that the following is true.  There exist $\eps, c > 0$ depending only on $(M,\mu_0, \cK)$ such that    for every $\tau > 0$, $\rho \lll_{\mu} 1$,  every $(1 - \eps, \cB_\rho, \tau)$-robust probability measure $\nu$, and every integer $n \in [c  | \log \rho|, 2c | \log \rho | ]$, we have 
  	\aryst
  	\cW_1(\mu^{* n} * \nu, \Upsilon_{\mu} ) < \rho^{\eps} + \tau.
  	\earyst
      \end{prop}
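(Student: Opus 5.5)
We will deduce Proposition \ref{prop step III} from a spectral gap for the transfer operator $P_\mu:\sigma\mapsto \mu*\sigma$ (dual of $T_\mu$) acting on a negative-order Sobolev space $H^{-s}(M)$, for a small $s>0$. The first step is to import this spectral result from \cite{DD, DDZ}. The gap conditions in assumption (1), together with the condition $\eta\le c_0(d)\kappa$ on the pinching, should imply that $\mu_0^{-1}$ is \emph{coexpanding} in the sense of \cite{DDZ}, in the spirit of Lemma \ref{lem contractinggaptocoexpanding0}. This yields a Lasota--Yorke inequality
$$\|T_{\mu_0}^{n_0}f\|_{H^{s}}\le \theta\|f\|_{H^{s}}+C\|f\|_{L^2}, \qquad \theta<1.$$
We then argue as follows.
\begin{itemize}
\item Since $H^s\hookrightarrow L^2$ is compact, $T_{\mu_0}$ is quasi-compact on $H^s$.
\item Total ergodicity (assumption (3)) rules out peripheral eigenvalues other than $1$, and makes $1$ simple with eigenspace the constants.
\item The Lasota--Yorke inequality is uniform for $\mu$ weak-$*$ close to $\mu_0$ on the bounded set $\cK$, since the $C^2$ bounds are uniform. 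By Keller--Liverani perturbation theory, the spectral picture persists for $\mu\in\cU_0(\mu_0,\cK)$.
\end{itemize}
Dualising, $P_\mu$ on $H^{-s}$ has a simple eigenvalue $1$, and the remaining spectrum lies in a disc of radius $\lambda<1$, uniformly in $\mu\in\cU$. The eigenvector is a probability measure $\Upsilon_\mu$. Uniqueness among stationary measures lying in $H^{-s}$ is immediate; uniqueness among all stationary measures will follow once the estimate below is established, by applying it to a robust approximation.

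The second step converts robustness into Sobolev control. Write $\nu=\nu'+\nu''$ with $\nu'(B_\rho(x))\le\rho^{d(1-\eps)}$ and $\nu''(M)\le\tau$. We mollify $\nu'$ at scale $\rho$, setting $\nu'_\rho:=\nu'*\phi_\rho$ with $\phi_\rho$ a smooth bump of width $\rho$ (taken in charts, or via the heat kernel). Then:
\begin{itemize}
\item The density of $\nu'_\rho$ is $O(\rho^{-d\eps})$ in $L^\infty$, so $\|\nu'_\rho\|_{L^2}\lesssim\rho^{-d\eps}$.
\item We have $\cW_1(\nu',\nu'_\rho)\lesssim\rho$.
\end{itemize}
Hence $\|\nu'_\rho\|_{H^{-s}}\lesssim \rho^{-d\eps}$, which is the only place full dimensionality enters.

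The third step iterates. Decompose $\nu'_\rho=m\Upsilon_\mu+R$, where $m=\nu'(M)$ and $R$ lies in the complementary spectral subspace. Then
$$\|P_\mu^{n}R\|_{H^{-s}}\lesssim\lambda^{n}\rho^{-d\eps}.$$
With $n\ge c|\log\rho|$, $c$ chosen so that $\lambda^{c|\log\rho|}\le\rho^{2d\eps+\eps}$, this is $\le\rho^{d\eps+\eps}$.

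To return to $\cW_1$, we interpolate. For a Lipschitz $f$ with $\|f\|_{\Lip}\le1$, split $f=f_\delta+(f-f_\delta)$ with $f_\delta$ a mollification at scale $\delta$. Then $\|f_\delta\|_{H^{s}}\lesssim\delta^{-s}$ and $\|f-f_\delta\|_\infty\le\delta$. Choosing $\delta$ as a small power of $\rho$ gives $|(P^n_\mu R)(f)|\lesssim \rho^{\eps'}$.

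The remaining errors are handled as follows.
\begin{itemize}
\item Since $\supp\mu\subset\cK$ has bounded Lipschitz constants, $\mu^{*n}$ expands distances by at most $L^n=\rho^{-c\log L}$. So $\cW_1(\mu^{*n}*\nu',\mu^{*n}*\nu'_\rho)\lesssim \rho^{1-c\log L}$, which is small once $c$ is small.
\item The mass defect satisfies $1-m\le\tau$.
\item The trash $\nu''$ contributes at most $\tau$.
\end{itemize}
Adjusting $\eps$ at the end, and balancing $c$ large enough for the spectral decay against $c$ small enough for the Lipschitz growth (using $n\le 2c|\log\rho|$), gives the bound $\rho^{\eps}+\tau$. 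This balance is possible precisely because $\eps$ can be taken small relative to $\log(1/\lambda)/\log L$.

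The main obstacle is the first step: verifying that the gap/pinching hypotheses give the uniform Lasota--Yorke (coexpansion) estimate needed to apply \cite{DD, DDZ} for every $\mu\in\cU$, including non-volume-preserving $\mu$. We would try to prove coexpansion for $\mu_0^{-1}$ via the gap condition at level $b=d_m$ combined with pinching. We would then use openness of the resulting integral inequality in the weak-$*$ topology on $\cP(\cK)$. The Jacobian factor in the transfer operator for non-conservative $\mu$ is harmless here, since $H^{s}$ bounds only need $C^2$ control, which is uniform on $\cK$.
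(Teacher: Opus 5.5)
Your Steps 2 and 3 are essentially the paper's equidistribution argument. You mollify at scale $\rho$ to get a density bounded by $\rho^{-d\eps}$, let the spectral gap act, pay $\rho D^n$ for undoing the mollification, and choose $c$ small against $D$ and then $\eps$ small against $c$ and the gap. Working with $P_\mu$ on $H^{-s}$ is just the dual of the paper's pairing of $T_\mu^n f\in H^s$ with $\nu_\rho$; you also need not mollify $f$, since $\|f\|_{H^s}\lesssim\|f\|_{\Lip}$.

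The genuine gap is Step 1. You leave it open, and as set up it aims at the wrong property. In the paper's conventions, the Lasota--Yorke/spectral-gap statement for $T_{\mu_0}$ on $H^s$ (equivalently for $P_\mu$ on $H^{-s}$) comes from coexpansion of $\mu_0$ itself (Theorem \ref{Thm Spectralgap}), not of $\mu_0^{-1}$. Coexpansion of $\mu_0^{-1}$ instead controls $T_{\mu_0}$ on $H^{-s}$, i.e. the transfer operator on $H^s$-densities. That is Lemma \ref{Hs-stat}, which yields an absolutely continuous stationary measure.

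A linear check on $\T^d$ confirms the direction. Composition $f\circ A$ sends the Fourier mode $m$ to $A^{T}m$, so $T_\mu$ damps high frequencies in $H^{-s}$ when $A^{T}$ expands on average. That is coexpansion of $\mu^{-1}$, since $\xi\circ A=A^{T}\xi$. By duality, $T_\mu$ on $H^{s}$ is governed by $T_{\mu^{-1}}$ on $H^{-s}$, i.e. by expansion of $A^{-T}$, which is coexpansion of $\mu$.

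Moreover, coexpansion of $\mu_0^{-1}$ is not available under the standing hypotheses; the paper treats it as an extra assumption, used only for $H^s$-regularity of $\Upsilon_\mu$. Your proposed route through the $d_m$-gap and $(d_m,d)$-pinching cannot produce it. Those data give expansion of $\mu_0$ on tangent vectors (Lemma \ref{lem finetocoexpanding2}). Coexpansion of $\mu_0^{-1}$ would additionally require non-concentration of the most expanded subspace at the endpoint of the walk, i.e. a gap property of $\mu_0^{-1}$, which is not assumed.

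What works is the other end of the flag: Lemma \ref{lem finetocoexpanding} at level $b=d_1$. With high $\mu_0^{*n}$-probability, the $d_1$-gap, the $(0,d_1)$-pinching and $\det Dg(x)=1$ force $-\lambda^{(d_1)}(x,g)\geq n\kappa/(2d)$; this is exactly where the hypothesis $\eta\le c_0(d)\kappa$ of the statement enters. Then $\|\xi\circ Dg(x)^{-1}\|\gtrsim e^{-\lambda^{(d_1)}(x,g)}\measuredangle(U_\xi,W_{d_1}(x,g))$ for a fixed $(d-d_1)$-plane $U_\xi\subset\Ker\xi$. The angle is controlled by the forward non-concentration of $W_{d_1}(x,g)$ (Proposition \ref{lem AllGapsImpliesNonConcentration}). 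Once $\mu_0$ is coexpanding, your quasi-compactness, total-ergodicity and Keller--Liverani steps go through as in the paper.

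Finally, your claim that uniqueness holds among all stationary measures is false. Finite $\Gamma_\mu$-orbits carry atomic stationary measures, and a mollified atom is not $(1-\eps,\cB_\rho,\tau)$-robust with small $\tau$. The statement only asserts uniqueness of the measure satisfying the equidistribution estimate.
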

 
The proof of Proposition \ref{prop step III}  is  given in Section \ref{sec proof prop step III}. The spectral information we need are extracted from \cite{DD, T3, DDZ}. The deduction of equidistribution follows an argument attributed to Venkatesh \cite{Venkatesh}, see also \cite{KK}.

 \begin{rema}   To establish Proposition \ref{prop step III}, we only need the properties that $\mu_{0}$ is coexpanding and $T_{\mu_{0}}$ is totally ergodic in $L^2(M, \vol)$,  see  Proposition  \ref{prop step III-gen} for a precise general statement. 
\end{rema}

  We may now conclude the proof of Theorem \ref{thm mainDissipative}.
  
    \begin{proof}[Proof of Theorem \ref{thm mainDissipative}]
    All the statements, except for that $\Upsilon_{\mu}$ is atom-free, follow directly by combining Proposition \ref{prop step II} and Proposition \ref{prop step III}.

    It remains to show that $\Upsilon_{\mu}$ is the unique atom-free $\mu$-stationary probability measure. 
    Let $\Upsilon_{0}$ be an arbitrary atom-free  $\mu$-stationary probability measure. By Lemma \ref{cor countablefiniteorbits},  there are only countably many finite $\Gamma_{\mu}$-orbits. Thus, every ergodic component of $\Upsilon_{0}$ is atom-free as well. Without loss of generality, let us assume that $\Upsilon_{0}$ itself is ergodic.
    Then for a $\Upsilon_{0}$-typical $x \in M$, we have $\frac{1}{n}\sum_{k = 0}^{n-1} \mu^{*k} * \delta_{x}$ converges to $\Upsilon_0$. By the statements in Theorem \ref{thm mainDissipative} proved above, we deduce that the $\Gamma_{\mu}$-orbit of $x$ is infinite, and that $\mu^{*k} * \delta_{x}$ converges to $\Upsilon_{\mu}$ as $k$ tends to infinity. Consequently, $\Upsilon_0 = \Upsilon_{\mu}$ is the only ergodic atom-free $\mu$-stationary measure.
   This completes the proof.
	  \end{proof}

		\section{Basic properties} \label{sec Basic properties}
		
	
	In this section, we study preliminary properties of  the random walk on $M$. We place ourselves in the following framework.
	
	\begin{setting}\label{Setting-Dimpos}
	We let $\mu\in  \cP_{c}(G)$ denote a compactly supported probability measure on $G$. We consider $\cK\subseteq G$ a bounded set containing $\supp( \mu)$. 
	\end{setting}
	It will be convenient to use the notation
    \ary  \label{eq D1}
      \maxder := \sup_{g \in \supp(\mu)} \max( \| Dg \|_{L^{\infty}}, \| D(g^{-1}) \|_{L^{\infty}} ).
    \eary

	\subsection{Gaps and pinching of singular values} \label{sec-gap-pinch}
	We investigate how  gap/pinching assumptions on the measure $\mu$ (as defined in \SectionSymbol \ref{Sec-Mainresults}) affect the behavior of the singular values of a typical element $g\in G$ sampled by $\mu^{*n}$.

	\bigskip
			Given $x \in M$ and $g \in G$, we fix a \emph{Cartan decomposition} of  $Dg(x)$. More precisely, we write
	\begin{align*}
	Dg(x) = R'(x, g)   \diag(e^{\lambda^{(1)}(x, g)}, \cdots, e^{\lambda^{(d)}(x, g)} )  R(x, g) 
	\end{align*}
	for some reals $\lambda^{(1)}(x, g)  \leq \cdots \leq   \lambda^{(d)}(x, g)$, and some linear isometries $R'(x, g):  \R^d \to T_{g(x)}M$ and $R(x, g) :  T_x M \to \R^d$. We observe that the vector of exponents $(\lambda^{(1)}(x, g), \cdots, \lambda^{(d)}(x, g) )$ is unique:  it lists the logarithm of the singular values of $Dg(x)$. The maps $R'(x, g), R(x, g)$ are not necessarily unique.  
	

	The next lemma provides a variational characterization of singular values of $D g(x)$.

	\begin{lemma} \label{lem lambdab}
		For all  $b  \in \{1, \cdots, d \}$, and all $g\in G$,  $x \in M$, we have 
		
		
		\begin{align*}
		e^{\lambda^{(b)}(x, g )}  
		&\,= \, \inf_{V \in \Gr(T_x M, d -b ) } \sup_{u \in \S(V^{\perp}) } \|  P_{Dg(x, V)^{\perp}}( Dg(x, u) ) \| \\
		&\,= \, \sup_{V' \in \Gr(T_x M, d - b + 1)}  \inf_{v' \in \S(V')} \| Dg(x, v') \|.
		\end{align*}

	\end{lemma}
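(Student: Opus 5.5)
This is pure linear algebra at a single point. Write $A := Dg(x) : T_xM \to T_{g(x)}M$ and use the Cartan decomposition $A = R'\,\diag(e^{\lambda^{(1)}},\dots,e^{\lambda^{(d)}})\,R$, with $\lambda^{(i)}=\lambda^{(i)}(x,g)$. Let $e_1,\dots,e_d$ be the orthonormal basis of $T_xM$ given by $e_i = R^{-1}(\text{$i$-th standard vector})$, so that $Ae_i = e^{\lambda^{(i)}} f_i$ with $f_i := R'(\text{$i$-th standard vector})$ orthonormal in $T_{g(x)}M$. Both sides are invariant under composing with isometries, so we may work with a diagonal operator whose singular values are ordered increasingly.

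The second equality, $e^{\lambda^{(b)}} = \sup_{V'\in \Gr(d-b+1)} \inf_{v'\in \S(V')}\|Av'\|$, is the Courant--Fischer min-max for the singular values of $A$ (eigenvalues of $A^*A$).
\begin{itemize}
\item For the lower bound, take $V' = \mathrm{span}(e_b,\dots,e_d)$. This space has dimension $d-b+1$, and on it $\|Av'\| \ge e^{\lambda^{(b)}}$.
\item For the upper bound, any $V'$ of dimension $d-b+1$ meets $\mathrm{span}(e_1,\dots,e_b)$ nontrivially by a dimension count. A unit vector $v'$ in this intersection has $\|Av'\|\le e^{\lambda^{(b)}}$.
\end{itemize}

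For the first expression we need a formula for the quotient-type quantity. Fix $V\in\Gr(T_xM,d-b)$ and set $W := A V$, which has dimension $d-b$. The map $u\mapsto P_{W^\perp}(Au)$ from $V^\perp$ to $W^\perp$ is the operator induced by $A$ on $T_xM/V \to T_{g(x)}M/W$, written in orthogonal models. Hence
\[
\sup_{u\in \S(V^\perp)}\|P_{W^\perp}Au\|
\]
is its operator norm. Since $P_{W^\perp}Au = P_{W^\perp}A(u+v)$ for every $v\in V$, this operator norm equals
\[
\sup_{u\in\S(V^\perp)} \dist(Au, W) = \sup_{u\in \S(V^\perp)} \inf_{v\in V}\|A(u+v)\|.
\]

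We prove the two inequalities separately.
\begin{itemize}
\item \emph{Upper bound on the infimum:} take $V=\mathrm{span}(e_{b+1},\dots,e_d)$. Then $V^\perp=\mathrm{span}(e_1,\dots,e_b)$ and $W=\mathrm{span}(f_{b+1},\dots,f_d)$, so $P_{W^\perp}Au = Au$, of norm at most $e^{\lambda^{(b)}}$. This gives "$\le$".
\item \emph{Lower bound:} for arbitrary $V$ of dimension $d-b$, the space $V\oplus \mathrm{span}$-type argument goes as follows. The $(d-b+1)$-dimensional space $U:=\mathrm{span}(e_b,\dots,e_d)$ meets $V^\perp + $ nothing directly, so we argue via the quotient. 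The space $V+U$ has dimension at most $d$, while $\dim U = d-b+1 > \dim V$, so there is a unit $w\in U$ with $w\perp V$; more precisely, we can pick $w\in U\cap V^\perp$, since $\dim U+\dim V^\perp = d+1$. Write $u:=w$. Then for every $v\in V$ we have $\|A(u+v)\| \ge \inf_{z\in \S(U+V)}\|Az\|\cdot\|u+v\|$. This is not immediately enough, because $U+V$ may be large.
\end{itemize}

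The cleaner route for the lower bound is duality. The induced map on quotients $\bar A: T_xM/V\to T_{g(x)}M/W$ has inverse $\overline{A^{-1}}$, and $\|\bar A\| \ge 1/\|\bar A^{-1}\|$ is not the right direction either. Instead we use that the adjoint of $\bar A$ is the restriction $A^*|_{W^\perp}: W^\perp\to V^\perp$. So the quantity equals
\[
\|A^*|_{W^\perp}\| \ge \inf_{V''\in \Gr(T_{g(x)}M,\,b)}\|A^*|_{V''}\|.
\]
By Courant--Fischer applied to $A^*$, whose singular values are the same $e^{\lambda^{(i)}}$, this infimum equals $e^{\lambda^{(b)}}$: the smallest possible maximal stretch on a $b$-dimensional subspace.

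The main point of care is this adjoint identification. It requires checking that $(P_{W^\perp}A|_{V^\perp})^* = P_{V^\perp}A^*|_{W^\perp}$ and that $P_{V^\perp}A^*|_{W^\perp} = A^*|_{W^\perp}$. The latter holds because $A^*W^\perp \perp V$: for $y\in W^\perp$ and $v\in V$, $\langle A^* y, v\rangle = \langle y, Av\rangle = 0$ since $Av\in W$. With this, the first equality follows and the lemma is proved.
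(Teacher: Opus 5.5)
Your argument is correct. For the lower bound in the first equality, though, you take the dual route to the paper's.

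\emph{The paper's route.} It stays on the primal side. After reducing to the diagonal $A$, it picks a nonzero $w \in \mathrm{span}(e_b,\dots,e_d)\cap (AV)^\perp$, which exists since $(d-b+1)+b>d$. It sets $u := P_{V^\perp}A^{-1}w$ and checks two facts:
\begin{itemize}
\item $P_{(AV)^\perp}Au = w$;
\item $\|u\|\le\|A^{-1}w\|\le e^{-\lambda^{(b)}}\|w\|$, because $A^{-1}$ preserves that span and contracts it by at least $e^{-\lambda^{(b)}}$.
\end{itemize}
So it exhibits an explicit preimage, of controlled norm, of a well-chosen vector under $T := P_{(AV)^\perp}A|_{V^\perp}$.

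\emph{Your route.} You identify $T^* = A^*|_{(AV)^\perp}$ and use $\|T\|=\|T^*\|$. As $V$ ranges over $\Gr(T_xM,d-b)$, the space $(AV)^\perp$ ranges over all of $\Gr(T_{g(x)}M,b)$. So your identification in fact shows that the first expression equals $\inf_{S\in\Gr(T_{g(x)}M,b)}\|A^*|_S\|$ exactly. Both inequalities are then just Courant--Fischer for $A^*$, and your explicit choice of $V$ for the upper bound is redundant, though harmless.

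\emph{Comparison.} Your route makes the two formulas in the lemma visibly dual to each other and is arguably cleaner. The paper's route avoids adjoints and produces a concrete test vector $u$. Your proof of the second equality fills in what the paper only calls ``similar''.

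In a write-up, delete the abandoned attempt with $w\in U\cap V^\perp$. The fix there is to intersect $(AV)^\perp$ with $AU=\mathrm{span}(f_b,\dots,f_d)$ in the target, instead of intersecting $U$ with $V^\perp$ in the source. You then pull back by $A^{-1}$ and project to $V^\perp$, which is exactly the paper's move.
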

	
	
	\begin{proof} We show the first equality, the proof of the second one is similar. As  isometries do not affect the norm and commute with taking the orthogonal, it is enough to show the result for the diagonal part $A\in \GL(\R^d)$ given by $A=\diag(e^{\lambda^{(1)}(x, g)}, \cdots, e^{\lambda^{(d)}(x, g)})$ instead of $Dg(x)$, and subspaces varying in $\Gr(\R^d)$ instead of $T_{x}M$. 
Set $W= \{0\}^{b-1}\times \R^{d-b+1}$, and note that  the action $A  \acts W$ satisfies $\|A w\|\geq e^{\lambda_{b}(x,g)}\|w\|$ for every $w \in W$. Let $V\in \Gr(d, d-b)$. Note that for dimensional reasons, there exists $w\in W\cap (AV)^\perp\smallsetminus \{0\}$.  Then  setting $u=P_{V^\perp}A^{-1}w$, and noting $P_{(AV)^\perp} AV=0$, we find
$$\|P_{(AV)^\perp} Au\|= \| P_{(AV)^\perp} A A^{-1}w\|=\|w\|.$$
Normalizing so that $\|u\|=1$, we have $\|w\|\geq e^{\lambda_{b}(x,g)}$ and the desired inequality follows. Equality is achieved by taking $V=W$.
\end{proof}
	
		We now use Lemma \ref{lem lambdab} to relate the gap $\lambda^{(b)}(x, g) -  \lambda^{(b')}(x, g)$ between the $b$-th and $b'$-th smallest logarithmic singular values of $D g(x)$, to the expansion/contraction data that appear in the gap or pinching condition from \SectionSymbol \ref{Sec-Mainresults}.

	\begin{cor} \label{lem lambdab1lambdab0+1} 
		For all integers $b,b'\in \{1, \dots d\}$, all $g\in G$, $x\in M$, we have
		\aryst
		&& e^{ \lambda^{(b)}(x, g)  - \lambda^{(b')}(x, g)  } \\
		 &=&
		    \inf_{\substack{V \in \Gr(T_xM, \,d - b) \\ V' \in \Gr(T_xM, \,d - b'+1)} }  \left(  \sup_{u \in \S(  V^{\perp})}  \| P_{   Dg(x, V)^{\perp}}(  Dg(x, u) ) \|  /  \inf_{v' \in \S(V')}  \| Dg(x, v') \| \right).
		\earyst
	\end{cor}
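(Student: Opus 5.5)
The plan is to deduce the corollary directly from the two variational formulas of Lemma \ref{lem lambdab}, taking $b$ for the numerator and $b'$ for the denominator. The right-hand side is an infimum over a product $\Gr(T_xM, d-b)\times \Gr(T_xM, d-b'+1)$ of a quotient whose numerator depends only on $V$ and whose denominator depends only on $V'$. Set
$$N(V):=\sup_{u \in \S(V^{\perp})} \| P_{Dg(x, V)^{\perp}}( Dg(x, u) ) \|, \qquad D(V'):=\inf_{v' \in \S(V')} \| Dg(x, v') \|.$$
Both quantities are positive, since $Dg(x)$ is invertible, $V^\perp\neq 0$ and $V'\neq 0$. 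Because the variables are independent, the infimum over pairs factorizes:
$$\inf_{V,V'} \frac{N(V)}{D(V')} = \Big(\inf_{V} N(V)\Big)\cdot \Big(\sup_{V'} D(V')\Big)^{-1}.$$
This holds because taking the infimum of $N(V)/D(V')$ over $V$ with $V'$ fixed gives $\inf N / D(V')$, and then taking the infimum over $V'$ of $1/D(V')$ gives $1/\sup D$.

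The key steps are as follows.

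\begin{enumerate}
\item \emph{Numerator.} By the first equality of Lemma \ref{lem lambdab} applied with index $b$, we have $\inf_V N(V)=e^{\lambda^{(b)}(x,g)}$.
\item \emph{Denominator.} By the second equality of Lemma \ref{lem lambdab} applied with index $b'$, which uses Grassmannians of dimension $d-b'+1$ as in the statement, we have $\sup_{V'} D(V')=e^{\lambda^{(b')}(x,g)}$.
\item \emph{Conclusion.} Dividing gives $e^{\lambda^{(b)}-\lambda^{(b')}}$, as claimed.
\end{enumerate}

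The only point requiring care is justifying the factorization of the infimum over the product, in particular that no quantity is $0$ or $\infty$. Invertibility of $Dg(x)$ and compactness of the Grassmannians and unit spheres keep all quantities in $(0,\infty)$, so the elementary identity $\inf_{a,b} f(a)/h(b) = \inf f/\sup h$ for positive $f,h$ applies. There is no real obstacle; the corollary is a formal consequence of Lemma \ref{lem lambdab}.
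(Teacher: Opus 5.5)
Your proposal is correct and follows the paper's approach. The paper's proof is the one-line remark that the corollary is a direct application of Lemma \ref{lem lambdab}, which is exactly what you do: you combine its first formula at index $b$ with its second formula at index $b'$ and factor the infimum over the product of Grassmannians.
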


\begin{proof}  
It is a direct application of Lemma  \ref{lem lambdab}.
		\end{proof}



In order to make use of Corollary \ref{lem lambdab1lambdab0+1}, we now present how a gap/pinching condition can be converted into a  moment bound for quotient of derivatives.

 
	We are now in a position to conclude the paragraph, by showing that a $b$-gap condition on $\mu$ yields a linear separation between $\lambda_{b}(x,g)$ and  $\lambda_{b+1}(x,g)$ as $x\in M$ and $g\sim \mu^{*n}$, while  on the other hand, a $(b_{0},b_{1})$-pinching condition is reflected by the proximity of  $\lambda_{b_{0}+1}(x,g)$ and $\lambda_{b_{1}}(x,g)$.

\begin{lemma} \label{cor GapWithHighProba}  

			 The following is true.
			 
			 \enmt
			 \item
			 Let  $b\in  \{1, \cdots, d\}$, $\kappa >\kappa'> c > 0$, $n, n_{0}\in \N$, $x\in M$. If  $\mu$ has a $(n_{0}, \kappa, b)$-gap,   
			and  $c \lll_{\cK, n_{0}, \kappa, \kappa'} 1$, then
		   \aryst 
		   \mu^{*n} \Big (  g \in G \mid    \lambda^{(b)}(x, g)  + n \kappa'  <   \lambda^{(b + 1)}(x, g)    \Big  ) > 1 - 2e^{- c n }.
		   \earyst
		   \item
		  Let  $ b_{0}<b_{1}$ be integers in $\{0, \dots, d\}$, let $\eta'>\eta> c > 0$, $n, n_{0} \in \N$, $x\in M$.  
		   If $\mu$ is $(n_{0}, \eta, b_{0}, b_{1})$-pinched,  and $c \lll_{\cK, n_{0}, \eta, \eta'} 1$, then 		   		   \aryst 
		   \mu^{*n} \Big (  g \in G  \mid     
	   \lambda^{(b_{0} + 1)}(x, g) +   n \eta'   >  \lambda^{b_{1}}(x, g) \Big  ) > 1 - 2e^{-c n }.
		   \earyst
		   \eenmt
		\end{lemma}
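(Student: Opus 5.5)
The plan is to write the gap (resp. pinching) quantity as a subadditive functional of a Markov chain on the Grassmann bundle, and then use a Chernoff-type bound via the large deviation estimates for Markov chains of Appendix \ref{app-LDP}. For item (1), I fix $x$ and, for $g\in G$ and $V\in \Gr(T_xM,d-b)$, consider
\[
\phi(g,x,V):=\log \sup_{u \in \S(V^{\perp})} \| P_{Dg(x, V)^{\perp}}(  Dg(x, u) ) \| - \log \inf_{v \in \S(V)} \| Dg(x, v) \|.
\]
Corollary \ref{lem lambdab1lambdab0+1} with $b'=b+1$ gives $\lambda^{(b)}(x,g)-\lambda^{(b+1)}(x,g)\le \phi(g,x,V)$ for every $V$. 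It therefore suffices to produce a single $V$, chosen independently of $g$, with $\phi(g,x,V)<-n\kappa'$ with probability at least $1-2e^{-cn}$ under $g\sim\mu^{*n}$.

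The key structural fact is that $\phi$ is subadditive along the cocycle. Writing $g=g_2g_1$, the induced map on $V^\perp\to (DgV)^\perp$, namely $u\mapsto P_{(Dg V)^\perp}Dg u$, factors as the composition of the analogous maps for $g_1$ at $(x,V)$ and for $g_2$ at $(g_1x, Dg_1V)$. This holds because $P_{W^\perp}A=P_{W^\perp}AP_{(A^{-1}W)^{\perp}}$ on quotients: the induced map on the quotient $T_xM/V$ is multiplicative. The contraction term $\inf_{v \in \S(V)} \| Dg(x, v) \|$ is supermultiplicative. Hence
\[
\phi(g_2g_1,x,V)\le \phi(g_1,x,V)+\phi(g_2,g_1x,Dg_1(x)V).
\]
Now I block the walk into steps of length $n_0$. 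Write $n=kn_0+r$ and $g=h_{k}\cdots h_1 h_0$, with $h_0\sim\mu^{*r}$ and $h_i\sim\mu^{*n_0}$ i.i.d., and fix $V$ arbitrarily. Then $\phi(g,x,V)\le \sum_{i} X_i + O_{\cK,n_0}(1)$, where $X_i=\phi(h_i, x_i, V_i)$ is evaluated along the induced Markov chain $(x_i,V_i)$ on the Grassmann bundle. The gap hypothesis says precisely that $\EV[X_i\mid \mathcal F_{i-1}]<-n_0\kappa$ uniformly in the current state. Moreover $|X_i|\le C(\cK,n_0)$, since $\cK$ is bounded in $C^1$ and $D_1$ controls all derivatives involved. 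A martingale Azuma--Hoeffding bound, or equivalently the appendix large deviation estimate, applied to $\sum (X_i-\EV[X_i\mid\mathcal F_{i-1}])$ gives
\[
\sum X_i<-kn_0\kappa'' \quad\text{with probability }\ge 1-e^{-ck},
\]
for any $\kappa''\in(\kappa',\kappa)$. The bounded remainder from $h_0$ and the difference $\kappa''-\kappa'$ absorb $O(1)$ for large $n$. For small $n$, the factor $2$ together with $c\lll 1$ makes the statement trivial, since $1-2e^{-cn}<0$ once $cn<\log 2$; to cover the finitely many intermediate $n$ I choose $c$ small relative to the threshold.

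Item (2) is identical. Corollary \ref{lem lambdab1lambdab0+1} with $b=b_1$ and $b'=b_0+1$ bounds $\lambda^{(b_1)}-\lambda^{(b_0+1)}$ by the infimum over pairs $V_1\subset V_0$ of the pinching functional; the inclusion is allowed since the infimum is over all pairs. The pair $(V_0,V_1)$ is transported along a Markov chain on the flag bundle, where $V_1\subset V_0$ is preserved by $Dg$. The same cocycle subadditivity holds, and the hypothesis gives conditional drift $<n_0\eta$. Azuma then yields a sum $<kn_0\eta''$ with $\eta''\in(\eta,\eta')$ with exponentially small failure probability.

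The main obstacle is the subadditivity of $\phi$, in particular checking that the projected term behaves multiplicatively. I would verify this by identifying $\| P_{Dg(x, V)^{\perp}}Dg(x,u)\|$ over $u\in\S(V^\perp)$ with the operator norm of the quotient map $T_xM/V\to T_{gx}M/DgV$, using quotient norms; composition is then submultiplicative for free.
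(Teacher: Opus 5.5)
Your proposal is correct and follows essentially the same route as the paper: bound $\lambda^{(b)}-\lambda^{(b+1)}$ by the gap functional at a fixed $V$ via Corollary \ref{lem lambdab1lambdab0+1}, use its subadditivity $\psi(g_2g_1,W)\le\psi(g_2,g_1W)+\psi(g_1,W)$, and apply the Markov-chain large deviation estimate (Proposition \ref{LDP-MC}) to the $n_0$-step chain on the Grassmann bundle, with item (2) treated the same way on pairs $V_1\subset V_0$. The differences are cosmetic. You justify subadditivity via quotient maps, which the paper only asserts. You also absorb the remainder $n-kn_0$ through an initial block $h_0$, whereas the paper first proves the bound for $n=kn_0$ and then uses $|\lambda^{(j)}(x,hg)-\lambda^{(j)}(x,g)|\le\log\max(\|Dh(gx)\|,\|Dh^{-1}(gx)\|)$.
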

		
		\begin{proof}
	We establish (1). The proof of (2) is similar. 

	Without loss of generality, we may assume that $\kappa - \kappa' \in (0, 1/2)$.

	Let $V\in \Gr(T_{x}M, d-b)$. By Corollary \ref{lem lambdab1lambdab0+1}, we have for every $g\in G$, 
	$$\lambda^{(b)}(x, g)  - \lambda^{(b+1)}(x, g) \leq  \log\sup_{u \in \S(  V^{\perp})}  \| P_{   Dg(x, V)^{\perp}}(  Dg(x, u) ) \|  - \log  \inf_{v \in \S(V)}  \| Dg(x, v) \|.$$
	Set $\psi(g,V)$ the right-hand side of the above equation. Note that $\psi:G\times \Gr(TM, d-b)\rightarrow \R$  satisfies for any $g_{1},g_{2}\in G$, $W\in \Gr(TM, d-b)$, 
	$$\psi(g_{2}g_{1},W) \leq \psi(g_{2}, g_{1}W)+ \psi(g_{1},W).$$
Moreover $\psi$ is bounded on $\supp (\mu^{*n_{0}}) \times \Gr(TM, d-b)$, and the  	$(n_{0},\kappa, b)$-gap assumption means that $$\sup_{V\in \Gr(TM, d-b)}\EV_{g\sim \mu^{*n_{0}}}(\psi(g,V))<-n_{0}\kappa.$$
	We may then apply the general large deviation estimate formulated in Proposition \ref{LDP-MC} to  $E:= G \times \Gr(TM, d-b)$, $f:=\psi +n_{0}\kappa$ and the Markov chain on $E$ with the transition probabilities
	$P_{(h, W)} =  \mu^{*n_{0}} \otimes \delta_{hW}$.  We obtain for all $k\geq1$,
	   \aryst 
		   \mu^{*kn_{0}} \Big (  g \in G \mid    \lambda^{(b)}(x, g)  -   \lambda^{(b + 1)}(x, g) \geq - kn_{0} (\kappa+\kappa')/2    \Big  ) <  e^{- c n }.
		   \earyst
	where $c=c(M, \cK, n_{0}, \kappa, \kappa')>0$. Noting that for any $j\in \{1, \dots, d\}$, $g,h\in G$, $x\in M$, we have
	$$|\lambda^{(j)}(x, hg)- \lambda^{(j)}(x,g)| \leq \log (\max\|Dh(gx)\|, \|Dh^{-1}(gx)\|),$$
	Item  (1)  in the lemma  follows  for all $n$, up to reducing $c$.
		\end{proof}

	\begin{rema}\label{lem MomentBound} In the  proof of  Lemma \ref{cor GapWithHighProba} above, we only used the second part of Proposition \ref{LDP-MC}. By applying the first part, we obtain the following complementary information: $\exists \gamma=\gamma(M, \cK, n_{0}, \kappa) >0$, $\forall n\geq0$, $\forall x\in M$, $\forall V\in \Gr(T_{x}M, d-b)$,
		\aryst
			\int  \Big ( \sup_{u \in \S(V^{\perp})} \| P_{Dg(x, V)^{\perp}}(  Dg(x, u) ) \| /  \inf_{v \in \S(V)} \| Dg(x, v) \| \Big )^{\gamma} d\mu^{* n}(g) \,\leq \,2 e^{- n \gamma \kappa}.
			\earyst
This estimate will play a role in the proof of Proposition \ref{lem AllGapsImpliesNonConcentration}   below. 
	\end{rema}	
		
		 	
		\subsection{Non-concentration of the contracting filtration} \label{Sec-contrac-filt}

	Given $g\in G$, $x\in M$, we introduce a flag of $T_x M$ given by
			 \aryst
	\{0\}\subsetneq  W_{1}(x, g) \subsetneq \dots  \subsetneq W_{d}(x, g)= T_x M 	\earyst
	where
	\ary \label{eq Wixg}
	W_i(x, g) = R(x, g)^{-1}(\R^{i} \times \{0\}^{d-i}), \quad i \in \{1, \cdots, d\}.
	\eary
	Roughly speaking, $W_i(x, g)$ is a \lq\lq most contracted\rq\rq  \  $i$-dimensional subspace of $T_xM$ under $Dg(x)$. If $\lambda^{(i)}(x, g)<\lambda^{(i+1)}(x, g)$, then $W_i(x, g)$  is uniquely defined (indepedently of the choices for $R'(x, g)$, $R(x, g)$). 
	
In the next proposition, we assume that $\mu$ has a $b$-gap and examine the statistics of $W_{b}(x,g)\in T_{x}M$
as $g$ varies randomly according to $\mu^{*n}$. Note that by Lemma \ref{cor GapWithHighProba}, the random subspace $W_{b}(x,g)$ is uniquely defined for most $g\sim \mu^{*n}$. We show it satisfies a certain non-concentration property at scale $\rho\geq e^{-n}$.

		 \begin{prop} \label{lem AllGapsImpliesNonConcentration}    		 
		 Let $b \in \{1, \cdots, d-1\}$,  $\kappa, c, \rho>0$, $n, n_{0}\in \N$, $x \in M$ and  $V \in \Gr(T_x M, d-b)$. Assume that $\mu$ has a $(n_{0}, \kappa, b)$-gap, and the bounds  $c, \rho \lll_{\cK, n_{0}, \kappa} 1$ and $n \geq | \log \rho |$.
		 	   Then we have 
		 	\aryst
		 	\mu^{*n}(  g \in G \mid  \measuredangle( W_b(x, g), V) < \rho  ) < \rho^{c}.
		 	\earyst
		 \end{prop}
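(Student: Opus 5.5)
The plan is to convert the angle condition into a statement about how strongly $Dg(x)$ contracts $V$, and then apply Markov's inequality to the moment bound of Remark \ref{lem MomentBound}. Write $g\sim\mu^{*n}$ and use the Cartan decomposition $Dg(x)=R'\,\mathrm{diag}(e^{\lambda^{(i)}})\,R$.

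First I would restrict to the good event of Lemma \ref{cor GapWithHighProba}(1), taking $\kappa'=\kappa/2$. On this event $\lambda^{(b+1)}-\lambda^{(b)}\ge n\kappa/2$. Its complement has mass at most $2e^{-c'n}\le 2\rho^{c'}$, because $n\ge|\log\rho|$.

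On the good event, $W_b:=W_b(x,g)$ is well defined. Suppose $\measuredangle(W_b,V)<\rho$. Then there are unit vectors $w\in W_b$ and $v\in V$ with $\|w-v\|\lesssim\rho$.

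The key linear-algebra step uses the quantity
$$\Psi(g):=\sup_{u\in\S(V^\perp)}\|P_{Dg(x,V)^\perp}Dg(x,u)\|\Big/\inf_{v\in\S(V)}\|Dg(x,v)\|.$$
I claim that on the good event, $\measuredangle(W_b,V)<\rho$ forces $\Psi(g)\gtrsim \rho\, e^{\lambda^{(b+1)}-\lambda^{(b)}}$ or a comparable inequality.

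The denominator is small for the following reason. The vector $v$ is $\rho$-close to $w\in W_b$, so $\|Dg\,v\|\le e^{\lambda^{(b)}}+\rho e^{\lambda^{(d)}}$. This bound is too crude, because $\lambda^{(d)}$ may be much larger than $\lambda^{(b)}$.

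So I would argue more directly with Corollary \ref{lem lambdab1lambdab0+1}-type reasoning, via the dual formulation, which is cleaner. Since $\dim V+\dim W_b=d$, a small angle means $V\cap W_b^{\perp}$-type transversality fails. The subspace $V^\perp$ (of dimension $b$) then nearly contains a vector $u$ whose component along $W_b^\perp=R^{-1}(\{0\}^b\times\R^{d-b})$ has size $\gtrsim$ some power of the angle deficiency.

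Equivalently, I would use the fact that $\Psi$ is comparable, up to constants and powers of the angle $\theta=\measuredangle(W_b,V)$, to $e^{\lambda^{(b+1)}-\lambda^{(b)}}\cdot\theta$, whichever is larger. Concretely, I would aim for
$$\Psi(g)\ \ge\ c_d\,\min\big(1,\theta^{-1}e^{-(\lambda^{(b+1)}-\lambda^{(b)})}\big)^{-1}\cdot e^{\lambda^{(b)}-\lambda^{(b+1)}},$$
which gives $\Psi(g)\gtrsim \min(\rho^{-1}e^{\lambda^{(b)}-\lambda^{(b+1)}},1)$ when $\theta<\rho$. This is proved by computing in the diagonal model, where $V$ is a graph over a space close to $W_b^{\perp}$ failing transversality. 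I expect this deterministic comparison to be the main obstacle. It needs care because the angle function is only equivalent to the BH2 angle up to powers, so I would accept $\rho^{C_d}$ losses.

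With the comparison in hand, the good event intersected with $\{\theta<\rho\}$ is contained in $\{\Psi(g)\gtrsim \min(\rho^{-C}e^{-n\kappa/2},1)\}$. Here $\rho^{-C}e^{-n\kappa/2}$ may still be small if $n$ is large, which is fine. Markov's inequality with Remark \ref{lem MomentBound} bounds the probability of this event by $2e^{-n\gamma\kappa}\max(\rho^{C\gamma}e^{n\gamma\kappa/2},1)$. The first term is at most $2\rho^{C\gamma}e^{-n\gamma\kappa/2}\le 2\rho^{C\gamma}$, and the second is at most $2e^{-n\gamma\kappa}\le2\rho^{\gamma\kappa}$.

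Adding the exceptional mass $2\rho^{c'}$ and shrinking $c$ (using $\rho\lll1$ to absorb the constants) yields the bound $\rho^{c}$.
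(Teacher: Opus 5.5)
Your proposal has a genuine gap. It sits at the step where you pass from the comparison to a fixed Markov threshold, and it cannot be repaired by a one-shot argument at time $n$.

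Write $\Delta=\lambda^{(b+1)}(x,g)-\lambda^{(b)}(x,g)$. Grant the comparison in the form you want: $\Psi(g)\gtrsim\min(1,\rho^{-C}e^{-\Delta})$ on $\{\theta<\rho\}$. This lower bound is \emph{decreasing} in $\Delta$, while the good event only gives $\Delta\ge n\kappa/2$. So you cannot conclude $\Psi(g)\gtrsim\min(1,\rho^{-C}e^{-n\kappa/2})$; that would need $\Delta\le n\kappa/2$.

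The only a priori upper bound is $\Delta\le 2n\log D_1$. With it, Markov plus Remark \ref{lem MomentBound} gives $2\rho^{C\gamma}e^{n\gamma(2\log D_1-\kappa)}$, which is useless for $n\ge|\log\rho|$.

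The problem is intrinsic, as a two-dimensional example shows. Take $d=2$, $Dg(x)=\mathrm{diag}(e^{\lambda_1},e^{\lambda_2})$, and $V$ a line at angle $\theta$ from $W_1$. The area identity $\|Dg\,v\|\,\|P_{(DgV)^\perp}Dg\,u\|=|\det Dg|$ gives
$\Psi=e^{\lambda_1+\lambda_2}/(e^{2\lambda_1}\cos^2\theta+e^{2\lambda_2}\sin^2\theta)\asymp\min(e^{\Delta},\theta^{-2}e^{-\Delta})$.
\begin{itemize}
\item The event $\{\theta<\rho\}$ only raises $\Psi$ by a factor $\rho^{-2}$ above its typical size $e^{-\Delta}$.
\item $\Delta$ is only bounded below by the gap and may typically be of size $n\sigma$ with $\sigma>\kappa$.
\item Markov at threshold $\rho^{-2}e^{-n\sigma}$ then gives $\rho^{2\gamma}e^{n\gamma(\sigma-\kappa)}$, which is not small once $n\gg|\log\rho|$.
\item What you would really need is a moment bound on $\Psi e^{\Delta}\asymp\min(e^{2\Delta},\theta^{-2})$, which is the non-concentration statement itself.
\end{itemize}
Separately, your displayed inequality simplifies to $\Psi\ge c_d\max(e^{-\Delta},\theta)$. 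This does not imply your next line, and it is false in the example above for $\theta=\pi/4$ and $\Delta$ large.

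The missing idea is to localize in time: the relative position of $W_b(x,g)$ and $V$ at scale $\rho$ is decided early, so the moment bound must be used blockwise. The paper proceeds as follows.
\begin{itemize}
\item It writes $g$ as a product of blocks $h_i$ of increasing lengths $l_i$ and pushes $V$ forward to $V_i=D(h_i\cdots h_1)(x)V$.
\item It puts $Dh_i$ in block-triangular form $\left[\begin{smallmatrix}A_i&C_i\\0&B_i\end{smallmatrix}\right]$ with respect to $V_{i-1}\oplus V_{i-1}^\perp\to V_i\oplus V_i^\perp$.
\item It applies Remark \ref{lem MomentBound} conditionally on the past in each block, plus a union bound. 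This gives an event of probability $1-O(e^{-\frac34 l_1\gamma\kappa})$ on which $\|A_i^{-1}\|\|B_i\|\le e^{-l_i\kappa/4}$ for all $i$.
\item On that event, Lemma \ref{int-lem} shows the following for any $v+u$ with $v\in\S(V)$, $u\perp V$, $\|u\|\le\rho$. The $V_i^\perp$-component shrinks by $2e^{-l_i\kappa/4}$ relative to the $V_i$-component in each block, and the latter is stretched by at least $\frac12\|A_i^{-1}\|^{-1}$.
\item The hypothesis $\|u_{i-1}\|\le\frac12 D_1^{-2l_i}$ is what neutralizes the crude block distortion $D_1^{l_i}$ that defeats your one-shot comparison.
\item Using $\prod\|B_i\|\ge e^{\lambda^{(b)}}$ from Lemma \ref{lem lambdab}, one gets $\|Dg(x)(v+u)\|\gtrsim 2^{-j}e^{\sum l_i\kappa/4}e^{\lambda^{(b)}(x,g)}>2e^{\lambda^{(b)}(x,g)}$. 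Hence no such vector lies in $W_b(x,g)$.
\end{itemize}
To make the failure probability a power of $\rho$, take $l_1$ to be a small multiple of $|\log\rho|$. It must be small enough that $\rho\le\frac12 D_1^{-2l_1}$, yet comparable to $|\log\rho|$ so that $e^{-\frac34 l_1\gamma\kappa}\le\rho^{c}$.
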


		 \begin{proof} 
		 
		 It is sufficient to show that with probability $1-\rho^{c}$, every vector $  v'   \in T_{x}M$ of the form 
		 $v'=v+u$ where $v\in \S(V)$,  $u\in V^\perp$, $\|u\|\leq \rho$, satisfies
		 $$ \|D g(x,v')\| > 2 e^{\lambda_{b}(x,g)}.$$
		 We consider a sequence of integers $0=k_{0}\leq k_{1}\leq \dots\leq k_{i}\leq \dots$ which will be specified later depending on $M, \cK,\kappa$.

		 Given a sequence $\ug=(g_{i})_{i\geq1}\in G^{\N^*}$, we define $(h_{i})_{i\geq 1}$ by $h_{i}=g_{k_{i}}\dots g_{k_{i-1}+1}$, and   set $x_{i}=h_{i}\dots h_{1}x$,  $V_{i}=D(h_{i}\dots h_{1})(x)V$. 
Under the splittings $T_{x_{i-1}} M = V_{i-1} \oplus V_{i-1}^{\perp}$ and $ T_{x_{i}} M = V_{i} \oplus V_{i}^{\perp}$, we may write 
		 	\aryst
		 	D h_{i}(x_{i-1})= \begin{bmatrix}
		 		A_i & C_i \\ 0 & B_i
		 	\end{bmatrix}
		 	\earyst
where $A_i \in \cL(V_{i-1},V_{i})$, $B_i \in \cL(V^\perp_{i-1},V^\perp_{i})$, $C_i \in \cL(V^\perp_{i-1},V_{i})$.
Consider $l_{i}:=k_{i}-k_{i-1}$.  By applying  Remark \ref{lem MomentBound} (conditioned on $g_{1},\dots,g_{k_{i-1}}$),  we have  for some $\gamma>0$ depending on $M, \cK, n_{0}, \kappa$,
\aryst
\EV_{\ug\sim \mu^{\otimes \N^*}}[(\| A_i^{-1} \| \| B_i \|)^{\gamma}] \leq   2 e^{ - l_{i} \gamma \kappa }.
\earyst 
The Markov inequality then implies 
$$ 
\mu^{\otimes \N^*}(\ug\,:\,  \| A_i^{-1} \| \| B_i \| > e^{ - l_{i} \kappa / 4}) \leq 2e^{ - \frac{3}{4}l_{i}\gamma \kappa. }
$$
Let $s\in (0, 1)$ be a parameter to be specified below depending on $M, \cK,\kappa$.  
Choosing $(k_{i})_{i\geq 1}$ so that $l_{i}\geq l_{1}+  s\, i $, the upper bound is summable over $i$ with total sum $O_{\gamma,\kappa, s}(e^{- \frac{3}{4} l_{1}\gamma \kappa })$. By  subadditivity of measures, it follows that 
\ary\label{Esize}
\mu^{\otimes \N^*}(E)\geq 1- O_{\gamma, \kappa, s}(e^{- \frac{3}{4}l_{1}\gamma \kappa }) 
\eary
for 
$$E:=\{\ug\,:\, \forall i\geq 1,\,\, \| A_i^{-1} \| \| B_i \| \leq e^{ - l_{i} \kappa / 4}\}.$$

Let $\ug\in E$. We now check that any  vector $u$ as described above has the desired growth under $g_{n}\dots g_{1}$. To do so, we will rely on an iterated application of the next lemma.
		 
\begin{lemma}\label{int-lem}
Let $i\geq 1$. Let $v'_{i-1}\in T_{x_{i-1}}M$ of the form $v'_{i-1}=v_{i-1}+u_{i-1}$ where $(v_{i-1},u_{i-1})\in V_{i-1}\times V^\perp_{i-1}$ and  $\|v_{i-1}\|=1$, $\|u_{i-1}\| \leq 2^{-1}D_{1}^{-2l_{i}}$.

Then $D(h_{i})(x_{i-1},v'_{i-1})= R_{i} (v'_{i}+u_{i})$ where $R_{i}\geq 2^{-1}\|A_{i}^{-1}\|^{-1}$ is a constant and $(v_{i},u_{i})\in V_{i}\times  V^\perp_{i}$ satisfy  $\|v_{i}\|=1$, $\|u_{i}\| \leq 2 e^{-l_{i} \kappa/4} \|u_{i-1}\| $.
\end{lemma}

\begin{proof}[Proof of Lemma \ref{int-lem}]  
We can write $h_{i} v'_{i-1}= (A_{i} v_{i-1}+C_{i}u_{i-1}) + B_{i} u_{i-1}$ where the first term (in brackets) belongs to $V_{i}$ and the second term belongs to $V^\perp_{i}$. We have 
\ary\label{growthV}
\|A_{i} v_{i-1}+C_{i}u_{i-1}\| \geq \|A_{i}^{-1}\|^{-1} -\|C_{i}\| \|u_{i-1}\|\geq 2^{-1}\|A_{i}^{-1}\|^{-1}
\eary 
where the first inequality uses $\|v_{i-1}\|=1$ and the second inequality uses $\|A^{-1}_{i}\|, \|C_{i}\|\leq D_{1}^{l_{i}}$ and $\|u_{i-1}\|\leq 2^{-1}D_{1}^{-2l_{i}}.$
On the other hand, recalling $\ug\in E$, we have
\ary\label{growthW}
\frac{\|B_{i} u_{i-1}\|}{2^{-1}\|A_{i}^{-1}\|^{-1}}\leq 2 \|A_{i}^{-1}\| \|B_{i} \| \|u_{i-1}\| \leq 2e^{-l_{i}\kappa/4} \|u_{i-1}\|.
\eary 
Equations \eqref{growthV}, \eqref{growthW} together justify Lemma  \ref{int-lem}.	
\end{proof}

We may now conclude the proof of Proposition \ref{lem AllGapsImpliesNonConcentration}. Assume $\rho\lll_{\cK, l_{1}}1$ so that $\rho< 2^{-1}D_{1}^{-2l_{1}}$. 
Choose $(k_{i})_{i\geq 0}$ depending on $\cK, \kappa, s$ so that $l_{1}$ satisfies $2e^{-l_{1}\kappa/4} < e^{-l_{1}\kappa/8}$, and $l_{i}=l_{1}+ \lceil s \,i\rceil$. Note\footnote{Even though $l_{i}$ depends $s$, this claim can be checked  by direct computation.} that by assuming $s\lll_{\cK, \kappa}1$, we have  $e^{-s^{-1} l_{i}\kappa/8}< 2^{-1}D_{1}^{-2(l_{i}+1)}$. The  condition on $\rho$  allows to apply Lemma \ref{int-lem} to $v'_{0}:=v'$, thus yielding a vector $v'_{1}$, and the choice of $l_{i}$ and $s$ allow to iterate, thus yielding $v'_{2}, v'_{3}$ etc. 
By   construction, we know that 
$$\|D(h_{j}\dots h_{1})(x,v')\| = \|v'_{j}\| \geq 2^{-j} \prod_{i=1}^j \|A_{i}^{-1}\|^{-1} \geq 2^{-j} e^{(\sum_{i=1}^j l_{i})\kappa/4 + \lambda_{b}(x, h_{j}\dots h_{1})}$$
where the last inequality uses 
$$\| A_i^{-1} \| \| B_i \| \leq e^{ - l_{i} \kappa / 4} \quad\,\,\text{ and } 	\,\,\quad	 \prod_{i = 1}^{j}  \| B_i \| \geq e^{\lambda^{(b)}(x, h_{j}\dots h_{1} )}$$
(from Lemma \ref{lem lambdab}). Coosing $j_{0}=\min \{j \,:\ l_{1}+\dots+l_{j}\geq n\}$, we get  
$$\|D(g_{n}\dots g_{1})(x,v')\|  \geq 2^{-j_{0}} D_{1}^{-2l_{j_{0}}}e^{(\sum_{i=1}^{j_{0}} l_{i})\kappa/4 + \lambda_{b}(x, g_{n}\dots h_{1})}.$$ 
	Choosing $\rho\lll_{\cK, \kappa, (k_{i})}1$ and $n\geq |\log \rho|$, we have $(\sum_{i=1}^{j_{0}} l_{i})\kappa/8 \geq 2l_{j_{0}}\log D_{1}+j_{0}\log 2$, thus yielding the desired growth:
	$$\|D(g_{n}\dots g_{1})(x,v')\|  \geq e^{(\sum_{i=0}^{j_{0}} l_{i})\kappa/8 + \lambda_{b}(x, g_{n}\dots h_{1})}.$$ 
Note that all the conditions imposed along the proof allow to choose $(k_{i})$ and $s$ in terms of $M,\cK,\kappa$ only, and $\rho\lll_{\cK, \kappa}1$. 
Recalling \eqref{Esize}, this concludes the proof.
		 \end{proof}

		 Proposition \ref{lem AllGapsImpliesNonConcentration} will play a crucial role in the rest of the paper.
As a first application, we use it to show that gaps at two consecutive entries implies pinching.
\begin{lemma} \label{rem allgapstopinch}	Let $  b_0 \in \{0,\dots,  d-1\}$, $\kappa > 0$. 
	Assume that $\mu$ has  a $(\kappa, \{b_0, b_{0}+1,  b_{0}+2\})$-gap. 
	Then  	for every $\eps > 0$,  the measure $\mu$ is $(\eps, \{ b_0, b_0 + 1 \})$-pinched.
\end{lemma}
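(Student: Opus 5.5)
The plan is to verify the defining inequality of $(n_0,\eps,b_0,b_0+1)$-pinching directly, for a single large time $n_0=n$. Write $b_1=b_0+1$, and fix $x\in M$ and subspaces $V_1\subset V_0$ with $\dim V_0=d-b_0$ and $\dim V_1=d-b_1$. Let $\Phi(g)$ be the integrand of Definition \ref{def pinch}. By Corollary \ref{lem lambdab1lambdab0+1}, $\Phi(g)\ge \lambda^{(b_1)}(x,g)-\lambda^{(b_1)}(x,g)=0$, so heuristically $\Phi$ is close to $0$. The point is to show that, for most $g\sim\mu^{*n}$, $\Phi(g)$ is only $O(\delta n)$ above this value. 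I also note the crude bound $|\Phi(g)|\le 2n\log \maxder$ on $\supp\mu^{*n}$.

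\emph{Step 1 (good event).} Put $\rho=e^{-\delta n}$ with $\delta$ small. If $b_0\ge1$, apply Proposition \ref{lem AllGapsImpliesNonConcentration} with $b=b_0$ and the subspace $V_0$, which has dimension $d-b_0$. If $b_1\le d-1$, apply it with $b=b_1$ and $V_1$, which has dimension $d-b_1$. In addition, apply Lemma \ref{cor GapWithHighProba}(1) at $b_0$ and $b_1$, so that the relevant singular values are separated and the spaces $W_{b_0}(x,g)$, $W_{b_1}(x,g)$ are well defined. Together these produce an event of $\mu^{*n}$-measure at least $1-2\rho^{c}-4e^{-cn}$ on which
\[
\measuredangle(W_{b_0}(x,g),V_0)\ge\rho,\qquad \measuredangle(W_{b_1}(x,g),V_1)\ge\rho,
\]
and the singular gaps at $b_0$ and $b_1$ are linear in $n$. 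In the edge cases the corresponding term is exact and needs no estimate: for $b_0=0$, $\inf_{\S(T_xM)}\|Dg\|=e^{\lambda^{(1)}}$, and for $b_1=d$, the sup term is $\|Dg(x)\|=e^{\lambda^{(d)}}$. I only expect to need the gaps at $b_0$ and $b_0+1$; the gap at $b_0+2$ in the hypothesis would be kept in reserve in case the sup estimate below requires a gap right above $W_{b_1}$.

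\emph{Step 2 (linear algebra on the good event).} I will show
\[
\inf_{v\in\S(V_0)}\|Dg(x,v)\|\ge \rho^{C}e^{\lambda^{(b_1)}(x,g)}, \qquad \sup_{u\in\S(V_1^\perp)}\|P_{Dg(x,V_1)^\perp}Dg(x,u)\|\le\rho^{-C}e^{\lambda^{(b_1)}(x,g)}.
\]
For the first bound, a unit $v\in V_0$ has a component of size $\gtrsim\rho$ in $W_{b_0}^\perp$, and $Dg(x)$ expands that component by at least $e^{\lambda^{(b_0+1)}}=e^{\lambda^{(b_1)}}$. The singular-value gap at $b_0$ guarantees that the image of the $W_{b_0}$-component cannot cancel it. For the second bound, I pass to the induced quotient map $T_xM/V_1\to T_{gx}M/gV_1$, whose operator norm is exactly the sup. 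I will compare determinants: transversality of $V_1$ to $W_{b_1}$ gives $|\det Dg|_{V_1}|\ge\rho^{C}e^{\lambda^{(b_1+1)}+\dots+\lambda^{(d)}}$, hence the quotient map has determinant at most $\rho^{-C}e^{\lambda^{(1)}+\dots+\lambda^{(b_1)}}$. By interlacing, its singular values are at least $e^{\lambda^{(1)}},\dots,e^{\lambda^{(b_0)}}$ in the appropriate order, so its top singular value is at most $\rho^{-C}e^{\lambda^{(b_1)}}$. An alternative route would be a direct argument in the style of Lemma \ref{lem lambdab}, using the block form from the proof of Proposition \ref{lem AllGapsImpliesNonConcentration}.

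\emph{Step 3 (conclusion).} On the good event, $\Phi\le 2C\delta n$. Off it, $\Phi\le 2n\log\maxder$, and this event has measure at most $2e^{-c\delta n}+4e^{-cn}$. Hence
\[
\int\Phi\, d\mu^{*n}\le 2C\delta n+2n\log\maxder\,(2e^{-c\delta n}+4e^{-cn}).
\]
Choosing first $\delta$ with $2C\delta<\eps/2$ and then $n$ large, the integral is below $n\eps$ uniformly in $x,V_0,V_1$, which is the claimed $(n,\eps,b_0,b_0+1)$-pinching. I expect Step 2 to be the main obstacle, in particular the sup bound. There one must control the quotient map uniformly while only knowing transversality of $V_1$ itself, not of $Dg(x,V_1)$. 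The determinant and interlacing argument is what I would try first, and at that point I would check whether the extra gap at $b_0+2$ is needed.
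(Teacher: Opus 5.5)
Your proposal is correct and is the paper's argument. The paper also sets $\rho=e^{-n\eps/10}$, discards the events $\measuredangle(W_{b_0}(x,g),V_0)<\rho$ and $\measuredangle(W_{b_0+1}(x,g),V_1)<\rho$ via Proposition \ref{lem AllGapsImpliesNonConcentration}, and then asserts your two Step~2 bounds as ``direct to verify''. Your determinant-and-interlacing argument supplies that omitted detail; it needs only transversality of $V_1$ to $W_{b_0+1}(x,g)$, and the gap at $b_0+2$ is used neither by you nor by the paper.

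Two minor corrections. In the infimum bound no gap is needed to rule out cancellation, because $Dg(x)$ maps $W_{b_0}(x,g)$ and $W_{b_0}(x,g)^\perp$ onto orthogonal subspaces. Likewise, the appeal to Lemma \ref{cor GapWithHighProba} in Step~1 is unnecessary, since $W_i(x,g)$ is defined from a fixed choice of Cartan decomposition.
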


Note  however that in the above statement, the time $n$ needed for $\mu$ to be $(n, \eps, b_{0},b_{0+1})$-pinched a priori goes to infinity when $\eps$ tends to $0$.

		 \begin{proof}[Proof of Lemma \ref{rem allgapstopinch}] 
		 	We deal with the case  $b_{0}\in \{1, \dots, d-2\}$, the proof can easily be adapted to the limit case $b_{0}\in \{0, d-1\}$. Without loss of generality, we may assume  $\eps < \min(\kappa, 1) $.
		 Let $n \geq 1$, set  $\rho = e^{- n \eps/10}$.
		 	
		 	We fix some  $x \in M$,  $V_0 \in \Gr(T_xM, d - b_0)$ and  $V_1 \in \Gr(T_xM,  d - b_0 - 1)$ with $V_1 \subset V_0$.
		 	We set 
		 	\aryst
		 	&& A_0 = \{ g \in G \mid  \measuredangle(  W_{b_0}(x, g), V_0) < \rho \}, \\
		 	&& A_1 = \{ g \in G \mid  \measuredangle(  W_{b_0+1}(x, g), V_1) < \rho \}.
		 	\earyst
		 	By Proposition \ref{lem AllGapsImpliesNonConcentration},  provided that $n  \ggg_{\cK, n_{0}, \kappa} 1$, we have
		 	\aryst
		 		\mu^{*n}( A_0 ) < \rho^{c}, \
		 		\mu^{*n}( A_1 ) < \rho^{c}.
		 	\earyst
			where $c=c( M,  \cK, n_{0}, \kappa)>0$. 
		 	For every $g \in \supp(\mu^{* n}) \setminus (A_0 \cup A_1)$, it is direct to verify that 
		 	\aryst 
		 	&&	    \log \sup_{u \in \S( V_1^{\perp})}  \| P_{   Dg(x, V_1)^{\perp}}(  Dg(x, u) ) \|   \leq ( \lambda^{(b_0 + 1)}  + \eps  / 3 ) n,   \\
		 	&&	 	  \log \inf_{v \in \S(V_0)}  \| Dg(x, v) \|       \geq  ( \lambda^{(b_0 + 1)}  - \eps  / 3 )n.
		 	\earyst
		 	This implies that $\mu$ is  $(n, \eps, b_0, b_0 + 1)$-pinched.
		 \end{proof}

\subsection{Almost volume-preserving estimate} 
We continue to denote by $\mu$ a compactly supported Borel probability measure on $G=\Diff^1(M)$, as in Setting \ref{Setting-Dimpos}. 

\begin{defi} 
Let $n_{0}\in \N$, $\eta>0$. We say $\mu$ is $(n_{0},\eta)$-almost volume-preserving if for every $x\in M$, 
$$\int_{G}\log \det Dg(x) d\mu^{*n_{0}}(g) \in (-n_{0}\eta, n_{0}\eta). $$
\end{defi}

The next lemma is a large deviation estimate for the determinant of an element $g$ selected by an almost volume-preserving measure.
\begin{lemma} \label{lem detcloseto1} 
Assume $\mu$ is $(n_{0}, \eta)$-almost volume-preserving. Let $\eta'>\eta$. Then for $c\lll_{\cK, n_{0}, \eta, \eta'}1$, every $n\geq1$, $x\in M$,  we have 
	\aryst
				\mu^{*n} \Big ( g \in G \mid  \det Dg(x) \in ( e^{ -  n \eta'}, e^{ n  \eta'}  ) \Big ) > 1 - 2 e^{- nc}.
				\earyst

\end{lemma}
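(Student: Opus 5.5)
The natural route is to run the proof of Lemma \ref{cor GapWithHighProba} again, with the function $\log\det$ in place of the gap function $\psi$. Define $\phi: G\times M\to \R$ by $\phi(g,y):=\log \det Dg(y)$, where the determinant is taken with respect to the Riemannian volume. The chain rule gives the exact cocycle identity
$$\phi(g_2g_1,y)=\phi(g_2,g_1y)+\phi(g_1,y).$$
In particular $\phi$ is subadditive, and so is $-\phi$. Moreover $|\phi|\le d\log \maxder^{n_0}$ on $\supp(\mu^{*n_0})\times M$, so $\phi$ is bounded there in terms of $\cK$ and $n_0$. The $(n_0,\eta)$-almost volume-preserving hypothesis reads
$$\sup_{y\in M}\EV_{g\sim\mu^{*n_0}}\phi(g,y)<n_0\eta, \qquad \sup_{y\in M}\EV_{g\sim\mu^{*n_0}}(-\phi(g,y))<n_0\eta.$$

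The plan is to treat the upper and lower tails separately. For each sign $\pm$, apply the second part of Proposition \ref{LDP-MC} with state space $E:=G\times M$, the function $f:=\pm\phi-n_0\eta$, and the Markov chain with transition probabilities $P_{(h,y)}=\mu^{*n_0}\otimes\delta_{hy}$. This is the same chain as in the gap lemma, with the point $y\in M$ playing the role of the subspace $W$. The output is, for all $k\ge1$,
$$\mu^{*kn_0}\bigl(g : \pm\log\det Dg(x)\ge kn_0(\eta+\eta')/2\bigr)<e^{-ckn_0},$$
with $c=c(M,\cK,n_0,\eta,\eta')>0$. A union bound over the two signs then gives the statement along multiples of $n_0$, with failure probability at most $2e^{-ckn_0}$.

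It remains to pass to general $n$. Write $n=kn_0+r$ with $0\le r<n_0$. The cocycle identity shows that composing with the last $r$ factors changes $\log\det$ by at most $r d\log\maxder\le n_0 d\log \maxder=O_{\cK,n_0}(1)$. This additive error is absorbed into the margin $n(\eta'-\eta)/2$ once $n$ is large. Small values of $n$ are handled by shrinking $c$: for $n$ below a threshold depending on $\cK,n_0,\eta,\eta'$, the bound $1-2e^{-nc}$ can be made negative, so the claim becomes vacuous there.

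There is no real obstacle, since the exact cocycle structure makes this easier than the gap lemma. The only point to check is that Proposition \ref{LDP-MC} accepts exactly this kind of input: a bounded function on the chain that is subadditive along trajectories and has uniformly negative one-step drift. The proof of Lemma \ref{cor GapWithHighProba} already uses the proposition in this form.
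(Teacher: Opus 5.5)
Your proposal is correct and matches the paper's proof essentially step for step. The paper uses the same cocycle $\log\det Dg(x)$ on $E=G\times M$ and applies Proposition \ref{LDP-MC} twice, to $\pm\log\det - n_0\eta$, with the same chain $P_{(h,y)}=\mu^{*n_0}\otimes\delta_{hy}$; it then passes from multiples of $n_0$ to all $n$ by bounding the contribution of the leftover factors by $d\log\max(\|Dh\|,\|Dh^{-1}\|)$ and shrinking $c$. (One cosmetic remark: the hypothesis only gives the pointwise strict bound $|\EV_{g\sim\mu^{*n_0}}\phi(g,y)|<n_0\eta$ rather than a strict bound on the supremum, but Proposition \ref{LDP-MC} only requires $\EV_x f(X_1)\le 0$, so this changes nothing.)
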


	\begin{proof}
	Set $E=G\times M$, consider $\psi : E\rightarrow \R$ the function given by $\psi(g,x)=\log \det Dg(x)$. Note that $\psi$ is a cocycle: $\forall g_{1},g_{2}\in G$, $x\in M$, 
	$$ \psi(g_{2}g_{1},x)=\psi(g_{2},g_{1}x)+ \psi(g_{1},x).$$
	Note also that $\psi$ is bounded on $\supp(\mu^{* n_0}) \times M$ by $O_{\cK, n_0}(1)$, and by assumption,  we have for every $x\in M$,
	$$\int_{G} \psi(g,x) \dd \mu^{*n_{0}}(g) \in (-n_{0}\eta, n_{0}\eta).$$ 
	We are thus in a position to apply the large deviation estimate formulated in Proposition \ref{LDP-MC}, for the functions $f_{1}, f_{2}:E\rightarrow \R$ given by $f_{1}(g,x)=\psi(g,x)-n_{0}\eta$, $f_{2}(g,x)=-\psi(g,x)-n_{0}\eta$, and the Markov chain on $E$ with transition probabilities $P_{(h, W)} = \mu^{*n_{0}} \otimes \delta_{hW}$. We obtain for all $k\geq1$, 
		\aryst
				\mu^{*kn_{0}} \Big ( g \in G \mid  \det Dg(x) \notin ( e^{ -  kn_{0} (\eta+\eta')/2}, e^{ n  (\eta+\eta')/2}  ) \Big ) <  e^{- k n_{0}c}.
				\earyst
	where $c=c(M, \cK, n_{0}, \eta, \eta')>0$. Using that 	for any $g,h\in G$, $x\in M$, 
	$$ |\log \det D(hg)(x) -\log \det Dg(x)| \leq d\log \max(\|Dh(gx)\|, \|Dh^{-1}(gx)\|),$$ we obtain the announced result (for all $n$) up to taking $c$ smaller. 		
	\end{proof}

		 \subsection{Expansion and  coexpansion}
	Keep the notations of Setting  \ref{Setting-Dimpos}.	 
  We study the action of the $\mu$-random walk on the tangent and cotangent bundles of $M$. 

		 \begin{defi} \label{defi coexpanding}  Let $n_{0}\in \N$, $\kappa\in \R$.
We say that  $\mu$ is {\em $(n_{0},\kappa)$-expanding} if 
		 	\aryst
		 	\inf_{x\in M, v\in \mathbb{S}(T_{x}M) }\,  \int_{G} \log \| Dg(x,v)  \| \dd\mu^{* n_{0}}(g) > n_{0}\kappa.
		 	\earyst		
We say that  $\mu$ is {\em $(n_{0},\kappa)$-coexpanding} if 		
		 	\aryst
		 	\inf_{x\in M, \xi \in \mathbb{S}(T^*_{x}M) } \,  \int_{G} \log \|  \xi \circ (Dg(x))^{-1}  \| \dd\mu^{* n_{0}}(g) > n_{0} \kappa.
		 	\earyst
		 \end{defi}
		 Note that by definition $\xi \circ (Dg(x))^{-1} \in T^*_{gx}M$ is a linear form on $T_{gx}M$.
		 Note also we may allow $\kappa$ in Definition \ref{defi coexpanding} to be negative.

\bigskip

		 We  see that under the conditions of Theorem \ref{thm mainDissipative}, $\mu$ is always expanding.
		 
\begin{lemma}\label{lem finetocoexpanding2} 
Let $b\in \{1, \dots, d-1\}$, $\kappa,\kappa', \eta>0$, $n_{0}, n_{1}\in \N$. Assume that the measure  $\mu$ has a $(n_{0},\kappa, b)$-gap, is $(n_{0},\eta, b,d)$-pinched, and is $(n_{0}, \eta)$-almost volume-preserving. 
If $\kappa'<\kappa$ and $\eta\leq c_{0}(d) (\kappa-\kappa')$ and $n_{1}\ggg_{\cK, n_{0}, \kappa, \kappa', \eta} 1$, then $\mu$ is $(n_{1}, \kappa'/d)$-expanding. 

\end{lemma}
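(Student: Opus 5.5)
Let $\lambda^{(1)}(x,g)\le\dots\le\lambda^{(d)}(x,g)$ be the logarithmic singular values of $Dg(x)$. The plan is to show that, for $g\sim\mu^{*n_1}$ and any unit vector $v\in T_xM$, the quantity $\log\|Dg(x,v)\|$ is at least roughly $\lambda^{(b+1)}(x,g)$ with high probability. We then show that $\lambda^{(b+1)}$ is itself bounded below by about $n_1\kappa/d$ once gap, pinching and almost volume preservation are combined. Finally we integrate, controlling the exceptional sets by the uniform bound $|\log\|Dg(x,v)\||\le n_1\log \maxder$ on $\supp(\mu^{*n_1})$.

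\textbf{Step 1: lower bound on $\lambda^{(b+1)}$.} We work on the event $\mathcal G$ where three estimates hold simultaneously. By Lemma \ref{cor GapWithHighProba}(1), $\lambda^{(b+1)}\ge\lambda^{(b)}+n_1\kappa''$, with $\kappa''$ slightly less than $\kappa$. By Lemma \ref{cor GapWithHighProba}(2), $\lambda^{(d)}\le\lambda^{(b+1)}+n_1\eta'$. By Lemma \ref{lem detcloseto1}, $\sum_j\lambda^{(j)}=\log\det Dg(x)\ge -n_1\eta'$. Each of these fails with probability at most $2e^{-cn_1}$.

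On $\mathcal G$, each $\lambda^{(j)}$ with $j\le b$ satisfies $\lambda^{(j)}\le\lambda^{(b)}\le\lambda^{(b+1)}-n_1\kappa''$, and each $\lambda^{(j)}$ with $j>b$ satisfies $\lambda^{(j)}\le\lambda^{(b+1)}+n_1\eta'$. Hence
$$-n_1\eta'\le d\lambda^{(b+1)}-b\,n_1\kappa''+(d-b)n_1\eta',$$
so that
$$\lambda^{(b+1)}\ge n_1\bigl(b\kappa''-(d-b+1)\eta'\bigr)/d\ge n_1(\kappa''-d\eta')/d.$$
The last inequality uses $b\ge1$ and assumes $\kappa''-d\eta'\ge0$; in the opposite case we would only get $b\kappa''-(d-b+1)\eta'\ge\kappa''-d\eta'$ when that quantity is nonnegative, so this needs a line of care. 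Choosing $\eta'$ close to $\eta$ and $\eta\le c_0(d)(\kappa-\kappa')$, this is at least $n_1\kappa'''/d$ for some $\kappa'''\in(\kappa',\kappa)$.

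\textbf{Step 2: a fixed vector sees $\lambda^{(b+1)}$.} Fix $x$ and $v\in\S(T_xM)$. The danger is that $v$ lies close to the most contracted subspace $W_b(x,g)$, where $\|Dg(x,v)\|$ could be as small as $e^{\lambda^{(1)}}$. Pick any $V\in\Gr(T_xM,d-b)$ containing $v$. By Proposition \ref{lem AllGapsImpliesNonConcentration} with $\rho=e^{-\delta n_1}$ ($\delta$ small), we have $\measuredangle(W_b(x,g),V)\ge\rho$ outside probability $\rho^{c}$. Since $W_b$ and $V$ have complementary dimensions, this angle bound says $v$ has a component of size $\gtrsim\rho$ orthogonal to $W_b(x,g)$, that is, along the span of the top $d-b$ singular directions. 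Therefore
$$\|Dg(x,v)\|\gtrsim \rho\, e^{\lambda^{(b+1)}(x,g)},$$
which gives $\log\|Dg(x,v)\|\ge \lambda^{(b+1)}-\delta n_1-O(1)$ off an exceptional set of probability $e^{-c\delta n_1}$.

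\textbf{Step 3: integrate.} Splitting the integral over the good event and its complement gives
$$\int\log\|Dg(x,v)\|\,d\mu^{*n_1}\ge n_1\kappa'''/d-\delta n_1-O(1)-n_1\log\maxder\,(4e^{-cn_1}+e^{-c\delta n_1}).$$
Choosing $\delta$ small compared with $(\kappa'''-\kappa')/d$ and then $n_1$ large makes this exceed $n_1\kappa'/d$, uniformly in $x$ and $v$.

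The main obstacle is Step 2. It needs the precise link between the angle function and the singular-value decomposition, namely that a large angle with $W_b$ forces a component along the expanding directions. This comparison holds up to constants and powers, which must be absorbed into $\delta$. The other delicate point is the bookkeeping of the constant $c_0(d)$ in Step 1, including the sign issue noted there.
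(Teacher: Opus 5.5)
Your proposal is correct and is essentially the paper's argument. The paper writes out only the dual coexpansion version and calls this one similar, but its structure is the same as yours: combine Lemmas \ref{cor GapWithHighProba} and \ref{lem detcloseto1} to bound the relevant singular value linearly in $n_1$, use Proposition \ref{lem AllGapsImpliesNonConcentration} to keep the angle with $W_b(x,g)$ from being exponentially small, and integrate using the crude $O(n_1)$ bound on the exceptional set. (The sign issue you flag in Step 1 does not arise, since $b\kappa''-(d-b+1)\eta'-(\kappa''-d\eta')=(b-1)(\kappa''+\eta')\ge 0$ for every $b\ge 1$.)
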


By changing the pinching condition, we can also guarantee coexpansion.

\begin{lemma}\label{lem finetocoexpanding} 
Let $b\in \{1, \dots, d-1\}$, $\kappa, \kappa', \eta>0$, $n_{0}, n_{1}\in \N$. Assume that the measure  $\mu$ has a $(n_{0}, \kappa, b)$-gap, is $(n_{0}, \eta, 0,b)$-pinched, and is $(n_{0}, \eta)$-almost volume-preserving.  If $\kappa'<\kappa$ and $\eta\leq c_{0}(d)(\kappa-\kappa')$ and $n_{1}\ggg_{\cK, n_{0}, \kappa, \kappa', \eta} 1$, then $\mu$ is $\kappa'/d$-coexpanding. 
\end{lemma}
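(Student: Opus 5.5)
The plan is to bound the singular values of $Dg(x)$ for a typical $g\sim\mu^{*n_{1}}$, and then convert that into a lower bound on $\log\|\xi\circ Dg(x)^{-1}\|$ for a fixed unit covector $\xi$, using non-concentration of the contracting subspace $W_b(x,g)$.

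\emph{Step 1 (singular values).} Fix $\eta'\in(\eta,2\eta)$ and $\kappa''\in(\kappa',\kappa)$. By Lemma \ref{cor GapWithHighProba} (both items) and Lemma \ref{lem detcloseto1}, outside an event of $\mu^{*n}$-probability $O(e^{-cn})$ we have three bounds: $\lambda^{(b+1)}\geq\lambda^{(b)}+n\kappa''$, $\lambda^{(1)}\geq\lambda^{(b)}-n\eta'$, and $\sum_i\lambda^{(i)}\leq n\eta'$. Since $\lambda^{(i)}\geq\lambda^{(b)}+n\kappa''$ for $i>b$ and $\lambda^{(i)}\geq\lambda^{(b)}-n\eta'$ for $i\leq b$, this gives
$$d\,\lambda^{(b)}\leq -(d-b)\,n\kappa''+(b+1)\,n\eta'.$$
Hence $-\lambda^{(1)}\geq -\lambda^{(b)}-n\eta'\geq n\kappa''/d-O_d(n\eta)$. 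Here I use that $d-b\geq1$, so that $(d-b)\kappa''/d\geq\kappa''/d$.

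\emph{Step 2 (from singular values to the covector).} Write $A=Dg(x)=R'\,\mathrm{diag}(e^{\lambda^{(i)}})\,R$. Identify $\xi$ with a unit vector $n_\xi\in T_xM$. Then $\|\xi\circ A^{-1}\|=\|R'\,\mathrm{diag}(e^{-\lambda^{(i)}})\,R\,n_\xi\|$. The $b$ largest singular directions of this operator on the input side span exactly $W_b(x,g)$, so
$$\|\xi\circ A^{-1}\|\geq e^{-\lambda^{(b)}}\,\|P_{W_b(x,g)}n_\xi\|.$$
It therefore suffices to show the following: for $\rho=e^{-\delta n}$ with $\delta$ small compared to $\kappa$, we have $\|P_{W_b(x,g)}n_\xi\|\geq\rho$ outside an event of probability $\rho^{c}$.

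\emph{Step 3 (the non-concentration input, main obstacle).} The event $\|P_{W_b}n_\xi\|<\rho$ says that $W_b$ lies $\rho$-close to the hyperplane $\ker\xi$. Proposition \ref{lem AllGapsImpliesNonConcentration} is phrased in terms of the angle of $W_b$ with a fixed $(d-b)$-plane, so it does not apply verbatim. I would first try to reprove it in this dual form, following the same scheme. Take the blocks $h_i$ and transport the pair (line $\langle n_\xi\rangle$, hyperplane) by the adjoint action. Use the moment bound of Remark \ref{lem MomentBound} to get the event $E$ on which $\|A_i^{-1}\|\|B_i\|\leq e^{-l_i\kappa/4}$. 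Then iterate a version of Lemma \ref{int-lem} for covectors $\xi+\zeta$ with $\|\zeta\|\leq\rho$. The conclusion should be that $\|(\xi+\zeta)\circ D(g_n\cdots g_1)^{-1}\|\gtrsim e^{-\lambda^{(b)}}$ with probability $1-\rho^{c}$. This is equivalent to what Step 2 needs. If this turns out awkward, the fallback is to apply Proposition \ref{lem AllGapsImpliesNonConcentration} to the transposed cocycle, using that the gap condition at index $b$ is symmetric under $A\mapsto A^{-T}$ up to swapping $b$ and $d-b$.

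\emph{Step 4 (conclusion).} Split the expectation over three sets. On the good set, $\log\|\xi\circ Dg^{-1}\|\geq n\kappa''/d-O_d(n\eta)-\delta n$. On the bad sets, it is at least $-n\log\maxder$, and their total mass is $O(e^{-cn})+e^{-c\delta n}$. Choosing $\eta\leq c_0(d)(\kappa-\kappa')$, then $\delta$ small, and then $n_1$ large, the expectation exceeds $n_1\kappa'/d$ uniformly in $x$ and $\xi$. This is the claimed $(n_{1},\kappa'/d)$-coexpansion.
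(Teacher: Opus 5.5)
Your Steps 1, 2 and 4 are the paper's argument: the same three events from Lemmas \ref{cor GapWithHighProba} and \ref{lem detcloseto1}, and the same determinant computation giving $-\lambda^{(b)}\geq n\big((d-b)\kappa''-(b+1)\eta'\big)/d$. That bound on $-\lambda^{(b)}$, not your side bound on $-\lambda^{(1)}$, is what Step 2 needs. The lower bound $\|\xi\circ Dg(x)^{-1}\|\geq e^{-\lambda^{(b)}}\|P_{W_b(x,g)}n_\xi\|$ and the final integration using $-n\log D_1$ on the bad set are also as in the paper.

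The only divergence is Step 3, and the ``main obstacle'' there is not real. Proposition \ref{lem AllGapsImpliesNonConcentration} applies directly after a dimension count, and this is what the paper does.
\begin{itemize}
\item Fix any $U_\xi\in\Gr(\Ker\xi,d-b)$ and set $\epsilon:=\|P_{W_b}n_\xi\|$, with $\epsilon<1$.
\item Every unit $w\in W_b$ satisfies $|\langle w,n_\xi\rangle|\leq\epsilon$. So $P_{\Ker\xi}$ maps $W_b$ injectively onto a $b$-plane inside the $(d-1)$-dimensional space $\Ker\xi$.
\item Since $b+(d-b)>d-1$, this $b$-plane meets $U_\xi$ nontrivially. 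The corresponding unit $w\in W_b$ lies within angle $\arcsin\epsilon$ of $U_\xi$.
\item Hence $\measuredangle(U_\xi,W_b(x,g))\leq\frac{\pi}{2}\|P_{W_b(x,g)}n_\xi\|$.
\end{itemize}
This turns your Step 2 into the paper's inequality \eqref{xiDg-lb}. Proposition \ref{lem AllGapsImpliesNonConcentration} with $V=U_\xi$ and $\rho=e^{-\delta n}$ then closes Step 3 with no new estimate.

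Your fallback would also work, but it gains nothing.
\begin{itemize}
\item The integrand in Definition \ref{def gap} is unchanged under $A\mapsto A^{-T}$, $V\mapsto V^\perp$, $b\mapsto d-b$. The diagonal blocks $A_i,B_i$ from the proof of Proposition \ref{lem AllGapsImpliesNonConcentration} become $B_i^{-T},A_i^{-T}$, so $\|A_i^{-1}\|\,\|B_i\|$ is preserved.
\item The most contracted $(d-b)$-plane of $Dg(x)^{-T}$ is $W_b(x,g)^\perp$.
\item The proof of that proposition uses only the cocycle property and bounded distortion, so it carries over to the cotangent cocycle.
\end{itemize}
You would still need the same padding trick, namely choosing a $b$-plane that contains $n_\xi$. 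The reason is that the proposition controls angles between subspaces of complementary dimensions, not between $W_b^\perp$ and a line. Re-proving Lemma \ref{int-lem} for covectors is likewise feasible but redundant.
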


\begin{rema} \label{rem-exp-extbundle}The proof of Lemma \ref{lem finetocoexpanding2} yields a slightly stronger statement, which guarantees expansion uniformly on any $(d-b)$-dimensional subspace of the tangent bundle, in other terms, for all large $n_{1}\in \N$, 
	\aryst
		 	\inf_{x\in M, V\in \Gr(T_{x}M, d-b)}\,  \int_{G} \log \inf_{v\in \S(V)}\| Dg(x,v)  \| \dd\mu^{* n_{1}}(g) > n_{1} \kappa'/d.
		 	\earyst
Similarly we also have uniform coexpansion on $b$-dimensional subspaces of the cotangent bundle $T^*M$  under the hypothesis of Lemma \ref{lem finetocoexpanding}. 
\end{rema}

 \begin{proof}[Proof of Lemmas \ref{lem finetocoexpanding2}, \ref{lem finetocoexpanding}]
We focus on the proof Lemma  \ref{lem finetocoexpanding}, that of Lemma \ref{lem finetocoexpanding2} is similar. Let $x\in M$, $\xi \in T_{x}^*M$  with $\|\xi\|=1$. Fix an arbitrary subspace $U_{\xi}\in \Gr(\Ker \xi, d-b)$. For $g\in G$, note that $W_{b}(x,g)$ is the image of the\footnote{We speak loosely here as such subspace may not be unique, in the sense that it  depends a priori on our choice of Cartan decompositions.} $b$-dimensional subspace that is most dilated by $Dg(x)^{-1}$, therefore 
\ary\label{xiDg-lb}
\|\xi \circ (Dg(x))^{-1} \| \gtrsim e^{-\lambda_{b}(x,g)} \measuredangle (U_{\xi}, W_{b}(x,g)). 
\eary

 Set $\kappa'':= (3\kappa+\kappa')/4\in (0, \kappa)$. By Lemmas \ref{cor GapWithHighProba}, \ref{lem detcloseto1}, for some  $c=c(M, \cK, n_{0}, \kappa, \kappa', \eta)>0$, we have\footnote{Here, Lemma \ref{cor GapWithHighProba} is applied with the parameters $(\kappa,\kappa', \eta, \eta')$ therein replaced by $(\kappa'',\kappa, \eta'', \eta)$ where $\kappa''>\kappa$ and $\eta''<\eta$. This is allowed after noticing that the gap or pinching property still hold if we perturb $\kappa$ or $\eta$ by a small amount.} 
 with $\mu^{*n}$-probability at least $1- 6e^{- nc}$:   
 \aryst
&& \lambda_{b}(x,g)+ n\kappa'' \leq \lambda_{b+1}(x,g), \\
&& |\lambda_{1}(x,g)-\lambda_{b}(x,g)| \leq 2n\eta, \\
&& \log \det Dg(x) \leq 2n\eta.
\earyst
These conditions altogether imply 
$$2n\eta \geq\sum_{i=1}^d \lambda_{i}(x,g) \geq b(\lambda_{b} - 2n\eta)+ (d-b)(\lambda_{b}+n\kappa''),$$
which leads to 
\ary\label{lyap-lb}
-\lambda_{b} \geq n \frac{(d-b) \kappa''  -2(b+1)\eta}{d} \geq n (\kappa+\kappa')/2d
\eary
where the last inequality assumes $\eta \leq c_{0}(d)(\kappa-\kappa')$.  

On the other hand, using the non-concentration estimate for $W_{b}(x,g)$ (Proposition \ref{lem AllGapsImpliesNonConcentration}), we get that for some $\gamma=\gamma(\cK, n_{0}, \kappa, \kappa')>0$, we have with $\mu^{*n}$-probability at least $1-2e^{-\gamma n}$:
\ary\label{ang-lb}
 \measuredangle (U_{\xi}, W_{b}(x,g))\geq e^{-n (\kappa-\kappa')/4d}.
 \eary
Combining \eqref{xiDg-lb}, \eqref{lyap-lb}, \eqref{ang-lb}, we obtain that with $\mu^{*n}$-probability at least $1-8e^{-\min(c,\gamma) n}$, we have 
$$\|\xi \circ (Dg(x))^{-1} \| \gtrsim e^{n  (\kappa+3\kappa')/4d}.$$
Observing that the $\mu^{*n}$-integral of $\log \|\xi \circ (Dg(x))^{-1} \| $ on the complementary event can be lower bounded by $-6nD_{1}e^{-\min(c,\gamma) n}$, we obtain that $\mu$ is $(n_{1}, \kappa'/d)$-coexpanding for all $n_{1}\ggg_{\cK, n_{0}, \kappa, \kappa', \eta} 1$.
 \end{proof}

		 In dimension $2$,  Lemmas \ref{lem finetocoexpanding2}, \ref{lem finetocoexpanding} tell us that if $\mu_{0}$ has a $1$-gap, then it is expanding and coexpanding. In this context, we can show that either expansion or coexpansion implies $1$-gap and obtain the following.
		 		  
		 \begin{lemma} \label{lem contractinggaptocoexpanding0} 
		 	 Assume  $d = 2$. Let $\mu_{0} \in \cP_c(G_{\vol})$. The properties that $\mu_{0}$ be  expanding, or  be coexpanding, or has a $1$-gap, are mutually equivalent.
		 \end{lemma}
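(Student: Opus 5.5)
The plan is to prove the three equivalences by exact pointwise identities for $2\times 2$ matrices of determinant one. Since $d=2$ and $\supp(\mu_0)\subseteq G_{\vol}$, every $g\in \Gamma_{\mu_0}$ satisfies $|\det Dg(x)|=1$ for all $x\in M$, because $\vol$ is the Riemannian volume. In particular this holds for every $g\in\supp(\mu_0^{*n_0})$. I expect no step to need the large deviation machinery of Lemma \ref{cor GapWithHighProba} or Proposition \ref{lem AllGapsImpliesNonConcentration}. Instead, I will show that the integrands in Definitions \ref{def expandingonaverage}, \ref{defi coexpanding} and \ref{def gap} (with $b=1$) are pointwise equal up to an explicit factor. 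The equivalences then hold with the same $n_0$ and explicitly related $\kappa$.

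\emph{Gap $\Leftrightarrow$ expanding.} Take $b=1$, $x\in M$, a line $V=\R v\in\Gr(T_xM,1)$ with $\|v\|=1$, and $u\in\S(V^\perp)$. The area of the parallelogram spanned by $Dg(x,v)$ and $Dg(x,u)$ equals $|\det Dg(x)|=1$. It also equals $\|Dg(x,v)\|\cdot\|P_{Dg(x,V)^\perp}Dg(x,u)\|$, that is, base times height. Since $V^\perp$ is one-dimensional, the supremum over $\S(V^\perp)$ is attained at $\pm u$. Hence the integrand in Definition \ref{def gap} equals $-2\log\|Dg(x,v)\|$. Integrating against $\mu_0^{*n_0}$, I get that $\mu_0$ has an $(n_0,\kappa,1)$-gap if and only if
$$\inf_{x,v}\int\log\|Dg(x,v)\|\,d\mu_0^{*n_0}(g)>n_0\kappa/2.$$
So a $1$-gap is equivalent to being $\kappa'$-expanding for some $\kappa'>0$. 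Since the definitions use strict inequalities with an infimum over the compact unit tangent bundle, I will also check that being expanding ($0$-expanding) yields some positive $\kappa'$. I would argue this by compactness and continuity of the integrand in $(x,v)$: for $\mu_0$ compactly supported in $\Diff^1$, the map $(x,v)\mapsto\int\log\|Dg(x,v)\|\,d\mu_0^{*n_0}$ is continuous, so a strict positive infimum gives a positive margin.

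\emph{Expanding $\Leftrightarrow$ coexpanding.} I will use the rotation by $\pi/2$, which I call $J_x:T_x^*M\to T_xM$ in the text below. It is defined locally from the metric, up to a sign that is irrelevant for norms, so orientability is not needed. For a linear map $A:T_xM\to T_yM$ with $|\det A|=1$, the $2\times 2$ cofactor identity gives $(A^{-1})^{*}=\pm J_y^{-1}AJ_x$. Hence $\|\xi\circ Dg(x)^{-1}\|=\|Dg(x,J_x\xi)\|$, and $J_x$ maps $\S(T_x^*M)$ bijectively onto $\S(T_xM)$. The integrands in the two parts of Definition \ref{defi coexpanding} therefore coincide after this reparametrisation. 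So $\mu_0$ is $(n_0,\kappa)$-expanding if and only if it is $(n_0,\kappa)$-coexpanding.

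The only delicate point is writing the cofactor identity intrinsically on a Riemannian surface, namely choosing orthonormal bases at $x$ and $gx$ and checking that the sign ambiguity is harmless. The rest is immediate. Combining the two equivalences proves the lemma.
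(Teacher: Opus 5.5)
Your proof is correct, and it is more elementary and self-contained than the paper's.

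The paper proves only the implication expanding $\Rightarrow$ $1$-gap by hand. It does so with exactly your base-times-height identity \eqref{d=2-gapcond}. For $1$-gap $\Rightarrow$ expanding and coexpanding, it appeals to the general-dimension Lemmas \ref{lem finetocoexpanding2} and \ref{lem finetocoexpanding}. These rest on large deviations, the non-concentration Proposition \ref{lem AllGapsImpliesNonConcentration}, and the pinching supplied by Lemma \ref{rem allgapstopinch}. For expanding $\Leftrightarrow$ coexpanding, the paper simply cites \cite[Proposition 3.11]{DD}.

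You notice instead that \eqref{d=2-gapcond} is an exact pointwise identity, so it gives both directions at once, with the same $n_0$. You also replace the citation by the $2\times 2$ cofactor identity, which is right. In orthonormal frames, $(A^{-1})^{T}=(\det A)^{-1}R^{-1}AR$ with $R$ the rotation by $\pi/2$. Hence $\|\xi\circ Dg(x)^{-1}\|=\|Dg(x,J_x\xi)\|$ when $|\det Dg(x)|=1$, and a change of orientation at either point only flips a sign.

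Your route buys transparency and sharp constants. An $(n_0,\kappa,1)$-gap, $(n_0,\kappa/2)$-expansion and $(n_0,\kappa/2)$-coexpansion become the same condition. The paper's route instead passes through a large time $n_1$ and a small loss in $\kappa$; this is harmless for its purposes, but less informative.

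One small correction to your bookkeeping: compactness is not needed to get a positive margin from expansion. Being expanding already means the infimum is strictly positive. Strictness matters only when passing from the pointwise strict inequality in Definition \ref{def gap} to a strict inequality for the infimum. There your continuity argument does the job, since the infimum over the compact unit tangent bundle is attained; alternatively, you can simply shrink $\kappa$.
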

		 
		 \begin{proof}
		 It is known  (see \cite[Proposition 3.11]{DD}) that the properties that $\mu_{0}$ be expanding or coexpanding are equivalent (again, for $d=2$, $\mu_{0} \in \cP_c(G_{\vol})$). In view of  Lemmas \ref{lem finetocoexpanding2}, \ref{lem finetocoexpanding}, it remains to check that if $\mu_{0}$ is expanding, then it has a $1$-gap. 
		 	Let $x \in M$,  let $T_{x}M=U \oplus V$ be an orthogonal decomposition of $T_{x}M$ into lines, say $U=\R u_{0}$, $V=\R v_{0}$,  with $\|u_{0}\|=\|v_{0}\|=1$.
		 	Then
		 	\aryst
		 	&&   \sup_{u \in \S(U)} \| P_{Dg(x, V)^{\perp}}(  Dg(x, u) ) \|  =   \|  P_{Dg(x, V)^{\perp}}(  Dg(x,  u_{0} ) )  \|,  \\
		 	&&  \inf_{v \in \S(V)} \| Dg(x, v) \|  =  \| Dg(x, v_{0}) \|.
		 	\earyst
		 	Since $\det Dg(x) = 1$, we have 
		 	\aryst
		 	1 =    \|  P_{Dg(x, V)^{\perp}}(  Dg(x,  u_{0} ) )  \| \| Dg(x, v_{0}) \|.
		 	\earyst
		 	Thus we have   
		 	\begin{align} \label{d=2-gapcond}
 \log \sup_{u \in \S(U)} \| P_{Dg(x, V)^{\perp}}(  Dg(x, u) ) \|  - \log \inf_{v \in \S(V)} \| Dg(x, v) \|   = - 2 \log \| Dg(x, v_{0}) \|.
\end{align}
By the assumption that $\mu_{0}$ is expanding,  there exist an integer $n_0 > 0$ and a parameter $\kappa > 0$ (depending on $M, \mu_{0}$) such that  
\aryst 
\inf_{y \in M,  u \in \S(T_y M)}	 	\int  \log \| Dg(y, u) \|  d\mu^{* n_0}(g)  > n_0 \kappa.
\earyst
 In view of \eqref{d=2-gapcond}, this implies that $\mu_{0}$ has a $(n_{0}, 2\kappa, 1)$-gap.
		 \end{proof}

\section{Initial dimension : Proof of Proposition \ref{prop step I}}	  \label{sec proof prop step I}

  We complete Phase I from the proof strategy, in other terms we prove Proposition \ref{prop step I}. To highlight which assumptions on $\mu$ will play a role, we place ourselves in  the following more general setting.

 \begin{setting} \label{setting-posdim}
 We let $\mu\in \cP_{c}(G)$ be a compactly supported probability measure on $G=\Diff^1(M)$. We let $\kappa_{1}>\kappa_{2}\geq 0$, $n_{0}\in \N$. We assume that $\mu$ is $(n_{0},\kappa_{1})$-expanding, and that $\mu^{-1}$ is $(n_{0},-\kappa_{2})$-expanding. We let $\cK\subseteq G$ denote a bounded set containing the support of $\mu$.
 \end{setting}

Recall that $\mu^{-1}$ is the pushforward of $\mu$ by the map $g\mapsto g^{-1}$. 
Therefore, the assumption on $\mu$ means that the $\mu$-random walk on $TM$ is expanding forward, and not too contracting backward.

  \begin{rema} 
The property of being $(n,\kappa)$-expanding  is stable by small perturbation of $\kappa$. Therefore, it would be  equivalent to consider the situation where $\kappa_{1}=\kappa_{2}$ in Setting \ref{setting-posdim},  instead of $\kappa_{1}>\kappa_{2}$. We choose to introduce such a constant $\kappa_{2}$ to help pinpoint the roles of the various assumptions  throughout the section. 
  \end{rema}

 Recall $\Gamma_{\mu}=\langle \supp (\mu) \rangle_{\text{grp}}$ denotes the group generated by the support of $\mu$. We show the following.


 \begin{prop}[Positive dimension] \label{prop step I gen} 
  There exists $\varkappa_{0}=\varkappa_{0}(M, \cK, n_{0},\kappa_{1})>0$ such that for every $x \in M$ with infinite $\Gamma_{\mu}$-orbit, all $\rho_0 >0$,  $n \ggg_{\mu, x, \rho_{0}, \kappa_{1}} 1$, we have
\aryst
  \sup_{y \in M} \mu^{* n} * \delta_{x}(B_{\rho}(y)) \leq 2 \rho^{\varkappa_{0}}, \quad \forall \rho \geq \rho_0.
\earyst
  \end{prop}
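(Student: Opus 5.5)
The plan follows \cite[Proposition 5.1]{BH1}: a Margulis function argument using forward expansion to repel pairs of points, plus a compactness/Baire-type step to deal with orbits that might approach finite orbits.

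\emph{Step 1 (Margulis function for pairs).} Forward $(n_0,\kappa_1)$-expansion and the compactness of $\cK$ give some $\delta=\delta(M,\cK,n_0,\kappa_1)>0$ and $r_0>0$ with the following property. For $x\neq y$ with $d_M(x,y)<r_0$, the function $u(x,y)=d_M(x,y)^{-\delta}$ satisfies
$$\int u(gx,gy)\,d\mu^{*n_0}(g)\leq e^{-n_0\kappa_1\delta/2}\,u(x,y).$$
To prove this, linearize: $gy\approx \exp_{gx}(Dg(x)v)$ with an error that is uniformly small relative to $\|v\|$, by uniform $C^1$ continuity on $\cK$. Then use that $\int\|Dg(x,v)\|^{-\delta}\,d\mu^{*n_0}\leq e^{-n_0\kappa_1\delta/2}$ for small $\delta$, since $\log\|Dg(x,v)\|$ is bounded and has mean greater than $n_0\kappa_1$. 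Truncating at scale $r_0$ gives the drift inequality $P u\le a u + B$ globally, with $a<1$ and $B=B(r_0)$.

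\emph{Step 2 (two-point estimate for $\mu^{*n}*\delta_x\otimes\mu^{*n}*\delta_x$).} We want to bound $\nu_n(B_\rho(y))$ with $\nu_n:=\mu^{*n}*\delta_x$, and Cauchy–Schwarz gives $\nu_n(B_\rho(y))^2\le \nu_n\otimes\nu_n\{d(z,z')<2\rho\}$. The independent product $\nu_n\otimes\nu_n$ is not driven by the same $g$, so instead we use the standard trick: write $\nu_{n}=\mu^{*(n-m)}*\nu_m$ and compare the random image of two points in the support of $\nu_m$ under the same word. Concretely, I would follow \cite{BH1}. Fix $m$ large and consider the finite-ish measure $\nu_m$. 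If $\nu_m$ is already spread, meaning $\nu_m(B_{r}(z))\le \epsilon$ for all $z$ at some fixed scale $r$, then for most pairs $(z,z')$ drawn from $\nu_m\otimes\nu_m$ one has $d(z,z')\ge r$. Applying the common walk to each pair and using the pair drift from Step 1 yields
$$\EV_g\,\nu_m\otimes\nu_m\bigl(d(gz,gz')<\rho\bigr)\lesssim \rho^{\delta}(a^{k}r^{-\delta}+B)+\epsilon.$$
Here the backward non-contraction hypothesis ($\mu^{-1}$ is $(n_0,-\kappa_2)$-expanding with $\kappa_2<\kappa_1$) is used to handle pairs that start too close: close pairs are not brought much closer on average, so their contribution does not blow up. This gives positive dimension above scale $\rho_0$ with exponent $\varkappa_0\approx\delta/2$, independent of $x$.

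\emph{Step 3 (initial spreading; the main obstacle).} It remains to show that for $x$ with infinite orbit, $\nu_m$ eventually has small mass on every $r$-ball. This is the non-quantitative part. I would argue by contradiction and compactness. If the ball masses stay above $\epsilon$ along a subsequence, then limits of $\nu_m$ (Cesàro averages are stationary) charge points. A Margulis function $u_F=\sum_{y\in F}d(\cdot,y)^{-\delta}$ built from Step 1 around finite $\Gamma_\mu$-orbits $F$ shows that mass concentrates only near finite invariant sets, and that orbits are repelled from them. Lemma \ref{cor countablefiniteorbits}, that only finitely many orbits of bounded cardinality exist, falls out of the same argument, since two distinct finite orbits at distance $r$ force $u$ to be at least $r^{-\delta}$, contradicting the drift bound. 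Since $x$ has infinite orbit, the drift inequality pushes $\nu_m$ away from the finitely many relevant orbits, and the mass near any point tends to zero. The delicate point is making this uniform over all $y\in M$ for a single large $n$, which I would obtain via compactness of $M$ and a finite cover at scale $\rho_0$.
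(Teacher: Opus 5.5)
\textbf{Steps 1 and 2.} These are essentially the paper's Lemmas \ref{lem margulisineq} and \ref{lem massdecay1}: a Margulis function $d(x,y)^{-s}$ for the two-point motion under a common word, then Cauchy--Schwarz with pairs at distance $<r$ discarded. The final assembly is also the same as the paper's. However, the backward hypothesis plays no role in Step 2. Close pairs are simply thrown away at cost $\sup_y \nu_m(B_r(y))$, so no control of close pairs is needed. The backward hypothesis is needed exactly in the step you leave unproved. Relatedly, uniformity over $y$ is not the delicate point. Once all atoms of $\nu_{n_1}$ have mass $\le \rho_0$ at a single time $n_1$, compactness gives $r$ with $\sup_y \nu_{n_1}(B_r(y))<2\rho_0$, and Step 2 applied to $\nu=\nu_{n_1}$ finishes.

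\textbf{The gap: Step 3.} You need that for $x$ with infinite orbit, $\max_y \mu^{*n}*\delta_x(\{y\})\to 0$. Your argument (Ces\`aro limits are stationary, and $u_F$ repels from finite orbits) at best proves that every Ces\`aro limit of $\nu_m$ is atom-free. That does not imply $\nu_m$ itself has small atoms. Heavy atoms of $\nu_m$ come from coalescence of distinct words, $w_1x=w_2x$. They can sit on the infinite orbit and wander, so they need not create an atom in any Ces\`aro limit, and distance from finite orbits says nothing about them. Hence the inference ``ball masses stay above $\epsilon$ $\Rightarrow$ Ces\`aro limits charge points'' is unjustified.

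\textbf{How the paper closes this.} First, $f_m(z):=\max_y \mu^{*m}*\delta_z(\{y\})$ is nonincreasing in $m$, so failure gives atoms of mass $\ge 2\epsilon$ at every time. Second, Lemma \ref{lem AmFinite} shows that $A_m=\{z : f_m(z)\ge \epsilon\}$ is \emph{finite} for large $m$. Then $\mu^{*(n+m)}*\delta_x(\{y\})\ge 2\epsilon$ forces $\mu^{*n}*\delta_x(A_m)\ge \epsilon$ for all $n$, with $A_m$ a fixed finite set. So a Ces\`aro limit charges a point of $A_m\cap \Gamma_\mu x$, which then has finite orbit, a contradiction.

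\textbf{Where the backward hypothesis enters.} Finiteness of $A_m$ is proved as follows. Suppose $z_1\neq z_2$ in $A_m$ are very close and share a set of words of $\mu^{*m}$-mass $\gtrsim \epsilon^2$ sending them to their heavy atoms. For $g,h$ in that set, $g^{-1}h$ fixes both $z_1$ and $z_2$. Hence $\|D(g^{-1}h)(z_1,v)\|\le 2$ in the direction $v$ from $z_1$ to $z_2$, with $(\mu^{*m})^{\otimes 2}$-probability bounded below. This contradicts Corollary \ref{lem NonConcentrationofAngles}, the back-and-forth expansion of $g^{-1}h$, which requires $\mu^{-1}$ to be $(n_0,-\kappa_2)$-expanding with $\kappa_2<\kappa_1$. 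Forward expansion alone gives no contradiction here. It only says $gz_1$ and $gz_2$ separate, which is consistent with every such $g$ sending $(z_1,z_2)$ to the same far-apart pair. Without an argument of this kind, your proof does not go through.
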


Note that Proposition \ref{prop step I} (Phase I) follows at once.
\begin{proof}[Proof of Proposition \ref{prop step I}]
For the duration of this proof only, we use the notations from Section \ref{sec Strategyoftheproof}. 
Provided $\eta\leq c_{0}(d)(\kappa-\check{\kappa})$,  we may apply Lemma \ref{lem finetocoexpanding2} to obtain that $\mu_{0}$ is $(n_{0},(\kappa+\check\kappa)/(2d))$-expanding for some integer $n_{0}=n_{0}(\mu_{0},\kappa,  \check\kappa)\geq 1$. On the other hand, we know by hypothesis that $\mu_{0}^{-1}$ is $(n_{0}, -\check{\kappa}/d)$-expanding (up to increasing $n_{0}$ if necessary).
Provided that $\cU\subseteq \cU_{0}(M, \mu_{0}, \cK, \kappa, \check\kappa)$, these two properties are still true  for the perturbation $\mu$.
Now Proposition \ref{prop step I gen} applies and yields Proposition \ref{prop step I} (after noting the dependence of $\cU_{0}, \varkappa$ in $\kappa, \check\kappa$ ultimately disappears as $\kappa, \check\kappa$ can in the end be chosen as functions of $\mu_{0}$).
\end{proof}

\bigskip 
  Our proof of Proposition \ref{prop step I gen} borrows from \cite[Section 5]{BH1} which deals with the context of Zariski-dense random walks on semisimple homogeneous spaces. It consists of two parts. First, we show that for $x\in M$ with infinite $\Gamma_{\mu}$-orbit, the maximal mass among the atoms of the distribution $\mu^{*n}*\delta_{x}$ converges to $0$ as $n$ goes to infinity (Lemma \ref{lem massdecay2}). Second, we establish that the $\mu$-random walk on $M$ tends to move points apart (Lemma \ref{lem margulisineq}). In combination with part 1, this leads to positive dimension above any prescribed scale.
  
The expansion assumptions on the random walk will be used in the following form.

\begin{lemma}\label{expM}  Let  $s>0$, $n\in \N$, $x \in M$, $v \in \S(T_xM)$.
\begin{itemize}
\item[i)] For $\kappa'_{1}\in (0,\kappa_{1})$ and $s \lll_{\cK, n_{0}, \kappa_{1}, \kappa'_{1}}1$, we have
\aryst 
	\int_{G} \| Dg(x, v) \|^{-s} \dd\mu^{*n}(g) < 2 e^{- ns \kappa'_{1}}
\earyst
and 
\aryst 
	\mu^{*n}( g \mid \| Dg(x, v) \| \leq e^{n \kappa'_{1}}) < 2 e^{- ns}.
\earyst

\item[ii)] For $\kappa'_{2}>\kappa_{2}$, the same statement holds with $(\mu, \kappa_{1}, \kappa_{1}')$ replaced by $(\mu^{-1}, -\kappa_{2},  -\kappa_{2}')$.


\end{itemize}
\end{lemma}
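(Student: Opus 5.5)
The plan is to treat $\psi(g,(x,v)) := -\log \|Dg(x,v)\|$ as a cocycle over the projective tangent bundle $E=\S(TM)$ and apply the moment-bound part of Proposition \ref{LDP-MC}, exactly as in Remark \ref{lem MomentBound}. Concretely, $\psi(g_2g_1,(x,v)) = \psi(g_2, (g_1x, Dg_1(x,v)/\|Dg_1(x,v)\|)) + \psi(g_1,(x,v))$, and $\psi$ is bounded by $n_0\log D_1$ on $\supp(\mu^{*n_0})\times E$. By Setting \ref{setting-posdim}, $\sup_{(x,v)}\int \psi\, d\mu^{*n_0} < -n_0\kappa_1$. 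I will consider the Markov chain on $G\times E$ with transitions $\mu^{*n_0}\otimes\delta_{h\cdot(x,v)}$, as in the proof of Lemma \ref{cor GapWithHighProba}.

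For the first bound in i), I will argue directly with the exponential. For $s$ small, a Taylor expansion $e^{sy}\le 1+sy+s^2y^2e^{s|y|}$ with $|y|\le n_0\log D_1$ gives
\[
\sup_{(x,v)}\int \|Dg(x,v)\|^{-s}\,d\mu^{*n_0}(g) \le 1 - s n_0\kappa_1 + O_{\cK,n_0}(s^2) \le e^{-s n_0 \kappa_1''},
\]
where $\kappa_1''\in(\kappa_1',\kappa_1)$ and $s\lll_{\cK,n_0,\kappa_1,\kappa_1'}1$. Iterating via the cocycle property and conditioning on the first $kn_0$ steps, I get
\[
\int \|Dg(x,v)\|^{-s}\,d\mu^{*kn_0} \le e^{-skn_0\kappa_1''}.
\]
For general $n=kn_0+r$ with $0\le r<n_0$, the remainder costs a factor $D_1^{sn_0}$. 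This factor is absorbed into $2e^{-ns\kappa_1'}$ once $e^{-sn(\kappa_1''-\kappa_1')}D_1^{sn_0}e^{sn_0\kappa_1''}\le 2$. That inequality holds for $n$ large, and for small $n$ it can be arranged by taking $s$ smaller. Alternatively, I will simply cite the first part of Proposition \ref{LDP-MC}.

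For the second bound in i), I apply Markov's inequality to the first bound with parameter $\kappa_1'''\in(\kappa_1',\kappa_1)$:
\[
\mu^{*n}(\|Dg\|\le e^{n\kappa_1'}) \le e^{sn\kappa_1'}\cdot 2e^{-ns\kappa_1'''} = 2e^{-ns(\kappa_1'''-\kappa_1')}.
\]
Renaming $s$, which is allowed since $s$ only needs to be small, yields the stated form $2e^{-ns}$. The point to be careful about is that $s$ appears in both the moment and the rate, so I will take the moment exponent $s_0$ and set $s := s_0(\kappa_1'''-\kappa_1')\le s_0$. This is harmless because the first bound holds for all smaller exponents as well, by Jensen's inequality.

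Part ii) is identical with $\mu^{-1}$ in place of $\mu$. The hypothesis reads $\int \log\|Dg(x,v)\|\,d\mu^{-*n_0}>-n_0\kappa_2$, which plays the role of $\kappa_1$ replaced by $-\kappa_2$. Nothing in the argument used positivity of $\kappa_1$, only the strict inequality $\kappa_1'<\kappa_1$, which here becomes $-\kappa_2'<-\kappa_2$. The only mildly delicate step is the uniform second-order Taylor control, which needs the boundedness of $\cK$ in $C^1$ to get uniform dependence of $s$ on $(\cK,n_0,\kappa_1,\kappa_1')$.
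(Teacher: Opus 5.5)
Your proposal is correct and follows essentially the paper's route. The paper likewise uses a second-order Taylor bound on the exponential at time $n_0$ (or, alternatively, Proposition \ref{LDP-MC}) to get the moment bound, then Markov's inequality with a further reduction of $s$, and the same argument for $\mu^{-1}$ with $\kappa_1$ replaced by $-\kappa_2$. You additionally spell out details the paper leaves implicit: the cocycle iteration, the remainder $n \bmod n_0$, and the intermediate exponents $\kappa_1'',\kappa_1'''$ that make the constants and the shrinking of $s$ uniform in $n$.
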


\begin{proof}

By hypothesis, the measure $\mu$ is $(n_{0},\kappa_{1})$-expanding: 
\ary\label{eq-expavkk'}
\inf_{y \in M, w \in \S(T_y M)} \int_{G}   \log   \|   Dg(y, w) \|  \dd\mu_{0}^{* n_0}(g) > n_{0}\kappa_{1}.
\eary
Using that $e^t\leq 1+t+t^2$ for all $t\in [-1,1]$, and taking $s$ small enough depending on $(M, \cK, n_{0},\kappa_{1}, \kappa_{1}')$, the first inequality in item i) follows (alternatively, one can deduce it from \eqref{eq-expavkk'} by invoking Proposition \ref{LDP-MC}); and the second inequality in item i) follows from Markov's inequality and by possibly reducing the size of $s$.

The same argument can be used to obtain ii), by relying this time on the assumption that $\mu^{-1}$ is $(n_{0},-\kappa_{2})$-expanding.
\end{proof}

As a first corollary of Lemma \ref{expM}, we note that the back-and-forth random process induced by $\mu^{*n}$  expands tangent vectors with high probability. 
 
\begin{cor} \label{lem NonConcentrationofAngles} 

	Let $\epsilon > 0$,  $n\in \N$, $x \in M$, $v  \in \S(T_xM)$. If $n \ggg_{\cK,n_{0}, (\kappa_{i})_{i},\eps} 1$, then we have 
	\aryst
	(\mu^{* n})^{\otimes 2}( (g,h) \in G \mid  \| Dg^{-1}h(x, v) \| \leq 2   ) < \epsilon.
	\earyst
\end{cor}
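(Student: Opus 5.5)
The plan is to treat $g^{-1}h$ as a two-stage process: first push $v$ forward by $h$, then apply $g^{-1}$ to the resulting unit vector. Since $g$ and $h$ are independent, we may condition on $h$. Write
\[
\|Dg^{-1}h(x,v)\| = \|Dh(x,v)\| \cdot \|Dg^{-1}(hx, w_h)\|, \qquad w_h := \frac{Dh(x,v)}{\|Dh(x,v)\|}\in \S(T_{hx}M).
\]
The first factor grows like $e^{n\kappa'_1}$ with high probability by the forward expansion. The second factor can shrink at most like $e^{-n\kappa'_2}$ with high probability by the backward non-contraction. Since $\kappa_1>\kappa_2$, the product tends to infinity.

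\textbf{Step 1: forward factor.} Fix $\kappa'_1\in(\kappa_2,\kappa_1)$ and $\kappa'_2\in(\kappa_2,\kappa'_1)$, for instance $\kappa'_1=(2\kappa_1+\kappa_2)/3$ and $\kappa'_2=(\kappa_1+2\kappa_2)/3$. Apply Lemma \ref{expM} i) at the point $x$ with vector $v$, and a small $s$ depending on $\cK,n_0,\kappa_1,\kappa_2$. This gives
\[
\mu^{*n}\bigl(h : \|Dh(x,v)\|\le e^{n\kappa'_1}\bigr)<2e^{-ns}.
\]

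\textbf{Step 2: backward factor.} Conditionally on $h$, the element $g^{-1}$ has law $(\mu^{-1})^{*n}=(\mu^{*n})^{-1}$, since $(g_n\cdots g_1)^{-1}=g_1^{-1}\cdots g_n^{-1}$ is a product of i.i.d.\ $\mu^{-1}$-distributed elements. Apply Lemma \ref{expM} ii) at the point $hx$ with vector $w_h$. The key point is that this bound is uniform over points and unit vectors, so its dependence on $h$ is harmless. This gives
\[
\mu^{*n}\bigl(g : \|Dg^{-1}(hx,w_h)\|\le e^{-n\kappa'_2}\bigr)<2e^{-ns}.
\]
Integrating over $h$ (Fubini) gives the same bound for the joint measure $(\mu^{*n})^{\otimes2}$.

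\textbf{Step 3: conclude.} Outside an event of $(\mu^{*n})^{\otimes 2}$-measure at most $4e^{-ns}$, we have
\[
\|Dg^{-1}h(x,v)\|> e^{n(\kappa'_1-\kappa'_2)}.
\]
The right-hand side exceeds $2$ once $n\ggg_{\kappa_1,\kappa_2}1$. Choosing $n$ also large enough that $4e^{-ns}<\epsilon$ finishes the proof.

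The only delicate point is Step 2. One must check that Lemma \ref{expM} ii) is a statement about $(\mu^{-1})^{*n}$ in the form stated, uniform in $(y,w)$. Since it is derived from the uniform infimum in the expansion hypothesis on $\mu^{-1}$, this holds. The rest is bookkeeping of constants.
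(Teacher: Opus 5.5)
Your proof is correct and is the same argument as the paper's. The paper's proof is the one-line remark that the statement follows from the deviation estimates of Lemma \ref{expM}, applied with $\kappa_1'=(2\kappa_1+\kappa_2)/3$ and $\kappa_2'=(\kappa_1+2\kappa_2)/3$. Your chain-rule splitting at $(hx,w_h)$, your use of the independence of $g$ and $h$ together with the uniformity of Lemma \ref{expM} ii) in the base point and vector, and your union bound are exactly the details behind that remark.
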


\begin{proof}
It is a direct consequence of the deviation estimates in Lemma \ref{expM}, applied with $\kappa_{1}'= (2\kappa_{1}+\kappa_{2})/3$ and $\kappa_{2}'=(\kappa_{1}+2\kappa_{2})/3$.
\end{proof}

We now show that for $x\in M$ with infinite $\Gamma_{\mu}$-orbit, the maximal mass of atoms of $\mu^{*n}*\delta_{x}$ decays to $0$ as $n\to +\infty$. 

\begin{lemma} \label{lem massdecay2}
	For every $x \in M$ with infinite $\Gamma_{\mu}$-orbit, we have 
	\aryst
	\limsup_{n \to \infty} \max_{y \in M}( \mu^{* n} * \delta_{x} )(\{ y \}) = 0.
	\earyst
\end{lemma}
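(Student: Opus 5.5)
The plan follows the standard argument from \cite[Section 5]{BH1}, adapted to a group of diffeomorphisms. For $n\in\N$ set $m_n:=\max_{y\in M}(\mu^{*n}*\delta_x)(\{y\})$; note that $\mu^{*n}*\delta_x$ is supported on the countable set $\Gamma_\mu x$, so the maximum is attained. The first step is that $m_n$ is non-increasing. Indeed, $(\mu^{*(n+1)}*\delta_x)(\{y\})=\int (\mu^{*n}*\delta_x)(\{g^{-1}y\})\,d\mu(g)\le m_n$. Hence $m:=\lim m_n$ exists, and we argue by contradiction assuming $m>0$.

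The second step is to use the collision probability. Put $\nu_n:=\mu^{*n}*\delta_x$. We have $\sum_y \nu_n(\{y\})^2=(\mu^{*n})^{\otimes 2}\{(g,h): gx=hx\}$, which is at least $m_n^2\ge m^2$. Rewrite this event as $\{g^{-1}hx=x\}$, i.e.\ $g^{-1}h\in\mathrm{Stab}_{\Gamma_\mu}(x)$. I would instead work with the set $A_n:=\{y:\nu_n(\{y\})\ge m/2\}$, which has cardinality at most $2/m$ and is nonempty for all large $n$.

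The key step is to show these sets are almost invariant and therefore produce a finite orbit. Let $y_n\in A_{n}$ with $\nu_{n}(\{y_n\})=m_n$. From $m_{n+k}\to m$ and the identity
\[
\nu_{n+k}(\{z\})=\int\nu_n(\{g^{-1}z\})\,d\mu^{*k}(g),
\]
applied at a point $z$ of maximal mass for $\nu_{n+k}$, we get the following. For fixed $k$ and large $n$, $\mu^{*k}$-most $g$ satisfy $\nu_n(\{g^{-1}z\})\ge m-\epsilon$, so $g^{-1}z$ lies in the set $A'_n$ of near-maximal atoms. That set has at most $1/(m-\epsilon)<2/m$ elements.

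Here is the plan for extracting the contradiction. Take a limit point, along a subsequence, of these configurations. The configurations are finite sets of bounded cardinality in the orbit, but the orbit is not compact in a useful sense. So instead I would argue combinatorially. Since $|A'_n|\le N:=\lfloor 2/m\rfloor$ and the total mass of near-maximal atoms cannot increase, an averaging argument gives the following: for large $n$ and each $g\in\supp\mu$ (using the weighted identity with $k=1$ and equality-in-the-limit), $g^{-1}$ maps the maximal atoms of $\nu_{n+1}$ into the maximal atoms of $\nu_n$. To get exact statements, the right object is the set $F_n$ of atoms of mass exactly $m_n$, or those within $\epsilon_n\to0$ of it. For large $n$ the cardinality $|F_n|$ stabilizes. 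The inclusion $g^{-1}F_{n+1}\subseteq F_n$ for all $g\in\supp\mu$ then forces $g^{-1}F_{n+1}=F_n$ by cardinality.

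The main obstacle is making this exact despite the possibility that $m_n>m$ strictly. I would handle it by the standard trick of choosing $n$ in a long window where $m_n-m_{n+N'}$ is tiny, and choosing a threshold gap in the finite list of atom values. Then near-maximal atoms are sent to near-maximal atoms by every generator: failure would lose a definite fraction of mass, contradicting near-constancy of $m_n$. This chain of finite sets $F_n,F_{n+1},\dots$ of equal size $\le N$, linked bijectively by all $g\in\supp\mu$, contains points of $\Gamma_\mu x$. Write $F_{n+j}=h_jF_n$ for suitable $h_j\in\Gamma_\mu$; then $\bigcup_{g}gF_n$ stays of bounded size. Iterating gives a finite $\Gamma_\mu$-invariant subset of $\Gamma_\mu x$, hence a finite orbit equal to $\Gamma_\mu x$, a contradiction. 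Lemma \ref{cor countablefiniteorbits} is not needed here, but the same bounded-cardinality argument feeds into it.
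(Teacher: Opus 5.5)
Your argument has a genuine gap. It is purely combinatorial and never uses the expansion hypotheses of Setting \ref{setting-posdim}, but the statement is false without them. Take $\mu=\delta_g$ with $g$ an irrational translation of $\T^2$, or any diffeomorphism with a non-periodic point $x$. Then $\mu^{*n}*\delta_x=\delta_{g^nx}$ has an atom of mass $1$ for every $n$, while $\Gamma_\mu x$ is infinite. (Only expansion excludes this $\mu$: an expanding $\delta_g$ would make $g^{n_0}$ stretch every vector, hence increase total volume.)

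Running your steps on this example shows where they break. Here $F_n=\{g^nx\}$, all cardinalities equal $1$, and $g^{-1}F_{n+1}=F_n$ for every $g\in\supp\mu$, exactly as you derive, yet no finite invariant set exists. The inference ``$g^{-1}F_{n+1}=F_n$ for all $g\in\supp\mu$, so iterating gives a finite $\Gamma_\mu$-invariant set'' is a non sequitur. It only shows that $F_n$ is invariant under the elements $g^{-1}g'$ with $g,g'\in\supp\mu$, which is trivial in the example. Meanwhile the sets $F_{n+j}=h_jF_n$ can drift along the orbit forever. Near-constancy of $m_n$ forces heavy atoms to be transported coherently, not to stay put.

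The missing idea is a geometric reason why the walk cannot transport a heavy atom coherently. It enters through the object you wrote down and then dropped: the back-and-forth element $g^{-1}h$, with $g,h$ independent of law $\mu^{*m}$. The paper proves (Lemma \ref{lem AmFinite}) that for $m$ large the set $A_m=\{z:\max_y(\mu^{*m}*\delta_z)(\{y\})\ge\eps\}$ is \emph{finite}. Suppose a tiny ball contained more than $2\eps^{-1}$ points of $A_m$. Then two of them, $z_1,z_2$, would both be fixed by $g^{-1}h$ for a proportion of pairs $(g,h)$ bounded below in terms of $\eps$. Each such $g^{-1}h$ has a unit vector $v\in T_{z_1}M$ (the direction toward $z_2$) with $\|D(g^{-1}h)(z_1,v)\|\le 2$. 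This contradicts Corollary \ref{lem NonConcentrationofAngles}, which is exactly where forward expansion of $\mu$ and mild backward contraction are used.

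Once the starting points of heavy atoms lie in a \emph{fixed} finite set, the conclusion follows. The bound $\max_y\nu_{n+m}(\{y\})\ge 2\eps$ gives $\nu_n(A_m)\ge\eps$ for all $n$. Any Ces\`aro limit of $(\nu_n)$ is then a $\mu$-stationary measure charging the finite set $A_m\cap\Gamma_\mu x$. It therefore has an atom whose orbit is finite and contained in $\Gamma_\mu x$, a contradiction. The fixed finite set is precisely what your moving $F_n$ lacks.

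Two minor points. If $\mu$ is non-atomic, $\nu_n$ need not live on a countable set. Also, averaging only gives your inclusions for $\mu$-most $g$, not for every $g\in\supp\mu$.
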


\begin{proof}

	For $m \in \N$,   we define $f_m : M \to \R$ by
		\aryst
		f_m(z) = \max_{y \in M} (\mu^{*m} * \delta_z)(\{ y \}).
			\earyst

	We first prove the following. 
	\begin{lemma} \label{lem AmFinite}
		For every $\eps > 0$ and every integer $m \ggg_{\mu, \eps} 1$, the set
		\aryst
		A_m = \{ z \in M \mid f_m(z) \geq \eps \}
		\earyst
		is finite.
	\end{lemma}
	
	\begin{proof}
		
		By writing $\mu^{*(m+1)}=\mu*\mu^{*m}$, it is direct to see that the sequence of functions $f_{m}$ is non-increasing: $f_{m}\geq f_{m+1}$. Thus it suffices to show that $A_m$ is finite for some $m  > 0$.
		
		Let $m$ be a large integer to be determined later. 	For each $z \in A_m$, we fix some $y_z \in A_m$ such that   $\mu^{*m} * \delta_z(\{ y_z \})  \geq \epsilon$.  We denote by $G_{z}$ the collection of $g \in \supp(\mu^{* m})$ such that $g(y_z) = z$. In particular we have $\mu^{*m}(G_{z}) \geq \epsilon$.
		
		Take an arbitrary $p_0 \in M$. Let $B = B_r(p_0)$ where $r > 0$ is a parameter with $r \lll_{\mu, m, \eps} 1$. In the following, we will prove that  $| B \cap A_m| \leq 2\epsilon^{-1}$, therefore justifying the finiteness of  $A_{m}$.
				
		Assume to the contrary that there are  distinct $z_1, \cdots, z_{K}$ in $B \cap A_m$, with $K >  2 \epsilon^{-1}$. By inclusion-exclusion principle, we may find two elements, say $z_1, z_2 \in B \cap A_m$, such that $\mu^{*m}(G_{z_1} \cap G_{z_2}) > \epsilon^{2}/20$.
		Take two arbitrary $g, h \in G_{z_1} \cap G_{z_2}$. Then we have $g^{-1}h (z_i) = z_i$  for $i \in \{1,2\}$. Placing ourselves in a local chart and using a Taylor expansion, we  deduce that there exists $v \in \S(T_{z_1}M)$ such that $D(g^{-1}h)(z_1, v) = v + O_{\mu, m}(\|v\|^2)$, and in particular $\|D(g^{-1}h)(z_1, v)\|\leq 2\|v\|$. Provided $m \ggg_{\mu, \eps} 1$, this  contradicts Lemma \ref{lem NonConcentrationofAngles}.
	\end{proof}
	
	Fix some $x \in M$ so that $\Gamma_{\mu} x$ is infinite.
	Assume by contradiction that there exists $\epsilon > 0$  such that $ f_n(x) \geq 2 \epsilon$ for all integers $n \geq1$.   
	 By Lemma \ref{lem AmFinite}, we can take some large $m$ so that $A_m$ (defined for $\epsilon$) is finite. 
		Given $n \geq 1$, there is some $y \in M$ such that   
			\aryst
	2 \epsilon &\leq&	 	(\mu^{*(n + m)}* \delta_x)(\{ y \})  = \int ( \mu^{*m} * \delta_{z})(\{y\})   \dd(\mu^{*n}* \delta_x)(z) \\
	&\leq&  \int_{A_m} ( \mu^{*m} * \delta_{z})(\{y\})   \dd(\mu^{*n}* \delta_x)(z) + \eps \leq (\mu^{* n } * \delta_x)(A_m) + \epsilon.
	\earyst
	Consequently, for every $n \geq 1$, we have $(\mu^{* n } * \delta_x)(A_m) \geq \epsilon$. Let $\nu$ be a weak limit  of the sequence $N^{-1} \sum_{n = 0}^{N-1} \mu^{*n} * \delta_{x}$, note that $\nu$ is a $\mu$-stationary probability measure on $M$. As $A_{m}$ is closed, we have  $\nu(A_m)\geq \eps$.  Since $A_m$ is a finite set, there is $z \in A_m$ such that $\nu(\{z\}) > 0$, and in particular, the orbit $\Gamma_{\mu} z$ must be finite. On the other hand, it is clear that we can choose such $z$ in  $\Gamma_{\mu} x$. Thus $\Gamma_{\mu} x$ is also finite. Contradiction. 
\end{proof}

We quickly deduce Lemma \ref{cor countablefiniteorbits} as a corollary of Lemma \ref{lem AmFinite}.

\begin{proof}[Proof of Proposition \ref{cor countablefiniteorbits}]
	Assume that $x \in M$ has a finite $\Gamma_{\mu}$-orbit with cardinality at most $N$.  For every  $m > 0$, we have $f_m(x) \geq 1/N$. In other words, $x$ belongs to the set $A_m$ defined  in Lemma \ref{lem AmFinite} and for the parameter $\eps=1/N$. For $m \ggg_{ \mu, N} 1$, we know from Lemma \ref{lem AmFinite} that $A_m$ is finite. This completes the proof.
\end{proof}

We now show that the $\mu$-random walk on $M$ tends to move points apart. This can be formulated by saying that the diagonal subset is repelling for the $\mu$-random walk on $M\times M$.  
For $s > 0$, we define $\Delta_s : M \times M \to [0,\infty]$ by 
\aryst
\Delta_s(x, y) = d_M(x, y)^{-s}.
\earyst
The next result states the function $\Delta_{s}$ is contracted by the $\mu$-random walk on $M\times M$. 


\begin{lemma} \label{lem margulisineq}
	Let $C,s>0$, $n \in \N$. If  $C^{-1},s\lll_{\cK,n_{0}, \kappa_{1}}1$,  then  we have
	\aryst
	S_{\mu}^{n} \Delta_s \leq C e^{- ns\kappa_{1}/2  }\Delta_s + C
	\earyst
	where for every distinct $x, y \in M$, we denote
	\aryst
	S_{\mu}u(x, y) = \int  u(gx, gy) d\mu(g).
	\earyst
\end{lemma}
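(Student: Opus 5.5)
We prove a Margulis-type inequality in two regimes, according to whether $d_M(x,y)$ is below or above a threshold $r$ depending on $n$, $\cK$, $n_{0}$, $\kappa_{1}$. The key input is Lemma \ref{expM} i): for $s \lll_{\cK,n_{0},\kappa_1} 1$ and $\kappa_1' = 3\kappa_1/4$, every unit $v\in T_xM$ satisfies $\int \|Dg(x,v)\|^{-s}\,d\mu^{*n}(g) \le 2e^{-ns\kappa_1'}$.

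\emph{Step 1 (one block of fixed length).} Fix $n_{1}$ large, depending on $\cK, n_0, \kappa_1$, so that $2e^{-n_1 s\kappa_1'}\le \tfrac12 e^{-n_1 s\kappa_1/2}$; this is possible because $\kappa_1' > \kappa_1/2$. Take $x\neq y$ with $d_M(x,y)<r_1$, where $r_1 \lll_{\cK,n_1} 1$. Write $y=\exp_x(tv)$ with $v$ a unit vector and $t=d_M(x,y)$. Since the elements of $\supp \mu^{*n_1}$ are uniformly $C^2$-bounded, hence $C^{1,1}$ with constant depending on $\cK$ and $n_1$, a Taylor expansion in charts gives
$$d_M(gx,gy) \ge t\,\|Dg(x,v)\| - O_{\cK,n_1}(t^2).$$
Also $\|Dg(x,v)\|\ge \maxder^{-n_1}$. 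Choosing $r_1$ small makes the error term at most a small fraction of the main term, so $d_M(gx,gy)\ge \tfrac12 t\|Dg(x,v)\|$, and therefore
$$S^{n_1}_\mu\Delta_s(x,y) \le 2^s\, t^{-s}\int\|Dg(x,v)\|^{-s}\,d\mu^{*n_1} \le e^{-n_1 s\kappa_1/2}\,\Delta_s(x,y)$$
for small $s$. When $d_M(x,y)\ge r_1$, the crude bound $d_M(gx,gy)\ge \maxder^{-n_1}d_M(x,y)$ gives $S^{n_1}_\mu \Delta_s \le \maxder^{n_1 s} r_1^{-s} =: C_1$. Combining the two regimes,
$$S^{n_1}_\mu\Delta_s \le e^{-n_1 s\kappa_1/2}\Delta_s + C_1.$$

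\emph{Step 2 (iteration).} Because $S_\mu$ is a positive operator fixing constants, iterating Step 1 gives, for $n=kn_1$,
$$S^{kn_1}_\mu\Delta_s \le e^{-kn_1 s\kappa_1/2}\Delta_s + C_1\sum_{j\ge0}e^{-jn_1 s\kappa_1/2}.$$
For general $n = kn_1 + j$ with $0\le j<n_1$, apply the crude bound $S^j_\mu\Delta_s\le \maxder^{js}\Delta_s$ first. This costs a factor $\maxder^{n_1 s}$ and an extra $e^{n_1 s\kappa_1/2}$ in the exponent, both absorbed into $C$. Hence $S^n_\mu\Delta_s\le Ce^{-ns\kappa_1/2}\Delta_s + C$ with $C=C(\cK,n_0,\kappa_1,s)$. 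If $C$ is required to be independent of $s$, restrict to $s$ in a compact range and note that all the constants above stay bounded.

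\emph{Main obstacle.} The delicate point is the order of the choices in Step 1. We fix $n_1$ first, then $r_1$ so that the quadratic Taylor error is uniform over $\supp\mu^{*n_1}$, which is possible because $\cK$ is bounded in $C^2$, or in $C^{1}$ with a uniform modulus of continuity. We also need the moment bound of Lemma \ref{expM} uniformly in $x$ and $v$, which that lemma provides. The factor $2^s$ lost in the comparison is harmless, since it is absorbed by choosing $n_1$ large.
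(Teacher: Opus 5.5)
Your proposal is correct and is essentially the paper's proof. You fix a block length (your $n_1$, the paper's $k$), use the linearization $d_M(gx,gy)\geq \frac12\|Dg(x,v)\|\,d_M(x,y)$ below a distance threshold together with Lemma \ref{expM} i) and a crude Lipschitz bound above it, and then iterate using positivity of $S_\mu$; your explicit handling of $n\notin n_1\N$ fills in what the paper leaves as ``by iteration''. One bookkeeping correction: the requirement $2e^{-n_1 s\kappa_1'}\leq \frac12 e^{-n_1 s\kappa_1/2}$ forces $n_1\gtrsim (s\kappa_1)^{-1}$, so $n_1$, $r_1$ and $C$ also depend on $s$, exactly as in the paper's condition $k\ggg_{s,\kappa_1}1$. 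This is harmless, since $s$ is fixed before $C$ in the application (Lemma \ref{lem massdecay1} and Proposition \ref{prop step I gen}).
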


\begin{proof}
	Let $k\in \N$ be a parameter. By letting $r  = r(M, \cK, k) > 0$ be a small parameter, we have for  every $x, y \in M$ with $d_M(x, y) \leq r$ that there exists $v_{x,y}\in \S(T_{x}M)$  such that, for $g\in \supp( \mu^{*k})$, 
	\aryst
	d_M(g(x), g(y)) \geq \|  Dg(x, v_{x,y}) \| d_M(x, y)/2.
	\earyst
	
	Let $\kappa_{1}'=\frac{2}{3}\kappa_{1}$.  We have
	\begin{align*}
	\int  d_{M}(g(x), g(y) )^{-s} \dd\mu^{*k}(g) 
	&\leq 2^s\int_{G} \| Dg(x, v_{x,y}) \|^{-s} \dd\mu^{*k}(g) d_M(x, y)^{-s}\\
	&\leq 2^{s+1} e^{-k s \kappa'_{1}} d_M(x, y)^{-s}\\
	&\leq e^{-k s \kappa_{1}/2} d_M(x, y)^{-s}
	\end{align*}
	where the second inequality relies on Lemma \ref{expM} i) and the condition $s\lll_{\cK,n_{0}, \kappa_{1}}1$, and where the third inequality assumes $k\ggg_{s,\kappa_{1}}1$.
	
	On the other hand, when $x,y\in M$ satisfy $d_M(x, y) \geq r$, we have  for all $g\in \supp (\mu^{*k})$, 
	$$d_{M}(g(x), g(y))^{-1}=O_{\cK, k}(1).$$
	
The two previous paragraphs justify that for all $x,y\in M$, we have
		\ary\label{eq-contract}
		S_{\mu}^{k} \Delta_s \leq  e^{- k s \kappa_{1}/2 }\Delta_s + O_{\cK, k}(1).
		\eary
The stated inequality follows by iteration of \eqref{eq-contract}.
\end{proof}

We deduce that acting by $\mu$-convolution on an atom-free distribution $\nu$ on  $M$ tends to reorganize the mass of $\nu$ to ultimately create some positive dimension. 

\begin{lemma} \label{lem massdecay1}
	Let   $C, s >0$  be as in Lemma \ref{lem margulisineq}. Let $\nu$ be a finite Borel measure on $M$ of total mass at most $1$. Let $n \in \N$, $\rho,r > 0$. We have
	\aryst
	\sup_{x \in M}	(\mu^{*n} * \nu)(B_\rho(x))^2 \leq C 2^s \rho^s( e^{- s \lambda n} r^{-s} + 1 ) + \sup_{y \in M} \nu(B_{r}(y)).
	\earyst
\end{lemma}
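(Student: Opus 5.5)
The idea is to bound the square of the mass of a ball by the mass, under $(\mu^{*n}*\nu)^{\otimes 2}$ (or rather its coupled version), of the set of nearby pairs. Then we use Lemma \ref{lem margulisineq} to control the expected value of $\Delta_s$ on that set. Here $\lambda$ stands for the exponent $\kappa_{1}/2$ appearing in Lemma \ref{lem margulisineq}.

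Fix $x\in M$ and set $B=B_\rho(x)$. First, since $\mu^{*n}*\nu=\int g_\star\nu\,\dd\mu^{*n}(g)$, Cauchy--Schwarz (or Jensen) in $g$ gives
\[
(\mu^{*n}*\nu)(B)^2\le \int \nu(g^{-1}B)^2\,\dd\mu^{*n}(g)=\int\!\!\int \mathbf 1_B(gy)\mathbf 1_B(gz)\,\dd\nu(y)\dd\nu(z)\,\dd\mu^{*n}(g).
\]
This is the key coupling: we use the same $g$ for both points, so the integrand is a function of $(gy,gz)$, and the operator $S_\mu^n$ appears.

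Next, split the $\nu\otimes\nu$ integral according to whether $d_M(y,z)<r$ or not.

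On the near part, we drop the indicators and bound the contribution by $(\nu\otimes\nu)\{d_M(y,z)<r\}=\int \nu(B_r(y))\,\dd\nu(y)\le \sup_y\nu(B_r(y))$, using that the total mass of $\nu$ is at most $1$. This gives the last term of the statement.

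On the far part $d_M(y,z)\ge r$, note that $gy,gz\in B$ forces $d_M(gy,gz)<2\rho$. Hence
\[
\mathbf 1_B(gy)\mathbf 1_B(gz)\le (2\rho)^s\,\Delta_s(gy,gz).
\]
Integrating in $g$ yields $(2\rho)^s S^n_\mu\Delta_s(y,z)$. By Lemma \ref{lem margulisineq} this is at most $(2\rho)^s\bigl(Ce^{-ns\lambda}\Delta_s(y,z)+C\bigr)$, and on the far part $\Delta_s(y,z)\le r^{-s}$. Integrating against $\nu\otimes\nu$, whose mass is at most $1$, gives $C2^s\rho^s(e^{-s\lambda n}r^{-s}+1)$. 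The diagonal $y=z$ lies in the near part, so the possibly infinite value of $\Delta_s$ there is never used.

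There is no serious obstacle. The only points requiring care are the use of the same $g$ for both points in the Cauchy--Schwarz step, which is what makes the problem a question about $S_\mu$, and the bookkeeping of the diagonal just described.
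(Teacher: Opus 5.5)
Your proof is correct and follows essentially the same route as the paper's proof. You apply Cauchy--Schwarz in $g$ to pass to $\mu^{*n}*(\nu\otimes\nu)$, bound the pairs at distance less than $r$ by $\sup_y\nu(B_r(y))$, and control the remaining pairs through Markov's inequality for $\Delta_s$, Lemma \ref{lem margulisineq}, and $\Delta_s\le r^{-s}$. Your reading of $\lambda$ as $\kappa_1/2$ and your strict inequality $d_M(y,z)<r$ on the near part, which gives exactly $\int\nu(B_r(y))\,\dd\nu(y)$ with open balls, are if anything slightly more careful than the written argument.
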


\begin{proof}
	By Cauchy-Schwarz's inequality, we have 
	\aryst
	\mu^{*n} * \nu( B_\rho(x) )^2 &\leq& \int_{G}  g_*\nu( B_\rho(x) )^2 \dd\mu^{* n}(g) \\
	&=& \int_{G} g_*( \nu \otimes \nu )(B_\rho(x) \times B_\rho(x))  \dd\mu^{* n}(g)\\
	&\leq&\mu^{* n} *(\nu \otimes \nu)( \{ \Delta_s \geq (2 \rho)^{-s} \} ).
	\earyst
	We denote by $\omega_{r}$ the restriction of $\nu \otimes \nu$ to the set $\{ (x, y) \in M \times M \mid d_{M}(x, y) > r \}$. Note that $(\nu \otimes \nu-\omega_{r})(M\times M)\leq\sup_{y \in M} \nu( B_{r}(y))$, whence
	\aryst
	\mu^{* n} *(\nu \otimes \nu)( \{ \Delta_s \geq (2 \rho)^{-s} \} ) \leq  ( \mu^{* n} * \omega_{r} )( \{ \Delta_s \geq (2 \rho)^{-s} \} ) + \sup_{y \in M} \nu( B_{r}(y)).
	\earyst
	By Markov's inequality and Lemma \ref{lem margulisineq},  we have 
	\aryst
	( \mu^{* n} * \omega_{r} )( \{ \Delta_s \geq (2 \rho)^{-s} \} )  &\leq& (2 \rho)^{s} \int  S_{\mu}^n \Delta_s  \dd\omega_{r} \\ 
	&\leq& C 2^s \rho^s( e^{- s\kappa_{1} n } \int \Delta_s \dd \omega_{r}  + 1 ) \\
	&\leq& C 2^s r^s(e^{- s\kappa_{1} n } r^{- s} + 1).
	\earyst
\end{proof}

\begin{proof}[Proof of Proposition \ref{prop step I gen}] 

By Lemma \ref{lem massdecay2}, there exists $n_{1}=n_{1}(\mu,x, \rho_{0})\geq1$ such  that every atom in $\mu^{*n_{1}}*\delta_{x}=:\nu$  has mass at most $\rho_{0}$.  For $r=r(\mu,x, \rho_{0})>0$ small enough, we then have $\sup_{y \in M} \nu(B_{r}(y))<2\rho_{0}$. By Lemma \ref{lem massdecay1}, we deduce that for $n\ggg_{\mu, \rho_{0}, \cK, n_{0}, \kappa_{1}}1$, we have $\sup_{x \in M}	(\mu^{*n} * \nu)(B_\rho(x))^2 \leq  C 2^{s+1} \rho^s+2\rho_{0}$. Note the dependence in $ \cK, n_{0}$ of the lower bound for $n$ can be absorbed by that in $\mu, \kappa_{1}$. Observing that $C 2^{s+1} \rho^s+2\rho_{0}\leq \rho^{s/2}$ for $\rho_{0}\leq \rho\lll_{C,s}1$,
the proposition follows.

\end{proof}

%
%
%
%
%
%
%

		 \section{B\'enard-He's Multislicing Theorem} \label{sec BHMT}

		 We present a multislicing statement in the spirit of \cite{BH1, BH2}, but adpated to our needs. Roughly speaking, we consider a finite measure $\nu$ on $\R^d$ which has positive dimension at certain scales, and give sufficient conditions for the measure to have better dimensional properties with respect to asymmetric boxes in generic position. By generic position, we essentially mean that the partial flag of $\R^d$ which captures the geometry of the box is chosen randomly by a measure satisfying suitable projection theorems. Compared to \cite{BH2}, we allow the measure $\nu$ to be supported on a microscopic ball and have microscopic mass, as well as the side lengths of the boxes to vary. 
		 \smallskip
		 
Since $M$ is irrelevant to this section, all implicit constants are stated explicitly in the statements in this section.
		 
	\subsection{Boxes}  
		Let $d\geq2$.  We denote by $\cF$ the full flag variety of $\R^d$, in other terms it is the collection of all the tuples  of subspaces $(W_{i})_{i=1}^{d}$ such that 
		 \aryst
		 \{0\}\subsetneq  W_{1}\subsetneq \dots  \subsetneq W_{d}=\R^d, \quad \forall i, \,\dim W_{i}=  i.
		 \earyst
		 We set
		 $$ \increasing:=\{(t_{i})_{i=1}^{d} :\,  0\leq t_{1} \leq \dots  \leq t_{d}\leq 1\}.$$
		For $\cW=(W_{i})_{i=1}^{d} \in \cF$, $\bt=(t_{i})_{i=1}^{d}\in   \increasing $, and $\rho\in (0, 1)$,  we introduce the box
		 $$B_{\rho^\bt}^{\cW} :=\sum_{i=1}^{d} B^{W_{i}}_{\rho^{t_{i}}}.$$
 		 We call $\cW$ the flag (or the filtration) carrying the box $B_{\rho^\bt}^{\cW}$.   
	
		  We will consider boxes $B_{\rho^\bt}^{\cW}$ whose exponent $\bt$ satisfies certain gap and pinching conditions. For that, given $m\in \{ 1, \cdots, d-1\}$, we set  $P_{m}(d)=\{\bd=(d_{i})_{i=1}^{m+1}\in \N^{m+1}\,:\, 0<d_{1}<\dots <d_{m+1}=d \}$. For $\bd\in P_{m}(d)$, $k_{0}\in \{1, \cdots, m\}$, $ \kappa, \eta>0$, we introduce the notation 
		 $\increasing(\bd, k_{0}, \kappa, \eta)$ to be the collection of $\bt \in \increasing$ such that 
		 \begin{itemize}
		 \item $t_{d_{2}}\geq \kappa$,
		 \item $t_{d_{k_{0} +1}}-t_{d_{k_{0}}}\geq \kappa$,
		 \item for all $k\in \{  0, \cdots, m \}$, $i,j\in \{   d_{k}+1,  \cdots, d_{k+1} \}$, one has $|t_{i}-t_{j}|\leq \eta$ (where $d_{0}=0$ by convention).
		 \end{itemize} 
		 
	 \subsection{Projection theorems} 
		
		Fix an integer $d' \in \{ 1, \cdots, d-1 \}$. The following definition depicts abstractly what it means for a measure on the Grassmannian of $\Gr(d,d')$ to satisfy a {\it subcritical projection theorem}. Given $A\subseteq \R^d$ and $\rho>0$, we denote by $\cN_{\rho}(A)$ the $\rho$-covering number of $A$, i.e. the least number of open $\rho$-balls needed to cover $A$. .
		 
		 \begin{defi}  \label{Sub-crit-P} 
		 	Let $\sigma$ be a probability measure on $\Gr(d, d')$. Let $ \rho, \eps, \tau>0$.  We say $\sigma$ has the \emph{subcritical property} $\SubP$ with  parameters $(\rho, \eps,\tau)$ if  for every set $A\subseteq B^{\R^d}_{1}$, the exceptional set
		 	\begin{equation*}
		 		\begin{split}
		 			\cE := \Bigl\{\, W  \in  \Gr(d, d')  \mid  \exists A' \subseteq A \,\,&\text{ with }\,\,\cN_{\rho}(A')\geq \rho^{\eps} \cN_{\rho}(A)\\
		 			& \text{ and }\, \cN_{\rho}(P_{W^{\perp}}A')< \rho^\tau \cN_{\rho}(A)^{\frac{d-d'}{d}}\Bigr\}
		 		\end{split}
		 	\end{equation*}
		 	has measure $\sigma(\cE)\leq \rho^\eps$.
		 \end{defi}

	The quantity $d-d'$ appearing in $\cE$ should be interpreted as $\dim W^\perp$, i.e. the rank of the  projector $P_{W^{\perp}}$.	 Roughly speaking,   $\SubP$ requires that for any fixed $A\subseteq B^{\R^d}_{1}$, for most subspaces $W$ chosen by $\sigma$, for every rather large subset $A'$ of $A$, the orthogonal projection of $A'$ to $W^\perp$ has covering number at least $\rho^\tau \cN_{\rho}(A)^{\frac{\dim W^\perp}{d}}$. Here the parameter $\tau$ expresses a dimensional loss compared to the heuristic that $\cN_{\rho}(P_{W^{\perp}}A)\geq \cN_{\rho}(A)^{\frac{\dim W^\perp}{d}}$ (for which equality is achieved when $A$ is a ball).

		 The following is a useful criterion for verifying the subcritical property.

		 \begin{defi} 
		 	Let $\sigma$ be a probability measure on $\Gr(d, d')$.  Let $r, c > 0$. 
		 	We say that $\sigma$ has the \emph{non-concentration property $\NCSub
		 $} with parameters $(r, c)$ if the following holds.
		 
		 For every non-zero linear subspace $V \subseteq \R^d$, setting  
		 \aryst
		 \Sigma_{d'}(V) = \{ W' \in \Gr(d, d') \mid  \dim(V\cap W') / \dim(V) > d' / d  \},
		 \earyst
		  we have 
		 	\aryst
		 	\sigma(  W \in \Gr(d, d') \mid  {\rm dist}( W, \Sigma_{d'}(V) ) < r   ) <  r^{c}.
		 	\earyst
		 \end{defi} 
		 
		 The collection $\Sigma_{d'}(V)$ is called the $d'$-dimensional \emph{constraining Schubert variety} at $V$. Avoiding constraining Schubert varieties is a necessary condition for a measure to satisfy a subcritical projection theorem with  small dimension loss. The following result, extracted from \cite[Theorem 4.1]{BH2}, claims it is also sufficient.
		  
		 \begin{thm}  \label{lem NC-toS-} 
 		 		Let $c, \eps, \rho \in (0, 1/10)$ such that $\eps \lll_{d,c}1$ and $\rho\lll_{d,c, \eps}1$.  
		 	  If $\sigma$ satisfies $\NCSub$ with parameters $(\rho^{\sqrt{\eps}}, c)$, then $\sigma$ has the subcritical property $\SubP$ with  parameters $(\rho, \eps, D  c^{-1} \sqrt{\eps})$  for some   $D=  O_{d}(1)$.
		 \end{thm}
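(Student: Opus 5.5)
This is \cite[Theorem 4.1]{BH2}, stated in the present normalization. I would deduce it directly from that reference after matching definitions: the angle and Grassmannian metrics agree up to power and constant, and the covering numbers $\cN_\rho$ match as well. If a self-contained argument is wanted, I would follow a multiscale entropy scheme, outlined below.

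\emph{Step 1: single-scale linear algebra.} Let $A_0\subseteq B_1^{\R^d}$ be a set that is essentially one $r$-separated configuration at a single scale, with $r=\rho^{\sqrt\eps}$. Use a John-ellipsoid or maximal-volume-simplex argument to find a subspace $V$ that approximately carries the ``large'' directions of $A_0$. Then show that if $\mathrm{dist}(W,\Sigma_{d'}(V))\geq r$, the projection $P_{W^\perp}$ restricted to $V$ loses at most the proportion $d'/d$ of the dimension of $V$, up to polynomial factors in $r$. Consequently
\[
\cN_r(P_{W^\perp}A_0)\geq r^{O_d(1)}\cN_r(A_0)^{(d-d')/d}.
\]
The $\NCSub$ hypothesis with parameters $(r,c)$ then says that the bad $W$ for this single piece have $\sigma$-mass at most $r^{c}$.

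\emph{Step 2: multiscale decomposition.} Split $[\rho,1]$ into $N\approx 1/\sqrt\eps$ dyadic blocks of ratio $r$. By pigeonholing, which costs $\rho^{O(\eps)}$ in cardinality, replace $A$ by a regular subset whose branching number is constant at each block. Bound the entropy of the projection below by the sum of the conditional entropies at each block. At each block, apply Step 1 to the rescaled local pieces and average with Fubini over $W\sim\sigma$. Call $W$ bad at a block if it is bad for a proportion at least $r^{c/2}$ of the pieces. Markov's inequality gives $\sigma(\text{bad at a block})\leq r^{c/2}$, and a union bound over the $N$ blocks gives a total of at most $N\rho^{c\sqrt\eps/2}$.

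\emph{Step 3: passing to arbitrary subsets $A'$.} This is the main obstacle. The exceptional set quantifies over all $A'\subseteq A$ with $\cN_\rho(A')\geq\rho^{\eps}\cN_\rho(A)$, so a good $W$ must work uniformly in $A'$. I would handle this by noting that a large $A'$ must meet a $\rho^{O(\eps)}$-proportion of the pieces at most blocks. On the good pieces, the single-scale lower bound of Step 1 survives passing to such dense subsets of pieces, at the cost of factors $r^{-O(1)}$ per block. Summing the losses over the $N$ blocks gives a total loss exponent of $O_d(\sqrt\eps)+O_d(\eps)\cdot N/c$, which is of order $Dc^{-1}\sqrt\eps$. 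The exceptional mass is at most $\rho^{\eps}$ provided $\eps\lll_{d,c}1$ and $\rho\lll_{d,c,\eps}1$. The delicate point is to organize Steps 2 and 3 so that the subspace $V$ in Step 1 is chosen for the piece and does not depend on $W$. Otherwise the non-concentration hypothesis cannot be applied with a fixed $V$.
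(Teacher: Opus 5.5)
Your main route, invoking \cite[Theorem 4.1]{BH2} after reconciling the Grassmannian-distance and angle normalizations and absorbing the resulting constants into $D=O_d(1)$ and the smallness conditions on $\eps,\rho$, is exactly what the paper does, since it gives no independent proof of this statement. The optional self-contained sketch should not be relied on as written: a single-block bound with loss $r^{O_d(1)}$ is essentially vacuous because $\cN_r(A_0)^{(d-d')/d}\lesssim r^{-(d-d')}$; over $N\approx\eps^{-1/2}$ blocks of ratio $r=\rho^{\sqrt\eps}$ such losses compound to $\rho^{O_d(1)}$ rather than the claimed $\rho^{O(c^{-1}\sqrt\eps)}$, which would require a per-block loss of $r^{O(\sqrt\eps/c)}$, and one John subspace per block (which ignores structure at intermediate scales inside the block) cannot give that in Step~1.
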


		 We now turn to supercritical estimates. Given $\alpha, \tau, \rho>0$, and $A\subseteq \R^d$, we set   
		 \begin{equation} \label{notation-cEaed} 
		 	\begin{split} 
		 		\cE^{(\alpha, \tau)}_{\rho}(A) := \Big \{\, W\in \Gr(d, d') \mid  \exists A' \subseteq A \,\,&\text{ with }\,\,\cN_{\rho}(A')\geq \rho^\tau \cN_{\rho}(A)\\
		 		& \text{ and }\, \,\cN_{\rho}(P_{W^{\perp}}A') < \rho^{- \alpha (d-d')  -\tau}\Big \}.
		 	\end{split}
		 \end{equation}
		 Again the quantity 	$d-d'$ appearing here should be interpreted as the rank of the  projector $P_{W^{\perp}}$. 
		 
		 The following definition depicts abstractly what it means for a measure on $\Gr(d,d')$ to satisfy a {\it supercritical projection theorem}.
		 
		 \begin{defi}  \label{SAP} 
		 	Let $\sigma$ be a  probability measures on $\Gr(d, d')$. Let $\rho, \varkappa, \tau>0$.  We say $\sigma$ has the \emph{supercritical  property} $\SupP$ with parameters $(\rho, \varkappa,\tau)$ if the following holds.
		 	
		 	Let $A\subseteq B^{\R^d}_{1}$ be any non-empty subset satisfying for some
		 	$\alpha\in[\varkappa, 1-\varkappa]$, for every $r \geq \rho$,
		 	\begin{equation}\label{nc-dim-dalpha}
		 		\sup_{v \in \R^d} \cN_\rho(A \cap (B^{\R^d}_r + v)) \leq \rho^{-\tau} r^{d \alpha } \cN_\rho(A).
		 	\end{equation}
 Then we have
		 	$$\sigma(\cE^{(\alpha, \tau)}_{\rho}(A)) \leq \rho^\tau.$$  
		 \end{defi}

				We record a useful criterion for verifying the supercritical property.

		 \begin{defi}
 	Let $\sigma$ be a probability measure on $\Gr(d, d')$.  Let $\rho,  \eps, c   > 0$. 
 	We say that $\sigma$ has the \emph{non-concentration property} $\NCSup$ with parameters $(\rho, \eps, c)$ if  for every $V \in \Gr(d, d-d')$ we have 
		 	\aryst
		 	\sigma( W \in \Gr(d, d') \mid  \measuredangle(V, W) < r  ) <  \rho^{- \eps} r^{c}, \quad r \geq \rho.
		 	\earyst
		 \end{defi}
		 
		 The following is due to Bourgain \cite[Theorem 5]{B} for projectors of rank $1$ (i.e. $d'=d-1$) and to He  \cite[Theorem 1]{H} in higher rank (i.e. $d'\leq d-2$).
		 \begin{thm} \label{lem NCtoS}
		 	Let $\varkappa,c, \eps, \rho>0$ with $\rho, \eps \lll_{d, \varkappa, c} 1$. If $\sigma$  satisfies $\NCSup$ with parameters $(\rho, \eps, c)$, then
			  $\sigma$ has the supercritical property $\SupP$ with  parameters $(\rho, \varkappa, \eps)$.
		 \end{thm}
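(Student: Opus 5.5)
The plan is not to reprove the projection theorem. Instead I would reduce the statement to the discretized projection theorems of Bourgain \cite[Theorem 5]{B} (for $d'=d-1$) and He \cite[Theorem 1]{H} (for $d'\leq d-2$). The work is therefore a dictionary check: the hypotheses and conclusion there must match $\NCSup$ and $\SupP$ as defined above, with parameters that depend only on $(d,\varkappa,c)$.

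\emph{Step 1: the non-concentration hypothesis.} In \cite{B,H} the measure $\sigma$ on $\Gr(d,d')$, or on $\Gr(d,d-d')$ after passing to orthogonal complements $W\mapsto W^\perp$, is required to satisfy the following: for every $\rho\leq r\leq 1$ and every subspace $V$ of complementary dimension, the $\sigma$-mass of the set of $W$ "$r$-close to intersecting $V$ nontrivially" is at most $\rho^{-\eps}r^{c}$.

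Since $\dim V+\dim W=d$, the quantity $\measuredangle(V,W)<r$ is comparable, up to constants and a power depending only on $d$, to the distance from $W$ to the Schubert variety $\{W' : W'\cap V\neq 0\}$. The same holds for $W^\perp$ versus $V^\perp$. Hence $\NCSup$ with parameters $(\rho,\eps,c)$ gives their hypothesis with parameters $(\rho, O_d(\eps), c/O_d(1))$. This costs only a harmless change of $c$ and of the implicit smallness conditions.

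\emph{Step 2: the set $A$.} Condition \eqref{nc-dim-dalpha} is the usual $(\rho, d\alpha)$-Frostman-type non-concentration for a $\rho$-discretized set at all scales $r\geq\rho$, with loss $\rho^{-\tau}$. I would pass from covering numbers to a maximal $\rho$-separated subset, which changes the counts by $O_d(1)$ factors. Their theorem then gives the following: outside an exceptional set of $W$ of $\sigma$-measure at most $\rho^{\eps_1}$, every subset $A'\subseteq A$ with $\cN_\rho(A')\geq \rho^{\eps_1}\cN_\rho(A)$ satisfies
\[\cN_\rho(P_{W^\perp}A')\geq \rho^{-\alpha(d-d')-\eps_1}.\]
Here $\eps_1=\eps_1(d,\varkappa,c)>0$, and the theorem holds uniformly for $\alpha\in[\varkappa,1-\varkappa]$.

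The uniformity over all large subsets $A'$ is built into Bourgain's formulation, which concerns all large subsets, and into He's. If a version only treats $A$ itself, I would recover the statement for large $A'$ directly. A subset $A'$ with $\cN_\rho(A')\geq\rho^{\tau}\cN_\rho(A)$ still satisfies \eqref{nc-dim-dalpha}, since $\cN_\rho(A'\cap(B_r+v))\leq \rho^{-2\tau}r^{d\alpha}\cN_\rho(A')$. However, the exceptional set would then depend on $A'$. To avoid this I would rely on the "robust" form, in which the set of bad $W$ is controlled for the worst $A'$.

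\emph{Step 3: matching parameters.} Finally choose $\eps\leq \eps_1/O_d(1)$ small, and require $\rho\lll_{d,\varkappa,c}1$ so that $O_d(1)$ constant factors are absorbed in powers $\rho^{\pm\eps}$. With $\tau=\eps$, the bad set $\cE^{(\alpha,\eps)}_\rho(A)$ from \eqref{notation-cEaed} is then contained in the exceptional set of Step 2, so $\sigma(\cE^{(\alpha,\eps)}_\rho(A))\leq\rho^{\eps}$. This is exactly $\SupP$ with parameters $(\rho,\varkappa,\eps)$.

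The main obstacle is Step 1 together with the uniformity in Step 2. One must check carefully that the angle $\measuredangle(V,W)$ used here matches the Schubert-type non-concentration in He's higher-rank theorem, up to polynomial equivalence. One must also check that his conclusion is stated in the robust form "for all large $A'$" with a single exceptional set. If the latter fails, I would rederive it by the standard exhaustion argument: repeatedly remove large subsets with small projection and count.
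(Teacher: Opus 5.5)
Your proposal is correct and takes the same route as the paper, which gives no argument of its own and simply attributes the statement to Bourgain's Theorem~5 for rank-one projections ($d'=d-1$) and to He's Theorem~1 for $d'\le d-2$. Your dictionary is exactly the translation implicit in that citation: you compare $\measuredangle(V,W)$ with the transversality (Schubert-distance) condition used there, up to a $d$-dependent power and after passing to orthogonal complements; you read \eqref{nc-dim-dalpha} at $r=\rho$ as the size bound $\cN_\rho(A)\geq\rho^{\tau-d\alpha}$, together with a weak non-concentration of exponent at least $d\varkappa$; and you use the robust form, with a single exceptional set valid for all large $A'$.
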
 
		  
		  We end this subsection by observing that the non-concentration condition  $\NCSup$ in the supercritical theorem is stronger than its subcritical counterpart $\NCSub$.
		  
		 \begin{lemma} \label{lem NC+toNC-}
		 	Let $c, \eps, \rho>0$ with $\rho \lll_{d} 1$ and $4\eps<c$. 	If $\sigma$ satisfies $\NCSup$ with parameters  $(\rho, \eps, c)$,
		 	then $\sigma$ satisfies $\NCSub$ with parameters   
		 	 $(\rho, c / 4)$.
		 \end{lemma}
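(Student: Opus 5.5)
The plan is to reduce the constraining Schubert variety $\Sigma_{d'}(V)$ to a single angle condition against one fixed subspace $U\in\Gr(d,d-d')$. Then $\NCSup$, applied to that $U$ at scale comparable to $\rho$, gives the bound.

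\emph{Step 1 (geometric reduction).} Fix a nonzero subspace $V\subseteq\R^d$ and set $k=\dim V$. I construct $U\in\Gr(d,d-d')$, depending only on $V$, such that every $W'\in\Sigma_{d'}(V)$ satisfies $W'\cap U\neq\{0\}$. There are two cases.
\begin{itemize}
\item If $k\le d-d'$, take any $U\supseteq V$. For $W'\in\Sigma_{d'}(V)$ we have $\dim(V\cap W')>d'k/d>0$, so $W'$ contains a nonzero vector of $V\subseteq U$.
\item If $k>d-d'$, take any $U\subseteq V$ of dimension $d-d'$. Inside $V$ we then have
$$\dim(V\cap W')+\dim U> \frac{d'k}{d}+d-d' = k+(d-d')\Big(1-\frac kd\Big)\ge k.$$
Hence $(V\cap W')\cap U\neq\{0\}$.
\end{itemize}
In both cases $\measuredangle(U,W')=0$ for every $W'\in\Sigma_{d'}(V)$.

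\emph{Step 2 (from distance to angle).} I use the elementary comparison that $\measuredangle(U,W)\le C\,\dist(W,W')+\measuredangle(U,W')$ for some $C=O_d(1)$. This holds because the canonical metric on the compact Grassmannian controls the displacement of unit vectors. Concretely, a unit vector $w'\in W'\cap U$ lies within $O_d(\dist(W,W'))$ of some unit vector of $W$. Consequently, if $\dist(W,\Sigma_{d'}(V))<\rho$, then $\measuredangle(U,W)<C\rho$.

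\emph{Step 3 (apply $\NCSup$).} Since $C\rho\ge\rho$, the $\NCSup$ hypothesis with $r=C\rho$ gives
$$\sigma\big(\dist(W,\Sigma_{d'}(V))<\rho\big)\le \sigma\big(\measuredangle(U,W)<C\rho\big)<\rho^{-\eps}C^c\rho^{c}.$$
Because $4\eps<c$, the right-hand side is at most $C^c\rho^{3c/4}$. This is at most $\rho^{c/4}$ as soon as $C^c\rho^{c/2}\le 1$. Taking $c\le 1$ without loss of generality, this reduces to $\rho\le C^{-2}$, which is guaranteed by $\rho\lll_d 1$ since $C=O_d(1)$.

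The main (mild) obstacle is Step 1: one must verify that the strict inequality defining $\Sigma_{d'}(V)$ forces a nontrivial intersection with a single fixed $(d-d')$-plane, which needs the two-case dimension count above. Step 2 is a routine comparison between the Grassmannian metric and the angle function, up to constants depending only on $d$.
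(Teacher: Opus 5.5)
Your proof is correct and follows the paper's argument: you choose a single $(d-d')$-plane $U$ that meets every $W'\in\Sigma_{d'}(V)$ nontrivially, convert $\dist(W,W')<\rho$ into $\measuredangle(U,W)=O_d(\rho)$, and apply $\NCSup$ at scale $O_d(\rho)$ using $4\eps<c$. Your two-case choice of $U$ is the paper's $U\subseteq V$ with $\dim U=\min(\dim V,d-d')$, made slightly more careful so that $\dim U=d-d'$ exactly and $\NCSup$ applies literally. The reduction to $c\le 1$ is unnecessary, and it would weaken the exponent if $c>1$, because $(C\rho^{1/2})^c\le 1$ holds if and only if $C\rho^{1/2}\le 1$, for every $c>0$.
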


		 \begin{proof}
		 Let $V\in \Gr(\R^d)\smallsetminus \{0\}$. We will give a lower bound for the distance between $\Sigma_{d'}(V)$ and most $W\sim \sigma$.

		 Fix an arbitrary subspace $U\subseteq V$ with 
		 $$\dim U= \min (\dim V, \,d-d').$$
Given any $W'\in  \Sigma_{d'}(V)$, observe that	$\dim W'\cap V+ \dim U >\dim V$, thus
		   $W'\cap U\neq \{0\}$. In particular, if $\dist(W,W')< r$, then every line of $W'$ is  $O_{d}(r)$-close to a line in $W$, which leads to
		 $$ \measuredangle(W,U)=O_{d}(r).$$ 
		 It follows that
		 \aryst
		 \sigma (W   \mid \dist(W,\Sigma_{d'}(V)) <r ) \leq \sigma (W \mid \measuredangle(W,U)  =   O_{d}(r) )\leq \rho^{-\eps} (O_{d}(r))^c
		 \earyst
		 where the last upper bound relies on the  $\NCSup$ assumption. By taking $r=\rho$, and assuming $\rho \lll_{d} 1$; $4\eps<c$, this finishes the proof.		
		 \end{proof}

		\subsection{Multislicing theorem} 
		We present the main result of the section, Theorem \ref{thm Multislicing0}. Before giving the precise statement, we describe the heuristic. 
		 
		 First, we consider a random box in $\R^d$, of the form $ (B^{\cW_{\theta}}_{\rho^{\bt_{\theta} }})_{\theta\sim \sigma}$ where $\theta$ is a random variable with distribution $\sigma$, $(\cW_{\theta})_{\theta\sim \sigma}$ is a random flag, $\rho\in(0, 1)$ a scale, and $(\bt_{\theta})_{\theta\sim \sigma}$ prescribes random side length exponents. We assume the set of exponents $\bt_{\theta}$ all belong to a  subset  $\square'\subseteq \increasing$, in which every $\bt=(t_{i})_{i=1}^d$  is pinched along a prescribed subsequence $t_{d_{1}}<\dots <t_{ d_{m+1}}$,  with $\kappa$-gap between $t_{d_{k_{0}}}$ and $t_{d_{k_{0}+1}}$ for some   $k_{0}\in \{1, \cdots, m\}$, and second minimum $t_{d_{2}}$ bounded from below by $\kappa$. For all $k \in \{1, \dots,m\}$, we assume the random subspace $(\cW_{\theta,d_{k}})_{\theta\sim \sigma}$ satisfies a subcritical projection theorem, and also a supercritical projection theorem for $k=k_{0}$. 
		 
		 Second, we consider a measure $\nu$ on $B^{\R^d}_{1}$ which has normalized dimension $\alpha\in (0, 1)$ at scales that appear as side length of the random box, i.e. the $\rho^{t_{i}}$ where $\bt\in \square'$, and also on the interval $[\rho^{t_{d_{k_{0}+1}}},\rho^{t_{d_{k_{0}}}}]$ as this is needed to exploit the supercritical assumption on $(\cW_{\theta,d_{k_{0}}})_{\theta\sim \sigma}$. We do not assume that $\nu$ is a probability measure. In fact, in the context of our application to random walks, the theorem below  will be applied to a measure $\nu$ supported on a ball of radius $\rho^{1/3}$ and of mass smaller than $\rho^{c_{\mu}/3}$ where $c_{\mu}>0$ depends on the measure $\mu$ driving the  walk. We let $\zeta$ be a parameter so that $\nu$ is supported on a ball of radius $\zeta$ in which all the random boxes $B^{\cW_{\theta}}_{\rho^{\bt_{\theta} }}$ fit as well. 
		 
		 The conclusion is that for most parameters $\theta$ with respect to  $\sigma$, up to throwing away a small part of the measure  $\nu$ (relatively to $\nu(\R^d) + \zeta^d$  ), the measure $\nu$ has improved dimension with respect to $B^{\cW_{\theta}}_{\rho^{\bt_{\theta} }}$ and all its additive translates in $\R^d$.

		  \begin{thm}[Non-homogeneous \& local supercritical multislicing] \label{thm Multislicing0}
		 	Let $d>m \geq1$ be integers, $\bd\in P_{m}(d)$, $k_{0}\in \{1, \cdots, m\} $. 
			Consider parameters    $\rho, \eta, \eps, \eps', \varkappa, \kappa, \tau, \tau' \in (0, 1/10)$.  Let $\square'\subseteq \increasing(\bd, k_{0}, \kappa, \eta)$.   
		 	
		 	Let $(\Theta, \sigma)$ be a measurable space $\Theta$ endowed with a probability measure $\sigma$.  Consider a measurable familiy of flags $(\cW_{\theta} = ( W_{\theta, i} )_{i = 1}^{d} )_{\theta\in \Theta} \in \cF^{\Theta}$ and  of exponents $(\bt_{\theta} = (t_{\theta, i})_{i = 1}^{d} )_{\theta \in \Theta} \in \square'^{\Theta}$.
		 	We assume the following is true:
		 	\enmt
		 	\item for every $\bt\in \square'$, every $k\in \{1, \dots, m \}$, the distribution of $(W_{\theta, d_k})_{\theta\sim \sigma}$  satisfies $\SubP$ with parameters $( \rho^{t_{d_{k+1}}},    \eps, \tau)$, 
		 	\item for  every $\bt\in \square'$, the distribution of $(W_{\theta, d_{k_{0}} })_{\theta\sim \sigma}$  satisfies $\SupP$ with parameters $(\rho^{t_{d_{k_{0}+1}}-t_{d_{k_{0}}}}, \varkappa,\tau')$. 
		 	\eenmt

		 	Let $\nu$ be a Borel measure on $\R^d$ of mass  at most $1$,    and such that for some $\alpha\in [\varkappa, 1-\varkappa,]$,  for all  $\bt\in \square'$,     all   $r \in \{\rho^{t_{d_k}}\}_{k=1}^{m+1}\cup [  \rho^{t_{d_{k_{0}+1}}}, \rho^{t_{d_{k_{0}}}} ]$, all $v \in \R^d$,  we have
		 	$$\nu(B^{\R^d}_{r} + v)\leq\rho^{-\eps} r^{d\alpha}.$$
		 	
		 	Let $\zeta\in(0,1]$ such that $\zeta \geq \diam (\supp \nu) $ and $\zeta \geq\max_{t_{1}\in \square'}\rho^{t_{1}}$.
		 	
		 	If $\eps' \lll  \eps$, and $\eta, \eps, \tau \lll_{d, \kappa, \tau'}1$, and $\rho\lll_{d, \kappa, \tau', \eps} 1$, then there is an event $\cE \subset \Theta$ such that $\sigma(\cE) \leq \rho^{\eps'}$, and for each $\theta \in \Theta \setminus \cE$, there is a set $F_{\theta} \subseteq \R^d$ satisfying  $\nu( F_{\theta}) \leq \rho^{\eps'}(\nu(\R^d)+\zeta^{d})$, such that  for every $v \in \R^d$,  
	 	$$( \nu|_{\R^d \smallsetminus F_{\theta}} )\left( B^{\cW_{\theta}}_{\rho^{\bt_{\theta} }}+ v \right) \leq \rho^{\kappa \tau'/( 200 d) }   \leb \left(B^{\cW_{ \theta}}_{\rho^{\bt_{ \theta}}}\right)^{\alpha}.$$
		 \end{thm}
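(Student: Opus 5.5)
The plan is to reduce Theorem \ref{thm Multislicing0} to the homogeneous supercritical multislicing theorem of \cite{BH2} via a rescaling and a discretization, and then adapt the multiscale induction of \cite{BH1, BH2} to non-constant exponents. First, by the bound $\zeta\geq \diam(\supp \nu)$ we place $\supp\nu$ inside a ball of radius $\zeta$. We then discretize $\nu$ at scale $\rho$: we replace it by a union of $\rho$-cubes of roughly constant mass. This is done by dyadic pigeonholing over $O(|\log\rho|)$ mass levels. We discard the levels carrying total mass at most $\rho^{\eps'}(\nu(\R^d)+\zeta^d)$. This threshold is where the term $\zeta^d$ appears, since the number of $\rho$-cubes meeting the support is $\lesssim \zeta^d\rho^{-d}$. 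The main step is to show that for $\theta$ outside a small exceptional set, each remaining piece $A$ (a union of cubes) satisfies, for every $v$,
\[
\cN_\rho\bigl(A\cap (B^{\cW_\theta}_{\rho^{\bt_\theta}}+v)\bigr)\leq \rho^{\kappa\tau'/(100d)}\,\leb\bigl(B^{\cW_\theta}_{\rho^{\bt_\theta}}\bigr)^{\alpha}\,\cN_\rho(A)\,\rho^{-d}\,\nu\text{-normalisation}.
\]
Summing over the $O(|\log\rho|)$ levels then costs only a logarithmic factor, which is absorbed into the gain.

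For a fixed $\theta$ we slice the box along the flag $W_{\theta,d_1}\subset\dots\subset W_{\theta,d_{m+1}}$, using the pinching condition. Up to a loss of $\rho^{-O(\eta)}$, the box $B^{\cW_\theta}_{\rho^{\bt_\theta}}$ is comparable to $\sum_k B^{W_{\theta,d_k}}_{\rho^{t_{d_k}}}$, so only the coarse flag indexed by $\bd$ matters. We then write the box count as a telescoping product over $k$. The $k$-th factor counts fibres of the projection $P_{W_{\theta,d_k}^\perp}$ at scale $\rho^{t_{d_{k+1}}}$ inside a translate of $B_{\rho^{t_{d_k}}}$. For each $k\neq k_0$ we use hypothesis (1), the subcritical property $\SubP$ at scale $\rho^{t_{d_{k+1}}}$. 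Together with the dimension bound on $\nu$ at the scales $\rho^{t_{d_k}}$, it gives the expected count $\rho^{-(d-d_k)\alpha(t_{d_{k+1}}-t_{d_k})}$ up to a loss $\rho^{-\tau-O(\eps)}$. For $k=k_0$ we rescale the ball $B_{\rho^{t_{d_{k_0}}}}$ to unit size. The hypothesis on $\nu$ over the whole interval $[\rho^{t_{d_{k_0+1}}},\rho^{t_{d_{k_0}}}]$ then gives exactly the non-concentration condition \eqref{nc-dim-dalpha} at relative scale $\rho^{t_{d_{k_0+1}}-t_{d_{k_0}}}$. Hypothesis (2), the supercritical property $\SupP$, then yields an extra gain $\rho^{-\tau'(d-d_{k_0})}$, which is of size at least $\rho^{-\tau'\kappa}$ in absolute scale because $t_{d_{k_0+1}}-t_{d_{k_0}}\geq\kappa$. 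The assumption $t_{d_2}\geq\kappa$ makes sure that the first slice is nondegenerate, so that the counts are nontrivial.

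The main obstacle is the quantifier order. The sets $\cE$ in $\SubP$ and $\SupP$ depend on the set $A$, so they depend on the slices and hence on $\theta$ and on the translate $v$. The paper's hypotheses instead give non-concentration uniformly in $\bt\in\square'$. To deal with this, I would follow the ``for most $\theta$, for all large subsets $A'$'' structure built into the definitions. For each $k$, the exceptional set $\cE$ is taken for the fixed projected set (the image of the whole of $A$ at the previous level), not for the individual slices. The freedom over subsets $A'$ then handles all fibres at once: a fibre-counting argument shows that if many fibres are too concentrated, their union is a large $A'$ with small projection. Since $\bt_\theta$ varies with $\theta$, I would discretize $\square'$ into $O(\eta^{-d})$ cells and apply the hypotheses for each representative $\bt$. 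The variation within a cell costs $\rho^{-O(\eta)}$, and the union bound over cells costs a constant factor. The final exceptional set $\cE$ is the union over $k$, cells and mass levels, with total measure $\lesssim|\log\rho|\,\eta^{-d}\rho^{\eps}\leq\rho^{\eps'}$. For $\theta\notin\cE$ we let $F_\theta$ be the union of the discarded levels and of the exceptional $A'$. The gain $\rho^{\kappa\tau'/(200d)}$ survives provided $\eta,\eps,\tau\lll\kappa\tau'$, which is exactly the hierarchy of parameters assumed in the statement.
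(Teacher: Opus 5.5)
Your outline announces a reduction to \cite[Theorem 3.4]{BH2}, but what you actually describe is a re-derivation of the multislicing estimate itself, and that core step does not go through as sketched.

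\begin{itemize}
\item \textbf{Quantifier problem.} Your fix is not admissible. $\SubP$ and $\SupP$ bound the $\sigma$-measure of the exceptional set of a set $A\subseteq B^{\R^d}_1$ chosen \emph{before} $\theta$. ``The image of $A$ at the previous level'' lives in $W_{\theta,d_{k-1}}^\perp$ and depends on $\theta$, so neither hypothesis says anything about it.
\item \textbf{The step $k=k_0$.} The same problem occurs here. You apply $\SupP$ to rescaled balls of radius $\rho^{t_{d_{k_0}}}$; there are $\rho^{-O(1)}$ of them, so no union bound can be afforded. A Fubini-type averaging argument is needed, and your sketch does not supply one.
\item \textbf{The non-concentration claim.} It is false in general that the hypothesis on $\nu$ over $[\rho^{t_{d_{k_0+1}}},\rho^{t_{d_{k_0}}}]$ gives exactly \eqref{nc-dim-dalpha} after rescaling. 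Condition \eqref{nc-dim-dalpha} is normalised by $\cN_\rho(A)$. It therefore only follows for balls carrying nearly maximal mass $\approx\rho^{d\alpha t_{d_{k_0}}}$, and you do not treat the others. This is exactly the difficulty created by a measure of microscopic support and mass.
\item \textbf{The term $\zeta^d$.} Your explanation does not identify its source. The cube count only bounds the number of dyadic mass levels, and that can be done with a threshold relative to $\nu(\R^d)$ alone.
\item \textbf{Discretisation scale.} Discretising $\square'$ at scale $\eta$ is not allowed. $\eta$ may be arbitrarily small while the threshold on $\rho$ may not depend on it, so the $O(\eta^{-d})$ cells cannot be absorbed.
\end{itemize}

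The missing idea is a translation trick that lets you use \cite[Theorem 3.4]{BH2} (case $\eta=0$, $\zeta=1$, a single exponent) as a black box. A dilation by $\zeta^{-1}$ would not work, because $\SubP$ is only assumed at the absolute scales $\rho^{t_{d_{k+1}}}$. The paper argues as follows.
\begin{itemize}
\item If $\nu(\R^d)<\rho^{\eps'}\zeta^d$, take $F_\theta=\R^d$.
\item Otherwise, place $\lesssim\zeta^{-d}$ translates $\nu_j$ of $\nu$ on $2\zeta$-separated balls inside $B^{\R^d}_1$, so that $\tilde\nu=\sum_j\nu_j$ has mass in $[\rho^{2\eps'},1]$. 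Every relevant scale is at most $\rho^{t_1}\leq\zeta$, so a ball at such a scale meets at most one copy, and $\tilde\nu$ keeps the non-concentration hypothesis.
\item Apply the black box to $\tilde\nu$ with the collapsed exponent $\obt$, using your pinching comparison at a cost $\rho^{-O(d\eta)}$, with exceptional bound $\rho^{3\eps'}$.
\item By pigeonhole, choose $j$ with $\nu_j(\tilde F_\theta)\leq\rho^{\eps'}\nu_j(\R^d)$, and translate back.
\end{itemize}
This turns the absolute exceptional bound of \cite{BH2} into a relative one, and it is exactly where $\nu(\R^d)+\zeta^d$ comes from.

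For varying $\bt_\theta$, use a maximal $\eps$-separated subset of $\square'$ instead. Its $L\lesssim_d\eps^{-d}$ points cost only $\rho^{-2d\eps}$ in the box comparison, which is absorbed since $\eps\lll_{d,\kappa,\tau'}1$. They give $\sigma(\cE)\leq L\rho^{\eps'}\leq\rho^{\eps'/2}$.
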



			
			\bigskip	 
		 \begin{proof} 
		  The case where the pinching parameter $\eta=0$, the support parameter $\zeta=1$, and  $\square'$ is a singleton (i.e. one considers a single exponent) is due to \cite[Theorem 3.4]{BH2},  with slightly better dimensional gain  ($\rho^{\kappa \tau'/( 100 d) } $ instead of $\rho^{\kappa \tau'/( 200 d)}$). Our first step is to derive  Theorem \ref{thm Multislicing0} for arbitrary $\eta, \zeta$, while still assuming that $\square'$ is a singleton. This is done by tessalating $B^{\R^d}_1$ with copies of $\nu$.  Afterward, we  derive  Theorem \ref{thm Multislicing0} in full generality  by discretizing $\square'$ to reduce to the singleton case. 
		  
		  Note that throughout  the proof we may assume that $\rho$  is small enough in terms of $\eps'$ as well (not just $d, \kappa, \tau', \eps$). This is because if the conclusion of theorem holds for a given value of $\eps'$, then it holds for all smaller values.

		 \bigskip
		 \noindent \underline{Step 1}: \emph{If $\square'$ is a singleton $\square'=\{\bt\}$ then Theorem \ref{thm Multislicing0} holds, with better dimensional gain $\rho^{\kappa \tau'/( 150 d) } $ instead of $\rho^{\kappa \tau'/( 200 d) }$.}
		 
%
		
		 One may assume $\nu(\R^d)\geq \rho^{\eps'}\zeta^{d}$ otherwise the result is trivial (taking $\cE=\emptyset$, $F_{\theta}=\R^d$). Let $Q$ be a ball of radius $\zeta$ such that $\supp (\nu) \subseteq Q$. Consider a finite family $(w_{j})_{j\in J}\in (\R^d)^J$ such that the translates $Q+ w_{j}$ are $2\zeta$-separated, included in $B^{\R^d}_{1}$, set $\nu_{j}$ the measure on $Q+w_{j}$ obtained by pushforward of $\nu$ under $v \mapsto v+w_{j}$. Provided $\rho^{\eps'}\lll_{d}1$, the  bounds  $1\geq \nu(\R^d)\geq \rho^{\eps'}\zeta^{d}$ allow to choose the family $(w_{j})_{j\in J}$ so that $\tilde\nu:=\sum_{j\in J}\nu_{j}$ satisfies
		 $$1\geq \tilde\nu(\R^d)\geq \rho^{2\eps'}. $$
		 Observe that the measure $\tilde\nu$ still satisfies the required non-concentration assumption: By the $2\zeta$-sepratation assumption on the $(Q+w_{j})_{j}$ and the condition $ \rho^{t_{1}}\leq \zeta$, we have for all $r \in \{\rho^{t_{d_k}}\}_{k=1}^{m+1}\cup [  \rho^{t_{d_{k_{0}+1}}}, \rho^{t_{ d_{k_{0}}}} ]$, all $v \in \R^d$,  we have
	$$\tilde{\nu}(B^{\R^d}_{r}+v)\leq\rho^{-\eps} r^{d\alpha}.$$

	We write
	\aryst
	\obt:=  ( \underbrace{t_{d_1}, \dots, t_{d_1}}_{d_1}, \underbrace{t_{d_2}, \dots, t_{d_2}}_{d_2 - d_1}, \dots, \underbrace{t_{d_{m+1}}, \dots, t_{d_{m+1}}}_{d_{m+1} - d_m} ).
	\earyst		
	Namely,	$\obt$ is the set of exponents obtained from $\bt$ by collapsing each $t_{i}$ to smallest $t_{d_{k}}$ above it.	We  apply Theorem \ref{thm Multislicing0} in the regime $\eta=0$, $\zeta=1$, $\square'=\{\,\obt\,\}$ (known by  \cite[Theorem 3.4]{BH2} and with better dimensional gain, as mentioned above). Provided $\eps' \lll  \eps$, and $\eps, \tau \lll_{d, \kappa, \tau'}1$, and $\rho\lll_{d, \kappa, \tau', \eps} 1$, we obtain the existence of $\tilde\cE\subseteq \Theta$ and $\tilde F_{\theta}\subseteq \R^d$ (for $\theta\in \Theta\smallsetminus \tilde\cE$) such that $\sigma(\tilde\cE), \tilde\nu(\tilde F_{\theta})\leq \rho^{3\eps'}   $, and for all $v \in \R^d$, 
		\begin{align*}
		( \tilde\nu|_{\R^d \smallsetminus \tilde F_{\theta}} )\left( B^{\cW_{\theta}}_{\rho^{\obt }} + v \right) \leq \rho^{\kappa \tau'/( 100 d) }   \leb \left(B^{\cW_{ \theta}}_{\rho^{\obt}}\right)^{\alpha}.
		\end{align*}
Assuming $\eta\lll_{d,\kappa, \tau'} 1$, this inequality implies in the case of  exponents given by $\bt$:
\begin{align}\label{eq-tilde-nu-mult}
		( \tilde\nu|_{\R^d \smallsetminus \tilde F_{\theta}} )\left( B^{\cW_{\theta}}_{\rho^{\bt }} + v \right) \leq \rho^{\kappa \tau'/( 150 d) }   \leb \left(B^{\cW_{ \theta}}_{\rho^{\bt}}\right)^{\alpha}.
		\end{align}		
By the pigeonhole principle and the bounds $\tilde\nu(\tilde F_{\theta})\leq \rho^{3\eps'}\leq \rho^{\eps'} \tilde\nu(\R^d)$, there exists $j\in J$ such that $\nu_{j}(\tilde F_{\theta})\leq \rho^{\eps'}\nu_j(\R^d)$. Setting $\cE=\tilde\cE$, $F_{\theta}=\tilde F_{\theta}- w_{j}$, we find $\sigma(\cE)\leq \rho^{\eps'}$, $\nu(F_{\theta})\leq \rho^{\eps'}\nu(\R^d)$ and that Equation \eqref{eq-tilde-nu-mult}	is valid with $(\nu, F_{\theta})$ in the place of $(\tilde\nu, \tilde F_{\theta})$. This concludes the proof of Step 1. 

		 		 \bigskip

		 \noindent \underline{Step 2}: \emph{Theorem \ref{thm Multislicing0} holds (even if  $\square'$ is not a singleton).}
		 
		 Let $\{\bt^{(l)} = (t^{(l)}_i)_{i = 1}^{d} \}_{l=1}^L$ be a maximal $\eps$-separated subset of $\square'$. In particular, $L\ll_{d} \eps^{-d}$.  Fix $l\in \{1, \dots, L\}$ and, invoking Step 1, apply  Theorem \ref{thm Multislicing0} with the same random flag $(\cW_{\theta})_{\theta\sim \sigma}$
		 but with  exponents  given by the singleton $\{\bt^{(l)}\}$. Call $\cE_{l}\subseteq \Theta$ and $F_{l, \theta}\subseteq \R^d$ (for $\theta\in \Theta\smallsetminus \cE_{l}$) the associated exceptional subsets given by Step 1. Set $\cE=\cup_{l=1}^L\cE_{l}$, and for $\theta\notin \cE$, set $F_{\theta}=\cup_{l=1}^L F_{l, \theta}$. Provided   $\eps'\leq \eps$ and $\rho\lll_{\eps'}1$, we then have $\sigma(\cE)\leq L\rho^{\eps'}\leq \rho^{\eps'/2} $ and similarly $ \nu(F_{\theta})\leq \rho^{\eps'/2}(\nu(\R^d) + \zeta^{d})$. Moreover, for $\theta\notin \cE$, choosing $l=l(\theta)$ such that $t^{(l)}_{i}-\eps \leq t_{\theta, i}\leq t^{(l)}_{i} +\eps$ for all $i=1, \dots, d$, we find for every $v \in \R^d$ 
		 \begin{align*}
		 ( \nu|_{\R^d \smallsetminus F_{\theta}} )\left( B^{\cW_{\theta}}_{\rho^{\bt_{\theta} }}+ v \right) 
		 &\leq \rho^{-2d\eps} \sup_{w\in \R^d}( \nu|_{\R^d \smallsetminus F_{\theta}} )\left( B^{\cW_{\theta}}_{\rho^{\bt^{(l)}}}+ w \right) \\
		 &\leq \rho^{-2d\eps+\kappa \tau'/( 150 d) }   \leb \left(B^{\cW_{ \theta}}_{\rho^{\bt^{(l)}}}\right)^{\alpha}\\
		 &\leq \rho^{\kappa \tau'/( 200 d) }   \leb \left(B^{\cW_{ \theta}}_{\rho^{\bt_{\theta}  }}\right)^{\alpha}.
		  \end{align*}
where  the last inequality uses $\eps\lll_{d, \kappa, \tau'}1$. This concludes the proof.

		 \end{proof}

		 \section{Dimensional increment : Proof of Proposition \ref{prop step II}} \label{sec proof prop step II}
		 
		 
In this section, we complete Phase II, by proving Proposition \ref{prop step II}. To highlight which assumptions on $\mu$ will play a role, we place ourselves in  the following more general setting.

     	\begin{setting}\label{Setting-bootstrap}
	Let $\kappa, \eta> 0$, $n_{0}, n_{1}\in \N$, and  $J = \{  0 = d_0 < d_1 <   \cdots <  d_{m+1} = d \}$  a subset of $ \{0, \cdots, d \}$. 
	Let $\mu$ be a compactly supported  probability measure on $\Diff^2(M)$. Assume  that
	$\mu$  has a  $(n_{0}, \kappa, J)$-gap, is $(n_{1},\eta, J)$-pinched, and is $(n_{1},\eta)$-almost volume-preserving.
	Let $\cK\subseteq \Diff^2(M)$ be a bounded  set containing the support of $\mu$.
	\end{setting}


		   We show the following   dimensional increment result.

		   \begin{prop}[Dimensional increment] \label{prop step II-single-gen} 
		 	Let $\varkappa,\rho,  \eps \in (0,1/10)$,   $\tau \geq 0$, $\alpha \in [\varkappa, 1 - \varkappa]$. Let $\nu$ be a Borel measure on $M$ of mass at most $1$ and that is $(\alpha, \,\cB_{[\rho, \rho^{1/4}]}, \tau)$-robust.  If    $\eta\lll_{\cK,n_{0}, \kappa, \varkappa}1$ and $\eps, \rho\lll_{\cK,n_{0}, \kappa, \varkappa, n_{1}}1$, then for some integer $n_{\rho}=n_{\rho}(M, \cK, \rho) \in [1,  | \log \rho |]$, we have
			$$
			\text{$\mu^{n_{\rho}} * \nu$ is $(\alpha + \eps, \,\cB_{\rho^{1/2}},\, \tau + \rho^{\eps})$-robust.}
			$$
		 \end{prop}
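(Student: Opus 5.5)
We follow the scheme of \cite[Section 4]{BH1}, adapted to the non-homogeneous setting by working in charts at a microscopic scale. Fix the target scale $\rho^{1/2}$ and choose $n_{\rho}=\lfloor c|\log\rho|\rfloor$, with $c=c(M,\cK)$ small enough that every $g\in\supp\mu^{*n_\rho}$ is well approximated by its derivative on balls of radius $\zeta:=\rho^{1/3}$. Concretely, we want $\|D^2g\|_{L^\infty}\zeta^2\lll \rho^{1/2}$ for such $g$, using the $C^2$ bound on $\cK$. We decompose $\nu=\nu'+\nu''$ as in the robustness hypothesis and discard $\nu''$, which contributes $\tau$. We then partition $M$ into pieces $Q$ of diameter $\zeta$ and write $\nu'=\sum_Q\nu'|_Q$. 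For a ball $B_{\rho^{1/2}}(y)$, we have
\[
\mu^{*n_\rho}*\nu'(B_{\rho^{1/2}}(y))=\sum_Q\int \nu'|_Q(g^{-1}B_{\rho^{1/2}}(y))\dd\mu^{*n_\rho}(g).
\]
In the exponential chart at the centre $x_Q$ of $Q$, the set $g^{-1}B_{\rho^{1/2}}(y)\cap Q$ is contained in a translate of a box $B^{\cW}_{\rho^{\bt}}$. Here the flag is $\cW=(W_i(x_Q,g))_i$ from \eqref{eq Wixg}, and $\rho^{t_i}\asymp\rho^{1/2}e^{-\lambda^{(i)}(x_Q,g)}$, up to the nonlinear error controlled by the choice of $\zeta$.

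The next step is to check the hypotheses of Theorem \ref{thm Multislicing0}, with $\Theta=G$ and $\sigma=\mu^{*n_\rho}$ restricted to a good event, and $\square'=\increasing(\bd,k_0,\kappa',\eta')$ for suitable $\kappa'\asymp c\kappa$ and $\eta'\asymp c\eta$. Lemma \ref{cor GapWithHighProba} shows that with probability $1-\rho^{\Omega(1)}$ the exponents $\lambda^{(i)}(x_Q,g)$ are pinched within the blocks of $J$ and gapped between consecutive blocks. Lemma \ref{lem detcloseto1} gives $\sum_i\lambda^{(i)}\approx0$, so the box has volume $\approx\rho^{d/2}$ and its side lengths lie in $[\rho,\rho^{1/4}]$ once $c$ is small. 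That is exactly the range on which robustness of $\nu$ is assumed. The projection hypotheses come from Proposition \ref{lem AllGapsImpliesNonConcentration}: it gives $\NCSup$ for the law of $W_{d_k}(x_Q,g)$ at all scales $\geq\rho^{O(1)}$ (any $b$-gap suffices). Lemma \ref{lem NC+toNC-} then gives $\NCSub$, and Theorems \ref{lem NC-toS-} and \ref{lem NCtoS} yield $\SubP$ for every $k$ and $\SupP$ for an index $k_0$ carrying a gap (any $k_0$ works, since all consecutive blocks are gapped). The mass bound hypothesis holds with $\rho^{-\eps}$ to spare, because $\nu'(B_r)\leq r^{d\alpha}$.

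Applying Theorem \ref{thm Multislicing0} to each $\nu'|_Q$, pushed to $\R^d$ in its chart and with support parameter $\zeta$, gives for $g$ outside an exceptional set $\cE_Q$ and outside $F_{Q,g}$:
\[
\nu'|_Q(g^{-1}B_{\rho^{1/2}}(y))\leq \rho^{\kappa'\tau'/(200d)}\rho^{d\alpha/2}\leq \rho^{d(\alpha+\eps)/2}
\]
for $\eps$ small. Summing over $Q$ is the delicate point, since the number of pieces $Q$ meeting $g^{-1}B_{\rho^{1/2}}(y)$ must be controlled. Because $g^{-1}B_{\rho^{1/2}}(y)$ is a single $\approx$box of diameter at most $\rho^{1/4}\cdot O(1)$ (or a thin tubular set), it meets $O(1)$ neighbouring pieces at scale $\zeta$ only if its diameter is below $\zeta$. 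We therefore plan to enlarge $\zeta$ to $\rho^{1/5}$ and require the boxes to fit inside, adjusting $c$ accordingly.

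The trash component is defined as
\[
\nu''_{\mathrm{new}}=\mu^{*n_\rho}*\nu''+\sum_Q\int\bigl(\mathbf 1_{\cE_Q}(g)\,g_*\nu'|_Q+g_*(\nu'|_{Q\cap F_{Q,g}})\bigr)\dd\mu^{*n_\rho}.
\]
Its mass is at most $\tau+\sum_Q\rho^{\eps'}(\nu'(Q)+\zeta^d)\leq\tau+2\rho^{\eps'}$, which gives the claim after renaming $\eps\leq\eps'$. A final point is that the exceptional sets must be chosen independently of $y$; Theorem \ref{thm Multislicing0} provides exactly this, since its conclusion holds for all translates $v$.

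The main obstacle is the linearization step. We must show that $g^{-1}B_{\rho^{1/2}}(y)\cap Q$ is contained in a translate of a box comparable to $B^{\cW(x_Q,g)}_{\rho^{\bt}}$, uniformly over $Q$ and over good $g$. This requires the $C^2$ control (a distortion estimate along the random product, using $D_2$-type bounds over $n_\rho\leq|\log\rho|$ steps) to beat the strongest contraction $e^{-\lambda^{(1)}}$. We would handle it by taking $c$ small relative to $\log D_1$ and $\log\sup_{\cK}\|g\|_{C^2}$.
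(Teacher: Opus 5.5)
Your proposal follows essentially the paper's own proof. The shared steps are: $n_\rho=\lfloor c|\log\rho|\rfloor$; localization to balls of radius $\zeta=\rho^{1/3}$ in exponential charts; the random flag $(W_i(x_Q,g))_i$ with exponents $1/2+s_i$; gap, pinching and determinant control via Lemmas \ref{cor GapWithHighProba} and \ref{lem detcloseto1}; projection hypotheses via Proposition \ref{lem AllGapsImpliesNonConcentration}, Lemma \ref{lem NC+toNC-} and Theorems \ref{lem NC-toS-}, \ref{lem NCtoS}; then Theorem \ref{thm Multislicing0} together with a linearization step.

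The one misstep is your worry about summing over $Q$. With $c\le 1/(20\log D_1)$, every $g^{-1}B_{\rho^{1/2}}(y)$ with $g\in\supp\mu^{*n_\rho}$ has diameter at most $2D_1^{n_\rho}\rho^{1/2}\le 2\rho^{9/20}\ll\rho^{1/3}$. So bounded multiplicity of the cover already gives $O(1)$ overlapping pieces, and $\zeta$ should stay $\rho^{1/3}$. Enlarging $\zeta$ to $\rho^{1/5}$ is unnecessary, and it would make your own forward-Taylor criterion $\|D^2g\|_{L^\infty}\zeta^2\lll\rho^{1/2}$ unsatisfiable.
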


	Note that Proposition \ref{prop step II-single} follows at once. 
	
	\begin{proof}[Proof of Proposition  \ref{prop step II-single}] 
	It is a consequence of Proposition  \ref{prop step II-single-gen} because Setting \ref{Setting-bootstrap} is looser than Setting \ref{Setting-main-result} (in view of Lemma \ref{lem detcloseto1}). Note that indicating dependences on $n_{0},\kappa, n_{1}$ is not needed here, because those parameters can be taken as functions of $M, \mu_{0}, \cK$. 
	\end{proof}

	\subsection{Linearization at local scales}	  \label{subsec	linearcharts}
	
	The proof of Proposition \ref{prop step II-single-gen} will rely on the multislicing estimate provided by Theorem \ref{thm Multislicing0}. To apply the latter, we need to place ourselves in suitable charts where the translates of balls by an element $g\in G$ look like genuine Euclidean boxes (with no curvature).  
	To this end, we let $\delta_{inj}$ be the injectivity radius of $M$, and   denote $$\varphi_{x} = \exp_x^{-1} : \exp_x( B^{T_x M}_{\delta_{inj}} ) \to  T_xM \simeq \R^d.$$
	Henceforth we  abusively see  $\varphi_x$ as a map from $ \exp_x( B^{T_x M}_{\delta_{inj}} ) $ to $\R^d$ through  an arbitrarily chosen isometry between $T_x M$ and $\R^d$. Clearly, the $C^2$-norms of $\varphi_x$ and $\varphi_x^{-1}$ are  bounded uniformly in $x$.

\begin{lemma}[Linearization] \label{lin-chart}
	There exists	$R(d) > 1$ depending only on $d$ such that the following is true.
Let $g\in \Diff^2(M)$ and let $\zeta>0$. Assume that $\zeta \lesssim C_0^{-R(d)}$ with   $C_0 :=2\max(\|g\|_{C^2}, \|g^{-1}\|_{C^2} )$.
 Then for any  $x, y \in M$      and any   $r\in [\zeta^2, \zeta] $,   
     		\begin{align*}
		\text{$\varphi_x(B_{\zeta}(x) \cap g ( B_{r}(y) ) )$} &   \text{ can be covered by}\\
		&\text{ $R(d)$-many  translates of   $D(\varphi_x g) \left( g^{-1}(x),  B^{T_{g^{-1}(x)}M}_{r} \right)$.}
		\end{align*}
\end{lemma}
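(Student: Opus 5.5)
We work in the normal chart at $z:=g^{-1}(x)$ and reduce everything to a second-order Taylor estimate. If $B_\zeta(x)\cap g(B_r(y))$ is empty there is nothing to prove, so assume it is not. Then $g^{-1}(B_\zeta(x))\subseteq B_{C_0\zeta}(z)$ meets $B_r(y)$, hence $d_M(y,z)\le C_0\zeta+r\le 2C_0\zeta$. It therefore suffices to understand the map
$$F:=\varphi_x\circ g\circ \varphi_z^{-1}$$
on the Euclidean ball $B^{T_zM}_{3C_0\zeta}$. For $\zeta\lll C_0^{-1}$ this ball sits well inside the injectivity domains of $\varphi_z^{-1}$ and $\varphi_x$. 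Using the uniform $C^2$ bounds on the charts $\varphi_x^{\pm1}$, we get $F(0)=0$, $DF(0)=D(\varphi_x g)(z)$, and $\|D^2F\|\lesssim C_0$ on that ball. The set $\varphi_z(B_r(y))$ is contained in a Euclidean ball $B^{T_zM}_{Cr}(w)$ with $\|w\|\lesssim C_0\zeta$ and $C=O(1)$, since charts are bi-Lipschitz with constant $O(1)$ at this scale.

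Write $A:=DF(0)$. By Taylor's formula, for $u$ in $B_{Cr}(w)$,
$$F(u)=F(w)+A(u-w)+E(u),$$
where
$$\|E(u)\|\le \sup_{\text{segment}}\|DF-A\|\,\|u-w\|\lesssim C_0\cdot C_0\zeta\cdot r.$$
So $\varphi_x(g(B_r(y)))\subseteq F(w)+A(B_{Cr})+B_{O(C_0^2\zeta r)}$.

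Next, absorb the error into the box. Since $\|A^{-1}\|\le C_0$ (up to $O(1)$ chart factors), the ball $B_{O(C_0^2\zeta r)}$ is contained in $A\bigl(B_{O(C_0^3\zeta r)}\bigr)$. Imposing $\zeta\lesssim C_0^{-R(d)}$ with $R(d)\ge 3$ makes $O(C_0^3\zeta)\le 1$, so this is contained in $A(B_r)$. Hence the image lies in $F(w)+A(B_{(C+1)r})$. The ellipsoid $A(B_{(C+1)r})$ is the linear image of a ball of radius $(C+1)r$. That ball is covered by $O_d(1)$ translates of $B_r$, namely $(C+1)^d$-many up to a dimensional constant. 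Applying $A$ turns this into a covering by translates of $A(B_r)=D(\varphi_x g)(z,B^{T_zM}_r)$. Their number is bounded by a constant depending only on $d$; taking $R(d)$ larger than both this constant and the exponent used above gives the statement.

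The main care point is keeping the chart constants uniform. The $C^2$ norms of $\exp$ and its inverse are bounded uniformly in the base point on a compact $M$. Because $C=O(1)$ depends only on $M$, the implicit constant in $\zeta\lesssim C_0^{-R(d)}$ absorbs it, so the covering number stays purely dimensional. The lower bound $r\ge\zeta^2$ is not needed for this argument: the error is relative to $r$, so the estimate holds for every $r\le\zeta$.
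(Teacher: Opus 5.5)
Your argument is correct. It is a streamlined variant of the paper's proof rather than a copy of it.

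The paper proceeds in two steps.
\begin{itemize}
\item It Taylor-expands $\varphi_x\circ g$ to second order at an arbitrary point $p\in g^{-1}(B_\zeta(x))\cap B_r(y)$, with error $O(C_0^{R}r^2)$.
\item It then replaces $D(\varphi_x g)(p)$ by $D(\varphi_x g)(g^{-1}(x))$ at cost $O(C_0^{R}\zeta r)$, converts this into $r^{4/3}$ using $r\ge\zeta^2$, and only then absorbs everything into the ellipsoid.
\end{itemize}
You work instead in normal coordinates at $z=g^{-1}(x)$, where $DF(0)$ is exactly the target linear map. A single mean-value bound with frozen derivative then gives one error $O(\mathrm{poly}(C_0)\,\zeta r)$, linear in $r$, which you absorb directly. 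So you are right that $r\ge\zeta^2$ is superfluous: the paper uses it only to write $C_0^R\zeta r\le r^{4/3}$.

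What centering at $p$ buys the paper is that $\exp_p^{-1}$ preserves distances to $p$. Hence $B_r(y)\subseteq \exp_p(B^{T_pM}_{2r})$, and the final covering count is visibly dimensional. In your version the count is about $(C+1)^d$, where $C$ is the Lipschitz constant of $\varphi_z$ near $y$. To make it depend on $d$ only, say that normal charts are $2$-bi-Lipschitz at scales $\lll_M 1$. An $M$-dependent $C$ is not removed by the implicit constant in $\zeta\lesssim C_0^{-R(d)}$; it is removed because the chart distortion tends to $1$ at small scales.

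Two harmless slips:
\begin{itemize}
\item $\|D^2F\|\lesssim C_0^2$, not $C_0$, because of the term $D^2\varphi_x(Dg\,\cdot,Dg\,\cdot)$. The error is therefore $O(C_0^3\zeta r)$, and you need $R(d)\ge 4$.
\item For $F$ to be defined on $B^{T_zM}_{3C_0\zeta}$ you need $\zeta\lll C_0^{-2}$, not $C_0^{-1}$. This is because $g$ maps that ball into a ball of radius $O(C_0^2\zeta)$ about $x$.
\end{itemize}
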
			


     	\begin{proof}
     		In the following, we let  $R(d)$ denote a generic positive constant  depending only on $d$, and which may vary from line to line. 
	
        Take an arbitrary $p\in g^{-1}(B_{\zeta}(x))\cap B_{r}(y)$. Invoking Taylor expansion  up to order $2$ (for $g$ at $p$  and $\varphi_x$ at $g(p)$), we find
     	\begin{equation}\label{eq-Tayl} 
     \varphi_x ( B_{\zeta}(x) \cap  g(B_{r}(y))) \subseteq \varphi_x(  g(p) ) + D(\varphi_x g)( p, B^{T_{p}M}_{r} )  + B^{\R^d}_{C^{R(d)}r^2}.
     	\end{equation}
	  Denote $q = g^{-1}(x)$. We have $d_M(p, q) \leq 2 \| Dg^{-1} \|_{L^\infty}   \zeta$, and hence
     \aryst
    \| D(\varphi_x g)(p) -  D(\varphi_x g)(q) \| \leq  \| D^2 (\varphi_x g)  \|_{L^\infty} d_M(p, q )\lesssim_{M}   C_0^{R(d)}\zeta.
     \earyst
     By      $\zeta^2\leq r\leq \zeta \lesssim C_0^{-R(d)} $ (with a larger $R(d)$) and by \eqref{eq-Tayl},
     it follows that 
          \begin{equation} \label{eq-ppQ}
     D(\varphi_x g)(p, B^{T_{p}M}_{r} ) \subseteq   D(\varphi_x g)(q, B^{T_{q}M}_{r} )    +B^{\R^d}_{r^{4/3} }.
          \end{equation}
     By  \eqref{eq-Tayl}, \eqref{eq-ppQ} and $r<C_0^{-R(d)}$, we obtain
        $$
           \varphi_x (B_{\zeta}(x) \cap  g(B_{r}(y))) \subseteq  \varphi_x ( g(p) ) +  D(\varphi_x g)(q, B^{T_{q}M}_{r} )  +B^{\R^d}_{r^{5/4}}.	
$$
      On the other hand, using again $r<C_0^{-R(d)}$, we have that $D(\varphi_x g)(q, B^{T_{q}M}_{r} )  $ contains $B^{\R^d}_{r^{5/4}}$, so the right-hand side above is included in a translate of $D(\varphi_x g)(q, B^{T_{q}M}_{2r} )  $. It follows that  $  \varphi_x (B_{\zeta}(x) \cap  g(B_{r}(y))) $
    can be covered by $R(d)$-many translates of  $D(\varphi_x g)(q, B^{T_{q}M}_{r} )  $.
  This concludes the proof.
 \end{proof}

 		\subsection{Proof of Proposition \ref{prop step II-single-gen}} 
		Note we may assume $\tau = 0$, i.e. $\nu$   is $(\alpha, \cB_{r}, 0)$-robust for every $r \in [\rho, \rho^{1/4}]$. We may also suppose $\rho$ is small enough in terms of $\eps$ as well (not only in terms of $M, \cK, n_{0}, \kappa, \varkappa, n_{1}$).

		Let $a_{0}>0$ be a parameter to be specified later depending on $M, \cK, \kappa$, set $n:=\lfloor a_{0}|\log \rho|\rfloor$.	We will select $a_{0}$ so that any element in $\supp(\mu^{*n})$ has controlled distortion. We introduce
		 \begin{align}\label{eq maximalderivative}
		   \maxder = \sup_{g \in \supp(\mu)} \max( \| Dg \|_{L^\infty}, \| D(g^{-1}) \|_{L^\infty} ), \quad \maxderII = \sup_{g \in \supp(\mu)} \max(  \| g \|_{C^2}, \| g^{-1} \|_{C^2} ).
		 \end{align}
		 Note that for every integer $n \geq 1$, for every $g \in \supp(\mu^{*n})$, we have 
		 		 \ary \label{eq Dginmu*g}
		\| Dg \|_{L^\infty}, \| D( g^{-1} ) \|_{L^\infty} \leq \maxder^n.
		 \eary
		 Moreover, by \cite[Lemma 6.4]{FM} and \eqref{eq maximalderivative}, there is a polynomial $P$ depending only on $d$ such that second order derivatives satisfy 
		 \ary \label{eq D2ginmu*g} 
		  \| D^2 g \|_{L^\infty}, \| D^2( g^{-1} ) \|_{L^\infty} \leq \maxder^{2n} P(n \maxderII).
		 \eary
		 Following the notations in \SectionSymbol \ref{sec-gap-pinch}, 
		 for every $x \in M$ and every $g \in  \supp(\mu^{*n})$  the differential map of $g$ at $x$ can be written
		 \aryst
		 Dg(x) = R'(x, g)   \diag(\rho^{ - s_1(x, g) }, \rho^{ - s_2(x, g)}, \cdots, \rho^{ - s_d(x, g)} )   R(x, g)
		 \earyst
		 where   $(s_i(x, g))_{i=1}^d\in \R^d$ are reals in increasing order,		 
		 and $R'(x, g):  \R^d \to T_{g(x)}M$, $R(x, g) :  T_x M \to \R^d$ are isometries. 
		As in \eqref{eq Wixg},  for  $i \in \{1, \cdots, d\}$, we set $W_i(x, g) = R(x, g)^{-1}(\R^{i} \times \{0\}^{d-i})$.

		 Choosing $0 < a_{0} \leq    \min(1,   \frac{1}{20 (\log D_{1})})$, we get by \eqref{eq Dginmu*g} that exponents of singular values are controlled by
		 \aryst
		 - 1/10 \leq s_1(x, g) \leq \cdots \leq s_d(x, g) \leq  1/10.
		\earyst
%
		
		We now aim to apply our multislicing estimate, Theorem \ref{thm Multislicing0},  to the measure  $ \nu$ restricted to a microscopic ball,   and with the covering of random  boxes given by 
		$(g^{-1}B_{\rho^{1/2}}(x))_{x\in M}$  where $g\sim \mu^{* n}$.   Set $\zeta:=\rho^{1/3}$, fix an open ball  $Q\subseteq M$  of radius $\zeta$, centered at $x_{Q}$.
	 To place ourselves in local charts,  we set $\varphi_{Q} := \varphi_{x_Q} |_Q : Q\rightarrow \R^d$, where $ \varphi_{x_Q}$ is given in \SectionSymbol \ref{subsec	linearcharts}. 
		  We set $\nu_{Q}:=(\varphi_{Q})_{*}\nu_{|Q}$ the pushforward of $\nu_{|Q}$ by $\varphi_{Q}$, and 
		\begin{align*}
      \cW_{Q, g} &= (W_{Q, g, i})_{i=1}^d \in \cF \quad \text{where} \quad W_{Q, g, i}= D\varphi_{Q}( W_{i}(x_{Q}, g) ),  \\
        \bt_{Q, g} &=  (t_{Q, g, i})_{i=1}^d \in [0,1]^d \quad \text{where} \quad t_{Q, g, i}= 1/2 + s_{i}(p,g).
     	\end{align*}

We will compare $B^{  \cW_{Q, g}}_{\rho^{\bt_{Q, g}}}$   with $\varphi_Q  (g^{-1}(B_{\rho^{1/2}}(x)))$ (see \eqref{eq coveringmultiplicity}).
	Observe that  the boxes  $B^{  \cW_{Q, g}}_{\rho^{\bt_{Q, g}}}$  and $D( \varphi_{Q} g^{-1})( B^{T_{g( x_{Q} )}M}_{\rho^{1/2}} )$      are commensurable, in the sense that each one is covered by $O_{d}(1)$-many translates of the other. 
 The fact that $\varphi_Q  (g^{-1}(B_{\rho^{1/2}}(x)))$ can be covered by $O_d(1)$-many translates of $D( \varphi_{Q} g^{-1})( B^{T_{g( x_{Q} )}M}_{\rho^{1/2}} )$   (seen in $\varphi_{Q}$) will be justified below using Lemma \ref{lin-chart}.

	For now, letting $g$ vary with law $\mu^{*n}$, we   check that the random box $B^{  \cW_{Q, g}}_{\rho^{\bt_{Q, g}}}$ satisfies the assumptions of Theorem \ref{thm Multislicing0}. 
	
\begin{itemize}
\item (Exponents).	
Set $\kappa_{*}:=a_{0}\kappa/2$, $\eta_{*}:=2a_{0}\eta$,  and $\square':=\increasing(\bd, 1, \kappa_{*}, \eta_{*})$   (this notation was introduced at the beginning of Section \ref{sec BHMT}).  By Lemma \ref{cor GapWithHighProba} (applied with $\kappa'=2\kappa/3$, $\eta'=2\eta$),  we have  
   \aryst \label{eq ComplementOmegajnkappa}
    \mu^{* n}(  g \in G \mid \bt_{Q,g}\in \square'  )    > 1 - \rho^{\eps}
   \earyst
   provided  $\eps \lll_{ \cK,n_{0}, \kappa, \eta, a_{0}, n_{1}} 1$ and $\rho\lll_{\eps }1$. 
\bigskip

\item (Projection estimates along $\cW_{Q, g}$). 
     Recall that $n = \lfloor a_{0} | \log \rho| \rfloor$.
 By Proposition \ref{lem AllGapsImpliesNonConcentration},  there is a constant $c=c(M, \cK, n_{0},\kappa, a_{0})>0$ such that for all $k \in \{1, \cdots, m\}$, $V \in \Gr(d, d- d_k)$, $r \geq \rho$, we have  
   \ary \label{NC-WQg}
   \mu^{* n}(  g \in G \mid  \measuredangle( W_{Q, g, d_k}, V ) <  r )    \leq 2   r^{c}   
   \eary
   Let $\bt\in \square'$. Provided that  $\eps \lll_{\varkappa, c}1$ and  $\rho^{\kappa}\lll_{\varkappa, c,\eps}1$, we may apply 
    the supercritical projection Theorem \ref{lem NCtoS} to obtain that (the distribution  of)   
   $$
   \text{$(W_{Q, g, d_k})_{g\sim \mu^{*n}}$ has  the property $\SupP$ with  parameters $(\rho^{t_{d_{2}} -t_{d_{1}}},  \varkappa, \tau')$}
    $$ 
   where $\tau'=\tau'(d, \varkappa, c, \kappa)>0$.
   
   On the other hand,   \eqref{NC-WQg} and Lemma \ref{lem NC+toNC-} together   imply that  $(W_{Q, g, d_k})_{g\sim \mu^{*n}}$ satisfies the weak  non-concentration $\NCSub$ with parameters $(  \rho^{ t_{d_{k+1}}\sqrt{\eps}}  , c/4)$.   This allows to apply the subcritical projection Theorem \ref{lem NC-toS-} and obtain that
$$\text{$(W_{Q, g, d_k})_{g \sim \mu^{*n}}$ has  the property $\SubP$ with  parameters $( \rho^{t_{d_{k+1}}},  \eps, D c^{-1} \sqrt{\eps} )$}$$
where $D\lesssim 1$.

\bigskip

\item (Non-concentration of  $\nu_{Q}$). Observe the non-concentration assumption on $\nu$ transfers to $\nu_{Q}$, up to a multiplicative constant (depending  on $\varphi$, thus on only $M$). In particular, provided $\rho\lll_{\eps}1$, we have
$$				
 \nu_{Q}(B_{r} + v) \leq \rho^{-\eps}    r^{d\alpha}, \quad
\forall  r\in [\rho, \rho^{1/4}], \forall v \in \R^d.
$$
\end{itemize}	
\bigskip
The combination of the three previous items justifies that we may apply Theorem \ref{thm Multislicing0} to $(B^{  \cW_{Q, g}}_{\rho^{\bt_{Q, g}}})_{g\sim \mu^{*n}}$, and $ \nu_{Q}$.
Provided we further have $\eps' \lll \eps$;  $\eta, \eps \lll_{\kappa, c, \tau'}1$; and $\rho\lll_{\kappa, c, \tau', \eps}1$, this yields a subset $\cE_{Q}\subseteq G$ such that $\mu^{*n}(\cE_{Q})\leq \rho^{\eps'}$, and for $g\in G\smallsetminus \cE_{Q}$, a subset  $F_{Q,g}\subseteq M$ with $\nu(F_{Q,g})\leq \rho^{\eps'}(\nu(Q)+\zeta^d)$ and  
		\begin{align} \label{estimate-dim-inc-nuQ0}
		\sup_{v \in \R^d}(\nu_{Q}|_{\R^d \smallsetminus  F_{Q,g}} )\left( B^{\cW_{Q, g}}_{\rho^{\bt_{Q, g} }} + v \right) \leq \rho^{\kappa \tau'/( 200 d) }   \leb \left(B^{\cW_{Q, g}}_{\rho^{\bt_{Q, g}}}\right)^{\alpha}. 
		\end{align}

We now  compare the right hand side in \eqref{estimate-dim-inc-nuQ0} with the volume of a ball of radius $\rho^{1/2}$. Recalling the assumption that $\mu$ is $(n_{1},\eta)$-almost volume-preserving and $n < | \log \rho |$, Lemma \ref{lem detcloseto1} implies that 
$$\mu^{*n}\left(g : |\sum_{i=1}^d s_{Q,g,i}| >2\eta \right)\leq 2e^{-nc'}$$
where $c'=c'(M, \cK, n_{1}, \eta)>0$. Assuming $\eta<\gamma/10$; $\eps' \lll_{\cK, n_{1},a_{0}, \gamma}1$; $\rho\lll_{\eps'}1$, we  get that
 for some  subset $\cH_{Q} \subset G$ with $\mu^{*n}(\cH_Q) < \rho^{\eps'}$, for  all $g \in G \setminus \cH_Q$, we have  
		 $|   \sum_{i} s_{Q,g,i}  |  < \gamma/5$.
 In view of \eqref{estimate-dim-inc-nuQ0}, it follows that for every $g \in G \setminus (\cH_Q \cup \cE_Q)$, we have  
		\begin{align} \label{estimate-dim-inc-nuQ}
	\sup_{v \in \R^d}(\nu_{Q}|_{\R^d \smallsetminus  F_{Q,g}} )\left( B^{\cW_{Q, g}}_{\rho^{\bt_{Q, g} }} + v \right) \leq  \rho^{\kappa \tau'/( 200 d) }  \rho^{(d/2 - 2 \eta) \alpha}  \leq \rho^{d(\alpha + 3 \gamma) / 2}
\end{align}
 where $\gamma :=  \kappa \tau' / (1600 d^2) $.   
		  
		\bigskip

     	We now aim to pull back  \eqref{estimate-dim-inc-nuQ} to $M$ in order to  obtain a dimensional increment estimate with respect to translates $g^{-1}B_{\rho^{1/2}}(x)$ where $x\in M$ and $g\sim \mu^{*n}$. 
 Taking $a_{0}\lll_{D_{1},D_{2}, d}1$,   the Linearization Lemma \ref{lin-chart} (applied with $(\zeta, r)=(\rho^{1/3}, \rho^{1/2})$)  and the distortion  bounds \eqref{eq Dginmu*g}, \eqref{eq D2ginmu*g},  together  imply that for all $x \in M$, all $g\in \supp ( \mu^{*n} )$,
     \begin{align} \label{eq coveringmultiplicity}
     \text{the set $\varphi_Q  (g^{-1}(B_{\rho^{1/2}}(x)))$ is covered by $O(1)$-many  translates of   $B^{\cW_{Q, \theta}}_{\rho^{\bt_{Q, \theta}}}$.}
\end{align}
	     	
     	Recalling \eqref{estimate-dim-inc-nuQ} we then obtain that 
	$$\text{the  measure $\mu^{*n}*\nu_{|Q}$ is $(\alpha+2\gamma, \,\cB_{\rho^{1/2}},\,3 \rho^{\eps'}(\nu(Q) +\zeta^d))$-robust.}$$
	Considering a covering $M$ by balls $Q$ of radius $\zeta$  with multiplicity bounded by a constant depending only on $d$, and taking the sum over this covering, we deduce that 
	$$\text{the  measure $\mu^{*n}*\nu$ is $(\alpha+\gamma, \,\cB_{\rho^{1/2}},\, \rho^{\eps'/2})$-robust.}$$
	Tracking down dependences in the argument until here, we see we can define $a_{0}$ in terms of $(M,\cK)$, then $c$ in terms of $(M, \cK, n_{0}, \kappa)$, then $\tau', \gamma, \eta$ in terms of $(M, \cK, n_{0}, \kappa, \varkappa)$,  then $\eps, \eps'$ in terms of $(M, \cK, n_{0}, \kappa, \varkappa, n_{1})$, and finally the upper bound on $\rho$ is given in terms of $(M, \cK, n_{0}, \kappa, \varkappa, n_{1},\eps')$. As noted earlier, the dependence on $\eps'$ plays no role in the latter. 
	This concludes the proof.
	
	\qed

	\section{From large dimension to equidistribution : Proof of Proposition \ref{prop step III}}  \label{sec proof prop step III}
	
   We complete Phase III from the proof strategy, in other terms we prove Proposition \ref{prop step III}. To highlight which assumptions on $\mu$ will play a role, we place ourselves in  the following more general setting.


\begin{setting} 
	Let   $\mu_0$ be a compactly supported probability measure on $\Diff^2(M, \vol)$. 
	Let $\cK\subseteq \Diff^2(M)$ be a bounded set containing the support of $\mu_{0}$. Let $\cU$ be a weak-$\star$ neighborhood of $\mu_{0}$ in $\cP(\cK)$, let $\mu\in \cU$. 
\end{setting}

We show the following.  

    \begin{prop} \label{prop step III-gen}  
    Assume  $\mu_{0}$ coexpanding, $T_{\mu_0}$ is totally ergodic in $L^2(M, \vol)$, and $\cU\subseteq \cU_{0}(\mu_{0},\cK)$. Then there exists a  unique $\mu$-stationary probability measure $\Upsilon_{\mu}$ on $M$ such that
    the following holds:
  There exist $\eps, c > 0$ depending only on $(M,\mu_{0}, \cK)$ such that    for every $\tau \in \R^+$, $\rho \lll_{ \mu } 1$,    every $(1 - \eps, \cB_{\rho}, \tau)$-robust probability measure $\nu$, and every integer $n \in [c  | \log \rho|, 2c | \log \rho | ]$, we have  
  	\aryst
  	\cW_1(\mu^{* n} * \nu, \Upsilon_{\mu} ) < \rho^{\eps} + \tau.
  	\earyst
      \end{prop}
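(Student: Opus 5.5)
The plan is to work with the transfer operator on densities, $\cL_\mu f := \frac{d(\mu * (f\,\vol))}{d\vol}$, acting on a Sobolev space $H^{-s}(M)$ of negative order, together with its dual, the Markov operator $T_\mu$ on $H^{s}(M)$. The approach has three steps: establish a spectral gap for $T_{\mu_0}$ and transfer it to $\mu$ by perturbation, turn the robustness of $\nu$ into a Sobolev bound on a mollification of $\nu$, and then combine the two via a Venkatesh-type argument.

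\textbf{Step 1: spectral gap.} We first invoke the results of \cite{DD, DDZ}. For $\mu_0$ coexpanding and volume preserving, they give that $T_{\mu_0}$ (or a fixed power $T_{\mu_0^{*n_1}}$) acting on $H^{s}(M)$, for some small $s=s(\mu_0)>0$, is quasi-compact with essential spectral radius $<1$; this is a Lasota--Yorke inequality $\|T^{n_1} f\|_{H^s}\le \tfrac12\|f\|_{H^s}+C\|f\|_{H^{s'}}$ with $s'<s$. Total ergodicity then rules out peripheral eigenvalues other than the simple eigenvalue $1$. The point is that an eigenfunction with $|\lambda|=1$ lies in $H^s\subset L^2$, and unitarity of $T_{\mu_0}$ on $L^2$ (volume preservation) forces $f\circ g=\lambda^{-1}\cdots$, giving a $T^k$-invariant nonconstant function. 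The key observation for perturbation is that the Lasota--Yorke constants depend only on the coexpansion rate and on $C^2$ bounds over $\cK$, both of which are uniform on a weak-$*$ neighborhood $\cU_0$. Moreover $\|T_\mu-T_{\mu_0}\|_{H^s\to H^{s'}}$ is small for $\mu$ weak-$*$ close to $\mu_0$, since $g\mapsto f\circ g$ is continuous $\cK\to H^{s'}$ uniformly for $\|f\|_{H^s}\le1$ by compactness. Keller--Liverani perturbation theory then gives that $T_\mu$ has a simple leading eigenvalue $1$ with eigenprojection $f\mapsto \Upsilon_\mu(f)\mathbf 1$, where $\Upsilon_\mu\in H^{-s}$ is a $\mu$-stationary probability measure, and that the rest of the spectrum lies in a disc of radius $e^{-\beta}<1$ with $\beta$ uniform on $\cU_0$. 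This yields $\|T_\mu^n f-\Upsilon_\mu(f)\|_{H^{s}}\le Ce^{-\beta n}\|f\|_{H^{s}}$, or the corresponding bound in the weaker $H^{s'}$ norm. Uniqueness of the stationary measure then follows: for any stationary $\nu'$, pair it with $T_\mu^n f$ for smooth $f$, noting that $T_\mu^n f\to \Upsilon_\mu(f)$ in $H^{s}$. Since $H^s$ is not embedded in $C^0$, I would actually use Lasota--Yorke between $C^1$ and $H^s$, or directly Lipschitz test functions, so that the convergence is uniform.

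\textbf{Step 2: from robustness to Sobolev control.} Write $\nu=\nu'+\nu''$ with $\nu''(M)\le\tau$. Let $\nu'_\rho:=\nu' * \psi_\rho$ be a mollification at scale $\rho$ (using a partition of unity and charts). Then $\|\nu'_\rho\|_{L^\infty}\lesssim \rho^{-d}\rho^{d(1-\eps)}=\rho^{-d\eps}$, so $\|\nu'_\rho\|_{L^2}\lesssim\rho^{-d\eps}$.

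\textbf{Step 3: combination.} For a test function $f$ with $\|f\|_{\Lip}\le1$, write
\[
(\mu^{*n}*\nu)(f)-\Upsilon_\mu(f)=\nu''(T^n_\mu f)-\nu''(M)\Upsilon_\mu(f)+\big[\nu'(T_\mu^nf)-\nu'(M)\Upsilon_\mu(f)\big].
\]
The first two terms together contribute at most $2\tau$, and $\tau$ can be absorbed after halving $\cW_1$. The bracket equals $\langle \nu'_\rho, h\rangle+\big(\nu'-\nu'_\rho\big)(T_\mu^n f)$ with $h:=T^n_\mu f-\Upsilon_\mu(f)$. The first part is at most $\|\nu'_\rho\|_{L^2}\|h\|_{L^2}\lesssim\rho^{-d\eps}e^{-\beta n}$. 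The second part is at most $\rho\,\Lip(T_\mu^n f)\le \rho D_1^{n}$. Choosing $c$ with $cD$-growth bounded, i.e. $c\log D_1\le 1/4$, and $n\in[c|\log\rho|,2c|\log\rho|]$, and then $\eps$ small compared with $c\beta/d$, gives a total bound of $\rho^{\eps}$. Here $\Upsilon_\mu(f)$ is a genuine integral since $\Upsilon_\mu$ is a probability measure obtained as the weak limit of $\mu^{*n}*\vol$.

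The main obstacle is Step 1: extracting a spectral gap for the \emph{perturbed, non-volume-preserving} $\mu$ that is uniform over $\cU$, since weak-$*$ perturbation is not small in operator norm on $H^s$. The Keller--Liverani framework, with its weak/strong norm pair and uniform Lasota--Yorke bounds coming from coexpansion, is exactly what should handle this. One also needs to make sure that $\|h\|_{L^2}$, and not merely $\|h\|_{H^{s'}}$, decays; if necessary I would interpolate between the $H^{s'}$ decay and the trivial $L^\infty$ bound, losing only a power in $\beta$.
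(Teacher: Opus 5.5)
Your proof of the $\cW_1$ estimate is the paper's proof: a Lasota--Yorke inequality on $H^s$ from coexpansion, its stability under weak-$*$ perturbation, total ergodicity to isolate a simple eigenvalue $1$, Keller--Liverani for $\mu\in\cU$, and then a mollification of $\nu$ at scale $\rho$ with density $\lesssim\rho^{-d\eps}$. This gives the split into a spectral term $\lesssim(1-\kappa')^n\rho^{-d\eps}$ and a Lipschitz term $\rho D_1^n$. Pairing in $L^2$ instead of $L^1$--$L^\infty$ changes nothing, since $\|h\|_{L^2}\le\|h\|_{H^s}$.

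One small inaccuracy: $T_{\mu_0}$ is not unitary on $L^2$; it is an average of the unitaries $f\mapsto f\circ g$. For a peripheral eigenfunction you use equality in the triangle inequality in $L^2$ to get $f\circ g=\lambda f$ for $\mu_0$-a.e. $g$. Finiteness of the peripheral spectrum then makes $\lambda$ a root of unity, and total ergodicity concludes.

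\textbf{The step that fails is your uniqueness claim.}
\begin{itemize}
\item $\Upsilon_\mu$ is in general \emph{not} the only $\mu$-stationary probability measure, because every finite $\Gamma_\mu$-orbit carries one. For example, $\delta_0$ is stationary for $\mu=\mu_0$ supported on $\SL_d(\Z)$ acting on $\T^d$, and this $\mu_0$ satisfies all the hypotheses.
\item Correspondingly, your mechanism cannot work. You propose upgrading $T_\mu^n f\to\Upsilon_\mu(f)$ to \emph{uniform} convergence for Lipschitz $f$ via some $C^1$/$H^s$ Lasota--Yorke. But at a common fixed point $x_0$ one has $T_\mu^n f(x_0)=f(x_0)$ for all $n$.
\item The spectral gap only yields convergence in $H^s$, hence in $L^1(\vol)$. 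That is exactly why one must pair against the mollification $\nu_\rho$ with bounded density, and why the robustness hypothesis is needed at all. If uniform convergence held, the estimate would hold for every $\nu$ with no dimension assumption, and Phases I--II would be pointless.
\end{itemize}

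The uniqueness in the statement should be read as uniqueness of the stationary measure satisfying the estimate, and that is immediate. $\vol$ is $(1-\eps,\cB_\rho,0)$-robust for small $\rho$, so apply the estimate to $\nu=\vol$. Given a large $n$, choose $\rho$ with $n=\tfrac32 c|\log\rho|$; this gives $\cW_1(\mu^{*n}*\vol,\Upsilon_\mu)<e^{-2\eps n/(3c)}$. Hence $\mu^{*n}*\vol\to\Upsilon_\mu$, which determines $\Upsilon_\mu$.

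The paper claims nothing stronger here. Uniqueness among \emph{atom-free} stationary measures is obtained separately in the proof of Theorem \ref{thm mainDissipative}, using Lemma \ref{cor countablefiniteorbits} and the equidistribution of $\mu^{*n}*\delta_x$ for points with infinite orbit.
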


Note that Proposition \ref{prop step III} from Phase III follows at once.

\begin{proof}[Proof of Proposition \ref{prop step III}]
It is a consequence  of Proposition \ref{prop step III-gen}, which applies thanks to Lemma \ref{lem finetocoexpanding}.
\end{proof}

\subsection{Spectral estimates for the Markov operator} \label{Sec-spec}
The proof of Proposition \ref{prop step III-gen}  exploits  the spectral properties of the Markov operator $T_{\mu}$  associated to $\mu$. Recall $T_{\mu}$  acts on  bounded measurable functions $f:M\rightarrow [0, +\infty]$ via the formula
$$ T_{\mu}f(x)= \int  f(gx) \dd\mu( g).$$ 
Since $\mu$ is compactly supported on $\diff^1(M)$, the chain rule and change-of-variables formula imply that  $T_{\mu}$ extends to a  bounded operator to both $L^2(M)=H^{0}(M)$ and $H^1(M)$. 
By Sobolev interpolation, $T_{\mu}$ defines a bounded operator on $H^s(M)$ for every $s \in [0,1]$.


The next proposition is essentially extracted from \cite{DD, T3}. It states that the coexpanding property of $\mu_{0}$ guarantees essential spectral gap  for $T_{\mu}$ on $H^s(M)$ for some (small) $s \in (0, 1]$. Moreover, if $T_{\mu_{0}}$ is totally ergodic in $L^2(M, \vol)$, then this essential spectral gap can be upgraded to a genuine spectral gap. 
	
		
		   
		 \begin{thm}[\cite{DD, T3}] \label{Thm Spectralgap} 
		 	    Assume  $\mu_{0}$ coexpanding and   $\cU\subseteq \cU_{0}(\mu_{0},\cK)$. 
		 	Then there exist $s = s(\mu_0)  \in  (0, 1]$, $\kappa = \kappa( \mu_0) > 0$  such that the essential spectral radius of $T_{\mu}$ on $H^s(M)$ is smaller than $1 - \kappa$. That is, there exists a finite rank operator $P : H^s(M) \to H^s(M)$ such that the spectral radius of $T_{\mu} - P$ is  smaller than $1 - \kappa$.
			
		 			 	
		 	In particular, if $T_{\mu_0}$ is totally ergodic in $L^2(M, \vol)$,  
		 	then there exists $\kappa' = \kappa'(\mu_0) > 0$ such that, up to shrinking $\cU$ if necessary, there is a  $\mu$-stationary probability measure $\Upsilon_\mu$ on $M$ such that the spectral projection at $1$ is given by  
			\aryst 
		 	\Pi_1 u \equiv \int u(y) \dd\Upsilon_{\mu}(y), \quad u \in C^{\infty}(M),
		 	\earyst 
			 and the operator $T_{\mu} - \Pi_1 : H^s(M) \to H^s(M)$ has spectral radius  smaller than $1 - \kappa'$.  
		 \end{thm}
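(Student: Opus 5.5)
The plan is to follow the Dolgopyat--Dewitt (\cite{DD}) and Tsujii (\cite{T3}) strategy: a Lasota--Yorke type inequality for $T_{\mu}$ on $H^s(M)$, converted into a bound on the essential spectral radius by Hennion's theorem, and then a perturbation argument to pass from the essential gap to a genuine gap.

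\textbf{Lasota--Yorke inequality.} We want an estimate of the form
\[
\|T_{\mu}^{N} u\|_{H^s} \leq \tfrac12 (1-\kappa)^N \|u\|_{H^s} + C_N \|u\|_{H^{s'}} \qquad (s'<s).
\]
\begin{itemize}
\item We work in Fourier/microlocal coordinates on a finite atlas of charts. The composition $u\mapsto u\circ g$ acts on wave packets at frequency $\xi\in T^*_{gx}M$ by moving them to frequency $\xi\circ Dg(x)$, up to an error controlled by $C^2$ bounds.
\item Coexpansion of $\mu_0$ gives $\int \|\xi\circ Dg(x)^{-1}\|^{-2s}\dd\mu_0^{*N}(g)\le (1-\kappa)^{2N}$ uniformly in $x$ and $\xi$, for $s$ small. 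This uses the exponential moment version of coexpansion, proved as in Lemma \ref{expM}.
\item The relevant bound has to be stated for the adjoint/transfer action on cotangent vectors.
\item Volume-preservation of $\mu_0$ keeps the $L^2$ Jacobian factors bounded by $1$.
\end{itemize}
Together with the compact embedding $H^s\hookrightarrow H^{s'}$, Hennion's theorem then gives essential spectral radius $\le 1-\kappa$ for $T_{\mu_0}$. These estimates involve only finitely many convolution powers and $C^2$ bounds on $\cK$. Weak-$\star$ small perturbations change $\int\|\xi\circ Dg(x)^{-1}\|^{-2s}\dd\mu^{*N}$ only slightly, uniformly over the compact set of $(x,\xi)$, so the same inequality holds for every $\mu\in\cU$. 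The one new ingredient is that for non-volume-preserving $\mu$ the Jacobians enter; they stay close to $1$ in average and cost at most $e^{O(N\eta)}$, which is absorbed in the gap. This yields the first claim, with $P$ the finite-rank spectral projector onto the eigenvalues of modulus $\ge 1-\kappa$.

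\textbf{From essential gap to spectral gap.} The total ergodicity hypothesis is used as follows.
\begin{itemize}
\item For $\mu_0$, any eigenvalue $\lambda$ with $|\lambda|=1$ has an $H^s$ eigenfunction $f$. Since $T_{\mu_0}$ is a contraction on $L^2(\vol)$ (the measure is volume-preserving), the standard strict-convexity argument forces $f(gx)=\lambda f(x)$ for $\mu_0$-a.e.\ $g$.
\item It follows that $|f|$ is invariant, hence constant by ergodicity. If $\lambda$ is a root of unity, then $f$ is $T^k_{\mu_0}$-invariant for some $k$, so $f$ is constant by total ergodicity and $\lambda=1$.
\item If $\lambda$ is not a root of unity, a spectral-measure argument applies. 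Because the peripheral spectrum is finite (it is discrete by the essential gap), $\lambda$ would generate infinitely many distinct peripheral eigenvalues $\lambda^k$, with eigenfunctions $f^k$. This contradicts finiteness, so $\lambda=1$.
\item Hence $1$ is a simple eigenvalue with eigenfunction $\mathbf 1$ and no other spectrum lies on the unit circle. By finiteness of the remaining spectrum outside the disc of radius $1-\kappa$, we get the gap $1-2\kappa'$.
\end{itemize}

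\textbf{Perturbation to $\mu\in\cU$.} This is the main obstacle. $\mu\mapsto T_{\mu}$ is not norm-continuous on $H^s$ for weak-$\star$ perturbations. It is, however, continuous as an operator $H^s\to H^{s'}$, and the uniform Lasota--Yorke inequality holds. Keller--Liverani stability then gives convergence of spectral projectors and continuity of the isolated spectrum outside the disc of radius $1-\kappa$. Thus $T_{\mu}$ has a simple eigenvalue at $1$ with the rest of its spectrum inside the disc of radius $1-\kappa'$.

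The rank-one spectral projector $\Pi_1$ is preserved by $T_\mu$. Since $T_\mu\mathbf 1=\mathbf 1$, it has the form $u\mapsto \ell(u)\mathbf 1$ with $\ell$ a positive functional; positivity follows from $\ell(u)=\lim_n T_\mu^n u$. Normalizing, $\ell(u)=\int u\,\dd\Upsilon_{\mu}$ for a probability measure $\Upsilon_\mu$, which is $\mu$-stationary because $\ell\circ T_\mu=\ell$.
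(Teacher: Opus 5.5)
Your proposal is correct and follows essentially the same route as the paper: a Dolgopyat--Dewitt Lasota--Yorke inequality on $H^s$ that survives weak-$\star$ perturbation because its constants come from finitely many convolution powers, then Hennion's criterion, then the peripheral spectrum analysis (finite group of unimodular eigenvalues, killed by total ergodicity), then Keller--Liverani, and finally positivity plus Riesz to identify $\Pi_1$ with a stationary measure. Two small points: first, your claim that only $C^2$ bounds enter hides the one step the paper makes explicit, namely that \cite{DD} use the strong G\r{a}rding inequality for $C^\infty$ symbols, so for $C^2$ diffeomorphisms one needs the finite-smoothness version \cite[Prop 2.4.A]{T1}; second, in the first claim the finite-rank operator should be $P=T_{\mu}\Pi$ with $\Pi$ the spectral projector, not $\Pi$ itself, since $T_{\mu}-\Pi$ has eigenvalues $\lambda-1$ on the range of $\Pi$.
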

		 
%
%
%
%
%

		 Theorem \ref{Thm Spectralgap} is essentially contained in  Theorem 1.1 and  \SectionSymbol 2.7 in \cite{DD}, but we need to go into the proof to see it.
		 We outline the argument here for the convenience of the reader.
		 
		 \begin{proof}[Proof outline of Theorem \ref{Thm Spectralgap}]
		 	In \cite[Section 6]{DD}, the authors prove that if $\mu_{0}$ is compactly supported in $\Diff^{\infty}(M, \vol)$ and coexpanding, then there exist $s\in (0,1]$,  $\kappa, C > 0$ and an integer $n_0 > 0$ (all depending on $\mu_{0}$) such that 
					 	\ary \label{eq LasotaYorke}
		 	\|  T^{n_0}_{\mu_{0}} u \|_{H^s(M)} \leq e^{ - n_0 \kappa} \| u \|_{H^s(M)} + C \|  u \|_{H^{s/2}(M)}.
		 	\eary
		We claim that \eqref{eq LasotaYorke} still holds if  $\mu_{0}$ is compactly supported within $\Diff^{2}(M, \vol)$ instead of $\Diff^{\infty}(M, \vol)$. 	To see why, note that  \cite{DD} require smoothness in order to use the strong G\r{a}rding's inequality for $C^{\infty}$ symbols (see also  \cite{T1}). In $C^2$-regularity,  using this time  the strong G\r{a}rding's inequality for symbols with finite smoothness \cite[Prop 2.4.A]{T1}, the same argument can be applied to justify  \eqref{eq LasotaYorke}. 
		
		 	Now that we have justified \eqref{eq LasotaYorke} in the $C^2$-regime, we observe that \eqref{eq LasotaYorke} is stable by perturbation of $\mu_{0}$ up to tweaking the constants $\kappa, C$: for any  measure $\mu$ in  a small enough neighborhood $\cU$ of $\mu_{0}$ within $\cP(\cK)$ depending on $\mu_{0}, \cK$:
	\ary \label{eq LasotaYorke2}
		 	\|  T^{n_0}_{\mu} u \|_{H^s(M)} \leq e^{ - n_0 \kappa/2} \| u \|_{H^s(M)} + 2C \|  u \|_{H^{s/2}(M)}.
		 	\eary			
		 	It is then standard to deduce from \eqref{eq LasotaYorke2} the statement on essential spectral gap in the theorem, by invoking Hennion's criterion \cite{H3}. 
	
		 	
		 	We now assume that $T_{\mu_0}$ is totally ergodic in $L^2(M, \vol)$, and establish the second claim in the theorem. Using that $\mu_{0}$ is volume-preserving, it is direct to see that $T_{\mu_{0}}\acts H^s(M)$ has all its eigenvalues in the closed unit disk, and those located on the unit circle form an abelian semigroup,  which is necessarily a finite abelian group in view of the bound on the essential spectral radius. Total ergodicity then imposes that $1$ is the only eigenvalue on the unit circle, and it has multiplicity one.
		 	
		 	Recalling \eqref{eq LasotaYorke2} and noting $T_{\mu}1=1$, one can use the spectral perturbation theorem of Keller-Liverani \cite{KL} (see also \cite[\SectionSymbol 2.7]{DD}) to deduce that the picture still holds for $T_{\mu}$, possibly after shrinking $\cU$,  and the spectral gap between the eigenvalue $1$ and the remainder of the spectrum is bounded from below by a constant depending only on $\mu_{0}$. Writing $\Pi_1$ the  spectral projector at $1$, this justifies that $T_{\mu}-\Pi_{1}$ has spectral radius less than $1-\kappa'$. Moreover, $\Pi_{1}$ defines a linear form on $H^s(M)$   which is non-negative (because $T_{\mu}$ is) and satisfies $\Pi_{1}(1)=1$.
By the Riesz representation theorem, it must be represented by some probability measure $\Upsilon_{\mu}$ on $M$.
		 \end{proof}
	
	We stress  that the total ergodicity condition appearing in  Theorem \ref{Thm Spectralgap}  is automatic under a  $1$-gap assumption. 
	
\begin{thm}[ \cite{DDZ}] \label{Thm Ergodicity0} 
If    $\mu_{0}$ has a  $1$-gap, then $T_{\mu_{0}}$ is totally ergodic in $L^2(M, \vol)$.
\end{thm}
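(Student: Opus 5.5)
The plan is to reduce total ergodicity to plain ergodicity, convert it into a regularity statement for invariant indicator functions, and get that regularity from the Lasota--Yorke inequality \eqref{eq LasotaYorke}.

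\emph{Reduction to ergodicity.} First I will note that a $1$-gap is inherited by every convolution power. If $\mu_0$ has an $(n_0,\kappa,1)$-gap, then $\mu_0^{*k}$ has a $(kn_0',\kappa',1)$-gap for suitable $n_0'$ and $\kappa'>0$. This follows from Lemma \ref{cor GapWithHighProba} and Remark \ref{lem MomentBound}, which give an exponential moment bound along every time $n$. So it suffices to show that $T_{\mu}$ is ergodic for any compactly supported $\mu$ on $\Diff^2(M,\vol)$ with a $1$-gap. Let $f\in L^2(M,\vol)$ satisfy $T_\mu f=f$. Since every $g\in\supp\mu$ preserves $\vol$, the map $f\mapsto f\circ g$ is an isometry of $L^2$. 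Equality in the convexity estimate $\|T_\mu f\|_2\le \int\|f\circ g\|_2\,d\mu(g)$ then forces $f\circ g=f$ for $\mu$-a.e.\ $g$. Hence the level sets of $f$ give $\Gamma_\mu$-invariant measurable sets $A$, and it is enough to prove $\vol(A)\in\{0,1\}$ for every such $A$.

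\emph{Coexpansion and Lasota--Yorke.} With $b=1$, the $(\eta,0,1)$-pinching condition is vacuous, since it concerns a single exponent. The almost volume-preserving condition holds with $\eta$ arbitrarily small. Hence Lemma \ref{lem finetocoexpanding} shows that $\mu$ is coexpanding. By the argument outlined in the proof of Theorem \ref{Thm Spectralgap} (Dolgopyat--Dewitt, $C^2$ version via finite-smoothness Gårding), for every $s\in(0,1)$ in a range up to some $s_*>1/2$, I expect an inequality
\[
\|T_\mu^{n_0}u\|_{H^s}\le e^{-n_0\kappa}\|u\|_{H^s}+C_s\|u\|_{H^{s-\delta}} .
\]
The value $s_*>1/2$ should be reachable because in the microlocal argument the contraction rate on frequency $\xi$ is governed by $\|\xi\circ Dg^{-1}\|^{-s}$. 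Coexpansion gives a negative exponent for every $s>0$ once $s$ is small relative to the moment exponent. To reach $s$ slightly above $1/2$ I would instead use the quantitative coexpansion of Remark \ref{rem-exp-extbundle}, namely that $\|\xi\circ Dg^{-1}\|$ grows like $e^{n\kappa'/d}$ with exponentially small exceptional probability. I would take $n_0$ large so that the $s$-th moment is still $<1$.

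\emph{Bootstrapping regularity of $1_A$.} Apply the inequality to the invariant function $u=1_A$, or first to a mollification $u_\epsilon=1_A*\phi_\epsilon$, whose $T^{n_0}$-invariance defect is controlled by $\epsilon$ and the $C^1$ bounds of $\supp\mu^{*n_0}$. Invariance gives $(1-e^{-n_0\kappa})\|u\|_{H^s}\le C_s\|u\|_{H^{s-\delta}}+o(1)$. Starting from $u\in L^2=H^0$ and stepping up by $\delta$ each time, finitely many steps give $1_A\in H^{s}$ for some $s>1/2$.

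\emph{Conclusion and main obstacle.} On a connected closed manifold, an indicator function lies in $H^s$ with $s\ge 1/2$ only if it is a.e.\ constant. Its trace on almost every generic curve would have to be an $H^{s}$ function of one variable taking only the values $0$ and $1$, which forces it to be constant; connectedness then propagates this across $M$. Hence $\vol(A)\in\{0,1\}$. The main obstacle I expect is the Lasota--Yorke step at regularity $s>1/2$. The paper's quoted version only provides some small $s$, so I must check that coexpansion, strengthened via Proposition \ref{lem AllGapsImpliesNonConcentration} to uniform growth of all covectors, controls the high-frequency part at that regularity with only $C^2$ maps. If this fails, the fallback is a finite-dimensionality route. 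Essential spectral gap of $T_\mu^*=T_{\mu^{-1}}$ on $H^{-s}$ makes the space of invariant $L^2$ functions finite dimensional, so there are finitely many ergodic components. These are then ruled out using the exponential contraction along the most contracted direction $W_1(x,g)$, which makes each component essentially saturated by local stable leaves and hence open modulo null sets.
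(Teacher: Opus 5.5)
\paragraph{Main gap: the Lasota--Yorke inequality for $s>1/2$.}
Your reduction to ergodicity and to $\Gamma_\mu$-invariant sets is fine. Coexpansion also does follow from the $1$-gap. As defined, the $(\eta,0,1)$-pinching is not vacuous: it is a transversality condition between $V_1$ and $W_1(x,g)$. This is harmless, because for $b=1$ the proof of Lemma~\ref{lem finetocoexpanding} only uses $\lambda^{(1)}=\lambda^{(b)}$.

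The genuine gap is the Lasota--Yorke inequality on $H^s$ with $s>1/2$. It is not merely unproved; it is false in general under the hypothesis. Heuristically, the admissible range of $s$ is governed by the negative moments $\sup_{x,\xi}\int\|\xi\circ Dg(x)^{-1}\|^{-2s}\,d\mu^{*n}(g)$. An exceptional event of probability $e^{-cn}$, on which covectors shrink by up to $D_1^{-n}$, contributes roughly $e^{n(2s\log D_1-c)}$ to this moment. Taking $n_0$ large changes nothing, since both exponents scale with $n$.

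Here is a concrete counterexample. On $\T^2$, let $\mu_0=N^{-1}\sum_{i=1}^N\delta_{A_i}$, where the $A_i$ generate a Zariski-dense subgroup of $\SL_2(\Z)$ (so $\mu_0$ has a $1$-gap, by case II), and $A_1=\begin{pmatrix}2&1\\1&1\end{pmatrix}^m$. Take $k\in\Z^2\smallsetminus\{0\}$ and $e_k(x)=e^{2\pi i\langle k,x\rangle}$. The Fourier coefficients of $T^n_{\mu_0}e_k$ are nonnegative, and the one at $A_1^nk$ is at least $N^{-n}$. Hence $\|T^n_{\mu_0}e_k\|_{H^{1/2}}\gtrsim_k (N^{-1}\phi^m)^n$, where $\phi$ is the golden ratio. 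For $m>\log N/\log\phi$, the spectral radius of $T_{\mu_0}$ on $H^{1/2}$ is therefore $>1$.

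Now suppose an inequality of your form held at $s=1/2$. By Hennion's criterion, every spectral value outside a disc of radius $<1$ would be an eigenvalue with eigenvector in $H^{1/2}\subset L^2$. Since $T_{\mu_0}$ is an $L^2$-contraction, this forces spectral radius $\le 1$, a contradiction. Indicators of smooth domains lie in $H^s$ for every $s<1/2$, so regularity below $1/2$ gives nothing. This route cannot be repaired by tuning parameters.

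Independently, the mollification bootstrap is not justified. The defect $T^{n_0}u_\epsilon-u_\epsilon$ is not small in $H^s$, because composition with $g$ fails to commute with mollification at frequencies of order $\epsilon^{-1}$.

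\paragraph{The fallback.}
Your fallback starts correctly. Every $L^2$-invariant function is fixed by $T_{\mu^{-1}}=T_\mu^*$ acting on $H^{-s}\supset L^2$. By duality with Theorem~\ref{Thm Spectralgap}, that operator has essential spectral radius $<1$, so there are finitely many ergodic components. This is exactly the input the paper supplies: it reduces to ergodicity as you do, then invokes \cite[Thm 1.2]{DDZ}, with \cite[Remark 4.6]{DDZ} plus Theorem~\ref{Thm Spectralgap} to allow $C^2$ regularity.

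What remains, ruling out the finitely many components, is the whole content of that cited theorem, and your one sentence does not provide it. For a fixed realization, the fast-stable curves tangent to the limit of $W_1(x,g_n\cdots g_1)$ form a one-dimensional lamination. Being saturated by such a lamination says nothing about openness. You would need three things:
\begin{enumerate}
\item absolute continuity of these random laminations;
\item an argument concatenating leaves from different realizations, using the non-concentration of $W_1(x,g)$ from Proposition~\ref{lem AllGapsImpliesNonConcentration}, to show that $1_A$ is a.e.\ constant near every point of $M$;
\item only then, connectedness of $M$.
\end{enumerate}
As it stands, the proposal is incomplete at precisely its hardest step.
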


	\begin{proof}
	Noting the assumption on $\mu_{0}$ is stable by taking convolution powers, it is enough to check ergodicity, i.e. fixed points for $T_{\mu_{0}}\acts L^2(M, \vol)$ are constant functions. In the smooth regularity case $\supp(\mu_0) \subset \Diff^{\infty}(M, \vol)$,  this follows from \cite[Thm 1.2]{DDZ}. In order to allow  $C^2$-regularity, one just needs to combine \cite[Remark 4.6]{DDZ} and    Theorem \ref{Thm Spectralgap} established later in Section \ref{sec proof prop step III}
 (which can be read quasi-independently from the remainder of the paper).
	\end{proof}
	
	We also record that under the assumption that $\mu_{0}^{-1}$ is coexpanding, the $\mu$-walk has a stationary measure in $H^s(M)$. 
		 
	\begin{lemma}[\cite{DD}  Stationary measure in $H^s(M)$] \label{Hs-stat} 		Assume  $\mu_{0}^{-1}$ coexpanding and   $\cU\subseteq \cU_{0}(\mu_{0},\cK)$. Then, for $s=s(\mu_{0})>0$ small enough,  there exists  an absolutely continuous $\mu$-stationary probability measure on $M$, with density in $H^s(M)$. 
 	\end{lemma}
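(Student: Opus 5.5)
The plan is to use duality: a stationary measure for $\mu$ with density in $H^s$ is the same as a fixed vector of the transfer operator $\cL_\mu := T_\mu^*$, which acts on densities by
$$\cL_\mu f=\int (f\circ g^{-1})\,|\det D g^{-1}|\dd\mu(g).$$
Equivalently, $\cL_\mu$ is the Markov operator of $\mu^{-1}$ twisted by a Jacobian weight. The first step is to note that, since $\mu_0^{-1}$ is coexpanding and $\mu_0$ is volume preserving, Theorem \ref{Thm Spectralgap} applies with $\mu_0^{-1}$ in place of $\mu_0$. For $\mu_0$ itself the weight is trivial, so $T_{\mu_0^{-1}}=\cL_{\mu_0}$. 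For $\mu\in\cU$ the weight is a $C^1$-small perturbation of $1$ on $\supp(\mu)$, or at least weak-$\star$ close in the appropriate sense. I would check that the Lasota–Yorke inequality \eqref{eq LasotaYorke} from \cite{DD} for $T_{\mu_0^{-1}}$ on $H^s$ persists for the weighted operator $\cL_\mu$ with constants $e^{-n_0\kappa/2}$ and $2C$. Multiplication by a bounded $C^1$ weight is bounded on $H^s$ for $s\le 1$, and the perturbation is controlled exactly as in the proof outline of Theorem \ref{Thm Spectralgap}.

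The second step invokes Hennion's criterion. It shows that $\cL_\mu$ has essential spectral radius $<1-\kappa/2$ on $H^s(M)$, and hence that the peripheral spectrum consists of finitely many eigenvalues of finite multiplicity. I then need $1$ to actually be an eigenvalue. This follows because the dual $T_\mu$ fixes constants, and $H^{-s}$ pairs with $H^s$. More concretely, $\cL_\mu$ preserves the integral $\int f\dd\vol$, so the spectral radius is $\ge 1$. Alternatively I would take Cesàro averages $\frac1N\sum_{k<N}\cL_\mu^k 1$. The Lasota–Yorke bound gives a uniform $H^s$ bound on $\cL_\mu^k 1$ (using $\|\cL_\mu^k 1\|_{L^1}=1$ and positivity to control the weak norm, after interpolating $H^{s/2}$ between $L^1$-type and $H^s$ norms), so the averages are bounded in $H^s$. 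By Rellich compactness $H^s\hookrightarrow L^2$, a subsequence converges to some $f\ge0$ with $\int f=1$ and $\cL_\mu f=f$. Since $f\dd\vol$ is then a $\mu$-stationary probability measure with density in $H^s$, the lemma follows.

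The main obstacle is the uniform weak-norm control in the Cesàro argument. The operator $\cL_\mu$ is not a contraction on $L^2$ once the Jacobian weights are non-trivial, so $\|\cL_\mu^k 1\|_{H^{s/2}}$ is not obviously bounded. I would handle this by iterating \eqref{eq LasotaYorke2} and interpolating
$$\|u\|_{H^{s/2}}\le \|u\|_{H^{s}}^{1/2}\|u\|_{H^{0}}^{1/2}.$$
I would then bound $\|\cL_\mu^k 1\|_{L^2}$ via $\|\cL_\mu^k1\|_{L^1}=1$ together with an $L^\infty$ bound of size $\le e^{k\delta}$, where $\delta$ is small for $\cU$ small. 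If this gives only subexponential growth, I would instead argue spectrally. By Keller–Liverani stability from $\cL_{\mu_0}$, which has a simple eigenvalue $1$ with eigenfunction $1$ by volume preservation, $\cL_\mu$ has a simple isolated eigenvalue $\lambda_\mu$ near $1$ with an eigenfunction $f_\mu$ close to $1$ in $H^s$, hence of nonzero integral. Conservation of $\int$ then forces $\lambda_\mu=1$. Here I would first shrink $\cU$ and, if needed, pass to a power $\mu^{*k}$ to avoid other peripheral eigenvalues coming from the lack of total ergodicity, averaging over the resulting finite cycle at the end. Positivity of $f_\mu$ follows from $f_\mu=\lim\frac1N\sum\cL_\mu^k 1$ after normalization.
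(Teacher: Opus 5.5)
Your first two steps are sound and are the paper's argument read through duality. The paper bounds the essential spectral radius of the unweighted operator $T_\mu$ on $H^{-s}(M)$, using coexpansion of $\mu_0^{-1}$ and then transferring to $\mu$. It then dualizes to the adjoint $S_\mu=\cL_\mu$ on $H^s(M)$. You instead apply Theorem~\ref{Thm Spectralgap} to $\mu_0^{-1}$ and perturb the weighted operator $\cL_\mu$ directly, then use $T_\mu 1=1$ to get $1\in\sigma(\cL_\mu)$, hence an isolated eigenvalue.

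One caveat on the perturbation. For a weak-$*$ neighbourhood, the Jacobian weight is not $C^1$-close to $1$ on all of $\supp(\mu)$, since elements far from conservative are allowed with small mass. It is only uniformly $C^1$-bounded on $\cK$ and equal to $1$ on $\supp(\mu_0)$. So the perturbation must be carried out on $\mu$-averaged quantities, as in the transfer step for Theorem~\ref{Thm Spectralgap}. Working with $T_\mu$ on $H^{-s}(M)$, as the paper does, sidesteps the weight.

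The genuine gap is the final step: producing a \emph{nonnegative} fixed density.

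Your Ces\`aro route needs $\sup_k\|\cL_\mu^k 1\|_{H^s}<\infty$. The interpolation you sketch, fed with $\|\cL_\mu^k 1\|_{L^\infty}\le e^{k\delta}$, only yields a bound growing like $e^{O(\delta)k}$, which does not control the averages.

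Your fallback does not close this either, for two reasons. First, total ergodicity is not among the hypotheses, so nothing makes $1$ a simple eigenvalue of $\cL_{\mu_0}$. Passing to $\mu^{*k}$ removes roots of unity from the peripheral spectrum, but not multiplicity at $1$. Second, the identity $f_\mu=\lim_N\frac1N\sum_{k<N}\cL_\mu^k 1$ presupposes exactly the power-boundedness you could not establish. Keller--Liverani by itself does not stop perturbed eigenvalues from lying slightly outside the unit circle.

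The missing observation is that $\cL_\mu$ is positive and preserves $\int\cdot\,\dd\vol$, hence is a contraction of $L^1(\vol)$. There are two ways to finish from here.

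\emph{Positive part (the paper's route).} Since $\cL_\mu$ is real, take a real $\phi\in H^s(M)$ with $\cL_\mu\phi=\phi$, replacing it by $-\phi$ if needed so that $\phi^+:=\max(\phi,0)\neq 0$. Then $\phi^+\in H^s(M)$ (as $s\le 1$). Positivity gives $\cL_\mu\phi^+\ge\max(\cL_\mu\phi,0)=\phi^+$. Equality of the integrals forces $\cL_\mu\phi^+=\phi^+$; now normalize.

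\emph{Keeping your Ces\`aro argument.} $L^1$-contraction forces every eigenvalue of $\cL_\mu$ on $H^s(M)$ to have modulus at most $1$. It also forces those of modulus $1$ to be semisimple, since a Jordan block would make $\|\cL_\mu^k f\|_{L^1}$ grow linearly. Together with the essential spectral radius bound, this gives $\sup_k\|\cL_\mu^k\|_{H^s\to H^s}<\infty$. Then $\frac1N\sum_{k<N}\cL_\mu^k 1$ converges in $H^s(M)$ to the spectral projection at $1$ of the constant function, which is a nonnegative fixed density of integral one.
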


 	\begin{proof}
	This is an adaptation of \cite[Theorem 7.9]{DD} to our context. We give the proof for convenience. We denote by $S_{\mu}$ the adjoint of $T_{\mu}$ on $L^2(M)$. It follows from the formula $S_{\mu}f(x)=\int f(g^{-1}x) |\det D_{x}g^{-1}| d\mu(g)$ that $S_{\mu}$ defines a bounded operator on $H^s(M)$ ($s\in [0, 1]$). Let $S^*_{\mu} \acts H^{-s}(M)$ be the dual operator. By density of smooth functions in $H^{-s}(M)$, we have that $S^*_{\mu}$ is given by the same formula as $T_{\mu}$, namely $S^*_{\mu}f=\int_{G}f(g.)d\mu(g)$. Provided $0<s\lll_{\mu_{0}}1$ and  $\cU\subseteq \cU_{0}(\mu_{0},\cK)$, the essential spectral radius of  $S^*_{\mu}$   must be strictly below $1$ (this is true for  $S^*_{\mu_{0}} = T_{\mu_{0}}$   by \cite[Theorem 1.1]{DD} thanks the the coexpansion hypothesis, and we may transfer it to $\mu$ as for Theorem \ref{Thm Spectralgap}). Combined with  $S^*_{\mu} 1=1$,   we get by spectral duality that $S_{\mu}\acts H^s(M)$ has $1$ in its spectrum and essential spectral radius strictly below $1$, therefore $1$ is an eigenvalue of $S_{\mu}$. Let $\phi\in H^s(M)$ be non-zero such that $S_{\mu}\phi =\phi$. After  multiplying by a complex number and restricting to the non-negative real part, this yields some $\phi'\in H^s(M)$ non-zero, non-negative with $\vol(\phi')=1$ and  $S_{\mu} \phi'\geq \phi'$. Using again that $S_{\mu}$ preserves integral against volume (due to $T_{\mu}1=1$), we get $S_{\mu}\phi'=\phi'$, so $\phi' d\vol$ is stationary and yields the desired measure.
	
 	\end{proof}

\subsection{Equidistribution}

 In order to exploit the previous spectral gap estimates, we will approximate the measure $\nu$ by a density on $M$.  Given $\rho\in (0, 1)$,  we define the \emph{mollification of $\nu$ at  scale $\rho$} to be the probability measure on $M$ satisfying that
for any measurable $f \colon M\rightarrow {[0, +\infty]}$,
\begin{align} \label{defnurho}
\int_M f \dd \nu_\rho = \int_M  \frac{1}{\vol(B_\rho(y))} \int_{B_\rho(y)} f \dd\vol\dd \nu (y).
\end{align} 
 Note that $\nu_{\rho}$ is absolutely continuous with repsect to $\vol$, with  Radon-Nikodym derivative given by:
\[
\frac{\dd \nu_{\rho}}{\dd \vol}(x) =  \int_{B_\rho(x)}\frac{1}{\vol(B_\rho(y))}\dd \nu(y).
\]
In particular, if $\nu$ is $(1-\eps, \cB_{\rho}, 0)$-robust, then we have 
\begin{align} \label{bndpsirho}
\normBig{\frac{\dd \nu_{\rho}}{\dd \vol}  }_{L^\infty}\lesssim \rho^{-d\eps}.
 \end{align} 

We will also use the following classical result.
 \begin{lemma}\label{Hs-appr}
There is a constant $C = C(M) > 0$ such that for every $s \in [0, 1]$, for every Lipschitz function $f : M \rightarrow \R$, we have  $\| f \|_{H^s(M)}\leq  C \| f \|_{\Lip}$.
 \end{lemma}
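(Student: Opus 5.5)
We prove the bound first for $s=0$ and $s=1$ and then get all intermediate $s$ by interpolation.

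\textbf{Case $s=0$.} Since $\vol$ is a probability measure,
$$\|f\|_{L^2}\leq \|f\|_{L^\infty}\leq \|f\|_{\Lip}.$$

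\textbf{Case $s=1$.} We use the standard characterization of $H^1(M)$ on a closed manifold. Fix a finite atlas of coordinate charts and a subordinate smooth partition of unity $(\chi_j)$; these choices depend only on $M$. Then
$$\|f\|_{H^1}\asymp_M \sum_j \|\chi_j f\|_{H^1(\R^d)}.$$
By Rademacher's theorem, each $\chi_j f$ is Lipschitz with compact support, so it lies in $W^{1,\infty}$. Its weak gradient is bounded almost everywhere by
$$\|\nabla\chi_j\|_\infty\|f\|_\infty+\|\chi_j\|_\infty\Lip(f)\lesssim_M \|f\|_{\Lip}.$$
Here the chart distortion is absorbed into the constant because it depends only on $M$. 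The supports are bounded, so this $L^\infty$ bound on $\chi_j f$ and its gradient yields an $L^2$ bound. Summing over the finitely many $j$ gives
$$\|f\|_{H^1(M)}\leq C_1(M)\|f\|_{\Lip}.$$

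\textbf{Intermediate $s$.} For $s\in(0,1)$ we use the interpolation inequality $\|f\|_{H^s}\leq C_2\|f\|_{L^2}^{1-s}\|f\|_{H^1}^{s}$. The constant $C_2$ can be taken uniform in $s\in[0,1]$: with the spectral definition $\|f\|_{H^s}^2=\sum_k(1+\lambda_k)^s|\hat f_k|^2$ relative to the Laplacian eigenbasis, Hölder's inequality gives the bound with $C_2=1$. Other definitions of $H^s(M)$ are equivalent to this one with constants uniform in $s\in[0,1]$. Combining with the two endpoint cases gives
$$\|f\|_{H^s}\leq \|f\|_{\Lip}^{1-s}\,(C_1\|f\|_{\Lip})^{s}\leq \max(1,C_1)\,\|f\|_{\Lip}.$$

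The only point needing care is this uniformity of constants in $s$. With the spectral definition of the norm it is automatic, so no step is a real obstacle.
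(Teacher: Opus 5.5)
Your proof is correct and follows the paper's route: both obtain the $H^1$ bound from Rademacher's theorem (the a.e. gradient is bounded by the Lipschitz constant) and then pass to $H^s$. The paper passes to $H^s$ simply via $\|f\|_{H^s}\le\|f\|_{H^1}$, which is immediate in your spectral normalization since $(1+\lambda_k)^s\le 1+\lambda_k$; your $s=0$ case and the Hölder interpolation step are valid but not needed.
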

 \begin{proof}
 	By Rademacher's theorem, a Lipschitz function $f$ is differentiable almost-everywhere, with differential clearly bounded  by the Lipschitz constant of $f$.  Given $s\in [0, 1]$, it follows that $\|f\|_{H^s}\leq \|f\|_{H^1}\leq C(M) \|f\|_{\Lip}$.
 \end{proof}


	
	We are now ready to establish the equidistribution result Proposition \ref{prop step III-gen}.
	
	\begin{proof}[Proof of Proposition \ref{prop step III-gen}] 


Let $\Upsilon_{\mu}$ be the $\mu$-stationary measure given by Theorem \ref{Thm Spectralgap}.

	Without loss of generality, we may assume $\tau=0$.

	We let $c, \eps, \rho\in (0, 1/10)$, $n\in \N$ be parameters to be specified later. We let $\nu$ be a $(1-\eps, \cB_{\rho}, 0)$-robust measure on $M$.

  By Theorem \ref{Thm Spectralgap}, we may also consider $s, \kappa'>0$ depending on $M, \mu_{0}$ such that $T_{\mu} -\Pi_{1}$ has spectral radius at most $1-  \kappa'$ on $H^{s}(M)$. We let $f : M\rightarrow \R$ be  a Lipschitz function such that $\|f\|_{\Lip}\leq 1$ and $\Pi_{1}(f)=0$.  
	
We compute 
\begin{align*}
| \mu^{*n}*\nu(f)|= \abse{\int_{M} T_\mu^n f \dd \nu} 
\leq \abse{\int_{M} T_{\mu}^n f \dd \nu_\rho}  + \abse{\int_{M} T_{\mu}^n f \dd \nu_\rho - \int_{M} T_{\mu}^n f \dd \nu}.
\end{align*}
To bound the first term, we observe
\begin{align*}
\abse{\int_{M} T_\mu^n f \dd \nu_\rho}  
& \leq \norm{T_\mu^n f }_{L^1} \normBig{ \frac{\mathrm{d}\nu_\rho}{\dd\vol}}_{L^\infty}\\
&\leq \norm{T_\mu^n f }_{H^s} \normBig{ \frac{\mathrm{d}\nu_\rho}{\dd\vol}}_{L^\infty}\\
&\lesssim_{\mu, \kappa'}  (1-\kappa')^n \|f \|_{H^s}\rho^{-d \eps}\\
&\lesssim   (1-\kappa')^n \rho^{- d \eps},
\end{align*}
where the penultimate inequality uses that the spectral radius of $T_{\mu}-\Pi_{1}$ on $H^s(M)$ is less than $1-  \kappa'$,  as well as   the assumption that $\nu$ is $(1-\eps, \cB_{\rho}, 0)$-robust (via its consequence \eqref{bndpsirho}), and the last inequality uses Lemma \ref{Hs-appr} to guarantee $\|f \|_{H^s} \lesssim \|f \|_{\Lip}\leq1$.  

From the definition of $\nu_{\rho}$, the second term is bounded by
\begin{equation*}
\abse{\int_{M} T_\mu^n f \dd \nu_\rho - \int_{M} T_\mu^n f \dd \nu} 
\leq   \rho \|T_\mu^n f\|_{\Lip} \leq  \rho D^n
\end{equation*}
where  $D=D(M,\cK)\geq1$.  

Combining the above estimates, we have shown that 	
\begin{align*}
| \mu^{*n}*\nu(f)|\lesssim_{ \mu, \kappa'} (1-\kappa')^n\rho^{-d\eps} + \rho D^n.
\end{align*}
Choosing $n \leq 2c |\log \rho|$ where $c>0$ is small enough in terms of $M, \cK$, we have the upper bound $\rho D^n\leq \rho^{1/2}$. On the other hand, asking $n \geq c |\log \rho|$ and $\eps$ small enough in terms of $\mu_{0}, c$, we have $(1-\kappa')^n\rho^{- d \eps}\leq \rho^{2\eps}$. In the end we have shown
\begin{align*}
| \mu^{*n}*\nu(f)|\lesssim_{\mu, \kappa'}  \rho^{2\eps}+\rho^{1/2}
\end{align*}
and taking $\rho\lll_{ \mu, \kappa', \eps}1$, the result follows. 
 	\end{proof}

 \section{Proofs of Theorem  \ref{main thm 2Dissipative} and applications} \label{sec Proofs123}
	
We use Theorem \ref{thm mainDissipative} to derive all the other statements collected in the introduction. We start with Theorem \ref{main thm 2Dissipative}, regarding the full gap context. 	
	
	\begin{proof}[Proof of Theorem \ref{main thm 2Dissipative}]
By Lemma \ref{rem allgapstopinch}, the full gap assumption implies full pinching. By Theorem  \ref{Thm Ergodicity0}, the full gap assumption implies that $T_{\mu_{0}}$ is totally ergodic in $L^2(M, \vol)$. We may thus apply Theorem \ref{thm mainDissipative} to obtain the desired conclusion. 
\end{proof}
	
	We now turn to applications of Theorem \ref{thm mainDissipative} in the various  situations mentioned in \SectionSymbol \ref{Sec-examples}. We start with the surface case.
	
	\begin{proof}[Proof of Theorem \ref{examplesI,II,III}, case I] 
	
 
		By Lemma \ref{lem contractinggaptocoexpanding0}, the assumption that  $\mu_0$ and $\mu_0^{-1}$ are expanding implies that they have a $1$-gap, and are also coexpanding. In particular, we may apply Theorem \ref{main thm 2Dissipative} to get conclusions (a) and (b) from Theorem \ref{examplesI,II,III}, except that we don't know yet that $\Upsilon_{\mu}$ is in $H^s(M)$. This point follows from Lemma \ref{Hs-stat} and the uniqueness of the atom-free stationary probability measure $\Upsilon_{\mu}$.

	\end{proof}

	We now deal with perturbations of Zariski-dense linear random walks on the torus $\T^d$.
		
	\begin{proof}[Proof of Theorem \ref{examplesI,II,III}, case II]

		In view of Theorem \ref{main thm 2Dissipative} and Lemma \ref{Hs-stat}, it is sufficient to show that $\mu_{0}$ has a $b$-gap for all 
		 $b\in \{1, \dots, d-1\}$, and that $\mu_{0}^{-1}$ is both expanding and coexpanding. Noting that $\mu_{0}^{-1}$ satisfies the same assumption as $\mu_{0}$ and recalling that full gap implies expansion and coexpansion (see Lemmas \ref{rem allgapstopinch}, \ref{lem finetocoexpanding2}, \ref{lem finetocoexpanding} ), it is in fact sufficient to establish the full gap property for $\mu_{0}$. This is the content of the next lemma. 
		
		\begin{lemma}  \label{lem mu0neighborhoodfine}
			The measure $\mu_{0}$ has a $b$-gap for every $b\in \{1, \dots, d-1\}$.
		\end{lemma}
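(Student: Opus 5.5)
The plan is to reduce the gap property of $\mu_0$ to classical facts about Zariski-dense random matrix products in $\SL(d,\R)$. For the linear action on $\T^d$, $Dg(x)=g$ for every $x$, so the gap condition at $(x,V)$ only involves the linear map $g$ and $V\in \Gr(d,d-b)$; it is independent of $x$. Setting
$$\psi(g,V)=\log \sup_{u\in \S(V^\perp)}\|P_{(gV)^\perp} g u\| - \log \inf_{v\in\S(V)}\|gv\|,$$
we must find $n_0$ and $\kappa>0$ with $\sup_{V}\EV_{g\sim\mu_0^{*n_0}}\psi(g,V)<-n_0\kappa$. By Goldsheid–Margulis and Guivarc'h–Raugi, the Lyapunov spectrum $\lambda_1>\dots>\lambda_d$ of $\mu_0$ is simple, since $\Gamma_{\mu_0}$ is Zariski-dense in $\SL(d,\R)$. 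The target is to show that $\frac1n\EV\psi(g,V)\to \lambda_{d-b+1}-\lambda_{d-b}$ (with exponents in decreasing order; this is the gap between the $b$-th and $(b+1)$-th smallest), uniformly in $V$. This limit is negative, and then any $\kappa$ below half the gap works for large $n_0$.

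The two terms of $\psi$ are handled separately. First, $\log\|P_{(gV)^\perp}g|_{V^\perp}\|$ is the norm of the map induced by $g$ on the quotient $\R^d/V\to\R^d/gV$. Combining $\det(g|_V)\cdot\det(\text{quotient map})=1$ with exterior powers, its top singular value is $\|\wedge^{b}\bar g\|$ divided by the product of the other $b-1$ singular values, which is awkward. The cleaner route is by duality: $(\R^d/V)^*\cong V^{\perp}$ and the quotient map is the dual of $g^{-T}$ restricted to the $b$-dimensional subspace $V^\perp$. Its norm equals $\sup_{w\in \S(V^\perp)}\|g^{-T}w\|^{-1}$ up to bounded factors, i.e. $1/\inf_{w\in\S(V^\perp)}\|g^{-T}w\|$. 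So both terms have the form $\log\inf_{v\in\S(U)}\|hv\|$, with $(h,U)=(g,V)$, $\dim U=d-b$, and $(h,U)=(g^{-T},V^\perp)$, $\dim U=b$.

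The key step is the uniform estimate: for a Zariski-dense measure $\nu$ on $\SL(d,\R)$ and every $k$,
$$\frac1n\EV_{h\sim\nu^{*n}}\log\inf_{v\in \S(U)}\|hv\|\to \lambda_{d-k+1}(\nu)\quad\text{uniformly in }U\in\Gr(d,k).$$
By Lemma \ref{lem lambdab}, the infimum is at most the $(d-k+1)$-th largest singular value. Equality up to the factor $\measuredangle(U,W_{d-k}(h))$, the most contracted complementary subspace, follows from the Cartan decomposition. So it suffices to know $\frac1n\EV\log\measuredangle(U,W_{d-k}(h))\to0$ uniformly in $U$. This is the main obstacle. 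It follows from the Hölder regularity of the stationary measures on Grassmannians (Guivarc'h), or equivalently from large deviation estimates for the distance to hyperplane sections (Benoist–Quint, Chapter 14), both valid for finitely supported Zariski-dense measures, hence with exponential moments. Convergence of $\frac1n\EV\log\sigma_{d-k+1}(h)$ to $\lambda_{d-k+1}$ is standard.

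Applying this with $\nu=\mu_0$, $k=d-b$ for the second term, and $\nu=(\mu_0)^{-T}$, which is also Zariski-dense with exponents $-\lambda_{d},\dots,-\lambda_1$, $k=b$ for the first term, gives the desired limit $\frac1n\EV\psi\to\lambda_{d-b+1}-\lambda_{d-b}<0$ uniformly in $V$. Fixing $n_0$ large yields the $(n_0,\kappa,b)$-gap.
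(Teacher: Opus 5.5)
Your argument is correct in substance, modulo the index slip noted below, but it is organized differently from the paper's.

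\textbf{How the paper argues.} The paper never passes to the dual or to angles. It uses the identity $\|g(e_V\wedge u)\|=\|ge_V\|\,\|P_{gV^\perp}gu\|$ to get
\[
\sup_{u\in\S(V^\perp)}\|P_{gV^\perp}gu\|\le\|g\|_{\wedge^{d-b+1}\R^d}/\|ge_V\|,
\]
together with the elementary bound $\inf_{u\in\S(V)}\|gu\|\ge\|ge_V\|/\|g\|_{\wedge^{d-b-1}\R^d}$. It then feeds both into two laws of large numbers from Benoist--Quint (Thms.~14.19--14.20, via irreducibility of $\wedge^k\R^d$): one for $\|g\|_{\wedge^k\R^d}$, and one for $\|gw\|$ with $w\in\wedge^k\R^d$, uniform in the unit vector $w$. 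This gives only one-sided bounds ($\limsup\le\sigma_b$ and $\liminf\ge\sigma_{b+1}$, in increasing order), which is all the gap condition requires.

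\textbf{How your route compares.} You rewrite the first term by duality as $-\log\inf_{w\in\S(V^\perp)}\|g^{-T}w\|$, then use the Cartan decomposition plus a uniform non-concentration estimate for the most contracted subspace $W_{d-k}(h)$. This rests on essentially the same deep input: the uniform-in-$w$ law of large numbers for $\|\wedge^k h\,e_U\|/\|\wedge^k h\|$ is comparable to your uniform control of $\frac1n\log\measuredangle(U,W_{d-k}(h))$. The difference is packaging. The paper's exterior-algebra inequalities let it cite the theorem as stated, while you must translate it into an angle statement. In exchange, your version yields two-sided limits and makes the geometric non-concentration explicit, in the spirit of Proposition~\ref{lem AllGapsImpliesNonConcentration}.

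\textbf{First correction (indexing).} For $\dim U=k$, Lemma~\ref{lem lambdab} bounds $\inf_{v\in\S(U)}\|hv\|$ by the $k$-th \emph{largest} singular value, i.e.\ the $(d-k+1)$-th smallest. So in your decreasing convention the limit in your key estimate is $\lambda_k(\nu)$, not $\lambda_{d-k+1}(\nu)$. Taken literally, your formula would give $\lambda_b-\lambda_{b+1}>0$, the wrong sign. The corrected formula gives the limit $\lambda_{d-b+1}-\lambda_{d-b}<0$ that you actually state.

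\textbf{Second correction (truncation).} The quantity $\EV\log\measuredangle(U,W_{d-k}(h))$ can be $-\infty$: for each $n$, some $U$ meets $W_{d-k}(h)$ for some $h$ in the finite support of $\mu_0^{*n}$. So your ``it suffices'' claim is false as stated and should be replaced by a truncated bound. With $D=\max_{g\in\supp\mu_0}\max(\|g\|,\|g^{-1}\|)$, use
\[
\log\inf_{v\in\S(U)}\|hv\|\ge\log\sigma_k(h)+\max\bigl(\log\sin\measuredangle(U,W_{d-k}(h)),\,-2n\log D\bigr),
\]
combined with $\sup_U\mu_0^{*n}\bigl(\measuredangle(U,W_{d-k}(h))<e^{-\eps n}\bigr)\to0$ for every $\eps>0$.
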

		
		As a preliminary to the proof, we recall a few standard results for the $\mu_{0}$-walk on the exterior algebra $\bigwedge^*\R^d$. 
		Write $\sigma_{1}\leq \dots\leq \sigma_{d}$ the Lyapunov exponents for the $\mu_{0}$-walk on $\R^d$. It is known the $\sigma_{i}$'s are mutually distinct reals \cite[Corollary 10.15]{BQ}. Note also that for every $k\in \{1, \dots, d-1\}$, the action of $\SL(d, \R)$ on  $\bigwedge^k\R^d$ is irreducible. It follows from \cite[Theorem 14.20]{BQ} that for every unit vector $w\in \bigwedge^k\R^d$, one has 
			\begin{align}\label{eq-cv-L1-LLN}
				n^{-1}\int_{\SL_{d}(\R)} \log \|gw\| \,\,\dd\mu_{0}^{*n}(g) \rightarrow \sigma_{d-k+1}+\dots +\sigma_{d}.
			\end{align}
			and the convergence is uniform in $w$. By \cite[Theorem 14.19]{BQ}, the estimate also holds for the operator norm $\|g\|_{\wedge^{k}\R^d}$ at the place of $ \|gw\|$: 
			\begin{align}\label{eq-cv-norm-LLN}
				n^{-1}\int_{\SL_{d}(\R)} \log \|g\|_{\wedge^{k}\R^d} \,\,\dd\mu_{0}^{*n}(g) \rightarrow \sigma_{d-k+1}+\dots +\sigma_{d}.
			\end{align}		
		
		\begin{proof}[Proof of Lemma \ref{lem mu0neighborhoodfine}] 	
				Let $b\in \{1, \dots, d-1\}$. We check that $\mu_{0}$ has a $b$-gap. 		Given a subspace $S\in \Gr(d,k)$, we write $e_{S}	\in \wedge^{k}\R^d$ a unit vector spanning the line $\wedge^{k}S$.	Let $V\in \Gr(d, d-b)$, let $u\in V^\perp$ with norm $1$.  Observe that
			\begin{align*}
				\|g e_{(V+\R u)}\|= \|g (e_{V}\wedge u)\| =\|g e_{V}\| \|e_{gV}\wedge gu\|= \|g e_{V}\|\| P_{gV^{\perp}} gu  \|.
			\end{align*}
			It follows that					
			\begin{align}\label{eq-proj-rationorm}
				\| P_{gV^{\perp}} gu  \|\leq \frac{\|g\|_{\wedge^{d-b+1}\R^d}}{\|ge_{V}\|} 
			\end{align}
			Applying \eqref{eq-cv-L1-LLN}, \eqref{eq-cv-norm-LLN}, we deduce that uniformly in $V$, 
			\begin{align}\label{eqsupE}
				\limsup_{n} n^{-1}	\int_{\SL_{d}(\R)}  \log \sup_{u \in \S(V^{\perp})} \| P_{gV^{\perp}} gu  \|  \dd\mu_{0}^{* n_0}(g) 	 \leq  \sigma_{b}		
			\end{align}
			On the other hand, by using Cartan's decomposition, one sees that
			\begin{align}\label{eq-inf-rationorm}
				\inf_{u \in \S(V)} \| gu \| \geq \frac{\|g e_{V}\|}{\|g\|_{\wedge^{d-b-1}\R^d }}.		 
			\end{align}
			Applying \eqref{eq-cv-L1-LLN}, \eqref{eq-cv-norm-LLN}, we deduce that uniformly in $V$, 
			\begin{align}\label{eqinfE}
				\liminf_{n} n^{-1} \int_{\SL_{d}(\R)} \log \inf_{u \in \S(V)} \| g v \|   \dd\mu_{0}^{* n_0}(g) 		\geq \sigma_{b+1}		
			\end{align}
			Recalling $\sigma_{b}<\sigma_{b+1}$, the combination of \eqref{eqsupE} and \eqref{eqinfE} shows that $\mu_{0}$ has a $b$-gap.

		\end{proof}

	As noted earlier, Theorem \ref{examplesI,II,III} case II follows.	
	
	\end{proof}

	We now deal with  perturbations of random walks on cocompact lattice quotients of $\SO(2,1)$ and $\SO(3,1)$.

	\begin{proof}[Proof of Theorem \ref{examplesI,II,III}, case III]
		
		We first deal with the case where $H=\SO(2,1)$. Similarly to  case II, it is sufficient to establish that $\mu_{0}$ has full gap: 
		
		\begin{lemma} If $H=\SO(2,1)$  then
			the measure $\mu_{0}$ has a $b$-gap for every $b\in \{1, 2\}$.					\end{lemma}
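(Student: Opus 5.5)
The plan is to reduce the gap property for the $\mu_0$-walk on $M=H/\Lambda$ to a statement about the linear random walk $\Ad(\mu'_0)$ on the Lie algebra $\mathfrak h\simeq\R^3$. Then I repeat verbatim the argument of Lemma \ref{lem mu0neighborhoodfine} for the torus.

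\textbf{Step 1: linearization of the tangent cocycle.} Fix a right-invariant Riemannian metric on $H$. Since $\Lambda$ acts on the right, this metric descends to $M$. For each $x=h\Lambda$, the map $\mathfrak h\to T_xM$, $v\mapsto \frac{d}{dt}\big|_{t=0}\exp(tv)h\Lambda$, is then an isometry. Because $g\exp(tv)h=\exp(t\Ad(g)v)\,gh$, in these trivializations we have exactly
\[
DL_g(x)=\Ad(g)\quad\text{for all } x\in M .
\]
The metric on $M$ used in the paper differs from this one by a uniformly bounded distortion $C=C(M)$. Every quantity appearing in Definition \ref{def gap} is then changed by at most $O(\log C)$ per evaluation, independently of $n_0$. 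Consequently, a $(n_0,\kappa,b)$-gap for the $\Ad$-walk with $n_0$ large gives a $(n_0,\kappa/2,b)$-gap on $M$. It therefore suffices to show that the linear walk $\Ad_*\mu'_0$ on $\mathfrak h\simeq\R^3$ has a $b$-gap for $b=1,2$, uniformly over $V\in\Gr(3,3-b)$.

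\textbf{Step 2: algebraic input.} The adjoint representation of $\SO(2,1)$ is isomorphic to its standard representation on $\R^3$, which is irreducible. Its second exterior power is the dual representation, again isomorphic to the standard one and hence irreducible. Since $\langle\supp\mu'_0\rangle$ is Zariski dense in $H$, its image is Zariski dense in $\Ad(H)\subseteq\SL_3(\R)$. This image acts strongly irreducibly and proximally, because $\Ad(H)$ contains $\R$-split elements with eigenvalues $e^{\lambda},1,e^{-\lambda}$. By the Guivarc'h--Raugi / Benoist--Quint simplicity results \cite[Corollary 10.15]{BQ}, the Lyapunov exponents are simple: $\sigma_1=-\sigma<\sigma_2=0<\sigma_3=\sigma$. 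Moreover $\mu'_0$ has compact support, so it has finite moments. Hence \cite[Theorems 14.19, 14.20]{BQ} give the uniform laws of large numbers \eqref{eq-cv-L1-LLN} and \eqref{eq-cv-norm-LLN} for $k=1,2$, since we have irreducibility of $\wedge^k\R^3$ for these $k$. This replaces the use of irreducibility of $\SL_d$ on $\wedge^k\R^d$ in case II.

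\textbf{Step 3: conclusion, and the main obstacle.} With these inputs, the inequalities \eqref{eq-proj-rationorm} and \eqref{eq-inf-rationorm}, which are purely linear-algebraic, yield \eqref{eqsupE} and \eqref{eqinfE} uniformly in $V$. The limsup is bounded by $\sigma_b$ and the liminf by $\sigma_{b+1}$. Because $\sigma_b<\sigma_{b+1}$ for $b=1,2$, some large $n_0$ makes the gap integral less than $-n_0\kappa$ with $\kappa=(\sigma_{b+1}-\sigma_b)/2$. By Step 1 this transfers to the gap on $M$.

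I expect the only real care to be needed in Step 2. There one must make sure that the Benoist--Quint results apply to the image group in $\Ad(H)$ rather than in $\SL_3$: Zariski density holds only in $\Ad(H)$. The LLN statements need only strong irreducibility of each $\wedge^k$ under the Zariski closure, and simplicity of the spectrum follows from the $\R$-rank one structure. So I would check these hypotheses explicitly against \cite{BQ} for the connected component of $\SO(2,1)$.
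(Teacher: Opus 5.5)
Your proposal is correct and takes the same route as the paper. The paper also reduces to the adjoint walk on $\mathfrak h$, uses irreducibility of $\mathfrak h$ and $\wedge^2\mathfrak h$ together with the simple spectrum $\sigma_1<\sigma_2=0<\sigma_3=-\sigma_1$, and reruns the torus argument through \eqref{eq-proj-rationorm} and \eqref{eq-inf-rationorm}; your Step 1 (the trivialization $DL_g(x)=\Ad(g)$ via a right-invariant metric, plus the bounded-distortion remark) only makes explicit what the paper leaves implicit.
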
	
		\begin{proof}
			It is the same proof as in the torus case (Lemma  \ref{lem mu0neighborhoodfine}), using that the adjoint action of $H$ on its Lie algebra $\mathfrak h$ and its dual  $\wedge^2\mathfrak{h}$ are irreducible, and that the random walk induced by $\Ad_{\star}\mu_{0}$ on $\mathfrak h$ has simple Lyapunov spectrum of the form $\sigma_{1}<\sigma_{2}=0<\sigma_{3}=-\sigma_{1}$.
		\end{proof}

	We now deal with the case where $H=\SO(3,1)$. In this case, $\mu_{0}$ does not have full gap, in particular we cannot apply Theorem  \ref{main thm 2Dissipative}. However  we can check the relevant pinching property and conclude by applying Theorem  \ref{thm mainDissipative}. The main step is the following.
		
		\begin{lemma}\label{caseSO(3,1)-gapinching}
		 If $H=\SO(3,1)$  then
			the measure $\mu_{0}$ has a $b$-gap for every $b\in \{2, 4\}$. Moreover, given any $\eta>0$, there exists $n_{0}$ such that $\mu_{0}$  is $(n_{0}, \eta, b_{0},b_{1})$-pinched for $(b_{0},b_{1})\in \{(0, 2), (2, 4), (4,6)\}$.					
		\end{lemma}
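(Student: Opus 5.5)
The plan is to exploit the homogeneous structure: the singular values of the derivative cocycle are then explicit, and the only real input is a non-concentration estimate for the most-contracted subspaces. Identify $T_{x\Lambda}M$ with the Lie algebra $\mathfrak h=\mathfrak{so}(3,1)\simeq \mathfrak{sl}_2(\C)$ (a real $6$-dimensional space) via right translation. The derivative of $L_h$ then becomes $\Ad(h)$, independently of the base point. The metric on $M$ may be taken to be induced by an $\Ad(K)$-invariant inner product on $\mathfrak h$, where $K=\SO(3)$. Write the Cartan decomposition $h=k\,a_t\,k'$ with $a_t=\diag(e^{t},e^{-t})$ (viewing $H\simeq \mathrm{PSL}_2(\C)$) and $t=t(h)\geq 0$. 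Then $\Ad(h)=\Ad(k)\Ad(a_t)\Ad(k')$ with $\Ad(k),\Ad(k')$ isometries. Moreover $\Ad(a_t)$ acts on $\mathfrak{sl}_2(\C)$ with eigenvalues $e^{-2t},e^{-2t},1,1,e^{2t},e^{2t}$: each root space and the Cartan subalgebra are complex lines, i.e.\ real $2$-planes. Hence the singular values satisfy exactly
\[
\lambda^{(1)}=\lambda^{(2)}=-2t,\qquad \lambda^{(3)}=\lambda^{(4)}=0,\qquad \lambda^{(5)}=\lambda^{(6)}=2t .
\]
The contracting flag $W_2(h)\subset W_4(h)$ is $\Ad(k')^{-1}$ applied to a fixed flag $\mathfrak n^-\subset \mathfrak n^-\oplus\mathfrak a_{\C}$. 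It is therefore determined by the point $\xi(h)=k'^{-1}M_0\in K/M_0\simeq \S^2=\partial\mathbb H^3$, where $M_0$ is the centralizer of $a_t$ in $K$.

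Next, I would bound the integrands of Definitions \ref{def gap} and \ref{def pinch} in terms of singular values and an angle. Fix $b\in\{2,4\}$ and $V\in \Gr(\mathfrak h,6-b)$. A linear-algebra computation in the spirit of Corollary \ref{lem lambdab1lambdab0+1} and the proof of Proposition \ref{lem AllGapsImpliesNonConcentration} should give
\[
\log\sup_{u\in\S(V^\perp)}\|P_{gV^\perp}gu\|-\log\inf_{v\in\S(V)}\|gv\|\;\le\;\lambda^{(b)}-\lambda^{(b+1)}+C\,\bigl|\log\measuredangle(W_b(g),V)\bigr|+C.
\]
The point is that if $V$ is transverse to $W_b(g)$ with angle $\theta$, then $V$ is expanded at least at rate $e^{\lambda^{(b+1)}}\theta^{O(1)}$, and the quotient by $gV$ kills the directions expanded more than $e^{\lambda^{(b)}}\theta^{-O(1)}$. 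The same argument, with $V_1\subset V_0$ in the pinching case, gives the bound $\lambda^{(b_1)}-\lambda^{(b_0+1)}+C|\log\measuredangle|+C$ for each pair $(b_0,b_1)\in\{(0,2),(2,4),(4,6)\}$. Here $\lambda^{(b_1)}-\lambda^{(b_0+1)}=0$ identically, and the relevant angles are between $V_0$ and $W_{b_0}(g)$ and between $V_1$ and $W_{b_1}(g)$. This bound is the one I would check carefully, by writing $\Ad(a_t)$ in block form relative to $W_b$ and its complement.

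With these bounds, integrating against $\mu_0^{*n}$ gives for the gap
\[
\int(\cdots)\,d\mu_0^{*n}\le -4\,\EV\, t(g)+C\,\EV\bigl|\log\measuredangle(W_b(g),V)\bigr|+C .
\]
Since $\mu_0$ is Zariski dense, $\EV\, t(g)\geq n\lambda_0$ for large $n$, where $\lambda_0>0$ is the top exponent (Furstenberg). So the $b$-gap follows once $\sup_{V}\sup_n\EV|\log\measuredangle(W_b(g),V)|<\infty$. For pinching, the integral is bounded by $C'$ uniformly in $n$ under the same angle bound, and choosing $n_0\geq C'/\eta$ gives $(n_0,\eta,b_0,b_1)$-pinching.

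The main obstacle is this uniform log-moment bound on the angles. The condition $\measuredangle(W_b(g),V)<r$ translates into $\xi(g)$ lying in the $r^{O(1)}$-neighbourhood of a proper real-algebraic subset $Z_V\subset\S^2$ of bounded degree (a point or a circle, uniformly in $V$). I would first try to obtain $\mu_0^{*n}(\xi(g)\in \text{nbhd}_{r}(Z_V))\le Cr^{c}$ for $r\geq e^{-\epsilon n}$ from the large deviation and regularity estimates for Zariski-dense walks in \cite{BQ}. These are Hölder regularity of the Furstenberg measure on the boundary, together with the exponential convergence of $\xi(g_1\cdots g_n)$ in distribution, equivalently regularity of the $\mu_0^{-1}$-stationary measure, since $\xi$ involves the right $K$-factor. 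One must also exclude that $Z_V$ contains a whole $\Gamma$-invariant set, which Zariski density rules out. For $r<e^{-\epsilon n}$ I would use the trivial bound $|\log\measuredangle|\lesssim n$ only on an event of probability $e^{-cn}$, handled by the same large deviation estimate. Summing over dyadic $r$ gives the desired uniform bound on the expectation.
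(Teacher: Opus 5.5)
Your route differs from the paper's, and it can be completed, but the step you yourself flag as the main obstacle is set up incorrectly in three places.

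\textbf{Comparison with the paper.} The paper passes to $\SL_2(\C)$ and imports non-concentration of the expanding $2$-plane from \cite[Proposition 1.2]{H4}. It transfers this to the most contracted $2$- and $4$-planes by applying it to $\mu_0^{-1}$ and to the transposed measure. It then concludes with the law of large numbers for $\log\|\Ad h\|_{\wedge^k}$ and the exterior-power inequalities \eqref{eq-proj-rationorm}, \eqref{eq-inf-rationorm}; all its errors are $o(n)$.

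You instead use the exact spectrum $-2t,-2t,0,0,2t,2t$ of $\Ad(a_t)$ and a direct angle bound. That bound is correct: with $\theta=\measuredangle(W_b(g),V)$ one has $\inf_{v\in\S(V)}\|gv\|\geq e^{\lambda^{(b+1)}}\sin\theta$ and $\sup_{u\in\S(V^\perp)}\|P_{gV^\perp}gu\|\leq e^{\lambda^{(b)}}/\sin\theta$. It gives pinching errors of size $O(1)$ plus angle terms. You then aim to prove non-concentration on $\S^2$ via the $\mu_0^{-1}$-stationary measure rather than import it. This is more explicit and self-contained. (Small slip: $\lambda^{(b)}-\lambda^{(b+1)}=-2t$, so the main term is $-2\EV t(g)$, not $-4\EV t(g)$.)

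\textbf{(i) The log-moment criterion is false as stated.} The condition $\sup_V\sup_n\EV|\log\measuredangle(W_b(g),V)|<\infty$ fails whenever $\mu_0^{*n}$ has an atom $g$ (e.g.\ $\mu_0$ finitely supported): choose $V$ meeting $W_b(g)$. For the same reason $|\log\measuredangle|\lesssim n$ is not a trivial bound. What is trivial is that the integrand itself is at most $2n\log D_1$. So you should bound $\EV\min(C|\log\measuredangle|,C'n)$ instead. Moreover only $o(n)$ is needed: the gap main term is linear in $n$, and for pinching $n_0$ may depend on $\eta$.

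\textbf{(ii) $Z_V$ is not a point or a circle, and its properness is the real content.} Write $\mathfrak n_\xi\subset\mathfrak b_\xi$ for $W_2\subset W_4$ at $\xi$, and $V=\ker\operatorname{Re}\phi_1\cap\ker\operatorname{Re}\phi_2$ with $\phi_j$ complex-linear. Then $\mathfrak n_\xi\cap V\neq\{0\}$ iff $\operatorname{Im}(\phi_1(N_\xi)\overline{\phi_2(N_\xi)})=0$, where $N_\xi$ spans $\mathfrak n_\xi$ and the $\phi_j(N_\xi)$ are binary quadratic forms in $\xi$. For $\phi_1=X_{12}$, $\phi_2=X_{21}$ this is $\{w^2\in\R\}$ in the affine coordinate $w$, i.e.\ two great circles. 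For $2$-planes $V=\R v_1\oplus\R v_2$ and $\mathfrak b_\xi$, replace $\phi_j(N_\xi)$ by $\operatorname{tr}(v_jN_\xi)$.

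That $Z_V\neq\S^2$ for \emph{every} $V$ is a property of $\SL_2(\C)$, not of $\Gamma$. It is exactly what fails for $\SL_3(\R)$ in Remark \ref{disc-moregen-simplegroups}, and Zariski density cannot help if $Z_V=\S^2$. So it must be proved. One way: if $\phi_1(N_\xi)/\phi_2(N_\xi)$ were real on $\C P^1$, it would be a real constant by the open mapping theorem, forcing $\operatorname{Re}\phi_1$ and $\operatorname{Re}\phi_2$ to be dependent.

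\textbf{(iii) Ball regularity does not control neighbourhoods of curves.} Hölder regularity of the Furstenberg measure on balls says nothing about $r$-neighbourhoods of curves. Also, passing from a small angle to an $r^{O(1)}$-neighbourhood of $Z_V$ would need a uniform \L{}ojasiewicz inequality. A cleaner route: a small angle gives $|F_V(\xi)|\lesssim r$, where $F_V$ is the expression in (ii) with $N_\xi$ normalized.
\begin{itemize}
\item $F_V$ is a linear functional of $N_\xi\otimes\overline{N_\xi}$, i.e.\ of the image of $\xi$ in the irreducible proximal real representation on Hermitian forms on $\mathrm{Sym}^2\C^2$.
\item By (ii) and compactness, its norm is bounded below uniformly in $V$.
\item Guivarc'h regularity with respect to hyperplanes in that representation, plus exponential convergence of $\xi(g)$, then gives $\sup_V\mu_0^{*n}(|F_V(\xi(g))|\leq r)\lesssim r^c$ for $r\geq e^{-\epsilon n}$.
\end{itemize}
Since only $o(n)$ is needed, a qualitative version also suffices. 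The $\mu_0^{-1}$-stationary measure gives zero mass to the proper algebraic sets $Z_V$; this is where Zariski density enters. Combined with upper semicontinuity of $V\mapsto Z_V$ and convergence in law of $\xi(g)$, this is enough.

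Finally, note that the paper's formulation through $\|\Ad h(e_V)\|\geq e^{-Cn}$ avoids the infinite-moment issue of (i) automatically.
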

		
		\begin{proof}
		Without loss of generality, we may consider  $H=\SL_{2}(\C)$ (seen as a real Lie group) as it is a double cover of $\SO(3,1)$. 
			Let  $\sigma_{1}\leq \sigma_{2}\leq \dots	\leq \sigma_{6}$ denote the Lyapunov exponents for the adjoint random walk driven by $\Ad_{\star}\mu_{0}$. It is known that for some $c>0$, we have $\sigma_{1}=\sigma_{2}=-c$, $\sigma_{3}=\sigma_{4}=0$, and $\sigma_{5}=\sigma_{6}=c$. By \cite[Proposition 1.2]{H4}, we also know that the proximal $2$-plane satisfies a non-concentration property. More precisely, for $h\in \SL_{2}(\C)$, write $h=R'_{h}\diag(e^{\lambda_{1}(h)}, e^{\lambda_{2}(h)}, e^{\lambda_{3}(h)})R_{h}$ a Cartan decomposition,  meaning $R'_{h},R_{h}\in \SU(2)$, and the $\lambda_{i}(h)$ are reals such that $\lambda_{1}(h)\leq \lambda_{2}(h)\leq \lambda_{3}(h)$. Then \cite[Proposition 1.2]{H4} implies that for some constant $c=c(\mu_{0})>0$, for any $4$-plane $V\in \Gr(\mathfrak{sl}_{2}(\C), 4)$, $r>e^{-n}$, we have
			\begin{equation}\label{cit-He}
				\mu_{0}^{*n} \{h\,:\, \measuredangle(\Ad(R'_{h})\C E_{2,1}, V)\leq r\}\ll_{\mu_{0}} r^c.
			\end{equation}
			This result states a non-concentration property for the real $2$-plane $\Ad(R'_{h})\C E_{2,1}$ spanned by the two longest axes of the ellipsoid $\Ad(h)B^{\mathfrak{sl}_{2}(\C) }_{1}$.
			Applying this result to the image of $\mu_{0}$ by $h\mapsto h^{-1}$ instead of $\mu_{0}$, we obtain, up to taking smaller $c$:
			\begin{equation}\label{nc-2plane}
				\mu_{0}^{*n} \{h\,:\, \measuredangle(\Ad(R^{-1}_{h})\C E_{1,2}, V)\leq r\}\ll_{\mu_{0}} r^c.
			\end{equation}
			This  estimate now concerns the $2$-plane $\Ad(R^{-1}_{h})\C E_{1,2}$ which is most contracted by $\Ad h$.  
			
			Similarly, applying \eqref{cit-He} to the image of $\mu_{0}$ by $h\mapsto \,^tg$ (with again suitable $c$), then  passing to the orthogonal on subspaces, we get for every $V'\in \Gr(\mathfrak{sl}_{2}(\C), 2)$, $r>e^{-n}$
			\begin{equation}\label{nc-4plane}
				\mu_{0}^{*n} \{h\,:\, \measuredangle(\Ad(R^{-1}_{h}) \C E_{1,2}\oplus \C (E_{1,1}-E_{2,2}), V)\leq r\}\ll_{\mu_{0}} r^c.
			\end{equation}
			This  estimate states non-concentration for the $4$-plane $\Ad(R^{-1}_{h}) \C E_{1,2}\oplus \C (E_{1,1}-E_{2,2})$ which is most contracted by $\Ad h$.

			We now derive estimates for matrix norm and coefficients of the adjoint random walk, driven by  $\Ad_{\star}\mu_{0}$. First,  the law of large numbers for the Cartan projection 
			\cite[Theorem 13.17]{BQ} guarantees for every $k=1, \dots, 6$, 
			\begin{align}\label{eq-cv-L1-LLN-norm}
				n^{-1}\int_{H} \log \|\Ad h\|_{\wedge^k\mathfrak{sl}_{2}(\C)} \,\,\dd\mu_{0}^{*n}(h) \rightarrow \sigma_{6-k+1}+\dots +\sigma_{6}.
			\end{align}
			Combining this with \eqref{nc-2plane}, \eqref{nc-4plane}, we find that for   $l=1,2$, and all $V\in \Gr(\mathfrak{sl}_{2}(\C), 2l)$,
			\begin{align}\label{eq-cv-L1-LLN-24}
				n^{-1}\int_{H} \log \|\Ad h (e_{V})\| \,\,\dd\mu_{0}^{*n}(h) \rightarrow \sigma_{6-2l+1}+\dots +\sigma_{6}.
			\end{align}
			Applying now \eqref{eq-proj-rationorm}, \eqref{eq-inf-rationorm} with the inequalities just obtained \eqref{eq-cv-L1-LLN-norm}, \eqref{eq-cv-L1-LLN-24}, the claim of the lemma follows at once from the definition of $b$-gap and pinching.  
		\end{proof}
		
Lemma \ref{caseSO(3,1)-gapinching} validates assumption (1) in Theorem \ref{thm mainDissipative}. For assumption (2),	observe that Lemma \ref{caseSO(3,1)-gapinching} also applies to $\mu_{0}^{-1}$ (which satisfies the same assumptions as $\mu_{0}$), and in particular, $\mu_{0}^{-1}$ is  expanding in view of Lemma \ref{lem finetocoexpanding2}. Note in passing that for the same reason,  $\mu_{0}^{-1}$ is also coexpanding by Lemma  \ref{lem finetocoexpanding}. To check assumption (3), note that the total ergodicity of $T_{\mu_{0}}$ in $L^2(H/\Lambda, \vol)$ follows from the Moore Ergodicity theorem \cite[Theorem 2.2.6]{Zimmer} (or \cite{Bekka} which even shows spectral gap). 
	
	The above allows to use Theorem \ref{thm mainDissipative}. The remaining claim on $H^s$-regularity of $\Upsilon_{\mu}$ follows from Lemma \ref{Hs-stat}, which applies because we just checked coexpansion for $\mu^{-1}_{0}$. This concludes the proof.
	\end{proof}

\begin{rema}\label{disc-moregen-simplegroups}	For more general simple Lie groups, it is not true that the $b$-gap property holds for all $b$ such that $\lambda_{b}<\lambda_{b+1}$, in particular we cannot apply Theorem \ref{thm mainDissipative}. This fails already for $\SL_{3}(\R)$: in this case $\Ad_{\star}\mu_{0}$  satisfies $\lambda_{5}=0<\lambda_{6}$ but does not have $5$-gap. To see why, recall from  Proposition \ref{lem AllGapsImpliesNonConcentration} that having $5$-gap would imply that the Borel subspace of $\mathfrak{sl}_{3}(\R)$ given by
	$$\mathfrak b:=\begin{pmatrix} *&*&* \\ 0 &*&*\\0&0&*\end{pmatrix}\cap \mathfrak{sl}_{3}(\R)$$
	is transverse, in the sense that for every $W\subseteq \mathfrak{sl}_{3}(\R)$ with $\dim W=3$, there exists some $h\in \SL_{3}(\R)$ such that  
	$\Ad h(\mathfrak b)\cap W=\{0\}$ 
	However this fails:  for 
	$$W_{0}:=\left\{\begin{pmatrix} t&0&0 \\0&t&0\\ * &*&-2t\end{pmatrix}\,:\, t\in \R \right\}.$$
	we have\footnote{
		Indeed, write $P$ (resp $P^-$) the upper (resp. lower) triangular subgroup of $H=\SL_{3}(\R)$.  Note $\kb$ and $W_{0}$ are respectively invariant under $P$ and $P^-$. As they also have nontrivial intersection, we get $\Ad(h)\kb\cap W_{0}\neq \{0\}$ for all $h\in P^-P$. But $P^-P$ is Zariski-dense in $H$ and the nontrivial  intersection condition is Zariski-closed. Hence $\Ad(h)\kb\cap W_{0}\neq \{0\}$ for all $h\in H$.
	} $\Ad(h)\kb\cap W_{0}\neq \{0\}$ for all $h$.
	
\end{rema}	
	
	Finally, we deal with perturbations of linear random walks on the sphere. 

	\begin{proof}[Proof of Theorem \ref{thm Sphere}] 
		
				The following result follows from \cite{DK, DDZ}.
		\begin{lemma} In the context of Theorem \ref{thm Sphere}, if  $\ell_{0}\in \N$ is large enough depending on $d$, and $\mathcal{V}, \mathcal{N}$ are small enough depending on $\mu_{0}, \cK$, then exactly one of the following holds:
			\enmt
			\item either all the elements of $\supp(\mu)$ are simutaneously conjugate to elements in $\SO(d + 1)$ by an element of $\Diff^{\ell_{0}}(M)$,
			\item  or  $\mu$   has a $b$-gap  for every $1 \leq b \leq d - 1$. 
			\eenmt
		\end{lemma}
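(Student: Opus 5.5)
The plan is to import the dichotomy of Dolgopyat--Krikorian \cite{DK}, in the form refined by \cite{DDZ}, and translate its non-linear alternative into the full gap property of Definition \ref{def gap}. First, for $\ell_0$ large and $\mathcal V,\mathcal N$ small, \cite{DK} gives the following alternative for $\mu=\sum_i q_i\delta_{S_i}$. Either the $S_i$ are simultaneously $\Diff^{\ell_0}$-conjugate into $\SO(d+1)$, which is case (1). Or the random walk has simple Lyapunov spectrum. In the second case, \cite{DDZ} strengthens this to a uniform statement, namely that $\mu$ is $(n_0,\kappa_1,\kappa_2,b)$-uniform in the sense of \cite[Definition 1.1]{DDZ} for every $1\le b\le d-1$. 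Here the uniform exponents are controlled over all points $x$ and all subspaces, not only for $\vol$-typical ones. I would cite the precise statement from \cite{DDZ} that, near $\mu_0$, non-conjugacy forces these uniform estimates with some $n_0$ and $\kappa>0$.

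Second, I convert $b$-uniformity into a $(n_0,\kappa,b)$-gap. As noted after Definition \ref{def gap}, the two notions are related. Uniformity says that for every $x$ and $V\in\Gr(T_xM,d-b)$ the $\mu^{*n_0}$-average of $\log\inf_{v\in\S(V)}\|Dg(x,v)\|$ exceeds that of $\log\sup_{u\in \S(V^\perp)}\|P_{Dg(x,V)^\perp}Dg(x,u)\|$ by $n_0\kappa$. This is exactly the integrand in Definition \ref{def gap}, so I only need to match the definitions and, where the formulations differ (for instance exterior-power versus projection formulations), use identities like \eqref{eq-proj-rationorm} and \eqref{eq-inf-rationorm}.

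Third, I show the two alternatives are mutually exclusive. If (1) holds, then $\mu$ is smoothly conjugate to a measure on isometries. Then $\lambda^{(b)}(x,g)$ is bounded uniformly in $n$ by the $C^1$ norms of the conjugacy. So the gap integrand is $O(1)$, while Definition \ref{def gap} requires it to be at most $-n_0\kappa$ for all iterates; since gap at $n_0$ propagates to large multiples via Lemma \ref{cor GapWithHighProba}, this is a contradiction.

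The main obstacle is the second alternative. \cite{DK} gives simple spectrum only for stationary (volume) measures, so one must either extract from \cite{DDZ} the uniform-in-$(x,V)$ version or argue it directly. If I have to argue it, I would proceed by contradiction. I would take a sequence of $\mu_k$ with perturbations $\to\mu_0$ for which the gap fails at some $x_k,V_k$. From these I would build, by compactness on the Grassmann bundle, a $\mu_k$-stationary measure on $\Gr(TM,d-b)$ whose fiber exponents violate the gap. I would then invoke the \cite{DK} normal-form analysis, which says the Lyapunov spectrum of such a projectivized stationary measure is computed by the same leading-order averaged term that \cite{DK} shows is nondegenerate unless the system is conjugate to rotations.
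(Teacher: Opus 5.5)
Your overall architecture is the paper's. When (1) fails, \cite{DK} is used to produce an estimate holding uniformly over base points and subspaces, and this is read off as a $b$-gap. Your third step is correct and supplies the exclusivity that the paper leaves implicit: a single conjugating diffeomorphism keeps $\lambda^{(b+1)}(x,g)-\lambda^{(b)}(x,g)$ bounded uniformly in $n$, which contradicts Lemma \ref{cor GapWithHighProba}.

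The problem is the load-bearing citation in your first step. \cite[Prop 2.1]{DDZ} only produces the $1$-gap, so the all-$b$ uniform statement you intend to quote from \cite{DDZ} is not there. The paper instead reruns that argument for general $b$. If (1) fails, the $\vol$-averaged Lyapunov exponent is positive by \cite{DK}. Then the estimate (45) in the proof of \cite[Corollary 4]{DK}, applied with $r=b$ for each $1\le b\le d-1$, gives the $b$-gap directly. That is the missing ingredient, and it resolves your ``main obstacle'' without any new argument.

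Your fallback should not be relied on as stated, for two reasons. First, a stationary measure on $\Gr(TM,d-b)$ assembled from bad pairs $(x_k,V_k)$ projects to a $\mu_k$-stationary measure on $M$ that need not be $\vol$. The simple-spectrum output of \cite{DK}, by contrast, concerns $\vol$. Second, and more importantly, even over $\vol$, knowing the exponents are separated does not control how the fiber $V$ sits relative to the Oseledets filtration. That relative position is exactly what Definition \ref{def gap} measures: a strict inequality between the $b$-th and $(b+1)$-th exponents does not imply a $b$-gap (see Remark \ref{disc-moregen-simplegroups}). To close a contradiction argument, you would need the leading-order computation of \cite{DK} to hold at every point of the Grassmann bundle, and that is precisely the content of (45). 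So the detour through stationary measures gains nothing over citing (45) directly.
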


		\begin{proof} 		
			The proof showing $1$-gap if item (1) fails is contained in \cite[Prop 2.1]{DDZ}. Here we recall the proof and see that it applies to general $b$-gap.

		If $\mu$ does not satisfy item (1), then the averaged Lyapunov exponents of $\mu$ with respect to $\vol$ is positive.
		By  \cite[Corollary 4]{DK}   and its proof (more precisely, (45) in \cite[Proof of Corollary 4]{DK})  applied with $r = b$ for every $1 \leq b \leq d-1$,  we deduce item (2).
		\end{proof}

		We  now deduce $\mu^{*n}\rightarrow \vol$. In case (1), we note that the support of $\mu$ still generates a dense subgroup of $\SO(d+1)$ (as this is stable by perturbation). Moreover, some power of $\mu$ is aperiodic by \cite[Proposition 3.3]{BB}. This allows to invoke It\^o-Kawada's theorem \cite{IK} to obtain $\mu^{*n}\rightarrow \vol$. 
In case (2), we still have $\mu^{*n}\rightarrow \vol$ by applying Theorem \ref{main thm 2Dissipative} (with $\mu$ playing the role of $\mu_{0}$ therein, which is allowed because here $\mu$ is volume-preserving).

			\end{proof}
	

\appendix
	\section{Large deviation estimate}	 \label{app-LDP}
In this appendix we record a general large deviation estimate for Markov chains. The argument is a standard adaptation of the large deviation principle for sums of i.i.d. variables and appears in the literature in several more specific settings. For convenience, we present the result here in an abstract form. In the present paper we apply it in two different contexts, and we hope that this formulation provides a convenient reference for future works.
	
\begin{prop}\label{LDP-MC}
	Let $E$ be a measurable space. Let $f:E\rightarrow \R$ be a bounded function.  Let $(X_{n})_{n\geq 0}$ be a Markov chain on $E$. We denote by  $\mathbb{P}_{x}$ the probability distribution for the Markov chain conditionally to $X_{0}=x$, and  write $\EV_{x}$ the corresponding expectation.
	
	 Assume that   for every $x\in E$, we have 
	$$\EV_{x}\left(f(X_{1}) \right) \leq 0.$$
	Then for every $\eps \in(0,1/2]$,  for $\gamma := \frac{\eps}{(1+\sup |f|)^2}$,  for all $x\in E$, we have
	\ary \label{LDP-mom} 
	\EV_{x}\left(\exp(\gamma  \sum_{i=1}^{n} f(X_{i})) \right)\leq \exp(\gamma \eps n).
	\eary
	In particular, 
	\ary\label{Markov-cons}
	\mathbb{P}_{x}\left( \sum_{i=1}^{n} f(X_{i}) \geq 2\eps n \right) \leq  \exp(-\gamma \eps n).
	\eary
\end{prop}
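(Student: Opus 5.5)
We prove the exponential moment bound \eqref{LDP-mom} by induction on $n$, using the Markov property to peel off the last step. Set $S_n=\sum_{i=1}^n f(X_i)$ and $K=\sup|f|$. The key one-step estimate is
$$\sup_{y\in E}\EV_{y}\bigl(\exp(\gamma f(X_1))\bigr)\leq \exp(\gamma\eps).$$
For this, note that $|\gamma f|\leq \gamma K\leq \eps/(1+K)\leq 1/2$, so the elementary inequality $e^t\leq 1+t+t^2$ (valid for $|t|\leq 1$) applies pointwise. It gives
$$\EV_y(e^{\gamma f(X_1)})\leq 1+\gamma\EV_y(f(X_1))+\gamma^2K^2\leq 1+\gamma^2 K^2,$$
where we used the hypothesis $\EV_y(f(X_1))\leq 0$. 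Since $\gamma K^2\leq \eps K^2/(1+K)^2\leq \eps$, the right-hand side is at most $1+\gamma\eps\leq e^{\gamma\eps}$.

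For the induction, condition on $X_0,\dots,X_{n-1}$. By the Markov property,
$$\EV_x\bigl(e^{\gamma S_n}\bigr)=\EV_x\bigl(e^{\gamma S_{n-1}}\,\EV_{X_{n-1}}(e^{\gamma f(X_1)})\bigr)\leq e^{\gamma\eps}\,\EV_x\bigl(e^{\gamma S_{n-1}}\bigr).$$
Iterating this from $S_0=0$ yields \eqref{LDP-mom} uniformly in $x$. The only point requiring care is that the one-step bound holds for every starting state, not just on average; this is exactly why the hypothesis is stated pointwise in $x$. Measurability of the kernel is implicit in the word "Markov chain".

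Finally, \eqref{Markov-cons} follows from the exponential Markov (Chernoff) inequality:
$$\mathbb P_x(S_n\geq 2\eps n)\leq e^{-2\gamma\eps n}\,\EV_x(e^{\gamma S_n})\leq e^{-2\gamma\eps n}e^{\gamma\eps n}=e^{-\gamma\eps n}.$$
There is no real obstacle here. The main thing to check is that the choice $\gamma=\eps/(1+K)^2$ makes both the Taylor bound applicable ($|\gamma f|\leq 1$) and the quadratic error term at most $\gamma\eps$, and both of these were verified above.
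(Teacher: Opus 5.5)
Your proof is correct and follows the paper's argument: the one-step bound $\EV_y(e^{\gamma f(X_1)})\leq 1+\gamma^2K^2\leq e^{\gamma\eps}$ via $e^t\leq 1+t+t^2$, iteration through the Markov property, and the exponential Markov inequality for \eqref{Markov-cons}. You also write out the conditioning step and the check $|\gamma f|\leq 1/2$, which the paper leaves implicit.
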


	\begin{proof}
	Note that \eqref{Markov-cons} is a direct consequence of \eqref{LDP-mom} and the Markov inequality. Therefore we focus on proving \eqref{LDP-mom}. 
	Set $R=\sup_{E} |f|<\infty$. Noting that $\gamma<1/R$ and  that $e^t\leq 1+t+t^2$ for all $t\in [-1,1]$, we have for every $x\in E$, 
	\begin{align*}
	\EV_{x}\left(\exp(\gamma  f(X_{1})) \right)
	\leq 1+ \gamma \EV_{x}(  f(X_{1}) )+ \gamma^2 \EV_{x}(  f(X_{1})^2)
	\leq  1+ \gamma^2 R^2
	\end{align*}
	where the second inequality uses the assumption $\EV_{x}(  f(X_{1}) )\leq 0$ and the definition of $R$.
	As our choice of $\gamma$ guarantees $\gamma R^2\leq \eps$, we obtain 
		\begin{align*}
	\EV_{x}\left(\exp(\gamma  f(X_{1})) \right)
	&\leq \exp(\gamma \eps).
		\end{align*}
	The Markov property then yields \eqref{LDP-mom}.
	\end{proof}

				\end{document}